\newcommand{\D}{{\mathbb{D}}}
\newcommand{\R}{{\mathbb{R}}}
\newcommand{\N}{{\mathbb{N}}}
\newcommand{\SD}{{\mathcal{D}}}
\newcommand{\T}{{\mathcal{T}}}
\newcommand{\SU}{{\mathcal{U}}}
\newcommand{\SB}{{\mathcal{B}}}
\newcommand{\FX}{{\mathfrak{X}}}
\newcommand{\ST}{{\mathcal{T}}}
\newcommand{\SL}{{\mathcal{L}}}
\newcommand{\SC}{{\mathcal{C}}}
\newcommand{\NS}{{\mathbb{S}}}
\newcommand{\lift}{{{\mathfrak{lift}}}}
\newcommand{\len}{{{\operatorname{len}}}}
\newcommand{\ev}{{{\operatorname{ev}}}}
\newcommand{\rank}{{{\operatorname{rank}}}}
\newcommand{\Op}{{\mathcal{O}p}}
\newcommand{\Id}{{\operatorname{Id}}}
\newcommand{\std}{{\operatorname{std}}}
\newcommand{\regu}{{\operatorname{r}}}
\newcommand{\epoint}{{\mathfrak{ep}}}
\newcommand{\Var}{{\operatorname{Var}}}
\newcommand{\GL}{{\operatorname{GL}}}
\newcommand{\Diff}{{\operatorname{Diff}}}
\newcommand{\Imm}{{\mathfrak{Imm}}}
\newcommand{\Emb}{{\mathfrak{Emb}}}
\newcommand{\Maps}{{\mathfrak{Maps}}}
\newcommand{\Mon}{{\operatorname{Mon}}}
\newcommand{\Immt}{{\mathfrak{Imm_{\mathcal{T}}}}}
\newcommand{\Embt}{{\mathfrak{Emb_{\mathcal{T}}}}}
\newcommand{\Immtf}{{\mathfrak{Imm}_{\mathcal{T}}^f}}
\newcommand{\Embtf}{{\mathfrak{Emb}_{\mathcal{T}}^f}}
\newcommand{\EmbAT}{{\mathfrak{Emb}_{\mathcal{AT}}}}
\newtheorem{proposition}{Proposition}[section]
\newtheorem{theorem}[proposition]{Theorem}
\newtheorem{definition}[proposition]{Definition}
\newtheorem{lemma}[proposition]{Lemma}
\newtheorem{corollary}[proposition]{Corollary}
\newtheorem{remark}[proposition]{Remark}
\newtheorem{assumption}[proposition]{Assumption}
\newcommand{\superimpose}[2]{%
  {\ooalign{$#1\@firstoftwo#2$\cr\hfil$#1\@secondoftwo#2$\hfil\cr}}}
\title{Classification of tangent and transverse knots in bracket-generating distributions}
\subjclass[2010]{Primary: 53D10. Secondary: 53D15, 57R17.}
\author{Javier Mart\'inez-Aguinaga}
\address{Universidad Complutense de Madrid, Facultad de Matem\'aticas, Plaza de Ciencias 3, 28040 Madrid; Instituto de Ciencias Matem'aticas CSIC-UAM-UC3M-UCM, C. Nicol'as Cabrera, 13-15, 28049 Madrid, Spain}
\email{frmart02@ucm.es}
\author{\'Alvaro del Pino}
\address{Utrecht University, Department of Mathematics, Budapestlaan 6, 3584CD Utrecht, The Netherlands}
\email{a.delpinogomez@uu.nl}
\begin{document}
\begin{abstract}
Consider a manifold, of dimension greater than $3$, equipped with a bracket-generating distribution. In this article we prove complete $h$-principles for embedded regular horizontal curves and for embedded transverse curves. These results contrast with the 3-dimensional contact case, where the full $h$-principle for transverse/legendrian knots is known not to hold.

We also prove analogous statements for immersions, with no assumptions on the ambient dimension.


\end{abstract}
\maketitle
\setcounter{tocdepth}{1}
\tableofcontents

\section{Introduction}

\subsection{Setup}

A \emph{$q$-distribution} on a smooth manifold $M$ is a smooth section $\SD$ of the Grassmann bundle of $q$-planes. There is a control-theoretic motivation for considering such objects: we may think of $M$ as configuration space and of $\SD$ as the \emph{admissible directions of motion}. Then, a natural question is whether any two points in $M$ can be connected by a \emph{horizontal path}, i.e. a path whose velocity vectors take values in $\SD$. A sufficient condition is given by a classic theorem of Chow \cite{Chow}: any two points in $M$ can be connected if $\SD$ is \emph{bracket-generating}. Being bracket-generating means that any vector in $TM$ can be written as a linear combination of Lie brackets involving sections of $\SD$. That is, Chow's theorem is an \emph{infinitesimal} to \emph{global} statement.

Even though classic proofs of Chow's theorem produce horizontal paths that are piecewise smooth, $C^\infty$-paths can be constructed by suitable smoothing, see \cite[Subsection 1.2.B]{Gro96}. It follows that every homotopy class of loops on $M$ can be represented by a smooth horizontal loop. That is, the inclusion
\[ \iota: \SL(M,\SD) \quad\longrightarrow\quad \SL(M) \]
is a $\pi_0$-surjection. Here $\SL(M)$ is the (unbased) loop space of $M$ (endowed with the $C^\infty$-topology) and $\SL(M,\SD)$ is the subspace of horizontal loops. More recently, Z. Ge \cite{Ge} proved that the analogous inclusion for $H^1$-loops is a weak homotopy equivalence; see also \cite{BL}.

In this paper we consider a variation on this theme, proving classification statements for spaces of horizontal embeddings. Our theorems relate these spaces to their \emph{formal} counterparts (roughly speaking, spaces of smooth embeddings plus some additional homotopical data). Taking care of the embedding condition is rather delicate (as is often the case for $h$-principles of this type) and much of the paper is dedicated to handling it. Analogous classification statements hold for horizontal immersions, with simpler proofs. We also deduce that the map $\iota$ above is a weak homotopy equivalence (i.e. the smooth analogue of Ge's theorem). Lastly, our techniques translate to the setting of embedded/immersed transverse curves, yielding similar classification results.

We now state our theorems. We work under the following assumption:
\begin{assumption} \label{assumption:constantGrowth}
All the bracket-generating distributions we consider in this paper are of constant growth (i.e. the growth vector does not depend on the point). See Subsection \ref{sssec:growthVector}.
\end{assumption}

\subsection{Immersed horizontal curves}

Let us write $\Imm(M,\SD) \subset \SL(M,\SD)$ for the subspace of immersed horizontal loops. In order to study it, we introduce the so-called \emph{scanning map}:
\[ \Imm(M,\SD) \quad\longrightarrow\quad \Imm^f(M,\SD), \]
taking values in the space of formal horizontal immersions
\[ \Imm^f(M,\SD) := \{ (\gamma,F) \, |\, \gamma \in \SL(M), \quad F \in \Mon(T\NS^1,\gamma^*\SD) \}. \]
The question to be addressed is whether the scanning map is a weak homotopy equivalence. The answer is positive if $\SD$ is a contact structure \cite[Section 14.1]{EM} but, for other distributions, the answer may be negative due to the presence of \emph{rigid curves} \cite{BH}.

A horizontal curve is rigid if possesses no $C^\infty$-deformations relative to its endpoints, up to reparametrisation. These curves are isolated and conform exceptional components within the space of all horizontal maps with given boundary conditions. \emph{Rigid loops} also exist. Because of this, the inclusion $\Imm(M,\SD) \to \Imm^f(M,\SD)$ can fail to be bijective at the level of connected components; see \cite[Remark 23]{PP}. Being rigid is the most extreme case of being \emph{singular}. This means that the \emph{endpoint map} of the curve is not submersive, so the curve has fewer deformations than expected; see Subsection \ref{ssec:regularity}.

The subspaces of rigid and singular curves have a \emph{geometric} and not a \emph{topological} nature. By this, we mean that small perturbations of $\SD$ can radically change their homotopy type; see \cite{Mon} or \cite[Theorem 27]{PP}. This motivates us to discard singular curves and focus on $\Imm^\regu(M,\SD)$, the subspace of regular horizontal immersions. In doing so, the subspace that we discard is not too large: Germs of singular horizontal curves were shown to form a subset of infinite codimension among all horizontal germs, first in the analytic case \cite{PS} and then in general \cite{Bho}. Earlier, it had already been observed \cite[Corollary 7]{Hsu} that \emph{regular} (i.e. non-singular) germs are $C^\infty$--generic.

Our first result reads: 
\begin{theorem} \label{thm:immersions}
Let $(M,\SD)$ be a manifold endowed with a bracket-generating distribution. Then, the following inclusion is a weak homotopy equivalence:
\[ \Imm^\regu(M,\SD) \quad\longrightarrow\quad \Imm^f(M,\SD). \]
\end{theorem}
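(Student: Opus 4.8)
The plan is to prove this via Gromov's method of \emph{holonomic approximation} / convex integration, adapted to the horizontal (non-holonomic) setting and restricted to the open subset of regular curves. The starting observation is that $\Imm^\regu(M,\SD)$ should be regarded as the space of solutions of an open, \emph{non-linear} differential relation $\mathcal{R}$ sitting inside the first jet space of maps $\NS^1 \to M$: a $1$-jet is formally horizontal if its derivative component lies in $\SD$, and we further impose the (open) conditions of being an immersion and of being \emph{regular} in the control-theoretic sense of Subsection \ref{ssec:regularity}. The formal space $\Imm^f(M,\SD)$ is, essentially by definition, the space of sections of (a deformation retract of) this relation, so the theorem is precisely the assertion that $\mathcal{R}$ satisfies the full parametric $h$-principle on $\NS^1$.

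The key steps, in order, would be: (1) Set up the microflexibility/local $h$-principle. The crucial input is that near a regular horizontal curve the endpoint map is a submersion, so that horizontal perturbations are abundant; this is what fails for singular curves and is exactly why we excised them. One shows that the relation is \emph{microflexible} — any local horizontal deformation of a compact piece of curve can be extended, after shrinking, over a slightly larger piece and for a short time — by appealing to the openness of regularity plus a quantitative version of Chow's theorem (the ball-box estimates) to build the required horizontal corrections with controlled size. (2) Upgrade microflexibility to an actual $h$-principle over the $1$-dimensional base. Since the domain is $\NS^1$, one can triangulate it into small arcs; over each arc the relation is ample in a suitable sense (bracket-generation provides enough horizontal directions transverse to any hyperplane of constraints, which is the replacement for Gromov's ampleness condition), so convex integration applies arc-by-arc. (3) Patch the local solutions: use a partition-of-unity / inductive argument along the $1$-skeleton, at each stage correcting the curve by a small horizontal loop supported near an overlap to make the pieces agree, invoking microflexibility to keep the correction within $\mathcal{R}$. (4) Handle the embedding-to-immersion reduction is moot here — we are only proving the immersive statement — but one does need to be careful that the corrections do not destroy regularity; this is guaranteed by working in a $C^0$-small neighbourhood where regularity is stable. (5) Finally, make everything parametric: run the above with an auxiliary parameter space $K$ (a sphere or disk), which is routine once the non-parametric scheme is in place, and conclude that the inclusion induces isomorphisms on all homotopy groups.

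The main obstacle I expect is Step (1): establishing microflexibility of the \emph{regular horizontal} relation with \emph{quantitative control}. Unlike in the contact case, where the distribution is maximally non-integrable and one has clean normal forms, a general bracket-generating distribution of constant growth has a more complicated local model (a Carnot-type nilpotentization), and the horizontal corrections needed to glue pieces together must be built by iterated bracketing — producing curves whose size in the ambient metric is governed by the ball-box theorem with exponents determined by the growth vector. Keeping these corrections $C^0$-small (to preserve embeddedness-adjacent properties and regularity) while also $C^1$-controlling their effect on the endpoint map requires a careful analysis of the flow of horizontal vector fields; this is the technical heart of the argument. A secondary subtlety is verifying that ``regular'' is genuinely an open condition compatible with convex integration — i.e. that the convex hulls one takes in the jet space stay inside the regular locus — which again reduces to the submersivity of the endpoint map and the fact, cited in the introduction, that singular germs are of infinite codimension.
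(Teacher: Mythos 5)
There is a genuine gap, and it sits at the heart of your scheme: you treat ``horizontal $+$ immersed $+$ regular'' as an open differential relation $\mathcal{R}$ in the $1$-jet space and then run convex integration arc-by-arc. Neither half of this works. Horizontality $\gamma'(t)\in\SD_{\gamma(t)}$ is a \emph{closed} relation, and its intersection with each principal subspace is the linear subspace $\SD_p\subset T_pM$, whose convex hull is $\SD_p$ itself — so the relation is neither open nor ample, and bracket-generation does not act as a ``replacement for ampleness'' at the level of first jets (brackets are invisible to $1$-jets). Moreover, regularity (Subsection \ref{ssec:regularity}) is submersivity of the endpoint map, a condition on the whole curve, not a pointwise condition on jets; it cannot be encoded as part of a relation in $J^1(\NS^1,M)$, and restrictions of regular curves to subarcs need not be regular, so an arc-by-arc argument cannot even see it. A sanity check that the scheme must fail: if convex integration applied to the horizontal relation as you describe, it would prove the $h$-principle for \emph{all} horizontal immersions, with no regularity hypothesis — but this is false, because of rigid curves and loops \cite{BH}, see \cite[Remark 23]{PP}. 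Any correct proof has to use regularity quantitatively, not merely as an ``open condition preserved by small perturbations''.

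Your Step (1) is also not available as stated: horizontal curves are \emph{not} microflexible in general (precisely because of rigidity; the paper says this explicitly in Section \ref{sec:microflexibility}). What is true is the microflexibility/local integrability of \emph{micro-regular} curves, proven in \cite{PS,Bho}, but the paper deliberately avoids invoking this (see Remark \ref{rem:whyNotTriangulate}) and even then one would still need an extension/gluing mechanism adapted to a closed relation, which convex integration does not supply. The paper's actual route is different: convex integration is used only where it legitimately applies, namely to pass from $\Imm^f(M,\SD)$ to the \emph{open, ample} $\varepsilon$-horizontal relation (Lemma \ref{lem:eHorizontalImm}); the passage from $\varepsilon$-horizontal to genuinely horizontal and regular is done by working in graphical models (Section \ref{sec:graphicalModels}), where $\SD$ is a connection and curves are lifted \emph{exactly} from their base projections, and by inserting tangles and controllers (Sections \ref{sec:tangles} and \ref{sec:controllers}) — explicit families built from iterated commutators of the coordinate flows — whose estimated displacement, combined with the inverse function theorem and with controlling families of regular curves (Lemma \ref{lem:controlling}) near the boundary of the parameter space, corrects the endpoints and makes the argument relative (the relative statement is the analogue of Proposition \ref{prop:relativeHorizontalEmbeddings}; for immersions the same proof goes through, only simplified because self-intersections need not be avoided). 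If you want to salvage your outline, you would have to replace Steps (2)–(3) by an endpoint-control mechanism of this kind (or by the Gromov-style argument of \cite{PS,Bho} based on micro-regularity); ampleness-based convex integration alone cannot close the gap.
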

Apart from the aforementioned contact case, in which there are no singular curves, this was already known in the Engel case \cite{PP}.

\begin{corollary}
Let $\SD_0$ and $\SD_1$ be bracket-generating distributions on a manifold $M$, homotopic as subbundles of $TM$. Then, the spaces $\Imm^\regu(M,\SD_0)$ and $\Imm^\regu(M,\SD_1)$ are weakly homotopy equivalent.
\end{corollary}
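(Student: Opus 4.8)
The plan is to deduce this corollary from Theorem~\ref{thm:immersions} by showing that the formal space $\Imm^f(M,\SD)$ depends, up to weak homotopy equivalence, only on the homotopy class of $\SD$ as a subbundle of $TM$. Concretely, let $\SD_t$, $t \in [0,1]$, be a smooth path of subbundles of $TM$ connecting $\SD_0$ to $\SD_1$ (a homotopy of subbundles is the same as a path in the space of sections of the Grassmannian of $q$-planes, so we may take it smooth). Note that each $\SD_t$ is automatically bracket-generating: being bracket-generating of a fixed constant growth vector is an open condition, but more to the point, the corollary is only interesting when all the $\SD_t$ are bracket-generating, and this is part of the hypothesis as soon as we read ``homotopic as bracket-generating distributions''; in the general case one restricts attention to the (open) subset of $t$ for which $\SD_t$ is bracket-generating and chains the argument. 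Assuming then that the whole path consists of bracket-generating distributions, Theorem~\ref{thm:immersions} gives weak homotopy equivalences $\Imm^\regu(M,\SD_i) \to \Imm^f(M,\SD_i)$ for $i=0,1$, so it suffices to produce a weak homotopy equivalence $\Imm^f(M,\SD_0) \to \Imm^f(M,\SD_1)$.

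The second step is to construct this equivalence. Recall that $\Imm^f(M,\SD) = \{(\gamma,F) \mid \gamma \in \SL(M),\ F \in \Mon(T\NS^1,\gamma^*\SD)\}$, i.e.\ it is the total space of a fibration over $\SL(M)$ whose fibre over $\gamma$ is the space of bundle monomorphisms $T\NS^1 \to \gamma^*\SD$. If we pull back the homotopy $\SD_t$ along $\gamma$ we get a homotopy of subbundles $\gamma^*\SD_t$ of $\gamma^*TM$. Now I would invoke the standard fact that homotopic vector subbundles of a fixed bundle are isomorphic (over a compact base such as $\NS^1$ this is elementary, or one may appeal to the covering homotopy property for the bundle of $q$-frames); moreover such an isomorphism can be chosen to depend continuously — indeed smoothly — on $\gamma$. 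The clean way to package this is: the map $E := \{(t,\gamma,v) \mid v \in \gamma^*\SD_t\} \to [0,1]\times \SL(M)$ is a smooth fibre bundle, hence (the base being a product with an interval) admits a bundle isomorphism $E \cong [0,1]\times E_0$ covering the identity; restricting to the endpoints yields a bundle isomorphism $\Phi: E_0 \to E_1$ over $\SL(M)$, i.e.\ a continuous family of vector bundle isomorphisms $\Phi_\gamma: \gamma^*\SD_0 \to \gamma^*\SD_1$. Post-composition with $\Phi_\gamma$ then defines a homeomorphism $\Imm^f(M,\SD_0) \to \Imm^f(M,\SD_1)$, $(\gamma, F) \mapsto (\gamma, \Phi_\gamma \circ F)$, which covers the identity on $\SL(M)$; in particular it is a weak homotopy equivalence.

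Composing the three equivalences — $\Imm^f(M,\SD_0) \simeq \Imm^\regu(M,\SD_0)$ backwards, then $\Imm^f(M,\SD_0) \simeq \Imm^f(M,\SD_1)$, then $\Imm^f(M,\SD_1) \simeq \Imm^\regu(M,\SD_1)$ — gives the claimed weak homotopy equivalence $\Imm^\regu(M,\SD_0) \simeq \Imm^\regu(M,\SD_1)$. The only genuinely non-formal input is Theorem~\ref{thm:immersions}; everything else is soft bundle theory. The step I expect to require the most care in writing up is the continuity (in $\gamma$, in the $C^\infty$-topology) of the trivialisation $\Phi$: one wants the isomorphism of $E$ with $[0,1]\times E_0$ to be smooth in all variables including the loop parameter, which is exactly the kind of parametrised statement that the covering homotopy theorem supplies but whose topological bookkeeping (working with the Fr\'echet manifold $\SL(M)$, or equivalently checking the property on compact families of loops) must be stated carefully. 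A minor point to address explicitly is that a homotopy of distributions through bracket-generating distributions of constant growth is what Assumption~\ref{assumption:constantGrowth} and the statement tacitly require; if one only assumes the endpoints are bracket-generating this is false in general, so the hypothesis should be read as asking for the homotopy to stay within the relevant class, or at least for $\SD_0,\SD_1$ to lie in the same path-component of that class.
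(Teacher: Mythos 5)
Your proof is correct and takes essentially the same route as the paper: Theorem \ref{thm:immersions} at both endpoints, combined with the fact that $\Imm^f(M,\SD_0)$ and $\Imm^f(M,\SD_1)$ are weakly equivalent when $\SD_0$ and $\SD_1$ are homotopic as subbundles (the paper simply cites this as the ``analogous fact''; your bundle-theoretic argument supplies the details). One correction to your write-up: the digression about the intermediate distributions $\SD_t$ is both unnecessary and partly wrong. Bracket-generation (let alone constant growth) is not automatic along a homotopy of subbundles, but it is also never needed for $t\in(0,1)$: the formal space $\Imm^f(M,\SD)$ makes sense for an arbitrary distribution, and Theorem \ref{thm:immersions} is invoked only at $t=0,1$. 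So the hypothesis of the corollary does not need to be strengthened to a homotopy through bracket-generating distributions --- the statement as written is exactly what your second step proves. Finally, that step can be packaged more simply: since $\SD_0$ and $\SD_1$ are homotopic subbundles of $TM$ over the paracompact manifold $M$, there is a bundle isomorphism $\Phi:\SD_0\to\SD_1$ covering $\id_M$, and $(\gamma,F)\mapsto(\gamma,\Phi\circ F)$ is then a homeomorphism of the formal spaces over $\SL(M)$; this sidesteps the covering-homotopy bookkeeping over the Fr\'echet manifold $\SL(M)$ that you flag as delicate.
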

This follows immediately from Theorem \ref{thm:immersions} and the analogous fact about $\Imm^f(M,\SD_0)$ and $\Imm^f(M,\SD_1)$. It follows that all the data about $\SD$ encoded in $\Imm^\regu(M,\SD)$ is purely formal.

\subsection{Embedded horizontal curves}

We now consider the subspace of embedded horizontal loops $\Emb(M,\SD) \subset \Imm(M,\SD)$, together with its scanning map
\[  \Emb(M,\SD) \quad\longrightarrow\quad \Emb^f(M,\SD), \]
into the space of \emph{formal horizontal embeddings}:
\begin{align*}
\Emb^f(M,\SD) := \left\{\left(\gamma,(F_s)_{s\in[0,1]}\right):\right. & \quad\gamma \in \Emb(M), \quad F_s \in \Mon_{\NS^1}(T\NS^1,\gamma^*TM), \\
                                                                     & \quad\left. F_0 = \gamma', \quad F_1 \in \gamma^*\SD \right\},
\end{align*}
i.e. the homotopy pullback of $\Emb(M)$ and $\Imm^f(M,\SD)$ mapping into $\Imm^f(M)$. Reasoning as above leads us to introduce $\Emb^\regu(M,\SD)$, the subspace of regular horizontal embeddings. Our second (and main) result reads:
\begin{theorem} \label{thm:embeddings}
Let $(M,\SD)$ be a bracket-generating distribution with $\dim(M) \geq 4$. Then, the following inclusion is a weak homotopy equivalence:
\[ \Emb^\regu(M,\SD) \quad\longrightarrow\quad \Emb^f(M\SD). \]
\end{theorem}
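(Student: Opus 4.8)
The plan is to bootstrap from the immersion statement (Theorem \ref{thm:immersions}) using a standard ``embeddings from immersions'' strategy, i.e. a doubling/resolution argument at the level of diagrams of spaces. Consider the commutative square whose terms are $\Emb^\regu(M,\SD)$, $\Imm^\regu(M,\SD)$, $\Emb(M)$ and $\Imm(M)$, together with its formal analogue built from $\Emb^f(M,\SD)$, $\Imm^f(M,\SD)$, $\Emb(M)$ and $\Imm(M)$. Since $\Emb^f(M,\SD)$ is \emph{defined} as the homotopy pullback of $\Emb(M)$ and $\Imm^f(M,\SD)$ over $\Imm^f(M)=\Imm(M)$, it suffices to show that the genuine square is also homotopy cartesian. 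Concretely: fix a formal horizontal embedding $(\gamma,(F_s))$; its underlying smooth embedding $\gamma$ together with the formal horizontal immersion $(\gamma,F_1)$ must be realized, \emph{compatibly}, by a genuine horizontal embedding close to $\gamma$. By Theorem \ref{thm:immersions} we may first produce a genuine regular horizontal immersion $\gamma_1$ realizing $(\gamma,F_1)$, homotopic to $\gamma$ through immersions along the path dictated by $(F_s)$. The remaining task is to upgrade $\gamma_1$ to an embedding, and to do the same in families.

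The key steps, in order, would be: (i) set up the two squares and reduce the theorem to ``the genuine square is homotopy cartesian'', which amounts to a relative statement --- realize a horizontal immersion by a horizontal \emph{embedding}, rel a parameter in a disk, whenever the immersion is already isotopic (as a smooth map) to an embedding; (ii) since $\dim M \geq 4$, self-intersections of a curve are codimension $\geq 3$, so a $C^0$-small perturbation of any immersed curve (or compact family of curves, parametrised by $D^k$ with $k$ fixed and $\dim M$ large relative to nothing --- here we only need $\dim M\ge 4$ for the base case and a transversality count for families) removes double points; the issue is to perform this perturbation \emph{within the class of horizontal curves}; (iii) for this one uses the bracket-generating condition exactly as in Chow/Gromov: near a double point one replaces a short arc of $\gamma_1$ by a horizontal ``wiggle'' supported in a small ball, built from iterated brackets of local framing vector fields of $\SD$, whose $C^0$-size is controlled while its image is pushed off the rest of the curve; one must check this wiggle can be chosen to preserve regularity (this is where the infinite-codimension genericity of regular germs, cited in the excerpt, is invoked) and to depend smoothly on auxiliary parameters; (iv) assemble these local modifications into a global homotopy, and then run the same argument parametrically over $D^k$ and rel $\partial D^k$ to get weak homotopy equivalence rather than just a $\pi_0$/$\pi_1$ statement; (v) conclude by the homotopy-pullback characterization of $\Emb^f$.

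The main obstacle --- and the reason this is ``the main result'' --- is step (iii)--(iv): making the embedded-resolution procedure \emph{horizontal, regular, and parametric} all at once. Removing double points of a smooth curve is soft, but here every intermediate curve in the homotopy must stay horizontal, and the natural $C^0$-small horizontal perturbations (short bracket-generated loops) are precisely the kind of moves that can create singular or even rigid sub-arcs; so one needs a careful model for a ``horizontal finger move'' together with a genericity argument guaranteeing it can be kept regular. In families one additionally has to organize these finger moves compatibly over the parameter space, which typically forces an inductive scheme over a triangulation of $D^k$ with the moves chosen coherently on overlaps, and care that the $C^0$-control needed to avoid \emph{new} intersections is uniform. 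I expect the bulk of the paper's technical machinery (local normal forms for $\SD$ along a horizontal curve, a quantitative version of Chow's construction, and a parametric general-position lemma for horizontal curves) to be deployed exactly here; once that is in place, steps (i), (ii) and (v) are formal.
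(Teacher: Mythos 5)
Your route is genuinely different from the paper's (the paper never passes through ``immersion first, then remove double points'': it reduces to $\varepsilon$-horizontal \emph{embeddings} by convex integration and then horizontalises by projecting/lifting in graphical models, never leaving the class of embedded curves), but as written your plan has a real gap at steps (iii)--(iv). The problem is not removing double points --- generic curves in $\dim M\geq 3$ are already embedded --- but the claim that this can be done by a horizontal ``wiggle'' \emph{supported in a small ball}. A compactly supported horizontal deformation rel endpoints of a sub-arc need not exist at all: regularity of the loop $\gamma_1$ does not imply regularity of its sub-arcs, and a sub-arc may be singular or even rigid, in which case it admits no such deformation. Even when the sub-arc is regular, a local horizontal modification generically drifts the far endpoint in the directions transverse to $\SD$ (this is exactly what horizontality couples to), so the modified arc does not close up against the rest of the curve; one needs a mechanism that corrects this drift globally while staying embedded, $C^0$-small and parametric. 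That mechanism is precisely the paper's main technical content: tangles and controllers (Sections \ref{sec:tangles} and \ref{sec:controllers}), inserted by an embedded horizontal birth homotopy (Proposition \ref{prop:insertTangle}), together with the inverse-function-theorem argument on the endpoint map (Lemma \ref{lem:errorControllers}) and the use of regularity near $\partial K$ as controllability. Invoking the infinite-codimension genericity of regular germs does not supply this: genericity is a statement about individual germs, not a parametric, relative-to-endpoints integrability statement; the latter is the microflexibility/local integrability of \cite{PS,Bho}, which the paper deliberately avoids using (Remark \ref{rem:whyNotTriangulate}), and even granted it, the endpoint-correction and embeddedness bookkeeping would still have to be built.

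Two smaller points. First, your localisation of where $\dim M\geq 4$ enters is off: it is not a transversality count for double points (that needs only $\dim M\geq 3$), but is used to make the endpoint-correcting models embedded upon lifting --- in the rank-$2$ case the two lifted branches of a crossing are separated using a further bracket direction via an area argument (Proposition \ref{prop:introtanglesbase}), which is exactly where the contrast with the $3$-dimensional contact case appears. Second, your reduction (i)+(v) via the homotopy-cartesian square is formally fine (given Hirsch--Smale for $\Imm(M)\simeq\Imm^f(M)$), and regularity of the final curves in the paper is not obtained by genericity either, but by construction: the inserted controllers themselves provide a full rank of variations of the endpoint.
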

Note that the dimensional assumption is sharp, since the result is known to be false in $3$-dimensional Contact Topology \cite{Ben}.

Theorem \ref{thm:embeddings} was already known in the Engel case \cite{CP} and in the higher-dimensional contact setting \cite[p. 128]{EM}. Our arguments differ considerably from both. The proof in \cite{EM} is contact-theoretical in nature, relying on isocontact immersions. The one in \cite{CP} uses the so-called \emph{Geiges projection}, which is particular to the Engel case. The methods in the present paper use instead local charts in which the distribution can be understood as a connection; see Subsection \ref{sec:graphicalModels}. This is reminiscent of the Lagrangian projection in Contact Topology and closely related to methods used in the Geometric Control Theory \cite{Gro96,Mon} (with the added difficulty of tracking the embedding condition).

Much like earlier:
\begin{corollary}
Fix a manifold $M$ with $\dim(M) \geq 4$. Let $\SD_0$ and $\SD_1$ be bracket-generating distributions on $M$, homotopic as subbundles of $TM$. Then, the spaces $\Emb^\regu(M,\SD_0)$ and $\Emb^\regu(M,\SD_1)$ are weakly homotopy equivalent.
\end{corollary}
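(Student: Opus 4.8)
The plan is to deduce this corollary formally from Theorem~\ref{thm:embeddings} together with the corresponding statement at the formal level. By Theorem~\ref{thm:embeddings}, the scanning maps
\[ \Emb^\regu(M,\SD_i) \quad\longrightarrow\quad \Emb^f(M,\SD_i), \qquad i = 0,1, \]
are weak homotopy equivalences (here we use $\dim(M)\geq 4$). So it suffices to produce a weak homotopy equivalence $\Emb^f(M,\SD_0) \simeq \Emb^f(M,\SD_1)$ induced by the homotopy of subbundles.

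To this end, recall that $\Emb^f(M,\SD)$ is defined as the homotopy pullback of $\Emb(M)$ and $\Imm^f(M,\SD)$ over $\Imm^f(M)$. The space $\Emb(M)$ and the base $\Imm^f(M)$ do not depend on $\SD$, so the only $\SD$-dependence sits in $\Imm^f(M,\SD)$ and in the map $\Imm^f(M,\SD)\to\Imm^f(M)$. Since homotopy pullbacks are invariant under weak equivalences of the diagrams involved, it is enough to show that a homotopy $(\SD_t)_{t\in[0,1]}$ of subbundles of $TM$ induces a weak equivalence $\Imm^f(M,\SD_0)\simeq\Imm^f(M,\SD_1)$ commuting (up to homotopy) with the maps to $\Imm^f(M)$. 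First I would pass from the homotopy $(\SD_t)$ to an ambient isotopy: a path of bundle automorphisms $\Phi_t$ of $TM$ covering $\id_M$ with $\Phi_0 = \id$ and $\Phi_1(\SD_0) = \SD_1$, obtained by choosing a metric and integrating the infinitesimal data of the Grassmannian path (this is the standard covering-homotopy argument for fibre bundles, applied to the $q$-plane Grassmann bundle). Then $\Phi_1$ sends $\Mon(T\NS^1,\gamma^*\SD_0)$ to $\Mon(T\NS^1,\gamma^*\SD_1)$ fibrewise, hence defines a homeomorphism $\Imm^f(M,\SD_0)\to\Imm^f(M,\SD_1)$, $(\gamma,F)\mapsto(\gamma,\Phi_1\circ F)$; and the path $\Phi_t$ provides a homotopy, through maps into $\Imm^f(M)$, between the composite $\Imm^f(M,\SD_0)\to\Imm^f(M,\SD_1)\to\Imm^f(M)$ and the original inclusion $\Imm^f(M,\SD_0)\to\Imm^f(M)$.

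Feeding this compatible weak equivalence of diagrams into the homotopy pullback yields a weak homotopy equivalence $\Emb^f(M,\SD_0)\simeq\Emb^f(M,\SD_1)$, and chaining with the two instances of Theorem~\ref{thm:embeddings} gives the desired zig-zag
\[ \Emb^\regu(M,\SD_0) \;\xrightarrow{\ \sim\ }\; \Emb^f(M,\SD_0) \;\xrightarrow{\ \sim\ }\; \Emb^f(M,\SD_1) \;\xleftarrow{\ \sim\ }\; \Emb^\regu(M,\SD_1), \]
proving the corollary.

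The only genuinely non-formal point is that one must check the constructed map $\Emb^f(M,\SD_0)\to\Emb^f(M,\SD_1)$ does not interact badly with the regularity condition used to define $\Emb^\regu$: but regularity is a condition only on honest horizontal curves, not on formal data, so it plays no role on the $\Emb^f$ side, and the corollary's statement compares only the $\Emb^\regu$ spaces, which are related to the $\Emb^f$ spaces purely through Theorem~\ref{thm:embeddings}. Thus I expect no real obstacle; the main thing to be careful about is phrasing the homotopy-pullback invariance correctly (e.g. replacing maps by fibrations, or invoking the gluing lemma for homotopy pullbacks) so that the induced equivalence is legitimate and natural enough to commute with scanning. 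A completely analogous argument, using Theorem~\ref{thm:immersions} in place of Theorem~\ref{thm:embeddings}, gives the immersed corollary stated earlier.
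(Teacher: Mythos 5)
Your proposal is correct and follows essentially the same route as the paper: the paper deduces the corollary immediately from Theorem \ref{thm:embeddings} together with the fact that $\Emb^f(M,\SD_0)$ and $\Emb^f(M,\SD_1)$ are weakly homotopy equivalent, which is exactly the equivalence you construct. Your extra detail (lifting the Grassmannian homotopy to bundle automorphisms $\Phi_t$ and invoking homotopy-pullback invariance) is just the standard argument the paper leaves implicit, so there is nothing to add.
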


\subsection{Horizontal loops}

Now we go back to the problem we started with:
\begin{theorem} \label{thm:Ge}
Let $(M,\SD)$ be a manifold endowed with a bracket-generating distribution. Then, the following inclusion is a weak homotopy equivalence:
\[ \SL(M,\SD) \quad\longrightarrow\quad \SL(M). \]
\end{theorem}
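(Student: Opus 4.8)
The plan is to deduce Theorem \ref{thm:Ge} from Theorem \ref{thm:immersions} by a scanning-and-comparison argument, reducing the statement about all horizontal loops to the one about regular horizontal immersions. The key observation is that $\SL(M)$ deformation retracts onto $\Imm(M)$ (smooth loops may be $C^0$-small-perturbed to immersions, and the space of such perturbations is contractible), so it suffices to show that the inclusion $\SL(M,\SD) \to \SL(M)$ becomes a weak homotopy equivalence after this identification, i.e. that $\Imm^\regu(M,\SD) \to \Imm(M)$ is one. Compose with the scanning map: by Theorem \ref{thm:immersions} the map $\Imm^\regu(M,\SD) \to \Imm^f(M,\SD)$ is a weak equivalence, so we are left to prove that the forgetful map $\Imm^f(M,\SD) \to \Imm(M)$, $(\gamma, F) \mapsto \gamma$, is a weak homotopy equivalence.

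First I would analyse this forgetful map as a map of fibrations over $\SL(M)$ (after restricting the target to $\Imm(M)$, which is an open dense subset): its fibre over a loop $\gamma$ is the space $\Mon(T\NS^1, \gamma^*\SD)$ of monomorphisms from a line bundle into $\gamma^*\SD$. Since $T\NS^1$ is trivial, this fibre is the space of nowhere-zero sections of the rank-$q$ bundle $\gamma^*\SD$ over $\NS^1$. Such a bundle is trivial (every bundle over $\NS^1$ of rank $\geq 1$ is, up to the $w_1$-twist, and in any case its sphere bundle is what matters), and the space of nowhere-zero sections of a trivial $\R^q$-bundle over $\NS^1$ is $\SL(\R^q \setminus 0) \simeq \SL(S^{q-1})$. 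The crucial input now is that $\SD$ being bracket-generating forces $q \geq 2$ (a line field is never bracket-generating on a manifold of dimension $\geq 2$, and the $\dim M = 1$ case is trivial), so $S^{q-1}$ is connected; when $q \geq 3$ it is moreover simply connected. I would then argue that the fibre is connected — and this is the heart of the matter — using that any nowhere-zero section can be homotoped to a constant one, so that $\pi_0$ of the fibre is trivial, whence the forgetful map is a $\pi_0$-bijection, matching Chow's theorem.

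For the higher homotopy groups I would either (i) compare the two Serre fibrations $\Imm^f(M,\SD) \to \SL(M)$ and $\mathrm{id}: \SL(M) \to \SL(M)$ via the long exact sequences and show the connecting maps kill the fibre's homotopy, or (ii) more cleanly, observe that when $q \geq 3$ the fibre $\SL(S^{q-1})$ need not be weakly contractible, so (i) as stated is too naive — instead the correct statement is that the \emph{inclusion-induced} map must be analysed, and one shows directly that $\Imm^f(M,\SD) \to \SL(M)$ admits a section up to homotopy and has weakly contractible homotopy fibre by a separate argument. The honest approach: pick an auxiliary Riemannian metric and a splitting $TM = \SD \oplus \SD^\perp$; then a formal horizontal immersion over $\gamma$ is the data of $\gamma$ plus a nowhere-zero section of $\gamma^*\SD$, and by a fibrewise linear homotopy (scaling the $\SD^\perp$-component of $\gamma'$ to zero is \emph{not} allowed since $\gamma'$ need not be generic, but) one uses that the space of formal horizontal data over a \emph{fixed} $\gamma$ deformation retracts onto a point once we allow $\gamma$ itself to vary within its homotopy class — i.e. the total space $\Imm^f(M,\SD)$ retracts onto $\Imm^f_{\mathrm{hor}}$ sitting over genuinely horizontal immersions, and there the projection is a homotopy equivalence.

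The main obstacle, and the step I expect to require the most care, is precisely this last point: the homotopy fibre of $\Imm^f(M,\SD) \to \SL(M)$ is $\SL(S^{q-1})$, which is \emph{not} weakly contractible for $q \geq 3$, so the forgetful map is genuinely not a weak equivalence on the nose. The resolution must be that $\SL(M,\SD) \to \SL(M)$ factors as $\SL(M,\SD) \to \Imm^f(M,\SD) \to \SL(M)$ only up to the scanning map, and one should instead run the argument the other way: show $\SL(M,\SD) \to \SL(M)$ directly by building a microflexibility/parametric h-principle for horizontal (non-embedded, non-immersed) loops, using Chow's theorem parametrically — i.e. the genuinely new content is a parametric version of the smooth Chow theorem, which one obtains by the same local connection models of Subsection \ref{sec:graphicalModels} used to prove Theorems \ref{thm:immersions} and \ref{thm:embeddings}, now without any transversality constraint on the curve, together with the observation that the space of horizontal perturbations supported in a Darboux-type chart is contractible. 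Concretely, I would cover a family $f: S^k \to \SL(M)$ by finitely many charts, horizontalize in each chart by a fibre-preserving homotopy that is the identity where the loop is already horizontal, and patch using a partition of unity in the parameter and loop variables, checking at each stage that the relevant spaces of local modifications are contractible so no obstruction accrues; the fact that $\dim M \geq 2$ and $\SD$ has corank $\geq 1$ but is bracket-generating is what makes each local horizontalization possible and its space of choices contractible.
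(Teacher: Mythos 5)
Your first reduction is unsound: $\SL(M)$ does not deformation retract onto $\Imm(M)$ (already for $M=\R^2$ the free loop space is contractible while $\pi_0\,\Imm(\NS^1,\R^2)=\Z$ by Whitney--Graustein), and, as you yourself notice midway, the forgetful map $\Imm^f(M,\SD)\to\SL(M)$ has homotopy fibre the space of nowhere-vanishing sections of $\gamma^*\SD$, i.e.\ a (possibly twisted) free loop space of $S^{q-1}$, which is never weakly contractible. So Theorem \ref{thm:Ge} cannot be deduced from Theorem \ref{thm:immersions} by composing with that forgetful map; the two statements are compatible only because the inclusion $\Imm^\regu(M,\SD)\hookrightarrow\SL(M,\SD)$ is itself very far from a weak equivalence.

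Your fallback, a direct ``parametric Chow'' argument that horizontalises chart by chart and patches with partitions of unity, names the right setting (graphical models) but omits the step where all the difficulty sits. Horizontalisation in a graphical model is done by projecting to the base and lifting (Lemma \ref{lem:horizontalisationGraphical}); this inevitably displaces the terminal endpoint of the arc, so the modified piece no longer matches the rest of the loop, and one cannot patch with a partition of unity because horizontality is a closed relation. In the paper this endpoint error is corrected by inserting controllers (Proposition \ref{prop:insertionControllers}), which requires the curves to be embedded or immersed near the insertion locus, or by exploiting regularity near $\partial K$ --- and a general smooth loop is neither immersed nor regular; rigidity shows that compactly supported horizontal variations need not exist in the required abundance, so the contractibility of ``local horizontal perturbations'' cannot simply be asserted. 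The missing idea, which is the actual content of the paper's proof of Theorem \ref{thm:Ge} (Subsection \ref{ssec:hPrincipleHorizontalImmersions}), is the stopping trick: reparametrise each horizontal loop so that it is stationary on a small subinterval; this makes it regular and allows one to graft an immersed horizontal arc of the form $\nu\bullet\bar\nu$ there, after which controllers can be inserted and the scheme of Proposition \ref{prop:relativeHorizontalEmbeddings} goes through. Without some device playing this role your sketch does not close.
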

This also holds for the (based) loop space $\Omega_p(M)$ and its subspace of horizontal loops $\Omega_p(M,\SD)$, for all $p \in M$. Observe that the statement uses no regularity assumptions. The reason is that singularity issues can be bypassed thanks to what we call the \emph{stopping-trick} (namely, one can slow the parametrisation of a horizontal curve down to zero locally in order to guarantee that enough compactly-supported variations exist). See Subsection \ref{ssec:hPrincipleHorizontalImmersions}.

\subsection{Immersed transverse curves}

The other geometrically interesting notion for curves in bracket--generating distributions is that of transversality. We define $\Immt(M,\SD)$ to be the space of immersed loops that are everywhere transverse to $\SD$. Like in the horizontal setting, one can introduce formal transverse immersions
\begin{align*} 
\Immtf(M,\SD) = \left\{(\gamma,F):\right. & \quad\gamma \in \SL(M), \quad F \in \Mon_{\NS^1}(T\NS^1,\gamma^*(TM/SD)) \},
\end{align*}
and see that there is a scanning map 
\[ \Immt(M,\SD) \quad\longrightarrow\quad \Immtf(M,\SD). \]
Being transverse is an open condition and therefore rigidity/singularity is not a phenomenon we encounter. We prove:
\begin{theorem} \label{thm:TransverseImmersions}
Let $(M,\SD)$ be a manifold endowed with a bracket-generating distribution. Then the inclusion
\[ \Immt(M,\SD) \quad\longrightarrow\quad \Immtf(M,\SD) \]
is a weak homotopy equivalence. 
\end{theorem}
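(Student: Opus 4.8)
The plan is to prove Theorem \ref{thm:TransverseImmersions} by a Gromov-style open-and-invariant-under-isotopy argument, reducing the global statement to a local one via a microflexibility/sheaf-theoretic formalism, and then establishing the local statement by an explicit construction using the bracket-generating hypothesis. First I would observe that transversality to $\SD$ is an open, $\Diff$-invariant differential relation of order $1$ on maps $\NS^1 \to M$; thus the parametric $h$-principle we want is amenable to the standard machinery for open relations, except that openness alone does not suffice since $\Immt(M,\SD)$ is not governed by an open \emph{Diff-invariant} relation on an \emph{open} manifold in the naive sense — the loop has no boundary to push to infinity. The right framework is therefore to work with the sheaf (on $\NS^1$) of transverse immersions of small arcs and show it is \emph{microflexible} and that its formal counterpart is \emph{flexible}, then invoke a Weinstein-type or Eliashberg--Mishachev-type argument to integrate local solutions; alternatively, and more in the spirit of this paper, one decomposes $\NS^1$ into finitely many intervals and works chart by chart using the graphical models of Subsection \ref{sec:graphicalModels}.

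The key steps, in order, would be: (i) \emph{Local model.} Fix $p \in M$ and a bracket-generating germ of $\SD$; using a graphical chart (a Darboux-type normal form adapted to the growth vector), produce a vector field $X$ transverse to $\SD$ such that its flow lines realize, up to the relevant jet, any prescribed formal transverse datum $(\gamma, F)$ over a small arc — here bracket-generating is used to guarantee that $TM/\SD$ is "filled out" by iterated brackets, so one has enough freedom to steer the curve in the transverse directions while keeping $\dot\gamma \notin \SD$. (ii) \emph{Parametric and relative versions.} Upgrade the local construction to families indexed by a compact parameter space (a disk $D^k$) and relative to a closed subset where a genuine solution already exists, by choosing the auxiliary data continuously; this is where one must be careful but the openness of transversality makes the perturbations harmless. (iii) \emph{Globalization.} Cover $\NS^1$ by arcs, order them, and inductively modify a formal solution into a genuine one arc by arc, using step (ii) to match on overlaps; because transversality is $C^0$-open in the $1$-jet, the successive modifications can be made $C^1$-small away from the arc being treated, so no earlier work is destroyed. (iv) Conclude that the scanning map $\Immt(M,\SD) \to \Immtf(M,\SD)$ induces a bijection on $\pi_0$ and, running the same argument with spheres of parameters and relative boundary conditions, isomorphisms on all $\pi_k$; hence it is a weak homotopy equivalence. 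The remark in the excerpt that "rigidity/singularity is not a phenomenon we encounter" is exactly what lets step (i) go through without the codimension estimates needed in the horizontal case.

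The main obstacle I anticipate is step (i): constructing, in a single chart and with good dependence on parameters, a genuine transverse curve realizing an arbitrary prescribed \emph{formal} transverse datum. The subtlety is that while $TM/\SD$ is spanned by brackets, a curve can only move in those bracket directions by "winding" within $\SD$, and one must arrange this winding so that the velocity never becomes horizontal and so that, in the parametric setting, the winding depends continuously (indeed smoothly) on the parameter and interpolates correctly with a pre-existing solution near the ends of the arc. Concretely, one wants a controllability statement: the endpoint map of transverse curves issuing from a point, with prescribed $1$-jet behavior, is a submersion onto the relevant formal data — and one wants this with estimates uniform in compact families. I expect this to follow from an explicit ansatz in the graphical model (essentially a transverse analogue of the horizontal "staircase"/loop-insertion constructions), but verifying the parametric and relative refinements cleanly is the crux; everything downstream (steps (ii)--(iv)) is then a routine, if lengthy, globalization that mirrors the immersed horizontal case, Theorem \ref{thm:immersions}, with the codimension-of-singularities input replaced by the openness of transversality.
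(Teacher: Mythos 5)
Your plan is workable and belongs to the same circle of ideas as the paper---graphical models, bracket-generated winding for endpoint control, relative and parametric globalisation over a fine subdivision---but it is not the route the paper actually takes, and the differences are instructive. The paper does not realise formal data directly by transverse curves in a chart: it first passes to $\varepsilon$-transverse curves by convex integration (the relation $\measuredangle(\gamma',\SD)>-\varepsilon$ is open \emph{and ample}, unlike strict transversality in corank one, whose principal components are half-spaces), which settles the formal-to-geometric step with $C^0$-control (Lemma \ref{lem:eTransverselImm}); and it never proves the ``transverse controllability'' statement you single out as the crux. Instead, after jiggling and transversalising along skeleta (Lemmas \ref{lem:transversalisationSkeleton} and \ref{lem:transversalisationFormalSkeleton}), it flattens the curves near a slice so that they become horizontal while staying almost transverse (Corollary \ref{cor:horizontalisationSkeleton}), inserts the \emph{horizontal} controllers of Sections \ref{sec:tangles}--\ref{sec:controllers}, uses their estimated displacement to match endpoints, and only then tilts back to strict transversality by a $C^1$-small perturbation (Lemma \ref{lem:transversalisationAT}), absorbing the endpoint drift of that final tilt in a transverse buffer segment near the end of the interval. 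Your variant---keeping strict transversality throughout the winding---should go through, but it forces you to redo the tangle/controller constructions with a positive tilt and to order the choices correctly (fix the winding first, then tilt by an amount small compared to the displacement the winding produces), and in the parametric, relative-to-endpoints setting this quantitative bookkeeping (the paper's chain of constants $\delta \ll S \ll 1/N \ll l \ll r_1$, plus the commutator estimates of Appendix \ref{Appendix}) is precisely where the content lies; your step (i) defers all of it. Note also that your first suggested framework (microflexibility of the sheaf of transverse arcs plus a Gromov-type argument) is essentially how this theorem was already proved in \cite{PS,Bho}; the paper's purpose is an argument independent of that machinery, whose controller technology also yields the embedded statement, the immersed case then following by simply dropping the self-intersection control (which is why it holds in dimension $3$ as well).
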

This result is not new. The $h$-principle for smooth immersions (of any dimension!) transverse to analytic bracket-generating distributions was proven in \cite{PS}. The analyticity assumption was later dropped by A. Bhowmick in \cite{Bho}, using Nash-Moser methods. Both articles rely on an argument due to Gromov relating the flexibility of transverse maps to the microflexibility of (micro)regular horizontal curves. The approach in this paper is independent.

Once again, a corollary is that the weak homotopy type of $\Immt(M,\SD)$ depends on $\SD$ only formally.

\subsection{Embedded transverse curves}

Lastly, we address embedded transverse loops $\Embt(M,\SD)$ and their scanning map into the analogous formal space:
\begin{align*}
\Embtf(M,\SD) = \left\{\left(\gamma,(F_s)_{s\in[0,1]}\right):\right. & \quad\gamma \in \Emb(M), \quad F_s \in \Mon_{\NS^1}(T\NS^1,\gamma^*TM), \\
                                                                     & \quad\left. F_0 = \gamma', \quad F_1: T\NS^1\to \gamma^*TM \to \gamma^*(TM/\SD)\text{ is injective }\right\}.
\end{align*}

Our fourth result reads:
\begin{theorem} \label{thm:TransverseEmbeddings}
Let $(M,\SD)$ be a bracket-generating distribution with $\dim(M) \geq 4$. Then the inclusion
\[ \Embt(M,\SD) \quad\longrightarrow\quad \Embt^f(M,\SD) \]
is a weak homotopy equivalence. In particular, $\Embt(M,\SD)$ depends only on the formal class of $\SD$.
\end{theorem}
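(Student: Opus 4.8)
My plan is to deduce the embedded transverse statement from the embedded horizontal statement (Theorem \ref{thm:embeddings}) by a duality/annihilator trick, rather than redoing the entire microflexibility argument. The key observation is that a curve $\gamma$ transverse to $\SD$ is, pointwise, complementary to the hyperplanes (or corank-$k$ planes) of $\SD$ near $\gamma$; and transversality is an open, stable condition, so no singularity phenomena intervene. Concretely, I would first reduce to the corank-one case by a standard trick: given a bracket-generating $\SD$ of corank $k$, one can perturb and restrict attention to an auxiliary corank-one bracket-generating distribution $\SD' \supseteq \SD$ such that being transverse to $\SD'$ is an open neighborhood condition refining transversality to $\SD$; more carefully, one should instead argue directly with $\SD$ using the "transverse = horizontal for a complementary distribution" idea only heuristically, and build the real argument on the scanning/parametric h-principle machinery developed for Theorems \ref{thm:immersions} and \ref{thm:embeddings}.

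The core of the argument should run parallel to the horizontal case. First, establish the immersed version Theorem \ref{thm:TransverseImmersions} is available (it is stated as known), so the scanning map $\Immt(M,\SD) \to \Immtf(M,\SD)$ is a weak homotopy equivalence. Second, observe that $\Embt^f(M,\SD)$ is, by construction, the homotopy pullback of $\Emb(M)$ and $\Immtf(M,\SD)$ over $\Imm^f(M)$, exactly as $\Emb^f(M,\SD)$ is the homotopy pullback of $\Emb(M)$ and $\Imm^f(M,\SD)$. Thus it suffices to show that the square
\[
\begin{CD}
\Embt(M,\SD) @>>> \Immt(M,\SD) \\
@VVV @VVV \\
\Emb(M) @>>> \Imm(M)
\end{CD}
\]
is homotopy cartesian, i.e. that passing from immersed to embedded transverse curves is governed purely by the smooth (non-geometric) embedding obstruction. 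This is where the graphical/connection models of Subsection \ref{sec:graphicalModels} enter: in a chart adapted to $\SD$, a transverse curve is a genuine transverse section whose projection to the base can be prescribed freely, and the "height coordinates" can be manipulated by a $C^0$-small isotopy to remove self-intersections once the base projection is embedded, provided $\dim M \geq 4$ so that there is room to push strands apart in the normal directions to $\SD$ while staying transverse. The dimensional hypothesis is sharp for the same reason as in Theorem \ref{thm:embeddings}: in the $3$-dimensional contact case this disentangling fails and the transverse knot classification is genuinely non-formal \cite{Ben}.

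The main obstacle, as in the horizontal case, is maintaining the embedding condition throughout a parametric family: given a family of formal transverse embeddings over a sphere $S^m$, one must produce an actual family of transverse embeddings realizing it, and the disentangling isotopies must be performed coherently in families. I would handle this by the same inductive scheme over a handle/simplex decomposition of the parameter space used to prove Theorem \ref{thm:embeddings}, replacing "horizontal" by "transverse" everywhere: on each cell, work in a graphical model, first achieve the transverse immersion condition parametrically (using Theorem \ref{thm:TransverseImmersions} relatively), then use the extra normal directions to resolve crossings by a $C^0$-small fiberwise isotopy, and finally glue using the openness of transversality. One subtlety worth flagging: unlike horizontal curves, transverse curves cannot be "stopped" (the velocity must stay off $\SD$), so the stopping-trick of Subsection \ref{ssec:hPrincipleHorizontalImmersions} is unavailable; however, it is also unnecessary, precisely because transversality is open and microflexibility is automatic, which is what makes this theorem strictly easier than Theorem \ref{thm:embeddings} once the horizontal machinery is in place.
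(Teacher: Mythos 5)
Your outline identifies the right reduction (pass to the immersed statement plus a comparison with smooth embeddings, work cell by cell in graphical models), but it is missing the one step that actually carries the geometric content of the theorem: endpoint control in the relative statement. The proof must establish an analogue of Proposition \ref{prop:relativeTransverseEmbeddings}, i.e. a deformation to genuinely transverse curves that is relative both in the parameter and in the domain. Making an $\varepsilon$-transverse curve transverse by adding positive slope in an ODE model necessarily displaces its final endpoint (this is exactly why Lemma \ref{lem:transversalisationAT} is not relative for curves that are horizontal somewhere), and since an almost transverse curve is not allowed to dip back down, this discrepancy cannot be reabsorbed by slope adjustments alone. The paper compensates by flattening the family near a slice (Corollary \ref{cor:horizontalisationSkeleton}) and inserting a controller there (Lemmas \ref{lem:insertionControllers} and \ref{lem:errorControllers}), i.e. by importing the tangle machinery of Section \ref{sec:tangles}; this is the only quantitative use of the bracket-generating hypothesis and the only place where $\dim M \geq 4$ really enters (embeddedness of tangles/controllers). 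Your claim that the transverse case is ``automatic'' because transversality is open and microflexible has no such mechanism behind it; note that in the $3$-dimensional contact case transversality is just as open, yet the statement fails, so openness cannot by itself yield the relative parametric statement.

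The mechanism you propose for embeddedness is also not viable. Resolving self-intersections by a $C^0$-small ``push in the normal directions to $\SD$'' fails twice: in corank one the normal direction is a single line, and, more fundamentally, general position cannot remove double points in parametric families of dimension $\geq \dim M - 2$, which is precisely the regime needed for a weak homotopy equivalence on all homotopy groups. The paper never removes crossings generically: curves are kept graphical over the base of a graphical model (embeddedness is then inherited from the base projection or from the vertical-coordinate and area estimates behind the tangles), and the locus $\Sigma$ where the velocity becomes vertical, so that the curve is not graphical, is handled by a dedicated triangulation step using Thom transversality on $K \times I$ — an issue your sketch does not address at all. Finally, the proposed reduction to corank one via an auxiliary corank-one distribution $\SD' \supset \SD$ is both unnecessary and unjustified (transversality to $\SD'$ is a strictly stronger condition, and the two spaces of transverse embeddings are not obviously equivalent); the paper simply observes that corank $\geq 2$ is classical \cite[4.6.2]{EM} and proves the corank-one case directly.
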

The dimension condition is sharp, since transverse embeddings into 3-dimensional contact manifolds do not satisfy a complete $h$-principle. Indeed, there are examples of transverse knots that have the same formal invariants but are not transversely isotopic \cite{BM}. Furthermore, Theorem \ref{thm:TransverseEmbeddings} is only interesting in corank $1$. Indeed, it is a classic result \cite[4.6.2]{EM} that closed $n$-dimensional submanifolds transverse to corank $k$ distributions abide by all forms of the $h-$principle if $k>n$.

\subsection{Structure of the paper}

In Section \ref{sec:preliminaries} we recall some standard definitions from the theory of tangent distributions. Basics of $h$-principle and some preliminary results, using the theory of $\varepsilon$-horizontal embeddings, are presented in Section \ref{sec:epsilon}.

In Section \ref{sec:graphicalModels} we introduce the notion of \emph{graphical model}. These are local descriptions in which the distribution is seen as a connection. Many of our arguments take place in such a local setting. Section \ref{sec:microflexibility} contains a series of technical lemmas (that roughly speaking correspond to the ``reduction step'' in our $h$-principles) about manipulating families of curves.

Sections \ref{sec:tangles} and \ref{sec:controllers} contain the main technical ingredients behind the proof, the notions of \emph{tangle} and \emph{controller}. These are models for horizontal curves (or rather, models for their projections to the base of a graphical model) meant to be used to produce a displacement \emph{transverse} to the distribution. They play a role analogous to the \emph{stabilisation} in Contact Topology, except for the fact that they can be introduced through homotopies of embedded horizontal curves. The existence of such a homotopy uses strongly the fact that the ambient dimension is at least 4 (and it is still rather technical to implement).

The $h$-principles for horizontal curves are proven in Section \ref{sec:hPrincipleHorizontal}. The $h$-principles for transverse curves in Section \ref{sec:hPrincipioTransverse}. Along the way we state and prove the appropriate relative versions. We will put all our emphasis on the embedding cases; the other statements (immersions and simply smooth curves) follow from the same arguments with considerable simplifications.

Appendix \ref{Appendix} contains various technical results on commutators of vector fields. These are used often throughout the paper.

\textbf{Acknowledgments:} The authors are thankful to E. Fern\'andez, M. Crainic, F. Presas, and L. Toussaint for their interest in this project. During the development of this work the first author was supported by the ``Programa Predoctoral de Formación de Personal Investigador No Doctor'' scheme funded by the Basque department of education (``Departamento de Educación del Gobierno Vasco''). The second author was funded by the NWO grant 016.Veni.192.013; this grant also funded the visits of the first author to Utrecht.

\section{Preliminaries on distributions} \label{sec:preliminaries}

In this section we recall some of the basic theory of distributions, including the notions of singularity and rigidity for horizontal curves (Subsection \ref{ssec:regularity}). For further details we refer the reader to \cite{GV,Mon}.

\subsection{Differential systems} \label{ssec:diffSystems}

The following definition generalises the notion of distribution:
\begin{definition}
Let $M$ be a smooth manifold. A \textbf{differential system} $\SD$ is a $C^\infty$-submodule of the space of smooth vector fields.
\end{definition}
Given a smooth distribution on $M$, we can construct a differential system by taking its smooth sections. Conversely, a differential system $\SD$ arises from a distribution if the dimension of its pointwise span $\SD(p) \subset T_pM$ is independent of $p \in M$. In this manner, we think of differential systems as singular distributions; we will often abuse notation and use $\SD$ to denote both the distribution and its sections.

\begin{remark}
When $M$ is not compact, it is convenient to impose that $\SD$ satisfies the sheaf condition. The reason is that there may be differential systems that only differ from one another due to their behaviour at infinity; imposing the sheaf condition removes this redundancy. These subtleties will not be relevant for us.
\end{remark}

\subsubsection{Lie flag} \label{sssec:LieFlag}

Let us introduce some terminology. We say that the string $a$, depending on the variable $a$, is a \textbf{formal bracket expression} of length $1$. Similarly, we say that the string $[a_1,a_2]$, depending on the variables $a_1$ and $a_2$, is a formal bracket expression of length $2$. Inductively, we define a formal bracket expression of length $n$ to be a string of the form $[A(a_1,\cdots,a_j),B(a_{j+1},a_n)]$ with $0 < j < n$ and $A$ and $B$ formal bracket expressions of lengths $j$ and $n-j$, respectively.

Given a differential system $\SD$, we define its \textbf{Lie flag} as the sequence of differential systems
\[ \SD_1 \subset \SD_2 \subset \SD_3 \subset \cdots \]
in which $\SD_i$ is the $C^\infty$-span of vector fields of the form $A(v_1,\cdots,v_j)$, $j \leq i$, where the $v_k$ are vector fields in $\SD$ and $A$ is a formal bracket expression of length $j$. As such, $\SD_1 = \SD$.

\subsubsection{Growth vector} \label{sssec:growthVector}

Given a point $p \in M$, one can use the Lie flag to produce a flag of vector spaces:
\[ \SD_1(p) \subset \SD_2(p) \subset \SD_3(p) \subset \cdots  \]
Here $\SD_i(p)$ denotes the span of $\SD_i$ at $p$. This yields a non-decreasing sequence of integers
\[ (\dim(\SD_1(p)), \dim(\SD_2(p)), \dim(\SD_3(p)), \cdots) \]
which in general depends on $p$. This sequence is called the \textbf{growth vector} of $\SD$ at $p$.

If the growth vector does not depend on the point, we will say that the differential system $\SD$ is of \textbf{constant growth}. If this is the case, all the differential systems in the Lie flag arise as spaces of sections of distributions. Some examples of distributions of constant growth are (regular) foliations, contact structures, and Engel structures.

The following notion is central to us:
\begin{definition}
A differential system $(M,\SD)$ is \textbf{bracket-generating} if, for every $p \in M$ and every $v \in T_pM$, there is an integer $m$ such that $v \in \SD_m(p)$. This integer is called the \textbf{step}.
\end{definition}
As stated in Assumption \ref{assumption:constantGrowth}: we henceforth focus on bracket-generating distributions of constant growth.

\subsubsection{Curvature}

Distributions with the same growth vector can have very different local behaviours. We now define another pointwise invariant called the \emph{curvature}.

Fix a point $p \in M$ and a vector $u \in \SD(p)$. A locally-defined vector field $\tilde{u} \in \FX(\Op(p))$ is a \emph{local extension} of $u$ (with respect to $\SD$) if $\tilde{u}(p) = u$ and $\tilde{u}(q) \in \SD(q)$ for every $q\in\Op(p)$. Then:
\begin{definition} \label{def:curvature}
The \textbf{curvature} of $\SD$ is the bundle morphism:
\begin{align*}
\Omega(\SD):  \SD\times\SD & \to T_pM/\SD\\
(u,v) &\mapsto [\tilde{u}, \tilde{v}](p) + \SD_p
\end{align*}
where $\tilde{u}$ and $\tilde{v}$ are local extensions of $u$ and $v$, respectively.
\end{definition}
From now on we will abuse notation and write $[u,v](p)$ for $[\tilde{u},\tilde{v}](p) + \SD_p$ in $T_pM/\SD_p$. The rank of the curvature measures how far the distribution is from being integrable. Indeed, according to Frobenius' theorem, $\SD$ is integrable if and only if the rank is zero. We can then consider further Lie brackets, yielding a collection of bundle morphisms:
\begin{align*}
\Omega_{i,j}: \SD_i \times \SD_j &\to \SD_{i+j}/\SD_{i+j-1}\\
 (u, v) &\mapsto [u,v],
\end{align*}
which we call the higher curvatures.

\subsection{Regularity of horizontal curves} \label{ssec:regularity}

We now recall how the phenomenon of singularity for horizontal curves shows up.

Given a $(M,\SD)$ distribution and a point $p \in M$, we write $\Maps_p([0,1];M,\SD)$ for the space of horizontal maps of the interval $[0,1]$ into $(M,\SD)$ with initial point $\gamma(0) = p$; we endow it with the $C^\infty$-topology.
\begin{definition}
The \textbf{endpoint map} is defined as the evaluation map at $1 \in [0,1]$:
\begin{align*}
\epoint:  \Maps_p([0,1];M,\SD) \quad&\longrightarrow\quad M\\
 \gamma \quad&\mapsto\quad \gamma(1)
\end{align*}
\end{definition}
This map is smooth. If it were submersive, its fibres would be smooth Frechet manifolds consisting of horizontal paths with given endpoints. The issue is that this is not always the case, leading to the conclusion that the fibres may develop singularities in which the tangent space is not well-defined. These singularities are thus horizontal curves that present issues in order to be deformed.

\begin{definition} \label{def:regular}
A curve $\gamma \in \Maps_p([0,1];M,\SD)$ is \textbf{regular} if the endpoint map $\epoint$ is submersive at $\gamma$. Otherwise, a curve is said to be \textbf{singular}.
\end{definition}
Equivalently, regularity means that, given any vector $v \in T_{\gamma(1)}M$, there exists a variation $(\gamma_s)_{s \in (-\varepsilon,\varepsilon)}$ such that 
\[ d_{\gamma} \epoint\left(\dfrac{d}{ds}\gamma_s\right) = \dfrac{d}{ds}(\epoint(\gamma_s)) = v. \]
We denote by $\Var_\gamma$ the space of infinitesimal variations of $\gamma$, endowed with the $C^\infty$-topology. In control theoretic terms, infinitesimal variations are simply sections of the bundle of controls over $\gamma$. A more down-to-earth description, when $\SD$ is a connection, is that $\Var_\gamma$ corresponds to the space of infinitesimal variations of the projection of $\gamma$ (to the base of the bundle). This approach will be used repeatedly in upcoming sections.

For the purposes of this paper, we are interested both in horizontal paths and horizontal loops. Then:
\begin{definition}
A curve $\gamma \in \SL(M,\SD)$ is \textbf{regular} if it is regular as a path (using the quotient map $[0,1] \to \NS^1$ given by a choice of basepoint).
\end{definition}
It is not difficult to see that being regular does not depend on the auxiliary choice of basepoint.

\section{$\varepsilon$-horizontality and $\varepsilon$-transversality} \label{sec:epsilon}

In this section we introduce $\varepsilon$-horizontal curves. These are curves that form an angle of at most $\varepsilon$ with the distribution and thus serve as approximations of horizontal curves. They provide a convenient starting point for the $h$-principle arguments that will appear later in the paper.

In Subsection \ref{ssec:horizontalCurves} we introduce some additional notation regarding horizontal curves. $\varepsilon$-horizontal curves appear in Subsection \ref{ssec:epsilonHorizontalCurves}. The main result is Proposition \ref{prop:hPrincipleEpsilon}: the space of $\varepsilon$-horizontal curves is weakly equivalent to the space of formal horizontal curves. We then introduce analogues of this idea in the transverse setting. This is done in Subsections \ref{ssec:transverseCurves} and \ref{ssec:epsilonTransverseCurves}.

We assume that the reader is familiar with the $h$-principle language. The standard references on the topic are \cite{EM,Gro86}.

\subsection{Horizontal curves} \label{ssec:horizontalCurves}

Fix a manifold and a distribution $(M,\SD)$. We already introduced the spaces of immersed horizontal loops $\Imm(M,\SD)$ and embedded horizontal loops $\Emb(M,\SD)$. The phenomenon of rigidity forced us to look instead into $\Imm^\regu(M,\SD)$ and $\Emb^\regu(M,\SD)$, the subspaces of regular curves. We want to compare these to the formal analogues $\Imm^f(M,\SD)$ and $\Emb^f(M,\SD)$. This comparison relates geometrically-defined spaces to spaces that are topological\footnote{Formal immersions are simply monomorphisms with image in $\SD$ and thus tractable using homotopy theoretical tools. The case of formal horizontal embeddings is more subtle, due to the difficulty of studying smooth embeddings themselves. Nonetheless, for the case considered in this article (submanifolds of codimension at least $3$), manifold calculus \cite{Wei,GW} may be used to describe embeddings in purely homotopy-theoretical terms.} in nature. 

Proofs in $h$-principle are local in nature. That is to say, in order to prove our theorems, we will reduce them to analogous statements for horizontal paths, relative boundary. This motivates us to introduce the following notation. Given a $1$-dimensional manifold $I$, we write
\[ \Imm^\regu(I;M,\SD) \longrightarrow \Imm(I;M,\SD) \longrightarrow \Imm^f(I;M,\SD) \]
for the spaces of regular horizontal immersions, horizontal immersions, and formal horizontal immersions of $I$ into $(M,\SD)$. Similarly, we write
\[ \Emb^\regu(I;M,\SD) \longrightarrow \Emb(I;M,\SD) \longrightarrow \Emb^f(I;M,\SD) \]
in the case of embeddings. All spaces are endowed with the weak Whitney topology.

\subsection{$\varepsilon$--horizontal curves} \label{ssec:epsilonHorizontalCurves}

Being horizontal is a closed differential relation. These are typically more difficult to handle than open relations; dealing with them often requires some input from PDE theory or the use of a trick that transforms the problem into one involving an open relation. In this paper we follow the second route, manipulating horizontal curves through their projections to the space of controls (Section \ref{sec:graphicalModels}).

We now introduce $\varepsilon$-horizontality. $\varepsilon$-horizontal curves can also be manipulated using the their projections, with the added advantage of being described by an open relation. Fix a riemannian metric $g$ in $M$. We can measure the (unsigned) angle $\angle$, in terms of the metric $g$, between any two linear subspaces at a given $T_pM$.

\begin{definition}
Fix a constant $0 < \varepsilon < \pi/2$. The space of \textbf{$\varepsilon$-horizontal embeddings} is defined as:
\[ \Emb^\varepsilon(M,\SD) := \left\{ \gamma \in \Emb(M) \,\mid\, \angle(\gamma',\SD) < \varepsilon \right\}. \]
\end{definition}

Its formal analogue, the space of \textbf{formal $\varepsilon$-horizontal embeddings}, reads:
\begin{align*}
\Emb^{f,\varepsilon}(M,\SD) := \left\{\left(\gamma,(F_s)_{s\in[0,1]}\right): \right. & \quad\gamma \in \Emb(M), \quad F_s \in \Mon_{\NS^1}(T\NS^1,\gamma^*TM), \\
																															        &	\quad\left. F_0 = \gamma', \quad \angle(F_1,\gamma^*\SD) < \varepsilon \right\}.
\end{align*}

\subsubsection{Some flexibility statements}

It is a classic result due to M. Gromov that the $h$-principle holds in the $\varepsilon$-horizontal setting:
\begin{lemma}\label{lem:hPrincipioEpsilonHorizontal}
Consider $(M,\SD)$ with $\rank(\SD) \geq 2$. Then, the inclusion $\Emb^\varepsilon(M,\SD) \to \Emb^{f,\varepsilon}(M,\SD)$ is a weak homotopy equivalence.
\end{lemma}
\begin{proof}
This follows from convex integration for open and ample relations \cite[Theorem 18.4.1]{EM}. The relation is clearly open. Ampleness follows from the fact that principal subspaces are in correspondence with tangent fibres $T_pM$, and the relation in each is an open conical set (as depicted in Figure \ref{EpsilonHorizontal}) that is path-connected and ample, because $\SD$ has at least rank 2.
\end{proof}

\begin{figure}[h] 
	\includegraphics[scale=0.55]{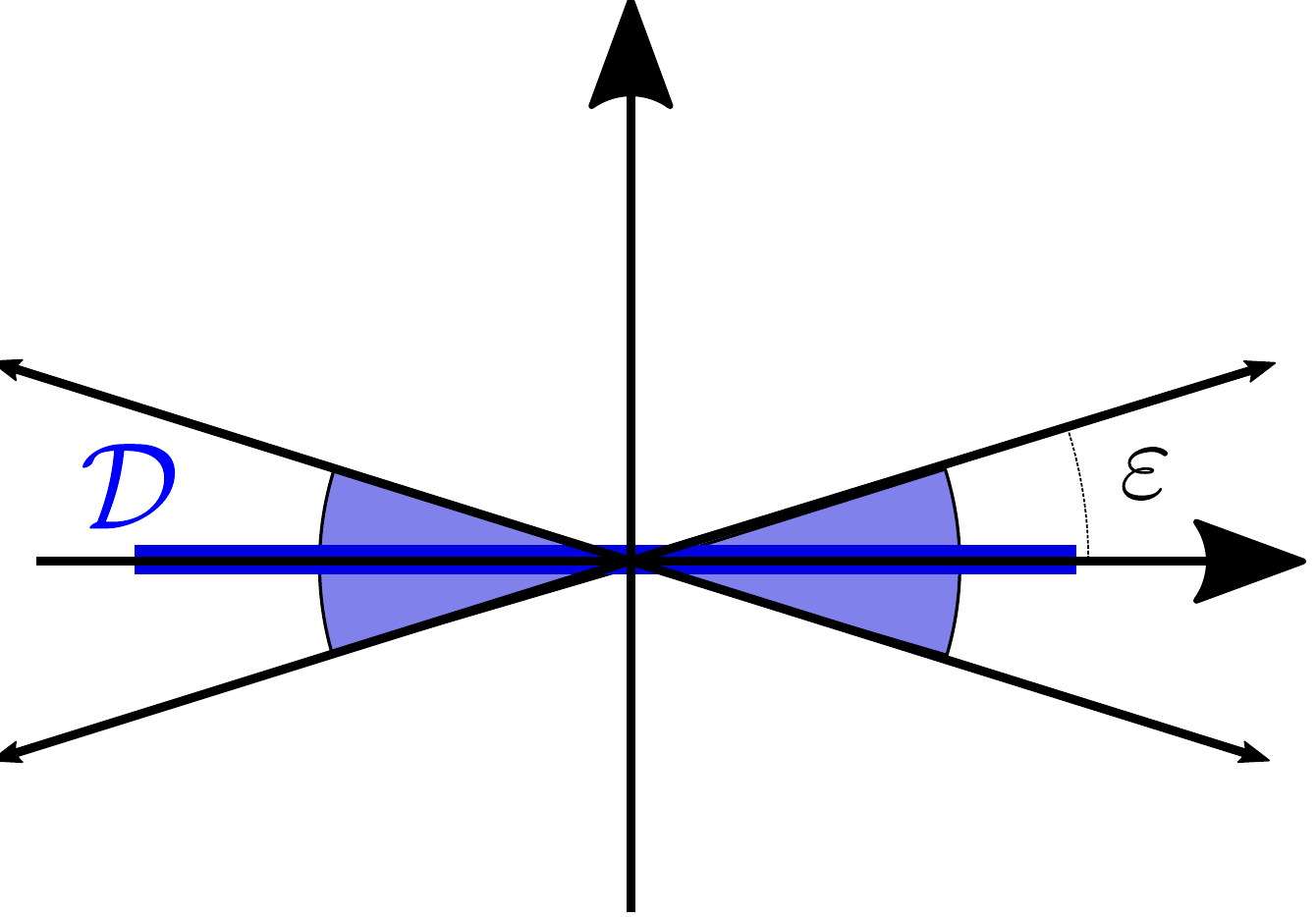}
	\centering
	\caption{Schematic depiction of the principal subspaces associated to $\varepsilon$-horizontality. The figure shows the rank $1$ case, in which the relation is conical and open but has two components. When the rank is at least 2, the relation is path-connected and thus ample.}\label{EpsilonHorizontal}
\end{figure}

Furthermore:
\begin{lemma}
The inclusion $\Emb^f(M,\SD) \to \Emb^{f,\varepsilon}(M,\SD)$ is a homotopy equivalence. In particular, $\Emb^\varepsilon(M,\SD)$ and $\Emb^f(M,\SD)$ are weakly homotopy equivalent.
\end{lemma}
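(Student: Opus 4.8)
The plan is to show that the map forgetting the $\varepsilon$-horizontality refinement, namely $\Emb^f(M,\SD) \to \Emb^{f,\varepsilon}(M,\SD)$, is a fibrewise deformation retract over the space of pairs $(\gamma, (F_s)_{s \in [0,1/2]})$ consisting of an embedding together with a homotopy of monomorphisms from $\gamma'$ up to an arbitrary monomorphism into $\gamma^*TM$. Concretely, both spaces have the shape ``$\Emb(M)$ together with a path of monomorphisms $F_s \in \Mon_{\NS^1}(T\NS^1, \gamma^*TM)$ starting at $\gamma'$, whose endpoint satisfies a condition on its position relative to $\gamma^*\SD$'': in $\Emb^f$ the endpoint condition is $F_1 \in \gamma^*\SD$, whereas in $\Emb^{f,\varepsilon}$ it is the open condition $\angle(F_1, \gamma^*\SD) < \varepsilon$. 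So the inclusion is induced, fibrewise over $\Emb(M)$, by the inclusion of pairs of path-spaces $P_1 \hookrightarrow P_2$, where $P_i \subset \Mon_{\NS^1}(T\NS^1, \gamma^*TM)^{[0,1]}$ is the subspace of paths starting at $\gamma'$ and ending in the subset $A_1 := \gamma^*\SD \cap \Mon$ (resp. $A_2 := \{F : \angle(F,\gamma^*\SD)<\varepsilon\}$) of $\Mon_{\NS^1}(T\NS^1, \gamma^*TM)$.

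The key observation is that $A_1 \hookrightarrow A_2$ is a (fibrewise) deformation retract: the set of monomorphisms $T\NS^1 \to \gamma^*TM$ making an angle $<\varepsilon$ with the subbundle $\gamma^*\SD$ deformation retracts onto the set of those landing inside $\gamma^*\SD$, simply by orthogonally projecting (with respect to the metric $g$) the image line onto $\gamma^*\SD$ and rescaling; the angle condition $\varepsilon < \pi/2$ guarantees this projection is never degenerate, so we stay within monomorphisms throughout, and the homotopy fixes $A_1$. This retraction is continuous in all parameters and hence gives a deformation retraction $r_t : A_2 \to A_2$ with $r_0 = \id$, $r_1(A_2) = A_1$, $r_t|_{A_1} = \id$. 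First I would check this carefully (it is essentially the statement that a small cone neighbourhood of a subbundle retracts to it, done pointwise over $\NS^1$ and over $\gamma$). Then, standard point-set topology of path spaces upgrades a deformation retraction $A_1 \hookrightarrow A_2$ to a deformation retraction of path spaces ``paths ending in $A_1$'' $\hookrightarrow$ ``paths ending in $A_2$'': given a path $F_\bullet$ ending in $A_2$, concatenate it with $s \mapsto r_s(F_1)$ to push the endpoint into $A_1$, and reparametrise; doing this continuously in $t$ yields the required deformation. Since all of this is fibrewise over $\Emb(M)$ and the constructions are continuous in $\gamma$, this assembles into a deformation retraction of $\Emb^{f,\varepsilon}(M,\SD)$ onto $\Emb^f(M,\SD)$.

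For the ``in particular'' clause I would simply combine the first part of this lemma with Lemma \ref{lem:hPrincipioEpsilonHorizontal} (noting $\rank(\SD) \geq 2$ follows from $\dim M \geq 4$ and bracket-generation, or is part of the running hypotheses where this lemma is used): the composite $\Emb^\varepsilon(M,\SD) \to \Emb^{f,\varepsilon}(M,\SD) \simeq \Emb^f(M,\SD)$ exhibits $\Emb^\varepsilon(M,\SD)$ and $\Emb^f(M,\SD)$ as weakly homotopy equivalent.

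The main obstacle I expect is purely bookkeeping rather than conceptual: making the concatenation-and-reparametrisation of paths genuinely continuous (the naive concatenation is not strictly continuous at the junction, so one must use a homotopy that reparametrises smoothly, e.g. interpolating the ``break point'' from $1$ down to $0$), and checking that every intermediate map in the deformation is still a monomorphism and still satisfies the angle bound — so that we never leave $\Emb^{f,\varepsilon}$. None of these steps require the distribution to be bracket-generating; the only geometric input is $\varepsilon < \pi/2$, which keeps orthogonal projection onto $\gamma^*\SD$ injective on the relevant lines.
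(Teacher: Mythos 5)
Your proposal is correct and is essentially the paper's own argument: the paper's proof is a one-line remark that fibrewise orthogonal projection of $F_1$ onto $\SD$ (non-degenerate since $\varepsilon < \pi/2$) provides a homotopy inverse, which is exactly the retraction you construct, merely spelled out with the path-space bookkeeping made explicit. The ``in particular'' clause is likewise obtained, as you say, by combining with Lemma \ref{lem:hPrincipioEpsilonHorizontal}.
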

\begin{proof}
Just note that the fiberwise orthogonal riemannian projection of $F_1$ onto $\SD$ provides a homotopy inverse.
\end{proof}

These results that we have just stated are also relative in the parameter, relative in the domain, and satisfy $C^0$-closeness. More precisely:
\begin{proposition} \label{prop:hPrincipleEpsilon}
Let $K$ be a compact manifold. Let $(M,\SD)$ be a manifold endowed with a distribution of rank greater or equal to $2$. Suppose we are given a map $(\gamma,F_s): K \to \Emb^f([0,1],M,\SD)$ satisfying the boundary conditions:
\begin{itemize}
\item $(\gamma,F_s)(k)|_{\Op(\{0,1\})}$ is a $\varepsilon$-horizontal embedding for all $k \in K$,
\item $(\gamma,F_s)(k) \in \Emb^\varepsilon([0,1],M,\SD)$ for $k \in \Op(\partial K)$.
\end{itemize}

Then, $(\gamma,F_s)$ extends to a homotopy $(\widetilde\gamma,\widetilde{F_s}): K \times [0,1] \to \Emb^f([0,1],M,\SD)$ that:
\begin{itemize}
\item restricts to $(\gamma,F_s)$ at time $s=0$,
\item maps into $\Emb^\varepsilon([0,1],M,\SD)$ at time $s=1$
\item is relative in the parameter (i.e. relative to $k \in \Op(\partial K)$),
\item is relative in the domain of the curves (i.e relative to $t \in \Op(\{0,1\})$),
\item has underlying curves $\widetilde\gamma(k,s)$ that are $C^0$-close to $\gamma(k)$ for all $k$ and $s$.
\end{itemize}
\end{proposition}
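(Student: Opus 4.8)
The plan is to deduce Proposition \ref{prop:hPrincipleEpsilon} from the relative, parametric version of convex integration that underlies Lemma \ref{lem:hPrincipioEpsilonHorizontal}. The key point is that $\varepsilon$-horizontality for embeddings of $[0,1]$ is an open, ample differential relation in the sense of \cite[Chapter 18]{EM}: openness is immediate, and ampleness holds because $\rank(\SD) \geq 2$ forces the fiberwise relation (an open cone around $\SD$ in each $T_pM$, see Figure \ref{EpsilonHorizontal}) to be path-connected, so its convex hull is the whole fiber. Convex integration for open ample relations comes packaged with all the flexibility we need: it is relative in the parameter, relative in the domain, and produces solutions $C^0$-close to the input formal solution. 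What requires a small amount of care is that we are working with embeddings rather than immersions. But for curves in a manifold of dimension $\geq 2$, $C^0$-small perturbations of an embedding remain embeddings, and convex integration can be carried out so that the underlying map stays $C^0$-close throughout; hence the embedding condition is automatically preserved. (Alternatively, one may appeal directly to the parametric $h$-principle for open ample relations applied to embeddings, as in \cite[Theorem 18.4.1]{EM} combined with the microextension trick.)

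First I would reformulate the hypotheses in the language of \cite{EM}: the map $(\gamma, F_s): K \to \Emb^f([0,1], M, \SD)$ is a $K$-family of formal solutions of the $\varepsilon$-horizontal relation $\mathcal R^\varepsilon \subset J^1([0,1], M)$; the hypothesis that $(\gamma, F_s)(k)$ is genuinely $\varepsilon$-horizontal near $\{0,1\}$ for all $k$, and genuinely $\varepsilon$-horizontal on all of $[0,1]$ for $k \in \Op(\partial K)$, says precisely that the family is already holonomic on $\Op(\partial K) \times [0,1] \,\cup\, K \times \Op(\{0,1\})$, a neighborhood of the part of $K \times [0,1]$ where we want the homotopy to be constant.

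Next I would invoke the parametric, relative convex integration theorem: since $\mathcal R^\varepsilon$ is open and ample, there is a homotopy of formal solutions $(\widetilde\gamma, \widetilde{F_s}): K \times [0,1] \to \Emb^f([0,1], M, \SD)$, starting at $(\gamma, F_s)$ at time $0$, holonomic (i.e. landing in $\Emb^\varepsilon$) at time $1$, constant over $\Op(\partial K)$ and over $\Op(\{0,1\}) \subset [0,1]$, and with the underlying maps $\widetilde\gamma(k,s)$ arbitrarily $C^0$-close to $\gamma(k)$. This last property is what convex integration always delivers: the solution is obtained by a reparametrized integral of the original derivative with small high-frequency corrections, so the $C^0$-distance is controlled. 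Finally I would observe that $C^0$-closeness to the embedding $\gamma(k)$ guarantees $\widetilde\gamma(k,s) \in \Emb([0,1], M)$ for all $k, s$ (embeddings of a compact $1$-manifold form a $C^0$-open set once one also controls the derivative, which convex integration does via the monomorphism part $\widetilde{F_s}$), so the homotopy genuinely lands in $\Emb^f([0,1], M, \SD)$ and, at time $1$, in $\Emb^\varepsilon([0,1], M, \SD)$.

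The main obstacle, such as it is, is bookkeeping rather than a genuine difficulty: one must make sure that the relative constraints imposed near $\partial K$ and near $\{0,1\}$ are compatible with the form in which convex integration outputs its solution, and that the $C^0$-closeness survives the parametric version (the constants degrade with the number of parameters but remain finite since $K$ is compact). I do not expect any essentially new idea to be needed beyond a careful citation of \cite[Chapter 18]{EM}; the substance of the statement is entirely contained in the openness and ampleness already established in Lemma \ref{lem:hPrincipioEpsilonHorizontal}.
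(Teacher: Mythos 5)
Your overall strategy coincides with the paper's: Proposition \ref{prop:hPrincipleEpsilon} is not given an independent proof there, but is asserted as the relative (in parameter and in domain), parametric and $C^0$-dense form of the convex-integration argument behind Lemma \ref{lem:hPrincipioEpsilonHorizontal}, exactly as you propose, with openness immediate and ampleness coming from $\rank(\SD)\geq 2$.

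There is, however, a genuine flaw in the one point that actually needs care, namely why the homotopy stays inside $\Emb$. It is simply false that ``$C^0$-small perturbations of an embedding remain embeddings'' for curves: an arbitrarily $C^0$-small modification can create self-intersections, and convex integration is never $C^1$-small (its whole mechanism is to make the derivative oscillate), so openness of the space of embeddings in the $C^1$-topology cannot be invoked either. Likewise, ``controlling the derivative via the monomorphism part $\widetilde F_s$'' does not help: the formal datum does not bound the actual derivative of $\widetilde\gamma(k,s)$ at intermediate times, and even an immersed curve $C^0$-close to an embedded one need not be embedded. Note also that general position is not available here, since for families over a high-dimensional $K$ double points of curves cannot be removed generically when $\dim(M)=3$, which is allowed by the hypothesis $\rank(\SD)\geq 2$. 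The correct reference is not \cite[Theorem 18.4.1]{EM} (which concerns maps/sections) but the $h$-principle for $A$-directed embeddings for an open and ample $A\subset \mathrm{Gr}_1(TM)$, \cite[Section 19.4]{EM}: there the convex integration is performed graphically inside a tubular neighbourhood of the given embedded curve, which is what guarantees that the whole homotopy consists of embeddings, and that theorem does come in the parametric, relative and $C^0$-dense form required by the statement. With that substitution (and the routine preliminary step of contracting $F_s$ to the constant homotopy over the regions where the curves are already $\varepsilon$-horizontal, so that the homotopy is genuinely relative there), your argument is complete and agrees with the paper's intended one.
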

The statement still holds even if $\SD$ is allowed to vary parametrically with $k \in K$; this is not needed for our purposes.

\subsubsection{The punchline}

We can summarise the previous statements using the following commutative diagram:
\begin{center}
\begin{tikzcd}
\Emb^\regu(M,\SD) \subset \Emb(M,\SD) \arrow[r] \arrow[d] & \Emb^f(M,\SD) \arrow[d, "\cong"] \\
\Emb^\varepsilon(M,\SD) \arrow[r, "\cong"] & \Emb^{f,\varepsilon}(M,\SD)
\end{tikzcd}
\end{center}
It follows that, in order to prove our main Theorem \ref{thm:embeddings}, it is sufficient to understand the inclusion $\Emb^\regu(M,\SD) \hookrightarrow \Emb^\varepsilon(M,\SD)$. This simplification (passing from formal to $\varepsilon$) is commonly used in the $h$-principle literature, see for instance \cite{Mur,CP}.

\subsubsection{The case of immersions}

One can define, analogously, the space of immersed $\varepsilon$-horizontal loops:
\[ \Imm^\varepsilon(M,\SD) := \left\{\gamma \in \Imm(M):\angle(\gamma',\SD) < \varepsilon \right\}. \]
From the arguments above it follows that:
\begin{lemma} \label{lem:eHorizontalImm}
Let $\SD$ be a distribution with $\rank(\SD) \geq 2$. The map $\Imm^\varepsilon(M,\SD) \longrightarrow \Imm^f(M,\SD)$ is a weak homotopy equivalence.
\end{lemma}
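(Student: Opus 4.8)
The statement to prove is Lemma \ref{lem:eHorizontalImm}: for $\rank(\SD) \geq 2$, the inclusion $\Imm^\varepsilon(M,\SD) \to \Imm^f(M,\SD)$ is a weak homotopy equivalence.

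\textbf{Approach.} The plan is to mimic exactly the three-step chain already carried out for embeddings, but working with immersions, where the situation is strictly simpler because there is no embedding constraint to track and the relevant jet space is honestly a one-jet space (so classical convex integration applies directly without any manifold-calculus bookkeeping). First I would observe that $\varepsilon$-horizontality for an immersion $\gamma \colon \NS^1 \to M$ is the differential relation $\{ \angle(\gamma', \SD) < \varepsilon\} \cap \{\gamma' \neq 0\}$ inside $J^1(\NS^1, M)$; this is open. Second, I would check it is ample in the coordinate directions: over a point $p$, the fiber of admissible $1$-jets is the set of nonzero vectors $v \in T_pM$ making an angle less than $\varepsilon$ with $\SD(p)$, which is an open cone; since $\rank(\SD) \geq 2$ this cone is path-connected, its convex hull in each principal direction (a line in $T_pM$ through the origin, intersected with the cone minus $0$) is all of the line, hence the relation is ample. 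Therefore, by convex integration for open ample relations in the one-jet setting (the immersion-valued version of \cite[Theorem 18.4.1]{EM}, or equivalently the parametric $h$-principle for directed immersions), the inclusion $\Imm^\varepsilon(M,\SD) \to \Imm^{f,\varepsilon}(M,\SD)$ is a weak homotopy equivalence, where $\Imm^{f,\varepsilon}(M,\SD)$ is defined like $\Imm^f(M,\SD)$ but with the monomorphism $F$ only required to satisfy $\angle(F, \gamma^*\SD) < \varepsilon$ rather than landing in $\gamma^*\SD$.

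\textbf{The second leg.} Next I would show $\Imm^f(M,\SD) \to \Imm^{f,\varepsilon}(M,\SD)$ is a homotopy equivalence. This is formal and soft: the fiberwise orthogonal riemannian projection $\pi_\SD \colon \gamma^*TM \to \gamma^*\SD$ restricts, on the open cone of vectors within angle $\varepsilon$ of $\SD$, to a fiber-preserving deformation retraction onto $\SD$ (straight-line homotopy $t \mapsto (1-t)F + t\,\pi_\SD F$ stays in the cone and stays injective, since $F$ is already close to its projection). Composing, I conclude $\Imm^\varepsilon(M,\SD)$ and $\Imm^f(M,\SD)$ are weakly homotopy equivalent, and one checks the equivalence is realised by the honest inclusion $\Imm^\varepsilon \hookrightarrow \Imm^{f}$ followed by these identifications, so the statement as phrased (the inclusion is a weak equivalence) follows. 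As in Proposition \ref{prop:hPrincipleEpsilon}, all of this is parametric over a compact $K$, relative in the parameter and relative in $C^0$-closeness of underlying curves; for loops one simply works on $\NS^1$ directly or reduces to the interval relative endpoints.

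\textbf{Main obstacle.} There is no serious obstacle here; the content is entirely that ampleness holds as soon as $\rank(\SD) \geq 2$, which is the same numerical hypothesis that already appeared for embeddings. The only point requiring a little care is verifying that convex integration can be run \emph{within the class of immersions} (rather than arbitrary maps): this is standard --- the defining relation already forces $\gamma' \neq 0$, so the open ample relation lives inside the immersion relation and the convex integration output is automatically an immersion. I expect this lemma to be essentially a one-paragraph remark pointing back to Lemma \ref{lem:hPrincipioEpsilonHorizontal} and the subsequent lemma, noting that dropping the embedding condition only simplifies matters.
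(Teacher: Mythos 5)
Your proposal is correct and takes essentially the route the paper intends: the paper derives Lemma \ref{lem:eHorizontalImm} directly ``from the arguments above'', i.e.\ convex integration for the open, ample $\varepsilon$-horizontal relation (as in Lemma \ref{lem:hPrincipioEpsilonHorizontal}) combined with the fibrewise orthogonal retraction identifying $\Imm^f(M,\SD)$ with $\Imm^{f,\varepsilon}(M,\SD)$, which is exactly your two-leg argument. One small correction: for curves the principal subspaces are the whole tangent fibres $T_pM$, not lines through the origin, and ampleness holds because the convex hull of the $\varepsilon$-cone --- path-connected precisely when $\rank(\SD)\geq 2$ --- is all of $T_pM$; your parenthetical line-based criterion would fail to detect the breakdown of ampleness in the rank-$1$ case, where the cone has two components.
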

This $h$-principle is also relative in the parameter, relative in the domain, and $C^0$-close. We leave it to the reader to spell out the analogue of Proposition \ref{prop:hPrincipleEpsilon}. Theorem \ref{thm:immersions} reduces then to the study of the inclusion $\Imm^\regu(M,\SD) \hookrightarrow \Imm^\varepsilon(M,\SD)$.

\subsection{Transverse curves} \label{ssec:transverseCurves}

We have already introduced the spaces of transverse immersed loops $\Immt(M,\SD)$, transverse embedded loops $\Embt(M,\SD)$, formally transverse immersions $\Immt^f(M,\SD)$, and formally transverse embeddings $\Embt^f(M,\SD)$. It was stated in the introduction that 
\[ \Immt(M,\SD) \longrightarrow \Immt^f(M,\SD), \qquad \Embt(M,\SD) \longrightarrow \Embt^f(M,\SD) \]
are weak equivalences whenever the corank of $\SD$ is larger than $1$, thanks to convex integration.
\begin{assumption} \label{assumption:corank1}
Whenever we work with transverse curves, we do so under the assumption that the corank of $\SD$ is one.
\end{assumption}

\subsubsection{Coorientations}

Suppose that $\SD$ is coorientable and fix a coorientation. We do not need this assumption for our results. However, we will make use of it as follows: due the relative nature of the arguments, we will reduce our theorems to $h$-principles in which the target manifold is euclidean space. In this local picture, the distribution is parallelisable and co-parallelisable. Furthermore, formal transverse curves induce a preferred coorientation. This will allow us to define a suitable replacement of $\varepsilon$-horizontality in the transverse setting.

\begin{definition}
A curve $\gamma: \NS^1 \to M$ is \textbf{positively transverse} if $\gamma'$ defines the positive coorientation in $TM/\SD$.
\end{definition}
If $\SD$ is cooriented, $\Immt(M,\SD)$, $\Embt(M,\SD)$, $\Immt^f(M,\SD)$, and $\Embt^f(M,\SD)$ split into two different path components (the positively transverse and the negatively transverse). In order not to overload notation, we will follow the convention that if $\SD$ is cooriented, we focus on the positive component.

We can now fix a riemannian metric $g$ on $M$ and define the \textbf{oriented angle} $\measuredangle(v(p), \SD_p)$ between a vector $v \in T_pM$ and the corank-$1$ distribution $\SD$. Its absolute value agrees with the (unsigned) angle $\angle(v, \SD)$ and its sign is positive if $v$ is positively transverse.

\begin{figure}[h] 
	\includegraphics[scale=0.8]{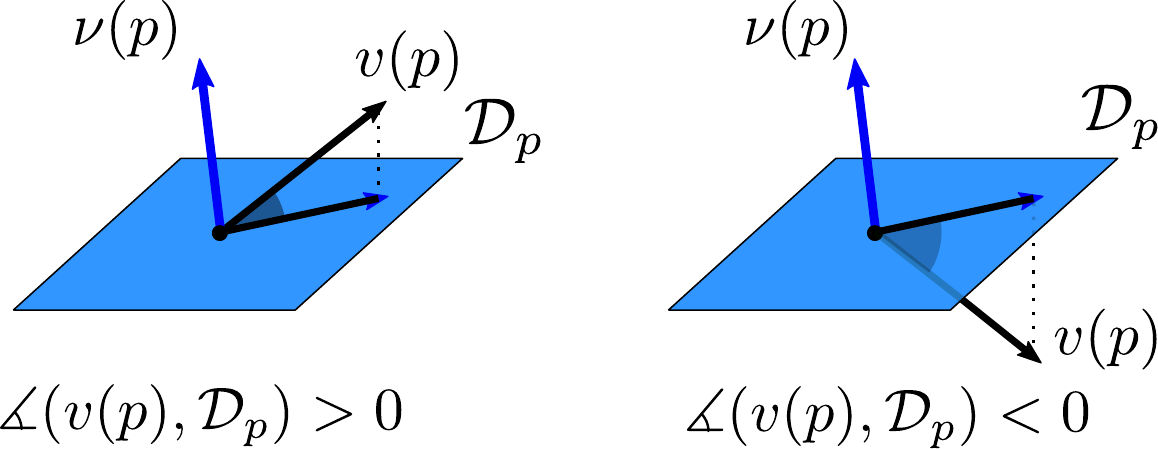}
	\centering
	\caption{On the left we depict a vector field $\nu$, providing the global choice of coorientation, and a positively transverse vector $v \in \T_pM$; the two lie on the same side of $\SD$. On the right we depict a negatively transverse vector.}\label{OrientedAngle}
\end{figure}

\subsubsection{Immersions}

If $I$ is a $1$-manifold, we write
\[ \Immt(I;M,\SD) \to \Immt^f(I;M,\SD), \quad \Embt(I;M,\SD) \to \Embt^f(I;M,\SD) \]
for the spaces of transverse immersions, formal transverse immersions, transverse embeddings and formal transverse embeddings of $I$ into $(M,\SD)$. Once again, if $\SD$ is cooriented, these denote only the positively transverse component.

\subsection{$\varepsilon$-transverse curves} \label{ssec:epsilonTransverseCurves}

Working under coorientability assumptions allows us to introduce the notion of $\varepsilon$-transversality.
\begin{definition}
Let $\SD$ be a cooriented distribution of corank-$1$. Fix a positive number $\pi/2 >\varepsilon>0$. The space of \textbf{$\varepsilon$-transverse embeddings} is defined as:
\[ \Embt^\varepsilon(M,\SD) = \left\{\gamma \in \Emb(M):\measuredangle(\gamma',\SD) > -\varepsilon \right\}. \]
\end{definition}
We can also consider its formal analogue, the space of \textbf{formal $\varepsilon$-transverse embeddings}:
\begin{align*}
\Embt^{f,\varepsilon}(M,\SD) = \left\{\left(\gamma,(F_s)_{s\in[0,1]}\right): \right. & \quad\gamma \in \Emb(M), \quad F_s \in \Mon_{\NS^1}(T\NS^1,\gamma^*TM), \\
																															        &	\quad\left. F_0 = \gamma', \quad \measuredangle(F_1,\gamma^*\SD) > -\varepsilon \right\}.
\end{align*}

\subsubsection{Flexibility statements}

The following is an analogue of Proposition \ref{prop:hPrincipleEpsilon}, with a milder condition on the rank. The proof is analogous, using convex integration and projection to $\SD$:
\begin{proposition}
Let $\SD$ be a cooriented corank-$1$ distribution with $\rank(\SD) \geq 1$. Then, the following inclusions are weak equivalences:
\[ \Embt^\varepsilon(M,\SD) \longrightarrow \Embt^{f,\varepsilon}(M,\SD), \qquad \Emb^f(M,\SD) \longrightarrow \Emb^{f,\varepsilon}(M,\SD). \]
\end{proposition}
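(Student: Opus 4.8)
The plan is to mimic the two-lemma structure already used in the $\varepsilon$-horizontal case, since the $\varepsilon$-transverse relation is again open (and, once we look at a single principal subspace, even simpler). First I would establish the left-hand equivalence $\Embt^\varepsilon(M,\SD) \to \Embt^{f,\varepsilon}(M,\SD)$ by convex integration for open, ample differential relations, exactly as in Lemma \ref{lem:hPrincipioEpsilonHorizontal}. The relevant $1$-jet relation asks that $\gamma'$ make oriented angle greater than $-\varepsilon$ with $\SD$; in each tangent fibre $T_pM$ this is an open half-space-like conical region (everything strictly on the positive side of the cone of half-angle $\varepsilon$ around $\SD_p$). This region is path-connected and its convex hull is all of $T_pM \setminus \{0\}$ — indeed already all of $T_pM$ once we note that it contains an open cone of directions whose convex hull is everything — so the relation is ample, and this works as soon as $\rank(\SD)\ge 1$ (the corank-$1$ hypothesis of Assumption \ref{assumption:corank1} is what makes the coorientation, hence the oriented angle, meaningful). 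Here is the one place where the rank condition is milder than in the horizontal case: for $\varepsilon$-horizontality the rank-$1$ relation is disconnected (two antipodal cones), whereas the $\varepsilon$-transverse relation is a single cone and stays connected even when $\SD$ is a line field. Convex integration for open ample relations then yields the weak equivalence, and — just as stated after Proposition \ref{prop:hPrincipleEpsilon} — the argument is relative in the parameter, relative in the domain, and $C^0$-close, because those are general features of the convex integration machine.

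Next I would prove the right-hand equivalence $\Emb^f(M,\SD) \to \Emb^{f,\varepsilon}(M,\SD)$ by exhibiting an explicit homotopy inverse, copying the proof of the analogous horizontal lemma verbatim. Given a formal datum $(\gamma,(F_s)_s)$ with $F_0=\gamma'$ and $F_1$ landing in $\SD$, one can fiberwise orthogonally project $F_1$ onto $\SD$ along the metric $g$ to obtain a genuine formal horizontal embedding; conversely a formal horizontal embedding is already an $\varepsilon$-horizontal one. Wait — in the transverse case the target space on the left is $\Emb^f(M,\SD)$ (horizontal!), not $\Embt^f(M,\SD)$, so the map in question is really the inclusion of formal \emph{horizontal} embeddings into formal $\varepsilon$-\emph{transverse} embeddings; this is consistent, since a monomorphism $F_1$ with image in $\SD$ has oriented angle $0 > -\varepsilon$ with $\SD$. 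The same orthogonal-projection construction therefore still produces a deformation retraction: projecting $F_1$ onto $\SD$ is a homotopy from the inclusion followed by projection back to the identity on $\Emb^{f,\varepsilon}$, and is literally the identity on $\Emb^f(M,\SD)$. So the retraction argument of the cited horizontal lemma goes through unchanged, establishing the second weak equivalence.

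I expect the main obstacle — to the extent there is one — to be bookkeeping rather than anything conceptual: being careful that the coorientation induced by the formal data is globally consistent (which is why the text restricts attention to the positively transverse component, Subsection \ref{ssec:transverseCurves}), and checking that ampleness really holds in the rank-$1$ case so that the single hypothesis $\rank(\SD)\ge 1$ suffices. Concretely, the principal subspace for a direction $\tau\in T\NS^1$ at a point is $\{v\in T_pM : \measuredangle(v,\SD_p)>-\varepsilon\}$ up to the reparametrisation freedom, and one must observe that this open conical set is nonempty, connected, and has convex hull equal to the whole fibre — the last point because it contains, for any nonzero $w\in T_pM$, either $w$ or a vector whose average with something already in the set is a positive multiple of $w$. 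Once ampleness is in hand, everything else is an application of \cite[Theorem 18.4.1]{EM} together with the projection trick, and the relative/$C^0$-close refinements follow formally, so there is no separate hard analytic step here; the real work of the paper lies downstream, in comparing $\Embt(M,\SD)$ with $\Embt^\varepsilon(M,\SD)$.
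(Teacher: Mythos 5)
Your treatment of the first inclusion is correct and is essentially the paper's (very terse) argument: the $\varepsilon$-transverse relation is open, for curves the principal subspaces are the whole tangent fibres $T_pM$, and in each fibre the relation $\{v \,:\, \measuredangle(v,\SD_p) > -\varepsilon\}$ is a single connected conical region whose convex hull is everything as soon as $\rank(\SD)\geq 1$ (write $w = \tfrac{1}{2}(w+Cu)+\tfrac{1}{2}(w-Cu)$ with $0\neq u\in\SD_p$ and $C$ large), so \cite[Theorem 18.4.1]{EM} applies and the relative and $C^0$-close refinements come for free. Your contrast with the disconnected rank-$1$ horizontal relation is exactly the right point.

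The second half, however, contains a genuine error. The second map is (modulo a typographical slip in the statement, where the subscript $\T$ has been dropped) the inclusion $\Embt^f(M,\SD) \hookrightarrow \Embt^{f,\varepsilon}(M,\SD)$: this is what the subsequent commutative diagram requires, and it is the transverse analogue of the horizontal lemma; read literally as $\Emb^f \to \Emb^{f,\varepsilon}$ it would just repeat that earlier lemma. You instead reinterpret it as the inclusion of formal \emph{horizontal} embeddings into formal $\varepsilon$-\emph{transverse} embeddings, and that statement is false in general: fibrewise you are comparing monomorphisms into $\SD$ (a sphere $S^{q-1}$ of directions) with the contractible cone $\{\measuredangle(v,\SD_p) > -\varepsilon\}$, so for instance in the contact case the formal Legendrian data (rotation number, etc.) would be erased. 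Correspondingly, your proposed homotopy inverse, fibrewise orthogonal projection of $F_1$ onto $\SD$, is not even well defined on $\Embt^{f,\varepsilon}(M,\SD)$: a positively transverse $F_1$ orthogonal to $\SD$ projects to $0$, which is not a monomorphism. The retraction for the intended map goes in the vertical rather than the horizontal direction: the set $\{v \neq 0 \,:\, \measuredangle(v,\SD_p) > -\varepsilon\}$ deformation retracts onto the positively transverse vectors, for example by the flow $F_1 \mapsto F_1 + sC\nu$ with $\nu$ the positive coorienting vector field, which only increases the oriented angle and eventually makes the projection to $TM/\SD$ positive; applying this fibrewise (together with the openness of the endpoint condition) gives the homotopy inverse to $\Embt^f(M,\SD) \hookrightarrow \Embt^{f,\varepsilon}(M,\SD)$, which is what the ``projection'' alluded to in the paper should be taken to mean.
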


\begin{figure}[h] 
	\includegraphics[scale=0.55]{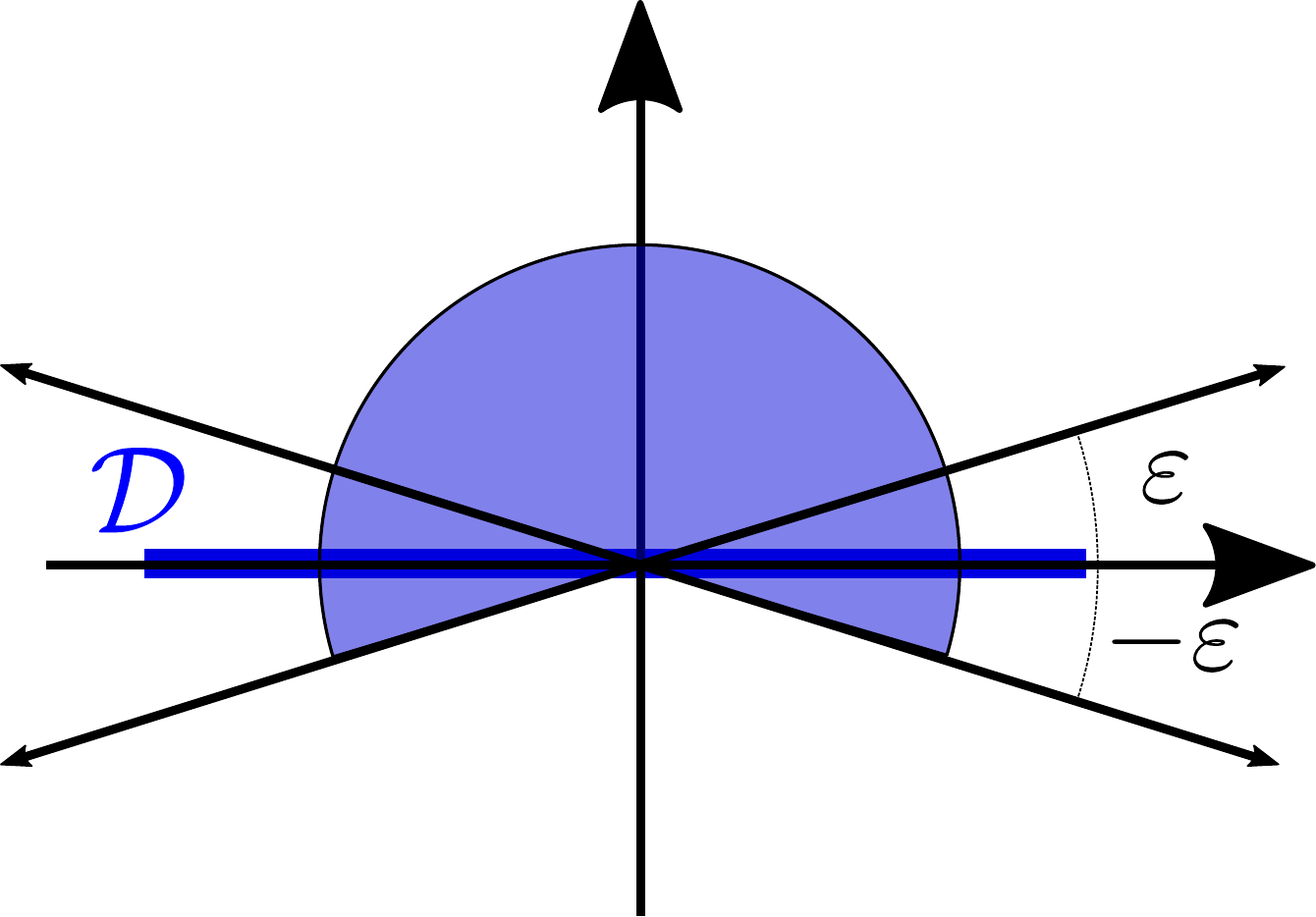}
	\centering
	\caption{The fiberwise relation defining $\varepsilon$-transversality defines a relation that is ample.}\label{fig:epsilonTransverse}
\end{figure}

A statement that is relative in the parameter, relative in the domain, and $C^0$-close also holds:
\begin{proposition} \label{prop:hPrincipleEpsilonTransverse}
Let $K$ be a compact manifold. Let $(M,\SD)$ be a manifold endowed with a cooriented corank-$1$ distribution of rank at least $1$. Suppose we are given a map $(\gamma,F_s): K \to \Embt^f([0,1];M,\SD)$ satisfying:
\begin{itemize}
\item $(\gamma,F_s)(k)|_{\Op(\{0,1\})}$ is a $\varepsilon$-transverse embedding for all $k \in K$,
\item $(\gamma,F_s)(k) \in \Embt^\varepsilon([0,1];M,\SD)$ for $k \in \Op(\partial K)$.
\end{itemize}

Then, $(\gamma,F_s)$ extends to a homotopy $(\widetilde\gamma,\widetilde{F_s}): K \times [0,1] \to \Embt^f([0,1];M,\SD)$ that:
\begin{itemize}
\item restricts to $(\gamma,F_s)$ at time $s=0$,
\item maps into $\Embt^\varepsilon([0,1];M,\SD)$ at time $s=1$
\item is relative in the parameter (i.e. relative to $k \in \Op(\partial K)$),
\item is relative in the domain of the curves (i.e relative to $t \in \Op(\{0,1\})$),
\item has underlying curves $\widetilde\gamma(k,s)$ that are $C^0$-close to $\gamma(k)$ for all $k$ and $s$.
\end{itemize}
\end{proposition}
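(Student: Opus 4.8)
\textbf{Proof plan for Proposition \ref{prop:hPrincipleEpsilonTransverse}.}

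The plan is to deduce this from the parametric, relative convex integration machinery for open ample relations, exactly as in the non-parametric flexibility statement, but keeping careful track of the boundary/parameter constraints. First I would observe that $\varepsilon$-transversality (for a cooriented corank-$1$ distribution) is an open, first-order, and pointwise \emph{ample} differential relation: in each tangent fibre $T_pM$ the admissible velocities form the open half-space-like cone $\{v:\measuredangle(v,\SD_p)>-\varepsilon\}$, whose complement is a proper convex cone, so the relation's principal subspaces are connected and their convex hulls are everything. (This is the content of Figure \ref{fig:epsilonTransverse} and needs only $\rank(\SD)\geq 1$, since we are not asking the curve to lie \emph{inside} $\SD$, only on the positive side of it.) Because the data is a map from the compact manifold $K$ into $\Embt^f([0,1];M,\SD)$, I would apply convex integration in its parametric form over $K$, with parameter space $K$ and relative to $\Op(\partial K)$, and simultaneously relative to $\Op(\{0,1\})\subset[0,1]$ in the domain of the curves; the hypotheses are precisely set up so that on $\Op(\partial K)$ the input is already a genuine $\varepsilon$-transverse embedding and near $\{0,1\}$ it is already holonomic and $\varepsilon$-transverse, so there is nothing to correct there. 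Convex integration then produces a one-parameter family interpolating from the formal datum at $s=0$ to a holonomic (i.e.\ genuinely $\varepsilon$-transverse) solution at $s=1$, with $C^0$-control on the underlying maps, which is the standard output of the method.

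The one genuinely non-formal point, and the step I expect to be the main obstacle, is maintaining the \emph{embedding} condition throughout the homotopy rather than merely the immersion/transversality condition. Convex integration for curves naturally produces immersions that are $C^0$-close but highly oscillating in the derivative, so a priori one only controls the $0$-jet of $\widetilde\gamma$. I would handle this in the usual way: since $\gamma(k)$ is an embedding of $\NS^1$ (a fixed compact $1$-manifold) into $M$ and embeddings form an open subset of $C^0$ for maps of a compact manifold that are immersions, a sufficiently $C^0$-small perturbation of an embedding that remains an immersion stays an embedding; the convex-integration solution can be taken as $C^0$-close to $\gamma(k)$ as we like, uniformly in $k\in K$ by compactness, and it is an immersion (indeed $\varepsilon$-transverse) at every stage of the homotopy, hence an embedding. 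One must only check that ``$\varepsilon$-transverse immersion + $C^0$-close to a fixed embedding $\Rightarrow$ embedding'' survives parametrically over the compact $K$ and relative to the already-embedded region near $\partial K$ and near $\{0,1\}$; this is a routine compactness argument once the $C^0$-smallness is chosen after fixing a tubular-neighbourhood-type constant for the family $\gamma(k)$.

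Concretely, the steps in order: (1) verify openness and ampleness of the $\varepsilon$-transversality relation in each fibre, noting that corank $1$ and a chosen coorientation make the relation a fibrewise half-space complement of a convex cone, hence ample for $\rank(\SD)\geq1$; (2) invoke the parametric, relative ($C^0$-dense) convex integration theorem \cite[Theorem 18.4.1]{EM} (or its formulation for sections of $1$-jet spaces over $[0,1]$) with parameter manifold $K$, relative to $\Op(\partial K)$ and to $\Op(\{0,1\})$, obtaining the homotopy $(\widetilde\gamma,\widetilde{F_s})$ with the stated $s=0$, $s=1$, and relativity properties and with $C^0$-closeness of $\widetilde\gamma(k,s)$ to $\gamma(k)$; (3) fix in advance, using compactness of $K$, a uniform constant $\delta>0$ such that any immersion of $\NS^1$ into $M$ that is $\delta$-$C^0$-close to some $\gamma(k)$ is automatically an embedding, and run convex integration with $C^0$-error below $\delta$, so that every $\widetilde\gamma(k,s)$ is an embedding; (4) conclude that the homotopy in fact lands in $\Embt^f([0,1];M,\SD)$ throughout and in $\Embt^\varepsilon([0,1];M,\SD)$ at $s=1$, giving all the asserted properties. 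The proof of the earlier non-relative flexibility lemma is the same argument with $K$ a point, which is why the text can say ``the proof is analogous''.
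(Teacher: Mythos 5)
Your overall route is the same one the paper takes: like its horizontal analogue (Proposition \ref{prop:hPrincipleEpsilon}, via Lemma \ref{lem:hPrincipioEpsilonHorizontal}), this proposition is obtained by checking that $\varepsilon$-transversality is an open, fibrewise ample first-order relation (your discussion of why the complement is a convex cone and why $\rank(\SD)\geq 1$ suffices here, in contrast with $\rank \geq 2$ in the horizontal case, is correct and matches the paper) and then invoking parametric, relative, $C^0$-dense convex integration as in \cite[Theorem 18.4.1]{EM}.

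There is, however, a genuine gap in your step (3), and it sits exactly at the point you yourself single out as the main obstacle. It is not true that an immersion of a compact $1$-manifold that is $C^0$-close to an embedding is itself an embedding, nor that embeddings form a $C^0$-open subset among immersions: inserting an arbitrarily small kink (a tiny loop) into an embedded arc produces an immersion that is $C^0$-close to the original but not injective, and one can even keep it $\varepsilon$-transverse. Embeddings are open in the $C^1$-topology, but convex integration gives no $C^1$-control -- it creates precisely the rapid tangential oscillation for which small self-intersections are the danger -- so no uniform $\delta>0$ with the property you postulate exists, and ``$\varepsilon$-transverse immersion plus $C^0$-closeness to $\gamma(k)$ implies embedded'' cannot be the mechanism. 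The statement the paper relies on is the directed-\emph{embedding} form of the convex integration theorem, in which embeddedness of the whole isotopy is part of the conclusion and is preserved by the construction itself (the oscillations are organised in small flowboxes/tubes along the curve so that injectivity is kept at every stage), rather than recovered a posteriori from $C^0$-closeness. To repair your argument you should either cite that form of the theorem, as the paper does, or supply an actual bookkeeping argument for embeddedness of the convex-integration output (for instance general position in the normal directions when $\dim M\geq 3$, together with a more careful construction in the lowest-dimensional case); steps (1), (2) and (4) of your plan are fine and agree with the paper.
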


\subsubsection{The punchline}

We obtain the following commutative diagram:
\begin{center}
\begin{tikzcd}
\Embt(M,\SD) \arrow[r] \arrow[d] & \Embt^f(M,\SD) \arrow[d, "\cong"] \\
\Embt^\varepsilon(M,\SD) \arrow[r, "\cong"] & \Embt^{f,\varepsilon}(M,\SD),
\end{tikzcd}
\end{center}
telling us that we should focus on the inclusion $\Embt(M,\SD)\hookrightarrow \Embt^\varepsilon(M,\SD)$. We will do so to prove Theorem \ref{thm:TransverseEmbeddings}.

\subsubsection{The immersion case}

We can also define the space of $\varepsilon$-transverse immersions $\Immt^\varepsilon(M,\SD)$ and deduce that:
\begin{lemma} \label{lem:eTransverselImm}
Let $\SD$ be a cooriented, corank-$1$ distribution of rank at least $1$. The map $\Immt^\varepsilon(M,\SD) \to \Immt^f(M,\SD)$ is a weak homotopy equivalence.
\end{lemma}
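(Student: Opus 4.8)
The plan is to run the same ``formal $\Rightarrow$ $\varepsilon$'' packaging used for embedded transverse curves, which in the immersed case is strictly simpler since no injectivity of the underlying map needs to be tracked. First I would introduce the space of \emph{formal $\varepsilon$-transverse immersions}
\[ \Immt^{f,\varepsilon}(M,\SD) := \left\{ (\gamma,F) \ :\ \gamma \in \SL(M),\quad F \in \Mon_{\NS^1}(T\NS^1,\gamma^*TM),\quad \measuredangle(F,\gamma^*\SD) > -\varepsilon \right\}, \]
restricted to the positive component whenever $\SD$ is cooriented. There is a tautological inclusion $\Immt^\varepsilon(M,\SD) \hookrightarrow \Immt^{f,\varepsilon}(M,\SD)$ sending $\gamma$ to $(\gamma,\gamma')$, and a map $\Immt^f(M,\SD) \to \Immt^{f,\varepsilon}(M,\SD)$ obtained by lifting $F\colon T\NS^1 \to \gamma^*(TM/\SD)$ along the metric isomorphism $\gamma^*\SD^\perp \cong \gamma^*(TM/\SD)$ to a monomorphism into $\gamma^*TM$ whose image is everywhere $g$-orthogonal to $\SD$ (so $\measuredangle = \pi/2 > -\varepsilon$). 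These fit into a zig-zag $\Immt^\varepsilon \to \Immt^{f,\varepsilon} \leftarrow \Immt^f$ realizing the scanning map of the Lemma (on genuinely transverse curves it agrees with it up to homotopy), so it suffices to show both arrows are weak homotopy equivalences.

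For the first arrow I would invoke convex integration for open, ample relations \cite[Theorem~18.4.1]{EM}, exactly as in the proof of Lemma~\ref{lem:hPrincipioEpsilonHorizontal}. The $\varepsilon$-transverse immersion relation in $J^1(\NS^1,M)$ is cut out by the strict inequalities ``$F\neq 0$'' and ``$\measuredangle(F,\SD)>-\varepsilon$'', hence is open; its fibre over $x\in M$ is $T_xM$ with the origin and the narrow closed cone $\{v:\measuredangle(v,\SD_x)\le-\varepsilon\}$ about the negative unit conormal removed, which is connected and whose convex hull is all of $T_xM$ as soon as $\rank(\SD)\ge 1$ (compare Figure~\ref{fig:epsilonTransverse}), hence ample. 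Because we work with immersions, no perturbation is needed to preserve embeddedness, so this is a simplification of the argument behind Proposition~\ref{prop:hPrincipleEpsilonTransverse}; running it parametrically and relative to the domain also yields the $C^0$-close relative version announced after the statement.

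For the second arrow I would build a homotopy inverse by rotating $F$ towards the coorientation. Pointwise, $F(p)/|F(p)|$ lies in the spherical cap of angular radius $\pi/2+\varepsilon<\pi$ about the positive unit conormal $\nu$; this cap is star-shaped about $\nu$, so the geodesic homotopy from $F$ to the $\nu$-direction stays inside the cap and remains a monomorphism, deforming $\Immt^{f,\varepsilon}(M,\SD)$ onto the sublocus where $\measuredangle(F,\SD)>0$. Postcomposition there with the quotient $\gamma^*TM \to \gamma^*(TM/\SD)$, which is injective on that sublocus, produces a point of $\Immt^f(M,\SD)$, and the two star-shaped-cap homotopies certify that this is a homotopy inverse of the lift map. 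Composing the two arrows exhibits $\Immt^\varepsilon(M,\SD)\to\Immt^f(M,\SD)$ as a weak homotopy equivalence.

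I expect the argument to be largely mechanical; the only point requiring (mild) care is the second arrow, where one must check that relaxing genuine transversality to the open condition $\measuredangle>-\varepsilon$ does not change the homotopy type, i.e. that the hemisphere $\{\measuredangle>0\}$ of fibrewise directions is a deformation retract of the super-hemispherical cap $\{\measuredangle>-\varepsilon\}$. This is precisely the star-shapedness noted above and is where $\varepsilon<\pi/2$ is used, while $\rank(\SD)\ge 1$ enters only in the ampleness count. Everything else is the by-now-standard passage from formal to $\varepsilon$-flexible data already carried out for embedded transverse curves.
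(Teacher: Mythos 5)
Your proposal is correct and follows essentially the same route as the paper: pass through the auxiliary space of formal $\varepsilon$-transverse immersions, apply convex integration for the open and ample $\varepsilon$-transversality relation (open since the inequalities are strict, ample since the fibrewise cap of angular radius $\pi/2+\varepsilon$ is connected with full convex hull once $\rank(\SD)\geq 1$), and compare formal transverse data with formal $\varepsilon$-transverse data via the metric splitting and the rotation/projection towards the coorientation. This is exactly the "convex integration plus projection to $\SD$" argument the paper invokes, with the immersed case being the simplification of the embedded one that you describe.
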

This $h$-principle is also relative in the parameter, relative in the domain, and $C^0$-close. We will henceforth focus on the inclusion $\Immt(M,\SD)\hookrightarrow \Immt^\varepsilon(M,\SD)$ in order to prove Theorem \ref{thm:TransverseImmersions}.

\subsubsection{Almost transversality}

To wrap up this section, consider the following definition:
\begin{definition}
Let $\SD$ be a cooriented, corank-$1$ distribution. Let $I$ be a $1$-dimensional manifold. The space of \textbf{almost transverse embeddings} is:
\[ \EmbAT(I;M,\SD) := \left\{\gamma \in \Emb(I,M) \,\mid\, \measuredangle(\gamma',\SD) \geq 0 \right\}. \]
We write $\EmbAT(M,\SD)$ in the particular case of loops.
\end{definition}
This may be regarded as the closure of the space of positively transverse embeddings $\Embt(I,M,\SD)$. We only introduce it because it will allow us to translate flexibility statements about horizontal curves to the transverse setting.

\section{Graphical models} \label{sec:graphicalModels}

One of the two standard projections used in Contact Topology to study legendrian (i.e. horizontal) knots in standard contact $(\R^3,\xi_\std = \ker(dy-zdx))$ is the so-called Lagrangian projection:
\begin{align*}
\pi_L: \R^3 \quad&\longrightarrow\quad \R^2\\
	(x,y,z)		\quad&\mapsto\quad (x,z).
\end{align*}
It projects $\xi_\std$, at each $p\in\R^3$, isomorphically onto the tangent space $T_{\pi_L(p)}\R^2$ of the base. This projects a legendrian knot to an immersed planar curve. From the projected curve one can recover the missing $y$-coordinate by integration:
\[ z(t) \quad=\quad z(t_0)+\int_{t_0}^t x(s)y'(s)ds. \]
Indeed, the integral on the right-hand side, when evaluated over the whole curve, computes the area it bounds due to Stokes' theorem. This turns the problem of manipulating Legendrian knots into a problem about planar curves satisfying an area constraint.

In this Section we introduce \emph{graphical models}. These are opens in Euclidean space, endowed with a bracket-generating distribution that is graphical over some of the coordinates; we call these the \emph{base}. Projecting to the base and manipulating curves there is analogous to using the Lagrangian projection. This line of reasoning is also classic in Geometric Control Theory: the tangent space of the base, upon choosing a framing, corresponds to the space of controls.

Graphical models are introduced in Subsection \ref{ssec:graphicalModels}. The related notion of ODE model appears in Subsection \ref{ssec:ODEModels}. In Subsection \ref{ssec:adaptedCharts} we explain how to cover any $(M,\SD)$ by graphical models. These local models will be used in Section \ref{sec:microflexibility} to manipulate horizontal and transverse curves.

\subsection{Graphical models} \label{ssec:graphicalModels}

Fix a rank $q$, an ambient dimension $n$, and a step $m$. We now introduce the main definition of this section. It may remind the reader of the ideas used to construct balls in Carnot-Caratheodory geometry \cite{Ge,Gro96,Mon}:
\begin{definition} \label{def:graphicalModel}
A \textbf{graphical model} is a tuple consisting of:
\begin{itemize}
\item a radius $r > 0$,
\item a constant-growth, bracket-generating, rank-$q$ distribution $\SD$ defined over the ball $B_r \subset \R^n$,
\item the standard projection $\pi: B_r \subset \R^n \rightarrow \R^q$ to the so-called \textbf{base},
\item a framing $\{X_1,\cdots,X_n\}$ of $TB_r$.
\end{itemize}
The Lie flag of $\SD$ will be denoted by
\[ \SD = \SD_1 \subset \SD_2 \subset \cdots \subset \SD_m = TB_r, \]
and we write $q_i = \rank(\SD_i)$.

This tuple must satisfy the following conditions:
\begin{itemize}
\item $\{X_1,\cdots,X_{q_i}\}$ is a framing of $\SD_i$.
\item Given $j = q_{i-1}+1,\cdots, q_i$, there is a formal bracket expression $A_j$ and a collection of indices $l^j_a=1,\cdots,q$ satisfying:
\[ X_j = A_j(X_{l_1^j},\cdots,X_{l_i^j}), \]
\item[a. ] $d\pi(X_j) = \partial_j$ for all $j=1,\cdots,q$. In particular, $d\pi$ is a fibrewise isomorphism between $\SD$ and $T\R^q$. 
\item[b. ] $X_j(0) = \partial_j$ for all $j=1,\cdots,n$.
\end{itemize}
\end{definition}
The first two conditions are unnamed because they simply state that the given framing is compatible with the Lie flag. Condition (a) says that the distribution is a connection over $\R^q$ and that its framing is the lift of the standard coordinate framing. Condition (b) controls $\{X_1,\cdots,X_n\}$, saying that they agree with the standard coordinate directions at the origin. This will allow us to describe, quantitatively, how paths in the base lift to horizontal curves. As one may expect, we will be able to estimate this up to an error of size $O(r)$.

\subsection{ODE models} \label{ssec:ODEModels}

Since $\SD$ is a connection over $\R^q$, any curve $\gamma: \R \longrightarrow \R^q$ can be lifted to a horizontal curve of $\SD$, uniquely once a lift of $\gamma(0)$ has been chosen. Conversely, any horizontal curve is recovered uniquely from its projection by lifting (using the appropriate initial point). This is a consequence of the fundamental theorem of ODEs. 

The caveat is that $\SD$ is only defined over $B_r$, so the claimed lift may escape the model and therefore not exist for all times; this is the usual issue one encounters with non-complete flows. Still, the lift 
\[ \lift(\gamma): U \longrightarrow (B_r,\SD) \]
is uniquely defined over some maximal open interval $U \subset \R$ that contains zero. In order to discuss this a bit further, we introduce:
\begin{definition}
Consider a curve $\gamma: \R \longrightarrow \R^q$ mapping to the base of a graphical model. Thanks to $\pi$, $\R^n$ can be seen as a fibration $F$ over $\R^q$, allowing us to consider the tautological map
\[ \Psi: \gamma^*F \cong \R \times \R^{n-q} \quad\longrightarrow \quad \R^n \]
that is transverse to $\SD$ (whenever the later is defined).

The \textbf{ODE model} associated to $\gamma$ (and to the graphical model) consists of:
\begin{itemize}
\item an open subset $D \subset \gamma^*F$,
\item the tautological map $\Psi: D \longrightarrow B_r$,
\item the line field $\Psi^*\SD$, whose domain of definition is $D$.
\end{itemize}
\end{definition}
That is, $(D,\Psi)$ parametrises the region of $B_r$ that lies over $\gamma$. Do note that $\Psi$ is an immersion/embedding only if $\gamma$ itself is immersed/embedded. Our discussion above states that lifting $\gamma$ amounts to choosing a basepoint in $D$, solving the ODE $\Psi^*\SD$, and pushing forward with $\Psi$.

\begin{figure}[h] 
	\includegraphics[scale=0.35]{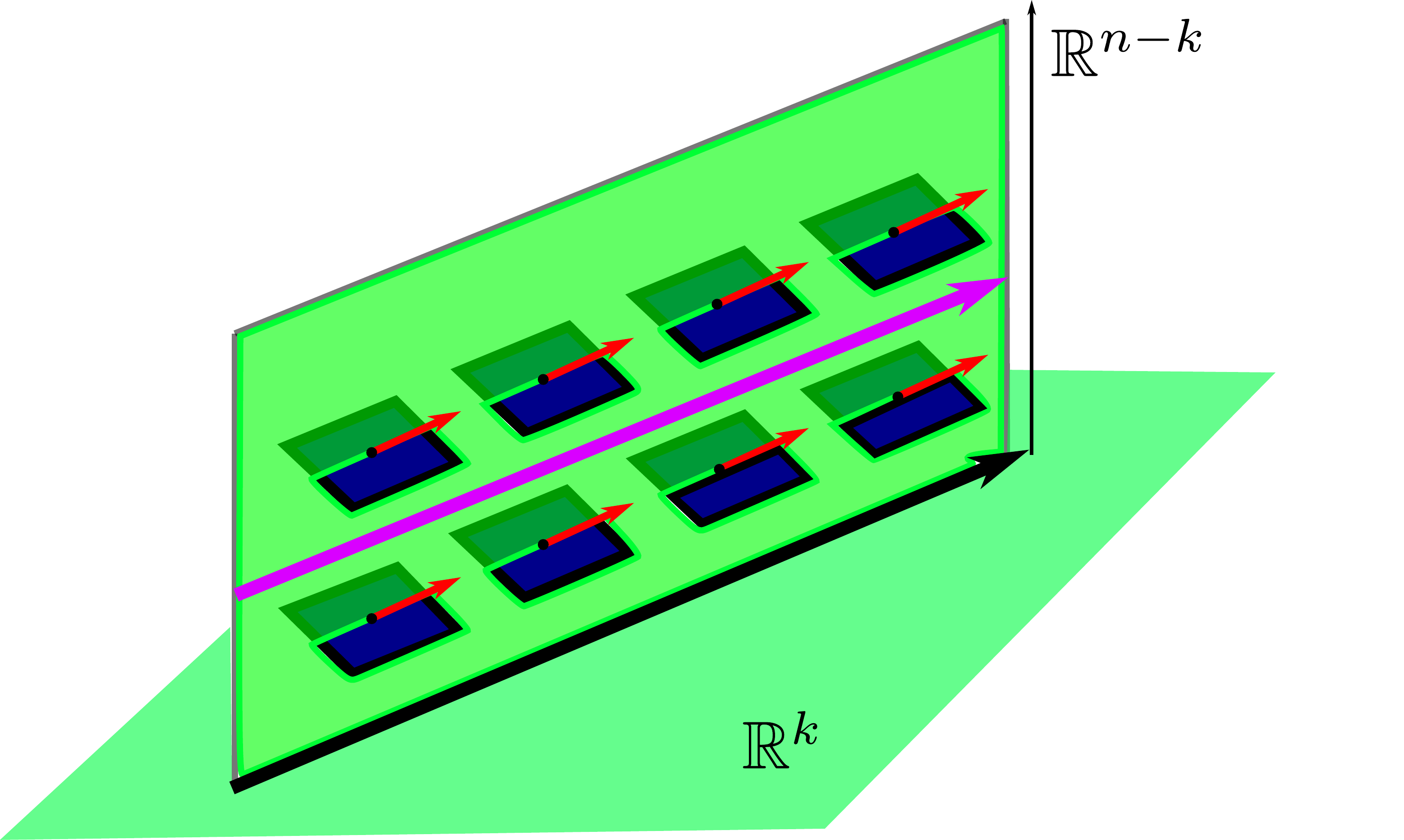}
	\centering
	\caption{A graphical model and an ODE model. The distribution is seen as a connection, where $T\R^n = \SD \oplus \ker d\pi$. A curve $\gamma$ is shown in the base, with its ODE model lying above. $\SD$ restricts to the ODE model as a line field. A lift of $\gamma$ is shown in magenta.}\label{GraphicalModel}
\end{figure}

The reason behind introducing ODE models is that they allow us to state the following trivial lemma:
\begin{lemma} \label{lem:ODEModel}
Fix a graphical model and consider the following objects:
\begin{itemize}
\item A curve $\gamma: [0,1] \longrightarrow \R^q$ mapping to the base. 
\item A (defined for all time) lift $\lift(\gamma): [0,1] \longrightarrow B_r$ of $\gamma$.
\item The ODE model $(D,\Psi,\Psi^*\SD)$ of $\gamma$.
\item The unique integral curve $\nu: [0,1] \longrightarrow (D,\Psi^*\SD)$, graphical over $[0,1]$, such that $\Psi \circ \nu = \lift(\gamma)$.
\end{itemize}
Then, there is a constant $\delta > 0$ and coordinates $\phi: [0,1] \times B_\delta \longrightarrow D$ such that:
\begin{itemize}
\item $\phi(t,0) = \nu(t)$,
\item $\phi^*\Psi^*\SD$ is spanned by $\partial_1$.
\end{itemize}
\end{lemma}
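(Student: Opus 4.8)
The plan is to recognise Lemma \ref{lem:ODEModel} as essentially a restatement of the flow-box (tubular neighbourhood) theorem for the nonsingular line field $\Psi^*\SD$ along the integral curve $\nu$, with the added bookkeeping that the domain $D$ is a priori only an open neighbourhood of $\nu([0,1])$ inside $\gamma^*F \cong \R \times \R^{n-q}$, and that we want coordinates that are globally a product $[0,1] \times B_\delta$ rather than merely a thin tube. First I would record that $\Psi^*\SD$ is a genuine line field on the open set $D$: condition (a) of Definition \ref{def:graphicalModel} says $d\pi$ is a fibrewise isomorphism $\SD \to T\R^q$, so $\SD$ is transverse to the fibres of $\pi$, hence $\Psi^*\SD$ is transverse to the fibres of $\gamma^*F \to [0,1]$ and in particular nowhere vanishing and nowhere tangent to $\{t\} \times \R^{n-q}$. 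The curve $\nu$ is, by hypothesis, an integral curve of $\Psi^*\SD$ graphical over $[0,1]$, i.e. $t \mapsto \nu(t)$ has first coordinate $t$.

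Next I would build the coordinates $\phi$ explicitly by flowing. Pick the unit (or $\partial_t$-normalised) vector field $Z$ spanning $\Psi^*\SD$ whose first component is $1$; since $\SD$ is a connection this normalisation is canonical and $Z$ is defined on all of $D$. For $w \in \R^{n-q}$ near $0$, let $\phi(t,w)$ be the time-$t$ value of the integral curve of $Z$ starting at the point $(0,\nu(0)_{\mathrm{fib}} + w)$ in the fibre over $0 \in [0,1]$; equivalently $\phi(t,w) = \Phi^Z_t(0,\nu(0)+w)$ where $\Phi^Z$ is the flow of $Z$. Because $Z$ has first component $1$, its flow advances the $[0,1]$-coordinate at unit speed, so $\phi(t,w)$ indeed lies over $t$, and $\phi(t,0) = \nu(t)$ by uniqueness of integral curves. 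The map $w \mapsto \phi(0,w)$ is the inclusion of a neighbourhood of $\nu(0)$ in the fibre, hence a local diffeomorphism onto its image; since $\phi$ intertwines the flow of $Z$ with the flow of $\partial_1$, it is a local diffeomorphism everywhere it is defined, and $\phi^*\Psi^*\SD$ is spanned by $\partial_1$ by construction.

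The one genuine point to check — and the place where the statement is slightly more than a pointwise flow-box — is that the flow $\Phi^Z_t(0,\nu(0)+w)$ remains defined, and inside $D$, for \emph{all} $t \in [0,1]$, provided $w$ lies in a sufficiently small ball $B_\delta$. Here I would argue by compactness: $\nu([0,1])$ is a compact subset of the open set $D$, so it has a compact neighbourhood $N \subset D$; the trajectory $t \mapsto \Phi^Z_t(0,\nu(0)+w)$ depends continuously on $w$ and equals $\nu(t) \in N$ at $w=0$, so by continuity of the flow (and, if one wants to be careful, a standard escape-time argument showing the set of $w$ for which the trajectory stays in $N$ on all of $[0,1]$ is open) there is $\delta > 0$ with $\Phi^Z_\cdot(0,\nu(0)+w)$ defined on $[0,1]$ and contained in $N$, hence in $D$, for all $\|w\| < \delta$. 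Shrinking $\delta$ further if necessary to ensure injectivity on each fibre (again using that $w \mapsto \phi(0,w)$ is an embedding near $0$ and that $\phi$ commutes with the flows, uniformly by compactness of $[0,1]$) makes $\phi \colon [0,1] \times B_\delta \to D$ an embedding onto an open neighbourhood of $\nu([0,1])$ with the two asserted properties. I expect this compactness/escape-time bookkeeping to be the only mildly delicate step; everything else is the classical straightening of a nonvanishing vector field, which is why the authors call it a trivial lemma.
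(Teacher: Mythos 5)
Your argument is correct and coincides with the paper's, which simply dismisses the lemma as "a consequence of the flowbox theorem": what you have written out (normalising the line field so its $\partial_t$-component is $1$, flowing from the fibre over $0$, and using compactness of $\nu([0,1])$ plus an escape-time argument to get a uniform $\delta$) is precisely the standard proof of the flowbox statement along a compact integral segment. No discrepancy to report.
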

This is a consequence of the flowbox theorem, so we will call $\phi$ \textbf{flowbox coordinates}. This allows us to see horizontal/transverse curves as graphs of functions and see horizontality/transversality in terms of their slope. See Figure \ref{fig:ODEmodel}.

\begin{figure}[h] 
	\includegraphics[scale=0.5]{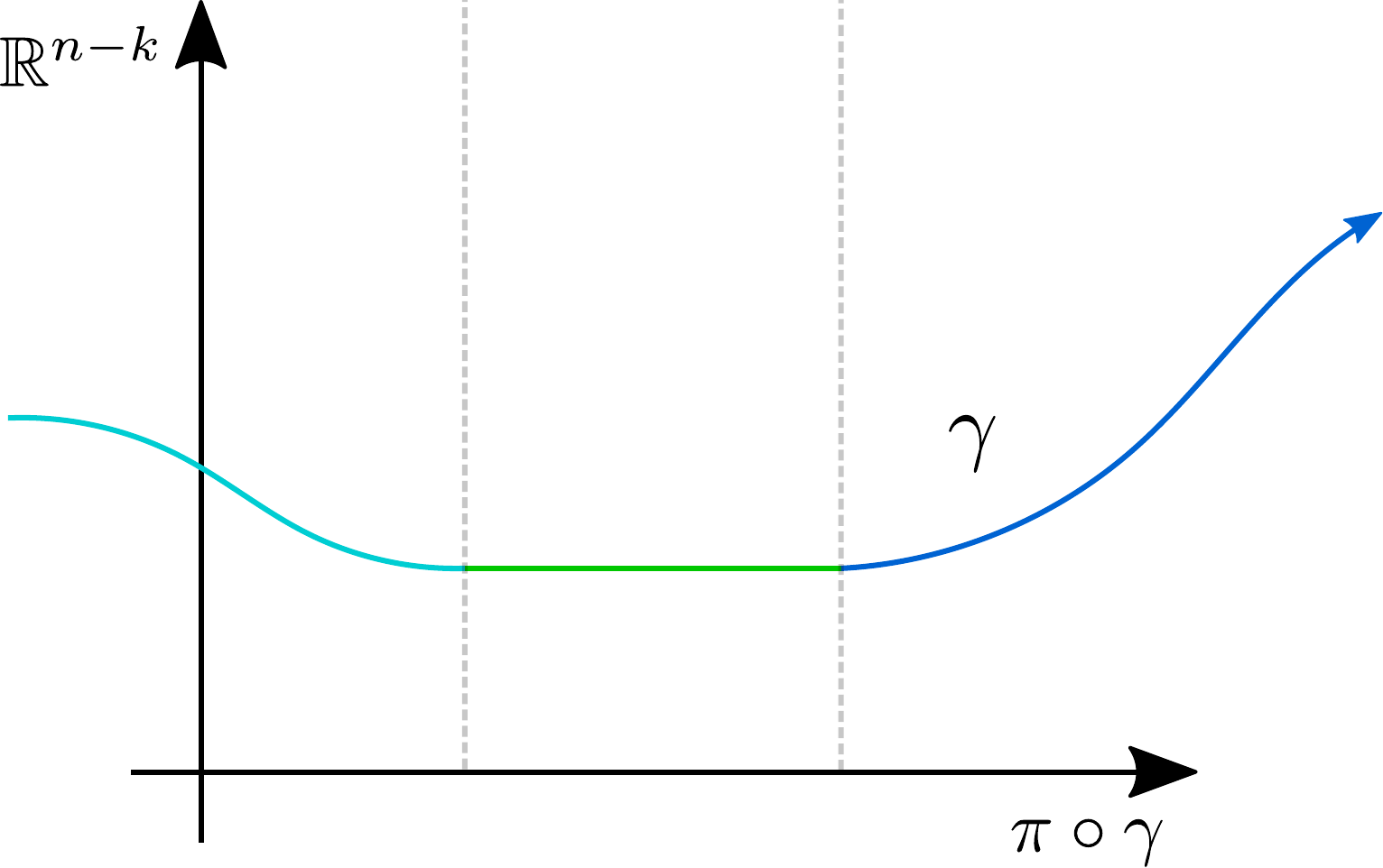}
	\centering
	\caption{An ODE model in flowbox coordinates. The distribution is of corank-$1$ and its coorientation is given by ``going up''. It contains a $\varepsilon$-horizontal curve $\gamma$ with  three differentiated regions. The left region is negatively transverse. In right region the curve is positively transverse. In the middle region $\gamma$ is parallel to the $x$-axis and is thus horizontal.}\label{fig:ODEmodel}
\end{figure}

\subsubsection{Size of ODE models} \label{sssec:ODEsize}

Using the properties of a graphical model, we can estimate how large the constant $\delta$ appearing in Lemma \ref{lem:ODEModel} can be.

Write $\pi_v: D \rightarrow \R^{n-q}$ for the standard projection to the vertical. According to the definition of graphical model, $\SD$ and the foliation by planes parallel to $\R^q$ differ by $O(r)$. In particular, the slope of $\SD$ is bounded by $O(r)$. The same holds for $\Psi^*\SD$ in $D$. This implies a bound for the vertical displacement:
\begin{equation} \label{eq:naiveBound}
|\pi_v \circ \lift(\gamma)(1) - \pi_v \circ \lift(\gamma)(0)| < \len(\gamma)O(r)
\end{equation}
where the right-most term is the length of $\gamma$. It is valid as long as it is smaller than the distance $d$ of $\gamma(0)$ to the boundary of the model. Choosing $\len(\gamma)$ sufficiently small, we can choose $\delta$ to be of the magnitude of $d$.

The punchline is that, in order to manipulate horizontal curves effectively on a manifold $(M,\SD)$, it will be necessary to cover it with graphical models whose radii are very small, as this will allow us to estimate vertical displacement of curves in an effective manner.

\subsection{Adapted charts} \label{ssec:adaptedCharts}

Fix a manifold $M$ and a bracket-generating distribution $\SD$. We now prove that $(M,\SD)$ can be covered by graphical models. For notational ease, let us introduce a definition first. Given a point $p \in M$, an \textbf{adapted chart} is a graphical model $(B_r \subset \R^n,\SD_U)$ together with a chart 
\[ \phi: (B_r,\SD_U) \longrightarrow (M,\SD), \]
such that $\phi^*\SD = \SD_U$ and $\phi(0) = p$.

\begin{proposition}\label{prop:adaptedCharts}
Let $(M,\SD)$ be a manifold endowed with a bracket-generating distribution. Then, any point $p \in M$ admits an adapted chart.
\end{proposition}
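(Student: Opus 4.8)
The plan is to build the adapted chart at a point $p$ by choosing a clever framing of $TM$ near $p$ that is compatible with the Lie flag of $\SD$ and that turns the distribution into a connection over a suitable set of base coordinates.

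\textbf{Step 1: choose coordinates making $\SD$ a connection.} Pick local coordinates $(x_1,\dots,x_q)$ (base) and $(y_1,\dots,y_{n-q})$ (fibre) on a small ball around $p$ such that $\SD(p) = \langle \partial_{x_1},\dots,\partial_{x_q}\rangle$ and $\ker d\pi(p)$ is transverse to $\SD(p)$, where $\pi$ is the projection to the $x$-coordinates. Shrinking the ball, transversality of $\ker d\pi$ and $\SD$ persists, so $\SD$ is a connection over the base: there are unique vector fields $X_1,\dots,X_q$ spanning $\SD$ with $d\pi(X_j)=\partial_{x_j}$. This gives condition (a) of Definition \ref{def:graphicalModel}.

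\textbf{Step 2: extend the framing using brackets.} Because $\SD$ is bracket-generating of constant growth, the Lie flag $\SD_1\subset\cdots\subset\SD_m=TM$ consists of distributions of locally constant rank $q_i$. Proceeding by induction on the flag level $i$: given the framing $X_1,\dots,X_{q_{i-1}}$ of $\SD_{i-1}$ where each $X_k$ with $k>q$ is already written as $A_k(X_{l_1},\dots)$ for some formal bracket expression, we need $q_i - q_{i-1}$ further vector fields spanning $\SD_i/\SD_{i-1}$ at $p$. Since $\SD_i$ is spanned by iterated brackets of sections of $\SD$ of length $\le i$, and $\SD_i(p)/\SD_{i-1}(p)$ is spanned by (the classes of) such brackets, one can select brackets $A_j(X_{l^j_1},\dots,X_{l^j_i})$ of the $X_1,\dots,X_q$ whose values at $p$ descend to a basis of $\SD_i(p)/\SD_{i-1}(p)$; by constant growth and continuity they remain a framing of $\SD_i$ on a possibly smaller ball. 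Set $X_j$ to be these brackets for $j=q_{i-1}+1,\dots,q_i$. After $m$ steps we obtain a full framing $\{X_1,\dots,X_n\}$ of $TM$ near $p$ satisfying the first two (unnamed) conditions.

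\textbf{Step 3: normalise at the origin and rescale.} The framing from Step 2 need not satisfy $X_j(p)=\partial_j$; but $\{X_j(p)\}$ is a basis of $T_pM$ with $\{X_1(p),\dots,X_q(p)\}=\{\partial_{x_1},\dots,\partial_{x_q}\}$, so a constant linear change of the fibre coordinates $(y_1,\dots,y_{n-q})$ (which does not disturb $\pi$, hence not condition (a)) arranges $X_j(p)=\partial_j$ for all $j$, giving condition (b). Finally, translate $p$ to the origin and rescale the coordinates so that the chart is defined on an honest ball $B_r$ of the desired (small) radius $r$; rescaling is a diffeomorphism, so it preserves all the structural conditions, and one reads off $q_i=\rank(\SD_i)$ as required. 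Taking $\phi$ to be the composition of these coordinate changes gives the adapted chart.

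The main obstacle is Step 2: one must argue carefully that the bracket expressions chosen at $p$ to span $\SD_i(p)/\SD_{i-1}(p)$ can be taken to be brackets of the distinguished connection vector fields $X_1,\dots,X_q$ (rather than arbitrary sections of $\SD$), and that their spanning property is stable on a neighbourhood — this is where constant growth (Assumption \ref{assumption:constantGrowth}) is essential. A secondary, routine point is bookkeeping the indices $l^j_a$ and the formal bracket expressions $A_j$ so that the inductive hypothesis is maintained; the Appendix's results on commutators of vector fields presumably streamline the estimates needed to see the bracket framings are genuine framings nearby.
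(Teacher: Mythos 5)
Your overall route is the same as the paper's: choose coordinates in which $\SD(p)$ is spanned by the first $q$ coordinate directions, so that $\SD$ becomes a connection over $\R^q$; frame $\SD$ by the lifts $X_1,\dots,X_q$ of the base coordinate fields; extend to a full frame by iterated brackets of these lifts (your Step 2 is exactly the paper's argument — the reduction from brackets of arbitrary sections to brackets of the frame fields follows from $C^\infty$-linearity, since $[fX,gY]=fg[X,Y]$ modulo terms lying in lower levels of the flag, and constant growth gives stability on a neighbourhood); and finish with a linear normalisation at $p$.

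The one genuine gap is in Step 3. The fact that $\{X_j(p)\}$ is a basis extending $\{\partial_{x_1},\dots,\partial_{x_q}\}$ does not by itself allow a linear change of the fibre coordinates to achieve $X_j(p)=\partial_j$ for $j>q$: any coordinate change fixing the base coordinates has its last $n-q$ coordinate vector fields tangent to the fibres of $\pi$, so such a change can only succeed if $X_j(p)\in\ker d_p\pi$ for all $j>q$. This verticality does hold in your construction, but for a reason you never state — and it is precisely the paper's closing observation: the lifts $X_1,\dots,X_q$ are $\pi$-related to the commuting fields $\partial_1,\dots,\partial_q$, hence every iterated bracket of length at least $2$ is $\pi$-related to $0$, so $X_{q+1},\dots,X_n$ are everywhere tangent to the fibres of $\pi$; only then does a fibrewise linear transformation produce condition (b) of Definition \ref{def:graphicalModel}. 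A secondary quibble: the final rescaling is unnecessary (the definition allows any radius $r>0$, so one simply restricts to a small ball), and as stated it is not innocuous — an isotropic rescaling multiplies a length-$\ell$ bracket of the rescaled lifts by $\lambda^\ell$, so it does not preserve condition (b) together with the requirement that the higher frame fields be exact bracket expressions of $X_1,\dots,X_q$; the natural rescalings here are the weighted (Carnot-type) ones, which you do not need anyway.
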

\begin{proof}
We argue at a fixed but arbitrary point $p \in M$. Fix a basis $\left\{Y_1,\cdots, Y_n\right\}$ of $T_pM$ such that $\left\{Y_1,\cdots, Y_q\right\}$ spans $\SD_p$. We construct local coordinates around $p$ using applying the exponential map. In these new coordinates we have that $Y_i$ is $\partial_i$. Condition (a) in the definition of graphical model follows.

In the new coordinates, we have local projection $\pi: \R^n \to \R^q$. This allows us to define a framing $F_\SD := \left\{X_1,\cdots, X_q\right\}$ of $\SD$ by lifting the coordinate vector fields $\partial_i$ of $\R^q$. Due to the bracket-generating condition, all vector fields around $p$ can be written as linear combinations of Lie brackets involving vector fields in $\SD$. Since all such vector fields are themselves sums of elements in $F_\SD$, it can be deduced that $TM$ is spanned by bracket expressions involving only $F_\SD$. This allows us to extend $F_\SD$ to a frame $\left\{X_1,\cdots, X_n\right\}$ such that:
\begin{itemize}
\item $\left\{X_1,\cdots, X_{q_i}\right\}$ spans $\SD_i$.
\item $X_j = A_j(X_{i_1},\cdots,X_{i_l})$, with $i_a \leq q$ and $A_j$ some bracket-expression.
\end{itemize}

By construction, the elements in $F_\SD$ commute with one another upon projecting to $\R^q$. I.e. their Lie brackets are purely vertical, meaning that the vector fields $\left\{X_{q+1},\cdots, X_n\right\}$ are tangent to the fibres of $\pi$. This implies that, by applying a linear transformation fibrewise, we can produce new coordinates in which $X_l(p)$ is $\partial_l$. Condition (b) follows.
\end{proof}

\subsubsection{Covering by nice adapted charts}

It is crucial for our arguments to be able to produce coverings by graphical models that are arbitrarily fine, and whose behaviour is controlled regardless of how fine we need them to be. That is the content of the following corollary:
\begin{lemma} \label{lem:adaptedCharts}
Let $(M,\SD)$ be a compact manifold endowed with a bracket-generating distribution. Then, there are constants $C, r > 0$ such that:
\begin{itemize}
\item[i. ] Any point $p \in M$ admits an adapted chart of radius $r$.
\item[ii. ] The bound $|X_j(x) - \partial_j| < C|x|$ holds for all such adapted charts and all elements $X_j$ in the corresponding framing.
\end{itemize}
The measuring in both properties is done using the Euclidean distance given by the adapted chart.
\end{lemma}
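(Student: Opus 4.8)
The statement is essentially a compactness upgrade of Proposition \ref{prop:adaptedCharts}: we already know every point admits an adapted chart, and we want to choose the radius $r$ uniformly and control the deviation of the framing vector fields $X_j$ from the coordinate directions $\partial_j$ by a single constant $C$, uniformly over all these charts. The plan is to first observe that the construction in the proof of Proposition \ref{prop:adaptedCharts} can be carried out ``continuously in $p$'': fix any smooth frame $\{Y_1,\dots,Y_n\}$ of $TM$ over a neighbourhood of each point, compatible with the Lie flag at that point, and check that the bracket expressions $A_j$ expressing $X_j$ in terms of $F_\SD$ can be chosen locally constant in $p$ (this uses the constant-growth assumption, which guarantees the ranks $q_i$ are locally constant and hence the combinatorial recipe producing the $A_j$ does not jump). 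Then the whole adapted chart — exponential coordinates, the lifted framing $F_\SD$, the extension to $\{X_1,\dots,X_n\}$, and the fibrewise linear correction making $X_l(p)=\partial_l$ — depends continuously (indeed smoothly) on $p$ within each member of a finite open cover of $M$.

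Next I would set up the quantitative estimate. For a single fixed $p$, the adapted chart is built by composing: (1) the Riemannian exponential map, which is a diffeomorphism on a ball whose radius is bounded below by the injectivity radius; (2) the lift operation $\partial_i \mapsto X_i$, which solves a linear system whose coefficients are the connection data of $\SD$ in the exponential chart; (3) the bracket-expression construction of $X_{q+1},\dots,X_n$, which involves finitely many Lie brackets — hence finitely many derivatives — of the $X_1,\dots,X_q$; (4) the fibrewise linear map normalising at the origin. Each of these steps is smooth in the point $p$ and in the space variable $x$, and at $x=0$ each $X_j$ equals $\partial_j$ by construction. Therefore $x \mapsto X_j(x) - \partial_j$ is a smooth map vanishing at $x=0$, so by Taylor's theorem with remainder it satisfies $|X_j(x)-\partial_j| \le C_p |x|$ on the chart, where $C_p$ bounds the first derivatives of $X_j$ over the ball.

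Finally I would invoke compactness. Cover $M$ by finitely many opens $V_1,\dots,V_N$ on each of which the adapted-chart construction is smooth and jointly defined in $(p,x)$. On $V_k$, the function $(p,x) \mapsto X_j(x)-\partial_j$ is smooth on a compact set, so its first $x$-derivatives are uniformly bounded by some $C_k$; set $C := \max_k C_k$. Likewise the radius of validity of each chart is bounded below by a positive $r_k$ (shrinking if necessary so that the exponential map is a diffeomorphism and the framing construction stays inside where it is defined), and we take $r := \min_k r_k$. Then every $p \in M$ lies in some $V_k$, admits an adapted chart of radius $r \le r_k$, and the bound (ii) holds with constant $C$. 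The main obstacle is the bookkeeping in the first step — verifying that the bracket expressions $A_j$, and hence the vector fields $X_{q+1},\dots,X_n$, can be produced in a way that varies continuously (not just pointwise) with $p$; once that is set up, the estimate and the compactness argument are routine.
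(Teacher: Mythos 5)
Your proposal is correct and follows essentially the same route as the paper's own proof: one observes that the construction of Proposition \ref{prop:adaptedCharts} is parametric in $p$ (with the bracket expressions $A_j$ made locally constant on the members of a sufficiently fine cover, and the final fibrewise linear normalisation being unique hence parametric), obtains constants $C_i, r_i>0$ over each member of the cover, and concludes by extracting a finite subcover and taking the extreme values of the constants. Your added detail on the Taylor/first-derivative bound for $X_j(x)-\partial_j$ is a correct elaboration of the estimate the paper leaves implicit.
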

\begin{proof}
The construction in Proposition \ref{prop:adaptedCharts} is parametric on $p$. This is certainly true for the exponential map, which yields the resulting local coordinates, the projection $\pi$, and thus the frame $F_\SD$. This is not necessarily the case, globally, for the choice of bracket expressions $A_j$, but it is still true if we argue on opens of some sufficiently fine cover $\{U_i\}$ of $M$. The fibrewise linear transformation chosen at the end of the argument is unique and thus parametric.

It follows that the statement holds for constants $C_i,r_i > 0$ over each $U_i$. We extract a finite covering to conclude the argument.
\end{proof}

Do note that, upon zooming-in at $p$, the distribution $\SD$ converges to a Carnot group called the nilpotentisation \cite[Chapter 4]{Mon}. That is, to a nilpotent Lie group endowed with a bracket-generating distribution that is left-invariant and invariant under suitable weighted scaling. This implies that, by taking a sufficiently fine cover of $(M,\SD)$, one can produce graphical models that are as close as required to a Carnot group. This is an improvement on Lemma \ref{lem:adaptedCharts}, but it is not needed for our arguments.

\section{Microflexibility of curves} \label{sec:microflexibility}

The results in this paper follow an overall strategy that is standard in $h$-principle. Namely: we first perform a series of simplifications that are meant to reduce the proof to a problem that is localised in a small ball. We call this \emph{reduction}. Reduction arguments can be technical but often follow some standard heuristics and patterns. Once we have passed to a localised setting, the second step begins. This is the core of the proof and requires some input that is specific to the geometric setup at hand. This step is called \emph{extension} because it often amounts to extending a solution from the boundary of small ball to its interior\footnote{The proof of Theorem \ref{thm:TransverseEmbeddings} follows these general lines. The proof of Theorem \ref{thm:embeddings} presents some subtleties that force us to do something slightly different; see Remark \ref{rem:whyNotTriangulate}}.

In this section we prove several lemmas dealing with deformations of horizontal and transverse curves; they are meant to be used in the reduction step. These deformations often take place along stratified subsets of positive codimension, and can therefore be understood as microflexibility phenomena for horizontal and transverse curves\footnote{Do note that, due to rigidity, horizontal curves are not microflexible in general.}. Proofs boil down to patching up local constructions happening in graphical models (Section \ref{sec:graphicalModels}).

In Subsection \ref{ssec:Thurston} we review Thurston's jiggling, which we use to triangulate our manifolds and thus argue one simplex at a time. Local statements taking place in graphical models are presented in Subsection \ref{ssec:projectingLifting}. We globalise these constructions in Subsections \ref{ssec:horizontalisation} (for horizontal curves) and \ref{ssec:transversalisation} (for transverse curves).

\subsection{Triangulations} \label{ssec:Thurston}

In order to reduce our arguments to a euclidean situation, we fix a (sufficiently fine) triangulation and we work on neighbourhoods of the simplices. For our purposes it is important that these triangulations are well-behaved. This can be achieved using the Thurston jiggling Lemma. We state it for the case of line fields (which is all we need):
\begin{lemma}[Thurston, \cite{Thur}] \label{lem:jiggling}
Let $N$ be a smooth $n$-manifold equipped with a line field $\xi$. Fix a metric $g$. Then, there exists a sequence of triangulations $\ST_b$ satisfying the following properties:
\begin{itemize}
\item[i. ] Each simplex $\Delta \in \ST$ is transverse to $\xi$.
\item[i'. ] Each $n$-simplex is a flowbox of $\xi$.
\item[ii. ] The radius of each simplex in $\ST_b$ is bounded above by $1/b$.
\item[iii. ] The number of simultaneously incident simplices in $\ST_b$ is bounded above by a constant independent of $b$.
\end{itemize}
\end{lemma}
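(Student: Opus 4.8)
\textbf{Proof proposal for Thurston's jiggling lemma (Lemma \ref{lem:jiggling}).}

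The plan is to follow Thurston's original argument \cite{Thur}, adapting it to the particularly simple case of a line field (rather than a general foliation or plane field). First I would fix an auxiliary smooth triangulation $\ST_0$ of $N$ and pass to iterated barycentric (or standard) subdivisions $\ST_b'$; this already gives property (ii) (radius bounded by $1/b$ after enough subdivisions) and property (iii) (the combinatorics of barycentric subdivision keeps the number of simplices incident to a vertex bounded by a constant depending only on $n$, independent of $b$). What these naive subdivisions do \emph{not} give is transversality to $\xi$: a $k$-simplex may be tangent to the line field at some point. The content of the lemma is that one can perturb the subdivided triangulation — keeping the combinatorics fixed and moving vertices by an amount controlled relative to the simplex size — so that each simplex becomes transverse to $\xi$, and moreover each top-dimensional simplex is a flowbox.

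The key step is the perturbation, which I would carry out inductively over the skeleton. Over a single simplex $\Delta$ of diameter $\sim 1/b$, the line field $\xi$ is $C^1$-close to the constant line field $\xi(p)$ at the barycenter $p$, with error $O(1/b)$. For the constant line field, transversality of a $k$-simplex is an open and dense condition on its vertices: a generic affine $k$-plane in $\R^n$ is transverse to a fixed line as long as $k < n$, and for $k = n$ a generic $n$-simplex, after a small affine adjustment, can be put in flowbox position for the constant field. One then invokes a standard parametric-transversality/general-position argument: choosing the perturbations of the vertices generically (within balls of radius $\epsilon/b$ for small fixed $\epsilon$) makes every simplex in the star of every vertex transverse to $\xi$ simultaneously, because the error between $\xi$ and its ``frozen'' value on each simplex is $O(1/b)$, which is dominated by the transversality margin obtained for the constant field once $b$ is large. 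Property (i') — that each $n$-simplex is a flowbox — then follows because an $n$-simplex transverse to a line field, and small enough that $\xi$ is nearly constant on it, is automatically a flowbox by the flowbox theorem (integrating $\xi$ gives coordinates in which $\xi = \partial_1$ and the simplex is a graph-like region). Properties (ii) and (iii) are preserved because the perturbation moves vertices by at most a fixed fraction of the simplex size and does not alter the simplicial structure.

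The main obstacle, and the only place where genuine care is needed, is making the inductive general-position argument \emph{uniform in} $b$: one must check that the size of the allowed vertex perturbations, and the transversality margin achieved, can be taken to be a fixed fraction of $1/b$ rather than degenerating as $b \to \infty$. This is exactly where the hypothesis of bounded local complexity (property (iii)) is used as an input to the construction, not merely as an output: since each vertex lies in a bounded number of simplices, the finitely many genericity conditions imposed at each vertex can be satisfied simultaneously with a margin depending only on $n$ and on the $C^1$-geometry of $\xi$ (which, after rescaling the ball of radius $1/b$ to unit size, is uniformly controlled). I would organize this by rescaling: work in a chart where $\Delta$ has unit size, so that $\xi$ becomes $C^1$-close to a constant field with error going to $0$ as $b \to \infty$, perform the transversalization for the constant field with a definite margin, and then note that the true field inherits transversality once the error is below that margin. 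Since the paper only needs the line-field case and does not need any quantitative refinement beyond (i)--(iii), I would keep this argument brief and cite \cite{Thur} for the general statement, indicating only the simplifications afforded by $\dim \xi = 1$.
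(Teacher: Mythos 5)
The paper does not actually prove this lemma: it is quoted verbatim from Thurston \cite{Thur} (only the statement, specialised to line fields, is given, together with the remark that Condition (iii) is what makes (i') provable). So there is nothing internal to compare your argument against; what can be assessed is whether your sketch of Thurston's jiggling argument is sound. Its overall architecture is right: subdivide finely, rescale each simplex to unit size so that $\xi$ becomes $C^1$-close to a constant field, perturb vertices in general position with a margin that is uniform in $b$, and use the bounded local complexity to satisfy the finitely many genericity conditions at each vertex simultaneously. You also correctly identify the crux, namely uniformity of the perturbation size and transversality margin as $b \to \infty$.

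The genuine gap is your choice of subdivision scheme. Iterated barycentric subdivision gives neither of the two properties your uniformity argument needs. First, it does not keep the incidence numbers bounded: a vertex of the original triangulation roughly doubles its valence with each barycentric subdivision (in dimension $2$, $k$ incident triangles become $2k$), so Condition (iii) fails with a constant independent of $b$. Second, and more damagingly for your rescaling step, barycentric subdivision degrades the shape of the simplices: their fatness (ratio of inradius to diameter) tends to $0$ under iteration, so after rescaling to unit size the simplices are arbitrarily thin. For a thin simplex the transversality margin achievable for the frozen constant field is correspondingly tiny, and it is no longer dominated by the $O(1/b)$ error between $\xi$ and its frozen value; the same degeneration also undermines the claim that a transverse $n$-simplex is automatically a flowbox, since that step needs the faces to be transverse to $\xi$ with an angle bounded below uniformly in $b$. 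This is precisely why Thurston's proof uses the crystalline (standard/edgewise, Freudenthal-type) subdivision, which preserves both the local combinatorics and a uniform lower bound on fatness, and then jiggles vertices within the resulting uniformly controlled geometry. Your parenthetical ``(or standard)'' points at the correct fix, but the argument as written leans on barycentric subdivision and would fail there; replacing it by a bounded-geometry subdivision scheme (and noting that Conditions (ii)--(iii) for such schemes are what feed the uniform general-position step, exactly as the paper's remark after the lemma indicates) repairs the sketch.
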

Conditions (i) and (i'') say that $\xi$ is almost constant in the coordinates provided by $\Delta$. Condition (ii) says that the triangulations are becoming finer as $b$ increases (and indeed all of them can be assumed to be refinements of some given triangulation). Condition (iii) states that the combinatorics of the triangulation remain controlled upon refinement (which is needed to prove Condition (i')).

We will also need a version for manifolds with the boundary:
\begin{corollary} \label{cor:jigglingBoundary}
In Lemma \ref{lem:jiggling}, suppose that $N$ has boundary. Then we can furthermore assume that:
\begin{itemize}
\item $\ST_b$ extends a triangulation of the boundary $\partial N$.
\item Conditions (i), (i'), (ii), (iii) hold for all simplices of $\ST_b$ not fully contained in $\partial N$.
\item The pair $(\partial N, \ST_b|_{\partial N})$ satisfies the conclusions of the Lemma.
\end{itemize}
\end{corollary}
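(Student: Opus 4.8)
The plan is to deduce the boundary version from the closed-manifold version of Thurston's jiggling (Lemma \ref{lem:jiggling}) by a doubling-and-restriction argument, followed by a separate jiggling near the boundary collar. First I would set up the geometry near $\partial N$. Using the line field $\xi$ and the metric $g$, I can choose a collar neighbourhood $\partial N \times [0,\epsilon) \hookrightarrow N$; since $\xi$ is a line field, after a small perturbation (which does not affect the statement) I may arrange that along the collar $\xi$ is transverse to the slices $\partial N \times \{t\}$, or alternatively tangent to the collar direction — either way $\xi|_{\partial N}$ is a well-defined line field on the closed manifold $\partial N$. Apply Lemma \ref{lem:jiggling} to $(\partial N, \xi|_{\partial N})$ to obtain triangulations $\ST_b^{\partial}$ of the boundary satisfying (i), (i'), (ii), (iii). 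This gives the third bullet of the corollary directly.

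Next I would extend each $\ST_b^{\partial}$ into the collar. The model situation is a product $\partial N \times [0,1/b]$ with a triangulation of the base and a line field that is close to a product line field; I can triangulate the product by coning or by the standard prism decomposition of $\Delta \times [0,1/b]$ into simplices, choosing the subdivision fine enough (comparable to $1/b$) that conditions (i), (ii) and (iii) persist, and condition (i') holds for the top-dimensional prisms because $\xi$ is almost constant there by construction of the collar. This produces a triangulation of a neighbourhood $\partial N \times [0, \epsilon')$ of the boundary that restricts to $\ST_b^{\partial}$ on $\partial N$ and whose simplices not contained in $\partial N$ satisfy all four conditions. Finally I would handle the interior: on the complement of a slightly smaller collar, $N$ is a manifold with a product end, and I can either double it across the boundary or simply apply Lemma \ref{lem:jiggling} on an open manifold obtained by slightly enlarging $N$ (the jiggling process is local and compatible with a prescribed triangulation on an open set, by the relative version implicit in Thurston's construction), arranging that the interior triangulation agrees with the collar triangulation on the overlap. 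Patching the two gives the desired $\ST_b$.

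The main obstacle I anticipate is the compatibility on the overlap region between the collar triangulation and the interior triangulation: one needs the interior jiggling to be performed \emph{relative} to an already-fixed triangulation on a neighbourhood of the boundary, and to check that Thurston's construction (which subdivides via affine pieces and then perturbs) can be carried out rel a fixed subcomplex while retaining conditions (i)–(iii) and, crucially, (i'). This is where conditions (iii) — bounded combinatorics under refinement — is doing real work, since (i') for the interior simplices abutting the boundary is proven, as in the closed case, by a counting argument that requires the combinatorics near the seam to stay controlled. The remaining steps — the collar construction, the prism decomposition, and the radius/transversality estimates — are routine once this relative jiggling is in hand, and in fact Thurston's original argument is already local enough that the relative statement follows by inspection; so I would present this as ``the proof of Lemma \ref{lem:jiggling} applies verbatim rel a fixed triangulation of $\Op(\partial N)$, and the collar of $\partial N$ is triangulated by the standard prism subdivision, fine enough that (i)–(iii) hold,'' leaving the elementary verifications to the reader.
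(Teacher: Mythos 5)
The paper does not actually prove Corollary \ref{cor:jigglingBoundary}: it is stated as a routine extension of Thurston's jiggling and then used, so there is no argument of the authors to compare yours against. Your overall architecture --- triangulate $\partial N$ finely, extend over a collar by prism subdivisions, then run Thurston's construction on the interior relative to the collar, invoking its local character --- is the natural route, and you correctly flag the relative jiggling at the seam as the place where the real work sits.

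There is, however, a genuine flaw in your first step. Under either of your normalizations ($\xi$ transverse to the collar slices, or $\xi$ spanning the collar direction), the restriction $\xi|_{\partial N}$ is a line field \emph{along} $\partial N$ that is nowhere tangent to $\partial N$; it is not a sub-bundle of $T\partial N$, so the instruction ``apply Lemma \ref{lem:jiggling} to $(\partial N,\xi|_{\partial N})$'' does not parse, and the third bullet is not obtained this way. Relatedly, perturbing $\xi$ is not innocuous: the corollary is invoked in Section \ref{sec:hPrincipioTransverse} for the specific line fields $\ker(d\pi_K)$ and $\ker(d\pi_{[0,1]})$, and the later arguments need transversality and the flowbox condition (i') for \emph{those} fields. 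Conclusions proved for a perturbed field $\xi'$ transfer back to $\xi$ only via the uniform (in $b$) transversality constants produced by the jiggling, and the admissible size of the perturbation depends on constants you only know after jiggling, so this needs an actual argument rather than the parenthetical ``does not affect the statement''. The cleaner treatment keeps $\xi$ fixed and separates the two behaviours of $\xi$ along $\partial N$: where $\xi$ is transverse to $\partial N$, every simplex contained in $\partial N$ is automatically transverse to $\xi$, so for the boundary one only needs fineness and bounded incidence; where $\xi$ is tangent to $\partial N$ (as happens along $\partial\D^k\times[0,1]$ in the paper's applications), one jiggles the boundary with respect to the honestly restricted line field. With that correction, your collar-prism extension and the relative interior jiggling can proceed essentially as you outline.
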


\subsection{Local arguments} \label{ssec:projectingLifting}

We now present a series of statements dealing with families of curves mapping into graphical models. In order to streamline notation, let us denote the target graphical model by $(V,\SD)$. Its projection to the base $\R^q$ is denoted by $\pi$ and the projection to the vertical by $\pi_\vert$. We also write $K$ for the smooth compact manifold serving as the parameter space of the families.

\subsubsection{Horizontalisation in graphical models}

We will often construct horizontal curves as lifts of curves in $\R^1$. The following is a quantitative statement about the existence of lifts: 
\begin{lemma} \label{lem:horizontalisationGraphical}
Consider a family $\gamma: K \rightarrow \Emb^\varepsilon([0,1];\D_{r_1},\SD)$. Then, there is a unique family $\nu: K \rightarrow \Emb([0,\delta];\D_r,\SD)$ satisfying
\begin{itemize}
\item $\pi \circ \nu = \pi \circ \gamma$.
\item $\nu(k)(0) = \gamma(k)(0)$.
\end{itemize}
Furthermore: Let $l$ be an upper bound for the velocity $||(\pi \circ \gamma(k))'||$. Then we can assume
\[ \delta > \dfrac{r-r_1}{l(\varepsilon+O(r))}. \]
\end{lemma}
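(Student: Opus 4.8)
The plan is to construct $\nu(k)$ by lifting the curve $\pi\circ\gamma(k)$ through the connection $\SD$, using that $d\pi$ is a fibrewise isomorphism between $\SD$ and $T\R^q$ (condition (a) of Definition \ref{def:graphicalModel}), and to control the lifetime of the lift by the naive vertical-displacement bound \eqref{eq:naiveBound}. First I would recall that, since $\SD$ is a connection over the base, for each fixed $k$ the ODE of parallel transport along $\pi\circ\gamma(k)$ with initial condition $\gamma(k)(0)$ has a unique maximal solution; call it $\nu(k)$. This immediately gives the two bullet properties $\pi\circ\nu=\pi\circ\gamma$ and $\nu(k)(0)=\gamma(k)(0)$, and uniqueness follows from uniqueness of solutions of ODEs. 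Smooth dependence on $k$ of $\nu(k)$ (as a family into $\Emb([0,\delta];\D_r,\SD)$) is the standard smooth-dependence-on-parameters statement for ODEs, noting that $\gamma$ is a smooth family and the vector fields $X_1,\dots,X_q$ framing $\SD$ are smooth.

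Next I would establish the lifetime estimate, i.e. that the lift stays inside $\D_r$ for times $t\in[0,\delta]$ with $\delta$ as claimed. The key point is that $\gamma(k)$ is $\varepsilon$-horizontal, so $||(\pi\circ\gamma(k))'||$ controls the full speed of the lift up to the angle defect: writing $\gamma(k)'$ in the splitting $T\R^n=\SD\oplus\ker d\pi$ is not quite what we want; rather, $\nu(k)$ is the horizontal lift of $\pi\circ\gamma(k)$, so its speed is exactly $||d\pi|_\SD^{-1}((\pi\circ\gamma(k))')||$, and by condition (b) of the graphical model (the bound $|X_j(x)-\partial_j|<C|x|$, hence slope of $\SD$ bounded by $O(r)$ inside $\D_r$) the vertical component of $\nu(k)'$ has norm at most $O(r)$ times the horizontal speed. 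Actually the cleaner route, and the one matching the statement, is: the horizontal position $\pi\circ\nu(k)(t)=\pi\circ\gamma(k)(t)$ moves with speed $\le l$, so it travels distance $\le lt$ from $\pi\circ\gamma(k)(0)$; meanwhile $\gamma(k)(0)\in\D_{r_1}$, and $\gamma(k)$ being $\varepsilon$-horizontal means its vertical speed is at most $(\varepsilon+O(r))$ times its horizontal speed — wait, I should phrase the vertical bound directly for the lift $\nu$: since $\nu(k)$ is a genuine horizontal curve, its velocity lies in $\SD$, whose slope relative to the $\R^q$-planes is $O(r)$; combined with horizontal speed $\le l$ this gives vertical speed $\le l\cdot O(r)$, hence vertical displacement $\le lt\cdot O(r)$. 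So the total displacement of $\nu(k)(t)$ from the origin is at most $r_1 + lt + lt\cdot O(r) = r_1 + lt(1+O(r))$. Hmm — the denominator in the statement is $l(\varepsilon+O(r))$, which suggests the intended comparison is not with a genuine horizontal lift but keeps $\varepsilon$ in play; I would reconcile this by instead bounding, for the ODE-model picture, $|\pi_\vert\circ\nu(k)(t)-\pi_\vert\circ\nu(k)(0)|$ by $\len(\pi\circ\gamma(k)|_{[0,t]})\cdot O(r)\le lt\cdot O(r)$ and noting $|\pi_\vert\circ\nu(k)(0)|<r_1$, and then I would argue that the constraint keeping us inside $\D_r$ is really about the \emph{horizontal} coordinate staying within $r-r_1$-ish room — here the $\varepsilon$ enters because one typically compares against the original $\varepsilon$-horizontal $\gamma$ rather than the exactly-horizontal $\nu$. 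In any case the inequality $\delta>\frac{r-r_1}{l(\varepsilon+O(r))}$ is of the form ``time $\times$ (speed bound) $<$ (available room)'', and I would simply solve for $\delta$: we stay in $\D_r$ as long as $t\cdot l(\varepsilon+O(r)) < r-r_1$, which is exactly $t<\frac{r-r_1}{l(\varepsilon+O(r))}$, so any such $\delta$ works, and by shrinking if necessary $\nu$ is defined on all of $[0,\delta]$ with image in $\D_r$.

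Finally I would check the embedding claim: $\nu(k)$ is an embedding because $\pi\circ\nu(k)=\pi\circ\gamma(k)$ is an embedding (as $\gamma(k)\in\Emb^\varepsilon$ and $\pi$ restricted to an $\varepsilon$-horizontal curve is injective for $r$ small — its image is transverse enough to the fibres of $\pi$), and an injective immersion of a compact interval is an embedding; the immersion property is automatic since $\nu(k)'$ is the horizontal lift of $(\pi\circ\gamma(k))'\ne 0$. I expect the main obstacle to be bookkeeping the $O(r)$ terms honestly — specifically, pinning down precisely which quantity ($\varepsilon$ versus the slope $O(r)$, horizontal room $r-r_1$ versus vertical room) enters the lifetime bound so that the displayed inequality comes out exactly as stated, and making sure the estimate is uniform over $k\in K$ (which it is, since $K$ is compact and $l$ is a uniform velocity bound). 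Everything else — existence, uniqueness, smooth parametric dependence, embeddedness — is standard ODE theory applied fibrewise over $K$.
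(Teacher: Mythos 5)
Your construction of $\nu$ (fibrewise horizontal lifting of $\pi\circ\gamma(k)$ with initial value $\gamma(k)(0)$, uniqueness and smooth parametric dependence from ODE theory) is exactly the paper's, but the quantitative clause --- which is the real content of the lemma --- is not established in your write-up. The bound $\delta>\frac{r-r_1}{l(\varepsilon+O(r))}$ does not come from estimating the displacement of $\nu$ from the origin: as you yourself noticed, that route only yields the weaker lifetime $(r-r_1)/(l(1+O(r)))$. Nor is the constraint ``about the horizontal coordinate staying within $r-r_1$-ish room'': the horizontal coordinate of $\nu$ coincides with that of $\gamma$ and therefore automatically stays in the $r_1$-ball; the only danger is vertical escape. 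The paper's one-line argument is precisely the comparison you mention in passing but never carry out: $\gamma(k)$ and $\nu(k)$ have the same base projection, so their difference is purely vertical and vanishes at $t=0$; its growth rate is the difference of vertical speeds, bounded by $l(\varepsilon+O(r))$ because $\gamma(k)'$ makes angle less than $\varepsilon$ with $\SD$ while $\nu(k)'$ lies in $\SD$, and $\SD$ itself has slope $O(r)$ with respect to the coordinate planes. Hence $|\nu(k)(t)-\gamma(k)(t)|\le t\,l(\varepsilon+O(r))$, and since $\gamma(k)(t)\in\D_{r_1}$ for \emph{all} $t$ (this is where the hypothesis that $\gamma$ is defined on all of $[0,1]$ inside $\D_{r_1}$ enters), $\nu(k)(t)$ remains in $\D_r$ as long as $t\,l(\varepsilon+O(r))<r-r_1$. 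Without this comparison, your concluding ``time $\times$ speed $<$ room'' line is an assertion rather than a proof: no reason is given why $l(\varepsilon+O(r))$ bounds the relevant speed.

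A secondary issue: your justification of embeddedness is wrong as stated. The projection $\pi\circ\gamma(k)$ of an embedded $\varepsilon$-horizontal curve need not be injective (this is the typical situation for the Lagrangian projection of a Legendrian arc), so injectivity of $\nu(k)$ cannot be deduced from injectivity of its projection. The paper's own proof is silent on this point (in its applications the lifted curves are short, nearly straight segments for which injectivity of the projection, and hence of the lift, is clear), but the argument you propose does not hold in the generality in which you phrase it.
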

\begin{proof}
The uniqueness of $\nu$ is immediate from the discussion in Subsection \ref{ssec:ODEModels}, since $\nu$ is obtained from $\pi \circ \gamma$ by lifting with a given initial value. The bound on $\delta$ follows from the fact that the slope of $\SD$ with the horizontal is at most $\varepsilon+O(r)$, so the difference between $\gamma$ and $\nu$, which is purely vertical is bounded by $\delta.l.(\varepsilon+O(r))$. For $\nu$ to remain within the $r$-ball, this quantity must be smaller than $r-r_1$, yielding the claim.
\end{proof}
Do note that the coefficient in the expression $O(r)$ can be bounded above in terms of the derivatives of $\SD$. In Lemma \ref{lem:adaptedCharts} we observed that this coefficient can be bounded globally over a compact manifold.

\subsubsection{Stability of horizontalisation}

Given a family of horizontal curves, we may want to produce a nearby horizontal family by manipulating its projection to the base. The following lemma says that this is indeed possible:
\begin{lemma} \label{lem:horizontalisationStability}
Consider families 
\[ \gamma: K \rightarrow \Emb([0,1];\D_r,\SD), \qquad \alpha: K \rightarrow \Imm([0,1];\R^q) \]
such that $\pi \circ \gamma$ and $\alpha$ are $C^0$-close and their lengths are close. Then, there is a lift $\nu: K \rightarrow \Imm([0,1];\D_r,\SD)$ of $\alpha$ that is $C^0$-close to $\gamma$.
\end{lemma}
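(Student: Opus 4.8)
The plan is to use the ODE-model/flowbox framework from Subsection \ref{ssec:ODEModels}: the curve $\gamma(k)$ determines a connection-lift of its base projection $\pi\circ\gamma(k)$, and we wish to show that lifting the nearby curve $\alpha(k)$ instead (with the same initial point) produces an embedded curve $C^0$-close to $\gamma(k)$. First I would fix, for each $k$, the ODE model $(D,\Psi,\Psi^*\SD)$ associated to $\pi\circ\gamma(k)$ together with the flowbox coordinates $\phi:[0,1]\times B_\delta\to D$ of Lemma \ref{lem:ODEModel}, in which $\lift(\pi\circ\gamma(k))$ is the zero section $t\mapsto(t,0)$ and $\Psi^*\SD$ is spanned by $\partial_1$. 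By the size estimate of Subsection \ref{sssec:ODEsize} (equation \eqref{eq:naiveBound}) and the hypothesis that $\len(\alpha(k))$ is close to $\len(\pi\circ\gamma(k))$, the vertical displacement of $\lift(\alpha(k))$ relative to $\lift(\pi\circ\gamma(k))$ is of order $\len(\alpha(k))O(r)$, hence — after shrinking if necessary — stays inside the region covered by the flowbox; more importantly, since $\pi\circ\gamma(k)$ and $\alpha(k)$ are $C^0$-close, the two base curves travel through the same part of the graphical model, so the comparison can be carried out fibrewise over the base. I would then define $\nu(k)$ to be the lift of $\alpha(k)$ with $\nu(k)(0)=\gamma(k)(0)$; this exists for all times by the same length-versus-radius bound used in Lemma \ref{lem:horizontalisationGraphical}, and it depends continuously on $k$ by smooth dependence of ODE solutions on parameters and initial data, so we obtain a genuine family $\nu:K\to\Imm([0,1];\D_r,\SD)$.

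The remaining point is the $C^0$-estimate $\operatorname{dist}(\nu(k),\gamma(k))$ small and the fact that $\nu(k)$ is an immersion. For the distance estimate: the horizontal and vertical components can be controlled separately. Horizontally, $\pi\circ\nu(k)=\alpha(k)$ is $C^0$-close to $\pi\circ\gamma(k)$ by hypothesis. Vertically, both $\lift(\alpha(k))$ and $\lift(\pi\circ\gamma(k))$ start at the same point $\gamma(k)(0)$ and are integral curves of a line field whose slope over the base is bounded by $O(r)$; a Gronwall-type comparison of the two vertical coordinates, using the $C^0$-closeness of the base curves (so the line field is sampled along nearly the same base points) and the closeness of lengths (so the total integration times agree up to a small error), bounds the vertical discrepancy by something that tends to $0$ as the $C^0$-distance of the base curves and the length discrepancy tend to $0$. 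Hence $\nu(k)$ is $C^0$-close to $\gamma(k)$. That $\nu(k)$ is an immersion follows because $d\pi$ is a fibrewise isomorphism $\SD\to T\R^q$ (condition (a) of Definition \ref{def:graphicalModel}), so $\nu(k)'$ vanishes nowhere precisely when $\alpha(k)'=(\pi\circ\nu(k))'$ vanishes nowhere, which holds since $\alpha(k)\in\Imm([0,1];\R^q)$.

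The main obstacle I anticipate is purely quantitative bookkeeping rather than a conceptual difficulty: one must make sure the lift $\nu(k)$ does not escape $\D_r$ uniformly in $k$, and that the flowbox coordinates from Lemma \ref{lem:ODEModel} can be chosen continuously in $k$ over a region large enough to contain both $\lift(\alpha(k))$ and $\lift(\pi\circ\gamma(k))$. This is where the hypothesis ``lengths are close'' does real work: without it $\alpha(k)$ could be much longer than $\pi\circ\gamma(k)$, forcing its lift out of the model. Provided $r$ (equivalently, the slope bound $O(r)$) and the various closeness parameters are taken small enough — which is legitimate, since in applications we will be free to refine the covering by graphical models as in Lemma \ref{lem:adaptedCharts} — all the estimates close up, and continuity in $k$ is automatic from the smooth dependence of solutions of the lifting ODE on the data $(\alpha(k),\gamma(k)(0))$.
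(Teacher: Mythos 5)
Your proposal is correct and follows essentially the same route as the paper: lift $\alpha(k)$ with an initial value (close to) $\gamma(k)(0)$, note that the $C^0$- and length-closeness of the base curves make the lifting ODE close to the one defining $\gamma(k)$, and conclude existence over $[0,1]$ together with $C^0$-closeness, the immersion property being automatic since $d\pi|_{\SD}$ is an isomorphism and $\alpha(k)$ is immersed. The only tiny imprecision is that the lift of $\alpha(k)$ must start at a point lying over $\alpha(k)(0)$, so one should take the initial value to be the point with the same vertical coordinate as $\gamma(k)(0)$ (hence merely close to $\gamma(k)(0)$, as the paper states), which does not affect the argument.
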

\begin{proof}
The family $\nu$ is obtained by lifting $\alpha$, as in Subsection \ref{ssec:ODEModels}. The conclusion forces us to choose an initial value that is close to $\gamma(k)(0)$. The hypothesis on $\alpha$ (closeness in $C^0$ and length) imply (Lemma \ref{lem:horizontalisationSkeleton}) that the ODE behind the lifting process is close to the ODE associated to $\pi \circ \gamma$. This implies that the lifting exists over $[0,1]$ and is close to $\gamma$.
\end{proof}
In concrete instances we will be able to argue that the resulting family $\nu$ also consists of embedded curves. This will follow from the specific properties of the family $\alpha$ under consideration.

\subsubsection{Interpolation statements}

We sometimes consider deformations of $\varepsilon$-horizontal curves in which the projection to the base remains fixed and the vertical component changes. This is explained in the following lemma, whose proof we leave to the reader:
\begin{lemma} \label{lem:varepsilonInterpolation}
Fix a family of curves $\alpha: K \rightarrow \Imm([0,1];\R^q)$. Then, there exists a constant $\delta>0$ such that any two families of curves
\[ \gamma, \nu: K \rightarrow \Imm^\varepsilon([0,1];\D_r,\SD) \]
lifting $\alpha$ and satisfying $|\gamma(k) - \nu(k)|_{C^0} < \delta$ are homotopic through a family of $\varepsilon$-horizontal curves also lifting $\alpha$.

Furthermore, this homotopy may be assumed to be relative to $\Op(\partial(K \times I))$ if the families already agree there.
\end{lemma}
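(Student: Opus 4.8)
The plan is to work entirely in the ODE models of Subsection \ref{ssec:ODEModels} (and their flowbox coordinates, Lemma \ref{lem:ODEModel}), so that both $\gamma$ and $\nu$ become graphs of functions over their common projection $\alpha$, and $\varepsilon$-horizontality becomes a pointwise slope bound. First I would fix, for each $k \in K$, the ODE model $(D,\Psi,\Psi^*\SD)$ associated to $\alpha(k)$; since $K$ is compact this can be done with uniform control on the relevant constants (the slope of $\SD$ is $O(r)$, and on a compact manifold the coefficient is bounded globally by Lemma \ref{lem:adaptedCharts}). In flowbox coordinates the line field $\Psi^*\SD$ is spanned by $\partial_1$, and a lift of $\alpha(k)$ is precisely a graph $t \mapsto (t, f(t))$ with $f$ taking values in $\R^{n-q}$ (the vertical directions). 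The $\varepsilon$-horizontality of the lift translates into a bound $|f'(t)| < c(\varepsilon)$ for an explicit constant depending on $\varepsilon$ and on the (uniformly bounded) geometry of the model; write $f_\gamma(k)$ and $f_\nu(k)$ for the functions representing $\gamma(k)$ and $\nu(k)$.

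The homotopy I would use is the straight-line (affine) interpolation $f_u(k) := (1-u) f_\gamma(k) + u\, f_\nu(k)$, $u \in [0,1]$, read back through $\Psi$ to give a family of curves lifting $\alpha$. Each $f_u(k)$ automatically lifts $\alpha(k)$ because the first coordinate is unchanged, and the interpolation stays inside the flowbox provided $|f_\gamma(k) - f_\nu(k)|_{C^0} = |\gamma(k) - \nu(k)|_{C^0}$ is smaller than the flowbox radius $\delta$ — this is one of the two roles of the smallness hypothesis, and the uniform size estimates of Subsection \ref{sssec:ODEsize} guarantee a single $\delta > 0$ works for all $k$. The second role of the smallness hypothesis is to preserve $\varepsilon$-horizontality along the homotopy: since $f_u'(k) = (1-u) f_\gamma'(k) + u\, f_\nu'(k)$ is a convex combination of two velocity vector fields each satisfying the slope bound for $\varepsilon$-horizontality, and the $\varepsilon$-horizontal locus in each principal subspace is an open convex cone (Figure \ref{EpsilonHorizontal}, together with the fact that the relevant condition $\measuredangle < \varepsilon$ cuts out a convex set near the horizontal), the interpolant satisfies the same bound. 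A little care is needed because the slope condition is measured against the true $\SD$, not its nilpotent approximation, but the $O(r)$ error is absorbed by shrinking $\delta$ — this is exactly the kind of estimate already packaged into Lemma \ref{lem:horizontalisationGraphical}.

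For the relative statement, note that the affine interpolation is canonical and depends only pointwise on $(f_\gamma, f_\nu)$: wherever $\gamma(k) = \nu(k)$ we have $f_\gamma(k) = f_\nu(k)$, so $f_u(k) = f_\gamma(k)$ for all $u$ and the homotopy is automatically constant there; this gives relativeness to $\Op(\partial(K \times I))$ for free once the two families already agree there, and also handles relativeness in the domain variable $t$ in the same way. The main obstacle, and the only place real work is hidden, is the passage to flowbox coordinates uniformly in $k$ and the verification that the single constant $\delta$ controls both ``stays in the flowbox'' and ``stays $\varepsilon$-horizontal'' simultaneously; once the uniform ODE-model estimates of Subsection \ref{sssec:ODEsize} are in hand, the convexity argument makes the rest routine, which is why the statement is left to the reader.
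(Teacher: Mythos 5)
The paper never writes out a proof of this lemma (it is explicitly ``left to the reader''), so your plan can only be measured against the intended argument; the route you take --- pass to the ODE model over $\alpha$, regard both families as vertical graphs over the common base projection, and interpolate affinely in the fibre, which also gives the relative statement for free --- is certainly the natural one. The genuine gap is in the step where you argue that the interpolant remains $\varepsilon$-horizontal. The set $\{v \in T_pM : \angle(v,\SD_p)<\varepsilon\}$ is \emph{not} a convex cone, and the paper itself insists on this (the relation is merely open and ample, with two components when $\rank(\SD)=1$), so the sentence invoking convexity of the $\varepsilon$-horizontal locus in the principal subspaces proves nothing. What your interpolation actually needs --- and this is exactly what the hypothesis that both families lift the same $\alpha$ buys --- is convexity of the \emph{slice} $\{v : d\pi(v)=\alpha(k)'(t),\ \angle(v,\SD_p)<\varepsilon\}$, since $f_u'=(1-u)f_\gamma'+u f_\nu'$ has prescribed base component. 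That slice is a ball in the vertical variable, hence convex, when $\ker d\pi$ is (nearly) $g$-orthogonal to $\SD$; but for a general background metric and $\varepsilon$ not small it can be disconnected: two $\varepsilon$-horizontal lifts of the same $\alpha$ may have velocities lying in opposite ``sheets'' of the cone, and the segment between them crosses the vertical directions. So the convexity you need is a statement about the fixed-projection slice, and it requires an actual argument (uniform $g$-transversality of $\ker d\pi$ to $\SD$ in the adapted charts, with a margin depending on $\varepsilon$), not a reference to Figure \ref{EpsilonHorizontal}.

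A second, smaller point: $C^0$-closeness gives no control on $f_\gamma'-f_\nu'$, so $\delta$ enters the angle estimate only through the $O(\delta)$ variation of $\SD$ (and of $g$) between the points $\gamma(k)(t)$ and $\nu(k)(t)$. Since the relation is the strict inequality $\angle<\varepsilon$, the given families may have angle arbitrarily close to $\varepsilon$ at some $(k,t)$, while $\delta$ must be fixed \emph{before} $\gamma$ and $\nu$ are given; hence the loss coming from evaluating $\SD$ at the interpolated point cannot simply be ``absorbed by shrinking $\delta$'' --- the available margin $\varepsilon-\angle$ is not uniform over admissible pairs, whereas your error term is. The affine interpolation therefore only yields, as written, a homotopy through $\varepsilon'$-horizontal curves for some $\varepsilon'$ slightly larger than $\varepsilon$, and closing this gap (by exploiting a quantitative margin, or by a preliminary homotopy that creates one) is precisely the kind of care the ``reader's proof'' has to supply. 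The treatment of the relative statement and the uniformity in $k$ via compactness are fine.
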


The analogue for transverse curves reads:
\begin{lemma} \label{lem:transverseInterpolation}
Suppose $\SD$ is of corank-1 and cooriented. Fix a family of curves $\alpha: K \rightarrow \Imm([0,1];\R^q)$. Then, there exists a constant $\delta>0$ such that any two families of curves
\[ \gamma, \nu: K \rightarrow \Immt([0,1];\D_r,\SD) \]
lifting $\alpha$ and satisfying $|\gamma(k) - \nu(k)|_{C^0} < \delta$ are homotopic through a family of transverse curves also lifting $\alpha$.

Furthermore, this homotopy may be assumed to be relative to $\Op(\partial(K \times I))$ if the families already agree there.
\end{lemma}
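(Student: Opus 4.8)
The plan is to mimic the proof of Lemma~\ref{lem:varepsilonInterpolation} verbatim, replacing ``$\varepsilon$-horizontal'' by ``transverse'' throughout and taking advantage of the fact that transversality is an \emph{open} condition (unlike horizontality, which is closed, and $\varepsilon$-horizontality, which is open but with a two-sided bound). First I would pass to flowbox coordinates along each lift, as provided by Lemma~\ref{lem:ODEModel}: since $\gamma(k)$ and $\nu(k)$ both lift the same base curve $\alpha(k)$, in the ODE model $(D,\Psi,\Psi^*\SD)$ associated to $\alpha(k)$ they are both graphs $t \mapsto (t, g_\gamma(k,t))$ and $t \mapsto (t, g_\nu(k,t))$ over $[0,1]$ of functions into $\D_\delta \subset \R^{n-q}$, with $\Psi^*\SD$ spanned by $\partial_1$. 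Transversality of a graph $t \mapsto (t,g(k,t))$ is precisely the condition that its velocity is transverse to $\partial_1$, i.e.\ (in corank $1$, using the preferred coorientation) a strict pointwise inequality $\measuredangle\big((1,\partial_t g(k,t)),\partial_1\big) > 0$ on the slope. This is an open, \emph{convex} condition on the $1$-jet $(g, \partial_t g)$ at each $(k,t)$: the set of admissible slopes is a half-space.

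The key step is then the linear (affine) interpolation $g_s(k,t) := (1-s)\,g_\gamma(k,t) + s\,g_\nu(k,t)$, $s\in[0,1]$. Because the transversality constraint is convex in the $1$-jet and both endpoints satisfy it, the interpolant satisfies it too — provided we know the constraint region is uniform enough along the family. Here is where the constant $\delta>0$ enters: by compactness of $K\times[0,1]$, the curves $\gamma(k), \nu(k)$ range over a compact subset of transverse jets, so there is a uniform lower bound on how transverse they are (a uniform $\varepsilon_0$ with $\measuredangle > \varepsilon_0$); one then chooses $\delta$ small enough that the flowbox coordinates of Lemma~\ref{lem:ODEModel} are defined on a $\delta$-neighbourhood for every $k$, and that the $C^0$-bound $|\gamma(k)-\nu(k)|_{C^0}<\delta$ keeps both graphs (and hence every convex combination) inside the flowbox $\D_\delta$ where $\Psi^*\SD$ has slope $O(r)$ and the transversality inequality persists. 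Pushing $g_s$ forward by $\Psi$ yields the desired family $(s,k) \mapsto$ (a transverse curve lifting $\alpha(k)$), since by construction $\pi\circ(\Psi\circ\text{graph}(g_s(k,\cdot))) = \alpha(k)$ for all $s$.

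For the relative statement, note that if $\gamma(k)=\nu(k)$ for $k\in\Op(\partial(K\times I))$ then $g_\gamma = g_\nu$ there, so the linear interpolation is already constant in $s$ on that neighbourhood; no cut-off is needed. One should also record that the homotopy preserves embeddedness when the original curves are embedded — but since the statement is phrased for $\Immt$, this is not required here; if it were, it would follow because $\Psi$ is an embedding whenever $\alpha(k)$ is embedded and small $C^0$-perturbations of an embedded graph inside a flowbox remain embedded.

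The main obstacle is the uniformity bookkeeping rather than any genuine difficulty: one must check that a \emph{single} $\delta$ works simultaneously for (a) the domain of definition of the flowbox coordinates of Lemma~\ref{lem:ODEModel} over all $k\in K$, (b) the convexity/persistence of the transversality inequality under linear interpolation given the $O(r)$ slope bound of $\Psi^*\SD$, and (c) the relative condition near $\partial(K\times I)$. All three are handled by compactness of $K$ together with the quantitative control from Subsection~\ref{sssec:ODEsize}; since the argument is entirely parallel to Lemma~\ref{lem:varepsilonInterpolation} (indeed easier, as transversality is a one-sided open condition with no $\varepsilon$ threshold to respect), the details are routine and we omit them.
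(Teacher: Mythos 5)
The paper itself gives no proof of this lemma (like Lemma \ref{lem:varepsilonInterpolation}, it is left to the reader), so you are being compared against the intended argument rather than a written one. Your overall strategy is indeed the intended one and is essentially sound: over the ODE model of $\alpha(k)$ a lift is a graph, positive transversality in corank $1$ becomes a one-sided open (and, once the characteristic line field is straightened, position-independent and convex) condition on the slope, and linear interpolation of the fibre components does the job, automatically relative near $\partial(K\times I)$.

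The gap is in how you obtain the straightening. You invoke the flowbox coordinates of Lemma \ref{lem:ODEModel}, but those are centred at a \emph{horizontal} lift and are only defined on $[0,1]\times B_\delta$ around that single leaf; and your claim that the hypothesis $|\gamma(k)-\nu(k)|_{C^0}<\delta$ ``keeps both graphs inside the flowbox'' is a non sequitur: closeness of $\gamma(k)$ to $\nu(k)$ says nothing about closeness to the centre leaf, and a positively transverse lift strictly crosses the characteristic foliation, so in general neither curve (nor their interpolants) lies in a small flowbox about any one horizontal leaf. Since $\delta$ must be chosen before $\gamma$ and $\nu$ are known, you need a straightening valid on a neighbourhood of \emph{all} admissible pairs of curves, uniformly in $k$. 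This is easy to supply in corank $1$, but it is a different construction from Lemma \ref{lem:ODEModel}: write the line field of the ODE model as $\partial_t+b(t,z)\partial_z$, extend $b$ to a bounded smooth function on all of $[0,1]\times\R$, and flow back to $t=0$ to obtain a fibre-preserving, fibrewise monotone diffeomorphism straightening the foliation over the whole interval $[0,1]$. Because it is fibrewise monotone, it carries the interval fibres of the ODE model to intervals, so the linear interpolation of the straightened graphs stays in the model, is strictly increasing (hence positively transverse), lifts $\alpha$, depends continuously on $k$, and is constant in $s$ wherever $\gamma=\nu$. With this fix your argument goes through; note that the $C^0$-closeness hypothesis is then not even needed in the transverse case (it is genuinely needed in Lemma \ref{lem:varepsilonInterpolation}, where the constraint on the slope is two-sided and position-dependent), so the lemma as stated follows a fortiori.
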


\subsubsection{Deforming $\varepsilon$-horizontal curves}

The following is an analogue of Lemma \ref{lem:horizontalisationStability} in the $\varepsilon$-horizontal setting.
\begin{lemma} \label{lem:varepsilonStability}
Consider families
\[ \gamma: K \rightarrow \Emb^\varepsilon([0,1];\D_r,\SD) \qquad \alpha: K \rightarrow \Imm([0,1];\R^q) \]
such that $\pi \circ \gamma$ and $\alpha$ are $C^0$-close and their lengths are close. Then, there is a lift $\nu: K \rightarrow \Imm^\varepsilon([0,1];\D_r,\SD)$ of $\alpha$ that is $C^0$-close to $\gamma$.
\end{lemma}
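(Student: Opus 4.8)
\textbf{Proof plan for Lemma \ref{lem:varepsilonStability}.}

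The plan is to imitate the proof of Lemma \ref{lem:horizontalisationStability}, but replacing each horizontal lift by an $\varepsilon$-horizontal lift and keeping track of the angle constraint throughout. First I would record the mechanism that produces $\nu$: since $\gamma(k)$ is $\varepsilon$-horizontal, its vertical component $\pi_\vert \circ \gamma(k)$ has a well-defined derivative, and we may regard $\gamma(k)$ as the lift of $\alpha(k)$ whose vertical part is prescribed to be (a reparametrisation of) $\pi_\vert \circ \gamma(k)$. Concretely, fix for each $k$ the initial value $\nu(k)(0) := \gamma(k)(0)$ and define $\nu(k)$ by integrating the vertical velocity of $\gamma(k)$ along the new base curve $\alpha(k)$; equivalently, transport the ``vertical graph'' of $\gamma(k)$ over $\pi\circ\gamma(k)$ to a graph over $\alpha(k)$ using the flowbox/ODE model of Subsection \ref{ssec:ODEModels}. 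Because the model is graphical and the distribution has slope $O(r)$ with the horizontal foliation, this makes sense provided $\nu(k)$ stays inside $\D_r$, which is guaranteed (as in Lemma \ref{lem:horizontalisationGraphical}) once the lengths are small and $\alpha(k)$ is $C^0$-close to $\pi\circ\gamma(k)$.

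Next I would verify the two substantive claims: that $\nu(k)$ is $\varepsilon$-horizontal, and that $\nu$ is $C^0$-close to $\gamma$. For $C^0$-closeness, the point is that $\nu(k)$ and $\gamma(k)$ solve ODEs (in the ODE model) with $C^0$-close base curves of close length, so by continuous dependence of solutions on the equation and on the initial data — exactly the estimate behind Lemma \ref{lem:horizontalisationStability}, and the naive bound \eqref{eq:naiveBound} — the lifts remain $C^0$-close. For the angle bound, observe that the horizontal component of $\nu(k)'$ is $\alpha(k)'$, which is $C^0$-close to $(\pi\circ\gamma(k))'$, while the vertical component of $\nu(k)'$ equals that of $\gamma(k)'$ (up to the reparametrisation factor relating $\alpha(k)$ and $\pi\circ\gamma(k)$, which is close to $1$). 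Since $\measuredangle(\gamma(k)',\SD) < \varepsilon$ is an open condition and the slope of $\SD$ varies by $O(r)$, a small enough $C^0$-perturbation of the base curve keeps the angle between $\nu(k)'$ and $\SD_{\nu(k)}$ strictly below $\varepsilon$; here I would quantify ``$C^0$-close'' and ``lengths close'' precisely in terms of the gap $\varepsilon - \sup_k\measuredangle(\gamma(k)',\SD)$ and the $O(r)$ constant (which, by Lemma \ref{lem:adaptedCharts}, is globally controlled).

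The main obstacle is purely bookkeeping rather than conceptual: one must make the quantifiers uniform over the compact parameter space $K$. Since $K$ is compact, $\sup_k \measuredangle(\gamma(k)',\SD)$ is attained and is strictly less than $\varepsilon$, the velocities $\|(\pi\circ\gamma(k))'\|$ are uniformly bounded, and the continuous-dependence estimates hold with constants uniform in $k$; so all the ``sufficiently close'' thresholds can be chosen once and for all. I would also note the routine point that $\nu$ depends continuously (indeed smoothly) on $k$, because it is obtained by solving a $k$-family of ODEs with smoothly varying data. With these uniformities in hand, the lemma follows. As in Lemma \ref{lem:horizontalisationStability}, in the concrete applications one additionally checks that $\nu$ is a family of embeddings; this is not part of the present statement (the target is $\Imm^\varepsilon$) and will be argued separately from the specific properties of $\alpha$.
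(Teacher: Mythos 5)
There is a genuine gap, and it sits exactly where the paper warns about the ``naive'' approach. Your construction keeps the vertical velocity (equivalently, the vertical graph) of $\gamma(k)$ and transports it over the new base curve $\alpha(k)$; this is precisely the choice $\pi_\vert\circ\nu = \pi_\vert\circ\gamma$ (up to reparametrisation) that the paper's proof discards, because nothing guarantees it preserves $\varepsilon$-horizontality. Your verification of the angle bound relies on two claims that are not among the hypotheses: that $\alpha(k)'$ is $C^0$-close to $(\pi\circ\gamma(k))'$, and that the pointwise reparametrisation factor is close to $1$. The lemma only assumes $C^0$-closeness of the base curves and closeness of their \emph{total} lengths; this gives no control on derivatives or on pointwise speeds. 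Worse, in the application the lemma is written for, namely Lemma \ref{lem:directionAdjust}, the curves $\alpha_\theta(k)(t)=\Phi_{\chi(k,t)\theta}(k)(\pi\circ\gamma(k)(t))$ are obtained by \emph{rotating} the projected curve: they are $C^0$-close to $\pi\circ\gamma$ and of comparable length, but their tangent directions differ from those of $\pi\circ\gamma$ by a large angle. In that situation (or whenever $|\alpha'|$ is pointwise much smaller than $|(\pi\circ\gamma)'|$, which closeness of total lengths allows), attaching the old vertical velocity to the new base velocity produces a curve whose angle with $\SD$ at the new point is no longer controlled by $\varepsilon$, so the step ``the angle between $\nu(k)'$ and $\SD_{\nu(k)}$ stays below $\varepsilon$'' fails.

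The paper's proof avoids any comparison of derivatives by changing what is transported. Decompose $\gamma(k)'(t)=X+Y$ with $X\in\SD$ the $\SD$-lift of $d\pi(\gamma(k)'(t))$ and $Y$ vertical, so that $|Y|/|\gamma(k)'(t)|$ is controlled by the angle of $\gamma$ with $\SD$. Then $\nu(k)$ is defined as the integral curve, starting at $\gamma(k)(0)$, of $W+Z$, where $W\in\SD$ is the $\SD$-lift of $\alpha(k)'(t)$ \emph{at the new point} and $Z=Y\,|\alpha(k)'(t)|/|X|$ is the old vertical error rescaled by the new speed. By construction $\pi\circ\nu=\alpha$, and the vertical deviation of $\nu'$ from $\SD$ at $\nu(k)(t)$ is proportional to the new speed with ratio essentially $|Y|/|X|$, i.e.\ the original angle, independently of how the direction or pointwise speed of $\alpha'$ compares with $(\pi\circ\gamma)'$; the $C^0$- and length-hypotheses are only needed to keep $\nu$ inside the model and $C^0$-close to $\gamma$ (as in Lemma \ref{lem:horizontalisationStability}). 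Your uniformity-over-$K$ remarks are fine, but they cannot repair the construction: to prove the lemma as stated you must replace the transported vertical data by something anchored to $\SD$ at the points of $\nu$ and scaled by $|\alpha'|$, as above.
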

\begin{proof}
Constructing $\nu$ amounts to choosing its vertical component $\pi_\vert \circ \nu$. Naively, we could set $\pi_\vert \circ \nu = \pi_\vert \circ \gamma$, but there is no reason why this would preserve $\varepsilon$-horizontality. The strategy to be pursued instead is to mimic the proof of Lemma \ref{lem:horizontalisationStability}. Namely, we want to see $\gamma$ as the solution of an ODE (that makes an angle of at most $\varepsilon$ with respect to $\SD$) and produce $\nu$ as a solution of a similar ODE. 

We define families of vector fields 
\[ X,Y,Z,W: K \times I \rightarrow \Gamma(TV) \]
satisfying the condition:
\begin{itemize}
\item $X$ is tangent to $\SD$ and satisfies $d\pi(X(k,t)) = d\pi(\gamma(k)'(t))$ over all points lying over $\pi \circ \gamma(k)(t)$.
\item $Y$ is vertical and satisfies $Y(k,t) = X(k,t) - \gamma(k)'(t)$. It follows that, along $\gamma$, we have the inequality:
\[ \dfrac{|Y(k,t)|}{|\gamma(k)'(t)|} = \angle(\gamma(k)'(t),\SD) < \sin(\varepsilon). \]
\item $W$ is tangent to $\SD$ and satisfies $d\pi(W(k,t)) = \alpha(k)'(t)$ over all points projecting to $\alpha(k)(t)$
\item $Z$ is vertical and given by the expression $Z(k,t) = Y(k,t)\dfrac{|\alpha(k)'(t)|}{|X(k,t)|}$. 
\end{itemize}
According to these definitions, $\gamma(k)$ is an integral line of $X(k,t) + Y(k,t)$. We define $\nu(k)(t)$ to be the integral line of $W(k,t) + Z(k,t)$ with initial condition $\gamma(k)(0)$.

By construction, $\pi \circ \nu = \alpha$ and therefore $\nu$ is immersed. Furthermore, due to our definition $Z$, $\nu$ is $\varepsilon$-horizontal as long as $\alpha$ is sufficiently close to $\pi \circ \gamma$. Lastly, $C^0$-closeness of $\nu$ and $\gamma$ follows from the closeness of $\alpha$ and $\pi \circ \gamma$ in $C^0$ and length.
\end{proof}

\subsection{Horizontalisation} \label{ssec:horizontalisation}

We now present semi-local analogues of Lemma \ref{lem:horizontalisationGraphical}. Since the Lemma only provides short-time existence of horizontal curves, generalisations must also present this feature. The reader should think of the upcoming statements as analogues of the holonomic approximation theorem \cite[Theorem 3.1.1]{EM}. However, they involve no wiggling.

The general setup is the following: We fix a pair $(M,\SD)$. The distribution need not be bracket-generating. Our families of curves are parametrised by a compact manifold $K$ and have the unit interval $I = [0,1]$ as their domain. The product $K \times I$ contains a stratified subset $A$ such that all its strata are transverse to the $I$-factor.

\subsubsection{Horizontalisation along the skeleton}

The following result shows that any family of $\varepsilon$-horizontal curves can be made horizontal on a neighbourhood of .
\begin{lemma} \label{lem:horizontalisationSkeleton}
Given a family $\gamma: K \rightarrow \Emb^\varepsilon(I;M,\SD)$, there exists a family 
\[ \widetilde\gamma: K \times [0,1] \rightarrow \Emb^\varepsilon(I;M,\SD) \]
such that:
\begin{itemize}
\item[i. ] $\widetilde\gamma(k,0) = \gamma(k)$.
\item[ii. ] $\widetilde\gamma(k)(t) = \gamma(k)(t)$ if $(k,t)\in (K\times I) \setminus \Op{A}$
\item[iii. ]  $\widetilde\gamma(k,s)$ is $C^0$-close to $\gamma(k)$, for all $s$.
\item[iii'. ] the length of $\widetilde\gamma(k,s)$ is close to the length of $\gamma(k)$, for all $s$.
\item[iv. ] $\widetilde\gamma(k,1)$ is horizontal close to $A$.
\end{itemize}
\end{lemma}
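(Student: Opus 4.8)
The plan is to build the homotopy $\widetilde\gamma$ stratum by stratum, working upward through the dimension of the strata of $A$ and using the local statement Lemma \ref{lem:horizontalisationGraphical} inside graphical models. First I would set up the inductive scheme. Since all strata of $A \subset K \times I$ are transverse to the $I$-factor, each stratum projects nicely, and a neighbourhood of a stratum can be covered by product charts $U \times J$, where $U$ is a parameter chart and $J \subset I$ is a small subinterval — small enough that $\gamma(k)|_J$ lands in a single adapted chart (a graphical model) for every $k \in U$; this is possible by compactness of $K$ and Lemma \ref{lem:adaptedCharts}, together with continuity of the family $\gamma$. Shrinking further, I may assume the radii $r_1 < r$ of these graphical models are as small as I like, so that the error terms $O(r)$ in Lemma \ref{lem:horizontalisationGraphical} are negligible relative to $\varepsilon$; this is exactly where the "short-time existence" caveat enters, and it is harmless here because we only ever need to horizontalise on a short subinterval near the (low-dimensional) stratum.

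Next comes the actual deformation. Inside such a graphical model, I apply Lemma \ref{lem:horizontalisationGraphical} to produce, for each $k \in U$, the unique horizontal lift $\nu(k)$ of the projection $\pi \circ \gamma(k)|_J$ with the same initial point $\gamma(k)(0|_J)$. Because $\gamma(k)$ is $\varepsilon$-horizontal, its vertical component differs from that of $\nu(k)$ by a $C^0$-small amount (bounded by $\operatorname{len}(\gamma|_J)\,(\varepsilon + O(r))$), so $\gamma(k)|_J$ and $\nu(k)$ are $C^0$-close and have close lengths. I then interpolate between $\gamma(k)|_J$ and $\nu(k)$ through a family of $\varepsilon$-horizontal curves with fixed projection, using Lemma \ref{lem:varepsilonInterpolation} — the $C^0$-distance is under the threshold $\delta$ provided there by our choice of small radius. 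Crucially, Lemma \ref{lem:varepsilonInterpolation} is relative to $\Op(\partial(U \times J))$, so the deformation is supported away from the boundary of the chart; I cut it off with a bump function in the parameter and in the $J$-direction so that it genuinely agrees with $\gamma$ outside $\Op(A)$, giving property (ii). Properties (iii) and (iii$'$) ($C^0$-closeness and length-closeness) hold throughout because every elementary move is a small $\varepsilon$-horizontal interpolation, and these estimates add up in a controlled way since, by Thurston jiggling / Lemma \ref{lem:jiggling}(iii), only a bounded number of charts overlap at any point.

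The only subtle point — and the main obstacle — is compatibility between successive strata: when I horizontalise near a $j$-stratum I must not destroy the horizontality already achieved near the $(j-1)$-strata in its closure. I handle this exactly as in holonomic approximation: I shrink the neighbourhood $\Op A$ at each stage, and I perform the $j$-stratum deformation relative to a neighbourhood of the lower skeleton where $\gamma$ is already horizontal; since horizontal curves have fixed projection anyway, the interpolation from Lemma \ref{lem:varepsilonInterpolation} applied relative to that region simply leaves those pieces untouched. Running through all strata of $A$ in increasing order of dimension yields the final homotopy $\widetilde\gamma$, horizontal on a (possibly smaller) neighbourhood of $A$, which is property (iv); property (i) is immediate since we start the homotopy at $\gamma$. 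This completes the plan.
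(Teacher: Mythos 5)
Your proposal is correct and follows essentially the same route as the paper: induction over the strata of $A$, passage to product charts around each stratum mapping into graphical models (the paper organises these charts via a jiggled triangulation of the stratum), replacement of the curves by the unique horizontal lift of their projection with the same initial point (Lemma \ref{lem:horizontalisationGraphical}), and an interpolation with fixed base projection (Lemma \ref{lem:varepsilonInterpolation}) cut off relative to the chart boundary and to the already-horizontalised lower strata, where uniqueness of lifts makes the deformation constant. The $C^0$- and length-control from the smallness of the charts is also exactly how the paper obtains (iii) and (iii').
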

\begin{proof}
The proof is inductive on the strata of $A$, starting from the smallest one. At a given step, working with a stratum $B$, we will achieve Property (iv) over $B$, preserving it as well along smaller strata. The other properties will follow as long as our perturbations are small and localised close to $A$.

Let $U$ be a neighbourhood of the smaller strata in which $\widetilde\gamma(k,1)$ is already horizontal. We can then consider a closed submanifold $B' \subset B$ such that $\{B',U\}$ cover $B$. We can triangulate $B'$ using Lemma \ref{lem:jiggling}, turning it into a stratified set itself, so that each simplex $\Delta$ is mapped by $\gamma$ to some adapted chart $(V,\phi)$. We then proceed inductively from the smaller simplices. A crucial observation is that simplices along $\partial B$ are contained in $U$ and therefore no further changes are required there. 

For the inductive step consider an $l$-simplex $\Delta$. The inductive hypothesis is that there is a family of curves $\beta$ that has been obtained from $\gamma$ by a homotopy satisfying Properties (i) to (iii') and that is already horizontal over all smaller simplices (and $U$). Since $B$ is transverse to the $I$-direction, $\Delta$ has a neighbourhood parametrised as 
\[ \Phi: \D^l \times \D^{k-l} \times (-\delta,\delta) \longrightarrow K \times I. \]
The map $\Phi$ preserves the foliation in the direction of the last component and $\Phi|_{\D^l \times 0}$ is an arbitrarily small extension of $\Delta$ to a smooth disc. We write $\eta = \phi \circ \beta \circ \Phi$ for the restriction of the family to this neighbourhood, mapping now into the graphical model $V$. From the induction hypothesis it follows that $\eta$ is horizontal over $\Op(\partial \D^l) \times \D^{k-l} \times (-\delta,\delta)$.

We have thus reduced the claim to the situation in which our stratified set is just a disc, and we have to work relatively to the boundary of $\D^l \times \D^{k-l} \times (\-\delta,\delta)$. There is a unique family of horizontal curves $\nu$ such that $\nu$ and $\eta$ share the same projection to the base of $V$ and such that $\nu(k)(0) = \eta(k)(0)$ for all $k \in \D^l \times \D^{k-l}$. We can argue that this family exists for all time if our triangulation was fine enough. Alternatively, we just observe that there is some $\delta' >0$ such that $\nu$ is defined over $\D^l \times \D^{k-l} \times (-\delta',\delta')$ and $\eta$ lives within an ODE model associated to it (Lemma \ref{lem:ODEModel}).

We now deform $\eta$, relatively to the boundary of the model, to a family that agrees with $\nu$ over $\D^l \times \D^{k-l} \times (-\delta,\delta)$. We can do so keeping the projection to the base the same (Lemma \ref{lem:horizontalisationGraphical}). This, together with a sufficiently small choice of $\delta'$, guarantees Properties (iii) and (iii'). This concludes the inductive argument to handle a stratum $B$ and thus the inductive argument across all strata.
\end{proof}
It is immediate from the proof that the statement also holds relatively to regions of $A$ in which the curves are already horizontal.

\begin{corollary} \label{cor:horizontalisationSkeleton}
Assume that $\SD$ is cooriented of corank-$1$ and that $\gamma$ is positively transverse. Then, the conclusions of Lemma \ref{lem:horizontalisationSkeleton} hold and additionally $\widetilde\gamma$ can be assumed to be almost transverse. 
\end{corollary}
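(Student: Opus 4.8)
The plan is to revisit the inductive proof of Lemma \ref{lem:horizontalisationSkeleton} and check that every deformation performed there can be chosen to preserve almost transversality. The starting family $\gamma$ is positively transverse, hence in particular $\measuredangle(\gamma(k)'(t),\SD) > 0$ everywhere, so we may as well assume $\varepsilon$ is small enough that $\Embt^\varepsilon([0,1];M,\SD)$ contains the initial family; this is harmless since being positively transverse is an open condition. Throughout the induction we will work inside graphical models with coorientation given by ``going up'' and prove the stronger statement: the homotopy $\widetilde\gamma$ can be taken so that $\widetilde\gamma(k,s) \in \EmbAT(I;M,\SD)$ for all $(k,s)$, being strictly positively transverse away from the (shrinking) region where it has already been horizontalised.

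The key observation is the local mechanism. In flowbox coordinates for an ODE model (Lemma \ref{lem:ODEModel}), an $\varepsilon$-horizontal curve is the graph of a function whose slope is bounded by $\varepsilon + O(r)$; positive transversality corresponds to positive slope, horizontality to zero slope, and almost transversality to nonnegative slope. The unique horizontal lift $\nu$ of $\eta$ sharing its base projection has slope exactly matching that of $\SD$ as seen in these coordinates, i.e.\ slope $O(r)$, which can be made as small as we like by taking the cover fine. The interpolation of Lemma \ref{lem:horizontalisationGraphical} between $\eta$ and $\nu$ keeps the base projection fixed and only changes the vertical component; I would realise this interpolation by a \emph{monotone} family of graphs whose slopes decrease from that of $\eta$ (which is nonnegative, by the inductive hypothesis that $\eta$ is almost transverse) down to that of $\nu$ (which is $O(r)$, hence still nonnegative once we orient things so that the distribution itself is ``almost transverse'' to the horizontal foliation — more precisely, we first arrange by a preliminary small perturbation that $\SD$ makes a small \emph{positive} angle with the $x$-axis in these coordinates, which is possible since the sign is a matter of convention and the base direction was the one along which we are horizontalising). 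Convexity of the condition ``slope $\geq 0$'' then guarantees that every intermediate curve stays almost transverse.

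Concretely, I would carry out the steps as follows. First, fix $\varepsilon$ small and note the initial family lies in $\Embt^\varepsilon \subset \EmbAT$. Second, run the same stratum-by-stratum, simplex-by-simplex induction as in Lemma \ref{lem:horizontalisationSkeleton}, with the enhanced inductive hypothesis that the current family $\beta$ is almost transverse everywhere and strictly positively transverse outside the neighbourhood $U$ of the already-treated strata. Third, in the inductive step, after passing to the graphical/ODE model and flowbox coordinates, replace the ``deform $\eta$ to $\nu$ keeping the base projection fixed'' move by the explicit monotone-in-slope homotopy described above; because the target horizontal lift has slope $O(r) \geq 0$ in the chosen coordinates and the source $\eta$ has slope $\geq 0$ by hypothesis, linear interpolation of the \emph{derivatives} (equivalently, of the graph functions) stays in the region ``slope $\geq 0$'', so the homotopy is through almost transverse curves, relative to the boundary of the model where things are already horizontal (slope identically $0$, which is on the boundary of the almost-transverse region but still inside it). Fourth, observe that Properties (i)--(iii') transfer verbatim since the perturbations are the same size and support as before. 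Fifth, conclude that the output family satisfies the conclusions of Lemma \ref{lem:horizontalisationSkeleton} and is almost transverse.

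The main obstacle is the sign bookkeeping at the interface between ``strictly positively transverse'' and ``horizontal'': along the region $A$ where we force horizontality, the curve's slope hits zero, and we must make sure that the monotone interpolation never overshoots into negative slope — in particular that the horizontal lift $\nu$, read in flowbox coordinates, genuinely has \emph{nonnegative} slope rather than a small slope of indeterminate sign. This is exactly where the freedom to choose the graphical model (and the preliminary normalisation making $\SD$ lie on the positive side of the horizontal foliation, using that $\dim M \geq 4$ gives room and that the base direction is ours to pick) is used; once that normalisation is in place the rest is a routine convexity argument. A secondary, purely cosmetic point is that $\EmbAT$ is a closed condition, so ``almost transverse'' is automatically preserved under the $C^0$-limits implicit in the construction, which removes any worry about the homotopy leaving the space in a limit.
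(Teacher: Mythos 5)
Your core argument is the paper's: the corollary is proved by re-running the horizontalisation of Lemma \ref{lem:horizontalisationSkeleton} inside ODE models and observing that, in flowbox coordinates, passing from positive (or nonnegative) slope to zero slope can be done through curves of nonnegative slope, i.e.\ through almost transverse curves. That part is fine and matches the paper, which treats exactly this observation as the whole proof.

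However, the ``main obstacle'' you identify, and the way you resolve it, rest on a misreading of Lemma \ref{lem:ODEModel}. In the flowbox coordinates $\phi$ of that lemma the pulled-back line field $\phi^*\Psi^*\SD$ is spanned by $\partial_1$, so the horizontal lift $\nu$ is a graph of slope \emph{exactly} zero in those coordinates, not of slope $O(r)$ with indeterminate sign; the coorientation merely fixes which vertical direction counts as positive. Hence almost transversality is literally the closed condition ``slope $\geq 0$'', horizontality is its boundary stratum ``slope $=0$'', and no sign bookkeeping, no preliminary normalisation, and certainly no appeal to $\dim(M)\geq 4$ is needed (the corollary carries no dimension hypothesis at all, and the relevant local picture is $2$-dimensional since the corank is $1$). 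Worse, as written your fix asks to ``arrange by a preliminary small perturbation that $\SD$ makes a small positive angle'': perturbing $\SD$ is not available to you --- the distribution is fixed data --- and if what you mean is tilting the coordinates, it is superfluous once the flowbox is used correctly. A second, smaller slip: interpolating the \emph{derivatives} linearly and interpolating the \emph{graph functions} linearly are not equivalent; only the former manifestly preserves ``slope $\geq 0$'' (interpolating the graphs introduces a cut-off term $\chi'(\nu_v-\eta_v)$ of uncontrolled sign), so you should phrase the deformation as a monotone deformation of slopes (or of nondecreasing vertical components), exactly as the zero-slope target in the straightened coordinates allows. With these corrections your argument collapses to the paper's two-line proof.
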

\begin{proof}
The horizontalisation process described in the proof of Lemma \ref{lem:horizontalisationSkeleton} was based on passing locally to some ODE model. In such a model it is immediate that introducing zero slope (making the curves horizontal) can be done while preserving non-negative slope everywhere (being almost transverse).
\end{proof}


\subsubsection{Direction adjustment}

Lemma \ref{lem:horizontalisationSkeleton} explained to us how to perturb a family of $\varepsilon$-horizontal curves so that it becomes horizontal along $A$. The next lemma states that one can prescribe the behaviour along $A$, as long as $A$ is contractible.
\begin{lemma}\label{lem:directionAdjust}
Suppose that $A$ is a $k$-disc and $\SD$ has rank at least $2$. Fix a family $\gamma: K \rightarrow \Emb^\varepsilon(I;M,\SD)$ and a family of horizontal curves $\nu$, defined only on a neighbourhood of $A$. Assume that $\nu|_A = \gamma|_A$.

Then, there exists a family $\widetilde\gamma: K \times [0,1] \rightarrow \Emb^\varepsilon(I;M,\SD)$ such that the conclusions of Lemma \ref{lem:horizontalisationSkeleton} hold and, additionally: 
\begin{itemize}
\item[v. ] $\widetilde\gamma(-,1)$ agrees with $\nu$ in $\Op(A)$.
\end{itemize}
\end{lemma}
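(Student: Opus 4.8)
The plan is to bootstrap from Lemma \ref{lem:horizontalisationSkeleton}. First I would apply that lemma directly, obtaining a homotopy after which $\gamma$ is horizontal on a neighbourhood of $A$; denote the endpoint family again by $\gamma$ (so Properties (i)--(iv) are now available for free). The remaining task is therefore purely local: starting from a family of horizontal curves defined near $A$, deform it \emph{within the class of horizontal curves near $A$ and $\varepsilon$-horizontal curves globally} so that near $A$ it matches the prescribed family $\nu$, which by hypothesis already agrees with $\gamma$ \emph{on} $A$ itself. Since $A$ is a $k$-disc, I would triangulate $A$ using Lemma \ref{lem:jiggling} so that each simplex $\Delta$ is carried by the family into a single adapted chart / graphical model $(V,\SD)$, and proceed by an inductive argument over the skeleton, exactly as in the proof of Lemma \ref{lem:horizontalisationSkeleton}. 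Over the boundary of the disc $A$ nothing needs to change, which anchors the induction.

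The core of the argument is the inductive step over a single simplex $\Delta$, which after the parametrisation $\Phi: \D^l \times \D^{k-l} \times (-\delta,\delta) \to K \times I$ reduces to the following model problem in a graphical model: given two families of horizontal curves $\nu$ and (the current) $\gamma$ whose projections to the base $\R^q$ agree on $\Op(\partial\D^l)$ (induction hypothesis) and which agree entirely over the central fibre $\D^l \times \D^{k-l} \times \{0\}$ (the hypothesis $\nu|_A = \gamma|_A$, transported), produce a homotopy from $\gamma$ to $\nu$ relative to the boundary, staying horizontal near $A$ and $\varepsilon$-horizontal globally, and $C^0$-small with controlled length. Because horizontal curves in a graphical model are exactly the lifts of curves in the base $\R^q$, this is equivalent to interpolating between the two base projections $\pi\circ\gamma$ and $\pi\circ\nu$ by a family of base curves $\alpha_s$, relative to $\Op(\partial\D^l)$ and fixing the central fibre, and then lifting. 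Here is where the rank-$\geq 2$ hypothesis enters: the space of immersions $\Imm([0,1];\R^q)$ with fixed endpoints is connected precisely when $q \geq 2$, so such an interpolation $\alpha_s$ exists (and can be taken $C^0$-small and length-controlled, as needed to stay inside the ball $\D_r$ via Lemma \ref{lem:horizontalisationGraphical}). Lifting $\alpha_s$ with the appropriate initial condition, using Lemma \ref{lem:horizontalisationStability} to guarantee that the lift exists over all of $[0,1]$ and stays $C^0$-close, yields the horizontal interpolation near $A$; away from $A$ one extends this to an $\varepsilon$-horizontal homotopy supported in $\Op(A)$ using Lemma \ref{lem:varepsilonInterpolation} (or \ref{lem:varepsilonStability}), cut off by a bump function.

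I should be careful about one subtlety: at the central fibre $\D^l \times \D^{k-l} \times \{0\}$ the interpolation $\alpha_s$ must be stationary, since $\gamma$ and $\nu$ already coincide there; this is compatible with also being relative to $\Op(\partial\D^l)$ as long as the triangulation is taken fine enough that these two relative regions are handled consistently across incident simplices — this is exactly the role of Condition (iii) in Thurston's lemma. The other bookkeeping point is that the homotopy must preserve embeddedness: as in Lemma \ref{lem:horizontalisationSkeleton}, this follows because the deformations are $C^0$-small and localised, so the curves remain embedded throughout (no new self-intersections or intersections with the rest of the family are created by an arbitrarily small perturbation).

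I expect the main obstacle to be the embeddedness/$C^0$-smallness bookkeeping done uniformly across the whole triangulation simultaneously — i.e.\ choosing the triangulation of $A$ fine enough (and the interpolations small enough) that all the local moves glue to a global homotopy through \emph{embedded} $\varepsilon$-horizontal curves, relative to $\partial A$ and to the prescribed values on $A$. The genuinely new geometric input, connectivity of $\Imm([0,1];\R^q)$ for $q\geq 2$, is elementary; it is the compatibility of the relative conditions (fixed on $\Op(\partial\D^l)$, fixed on the central fibre, supported in $\Op(A)$) along the inductive skeleton argument that requires the most care, and it is handled, as everywhere in this section, by patching local constructions in graphical models with the help of Thurston's jiggling.
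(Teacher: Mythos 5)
Your overall scheme (localise in graphical models, manipulate base projections, lift, and patch with Lemmas \ref{lem:varepsilonStability} and \ref{lem:varepsilonInterpolation}) is the same as the paper's, but the step carrying the geometric content has a genuine gap. The interpolation between $\pi\circ\gamma$ and $\pi\circ\nu$ must be produced \emph{parametrically} over the whole family, relative to the prescribed regions, and you justify its existence only by $\pi_0$-connectivity of $\Imm([0,1];\R^q)$. That is not enough: in your skeleton-wise induction over a triangulation of $A$, extending the interpolation over an $l$-simplex relative to its boundary is obstructed by higher homotopy groups of the relevant space of arcs (equivalently of the sphere of directions $S^{q-1}$), which need not vanish, so the induction can get stuck for unlucky choices on lower skeleta. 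Moreover, since the deformation must be supported in $\Op(A)$, the base arcs are fixed near the ends of the small $t$-interval together with their germs, and then for $q=2$ even the $\pi_0$-statement as you phrase it fails (turning number). The hypothesis that resolves the parametric problem is precisely the one you never use: $A$ is a $k$-disc, hence contractible. The paper exploits this (together with rank $\geq 2$, i.e.\ connectedness of the set of $\varepsilon$-horizontal directions) to build, via the homotopy lifting property, a family of tangential rotations $(v_\theta)$ defined over all of $A$ taking $\gamma'|_A$ to $\nu'|_A$, extends it to linear maps $\Phi_\theta$ of the base $\R^q$, and applies these to $\pi\circ\gamma$ cut off near $A$; after that the two base arcs are tangent at $A$ and a $C^1$-small adjustment matches them on $\Op(A)$, so no connectivity statement about arc spaces is needed at all. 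By triangulating $A$ and arguing simplex by simplex you discard contractibility exactly where it is needed.

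A secondary issue is the order of operations. You horizontalise first via Lemma \ref{lem:horizontalisationSkeleton} and then continue to use $\nu|_A=\gamma|_A$; but that horizontalisation replaces $\gamma$ near $A$ by the horizontal lift of its base projection anchored at the start of the flowbox interval, so the values on $A$ itself generally move, and the exact equality --- which is what guarantees at the end, by uniqueness of horizontal lifts of $\pi\circ\nu$ through the common point, that the final family equals $\nu$ on $\Op(A)$ (Property (v)) --- survives only approximately. This can be repaired by anchoring the lifts on $A$, but it must be addressed; the paper sidesteps it by performing the rotation and the matching of base projections first and invoking Lemma \ref{lem:horizontalisationSkeleton} near $A$ only as the last step.
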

\begin{proof}
Since the argument takes place on a neighbourhood of $A$ and is relative to its boundary, we may as well assume that $K = \Op(\D^k)$ and $A = \D^k \times \{1/2\} \subset K \times [0,1]$. 

Since $A$ is contractible and $\SD$ has rank at least $2$, we can find a tangential rotation 
\[ (v_\theta)_{\theta \in [0,1]}: \D^k \longrightarrow TM \]
such that:
\begin{itemize}
\item $v_\theta(k) \in T_{\gamma(k)(1/2)} M$ is $\varepsilon$-horizontal.
\item $v_0(x) = \gamma(k)'(1/2)$.
\item $v_1(x) = \nu(x)'(1/2)$.
\end{itemize}
That is, $v$ is a lift of $\gamma|_A$ providing a tangential rotation of its velocity vector to the velocity vector of $\nu$.

A further simplification enters the proof now: $\gamma$ may be assumed to take values in a graphical model $V$. Otherwise we triangulate $A$ in a sufficiently fine manner and argue inductively on neighbourhoods of its simplices. There is then a homotopy of linear maps
\[ (\Phi_\theta)_{\theta \in [0,1]}: \D^k \longrightarrow \GL(\R^q,\R^q) \]
that satisfies $\Phi_0(k) = \Id$ and $\Phi_\theta(k)(d\pi(\gamma(k)'(1/2))) = d\pi(v_\theta(k))$. It exists due to the homotopy lifting property. It provides us with a rotation of $\R^q$ extending the tangential rotation $d\pi \circ v_\theta$.

Let $\chi$ be a cut-off function that is $1$ on a neighbourhood of $A$ and zero away from it. Consider the homotopy of curves given by
\[ \alpha_\theta(k)(t) := \Phi_{\chi(k,t)\theta}(k)(\pi \circ \gamma(k)(t)). \]
We claim that $\alpha_\theta(k)$ is in fact embedded. This will indeed be the case if the support of $\chi$ is sufficiently small, since the curves $\pi \circ \gamma|_{\Op(A)}$ are then small embedded intervals resembling a straight line.

By construction, $\alpha_1(k)$ is tangent to $\pi \circ \nu(k)$ at $t=1/2$. This allows us to define a further homotopy $(\alpha_\theta)_{\theta \in [1,2]}$ so that $\alpha_2(k)(t) = \pi \circ \nu(k)(t)$ for every $t \in \Op(\{1/2\})$. This latter homotopy may be assumed to be $C^1$-small and supported in an arbitrarily small neighbourhood of $A$. 

Over $\Op(A)$, we have that $\pi \circ \gamma$ and $\alpha_\theta$ are $C^0$-close and of similar length. It follows from Lemma \ref{lem:varepsilonStability} that there is a family of $\varepsilon$-horizontal curves $\beta_\theta$ lifting $\alpha_\theta$, that is $C^0$-close to $\gamma$. Applying Lemma \ref{lem:varepsilonInterpolation} allows us to assume that $\beta_\theta$ agrees with $\gamma$ outside a neighbourhood of $A$. We can then apply Lemma \ref{lem:horizontalisationSkeleton} to $\beta_2$ in order to horizontalise. This yields a homotopy to some $\varepsilon$-horizontal family $\beta_3$ that close to $A$ agrees with $\nu$.
\end{proof}

%

\subsection{Transversalisation} \label{ssec:transversalisation}

In this subsection we explain the transverse analogues of the results presented in Subsection \ref{ssec:horizontalisation}. We fix a distribution $(M,\SD)$. We write $K$ for a compact manifold and $I$ for $[0,1]$. $A \subset K \times I$ is a stratified set transverse to the second factor.

\subsubsection{Transversalisation of almost-transverse curves}

The following lemma explains that almost transverse curves can be pushed slightly to become transverse.
\begin{lemma}\label{lem:transversalisationAT}
Suppose $\SD$ is of corank $1$. Given a family of curves $\gamma: K \longrightarrow \EmbAT([0,1];M,\SD)$, there exists a $C^1$-deformation 
\[ \widetilde\gamma: K \times [0,1] \longrightarrow \EmbAT([0,1];M,\SD) \]
 such that
\begin{itemize}
\item $\widetilde\gamma(k,0) = \gamma(k)$.
\item $\widetilde\gamma(k,1)$ is transverse.
\item Assume that $\gamma$ is transverse along $\Op(\partial(K \times I))$. Then this homotopy is relative to the boundary.
\end{itemize}
\end{lemma}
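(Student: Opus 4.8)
The plan is to reduce to a local computation in a graphical model using the triangulation machinery of Subsection \ref{ssec:Thurston}, and then push the curve off the distribution by adding a small positive slope. First I would observe that being transverse is an \emph{open} condition, so the set $\{(k,t) \in K \times I \mid \gamma(k)'(t) \notin \SD\}$ is open, and it contains $\Op(\partial(K \times I))$ by the hypothesis of the third bullet; the problem is therefore to perturb $\gamma$ on the closed complementary region $A_0 := \{(k,t) \mid \gamma(k)'(t) \in \SD\}$, relative to a neighbourhood of where the curves are already transverse. (In the generic/worst case one may simply take $A = K \times I$ and ignore this refinement, proving the non-relative version first and then noting the construction is supported away from the transverse region.)

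Next I would invoke Thurston jiggling (Lemma \ref{lem:jiggling}, or rather its use in the proof of Lemma \ref{lem:horizontalisationSkeleton}): triangulate $K \times I$ finely enough, respecting the $I$-direction, so that each simplex is carried by $\gamma$ into a single adapted chart, i.e. a graphical model $(V,\SD)$ as in Section \ref{sec:graphicalModels}. Then argue inductively on the skeleton exactly as in Lemma \ref{lem:horizontalisationSkeleton}, the inductive claim being that after finitely many steps $\widetilde\gamma(k,1)$ is transverse over all simplices handled so far, with all intermediate curves remaining in $\EmbAT$. For a single simplex, pass to flowbox coordinates via Lemma \ref{lem:ODEModel}, so that $\SD$ is spanned by $\partial_1$ and the curve is the graph of a function whose slope (the component in the flowbox ``vertical'' direction) is everywhere $\geq 0$ by almost-transversality. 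The deformation is then the obvious one: add a bump function that bends the slope strictly upward — for instance replace the vertical component $\pi_\vert \circ \gamma$ by $\pi_\vert \circ \gamma + s\,\rho(t)\,\chi(k)$ where $\rho' > 0$ on the interior of the relevant $t$-interval and $\chi$ is a cutoff in the parameter — which turns slope $\geq 0$ into slope $> 0$, hence genuine transversality, while staying $C^1$-close and keeping slopes non-negative throughout the homotopy so we never leave $\EmbAT$. Being $C^1$-close and supported near $A_0$ in a small graphical model, the deformed curves remain embedded (the projections stay $C^0$-close to embedded arcs resembling straight segments). The relative statement follows because the bump functions can be chosen to vanish on $\Op(\partial(K\times I))$, where the curves were already transverse.

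The main obstacle — and it is more bookkeeping than conceptual — is ensuring compatibility across the inductive steps: when handling a simplex $\Delta$ one must preserve the transversality already achieved over $\partial\Delta$ and over lower-dimensional simplices, while the deformation introduced for $\Delta$ must not destroy it. This is handled, as in Lemma \ref{lem:horizontalisationSkeleton}, by localising each step in a neighbourhood of $\Delta$ relative to its boundary and choosing the supports of the cutoffs small enough; the interpolation Lemma \ref{lem:transverseInterpolation} can be used to glue the locally-produced families to the previous one near the boundary of each chart. Compared to the horizontal case, the transverse case is genuinely easier here: rather than needing to hit the \emph{closed} condition of exact horizontality, we only need to land in the \emph{open} transverse region, so no delicate lifting-over-maximal-interval argument is required — adding an arbitrarily small strictly positive slope suffices, and the only quantitative care needed is that it stays small enough to preserve embeddedness and to keep the curves inside the ball $\D_r$ of the graphical model.
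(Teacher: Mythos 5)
Your proposal follows essentially the same route as the paper's (very terse) proof: cover $K\times I$ by sufficiently small opens so that each piece lands in an adapted chart, pass to the ODE model in flowbox coordinates, where almost-transversality becomes the condition ``slope $\geq 0$'' on the vertical component, and add a small non-negative slope to make it strictly positive. One point needs care, and it is precisely the one the paper highlights in the remark following its proof: the local perturbation cannot be compactly supported in the $t$-direction over a stretch where the curve is merely almost transverse (e.g.\ horizontal), because a function $\rho$ that returns to zero must have $\rho'<0$ somewhere, which would make the slope negative and leave $\EmbAT$. So your $\rho$ should be a monotone non-decreasing ramp rather than a bump; the resulting upward displacement of the later portion of the curve has to be accepted and carried through the chart-by-chart induction, and the taper back to zero may only be performed in the region where the original curve is strictly transverse, with the added negative slope dominated (by compactness) by the original positive slope there --- which is exactly why the homotopy is relative only under the hypothesis that $\gamma$ is transverse near $\partial(K\times I)$. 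With that reading your argument is correct: cutting off in $k$ is harmless since it does not affect the $t$-slope, transversality already achieved is never destroyed because each local move only adds non-negative slope, and the $C^1$-smallness you invoke preserves embeddedness since embeddings of a compact interval form a $C^1$-open set.
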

\begin{proof}
The argument is carried out one adapted chart at a time. If $K \times I$ is covered by sufficiently small opens, we can pass to ODE charts (Subsection \ref{ssec:ODEModels}), where the statement is obvious and relative.
\end{proof}
Do observe that this process may not be assumed to be relative if the starting family was purely horizontal. In fact, the argument will certainly displace the endpoint of the curves upwards.

\begin{figure}[h] 
	\includegraphics[scale=0.35]{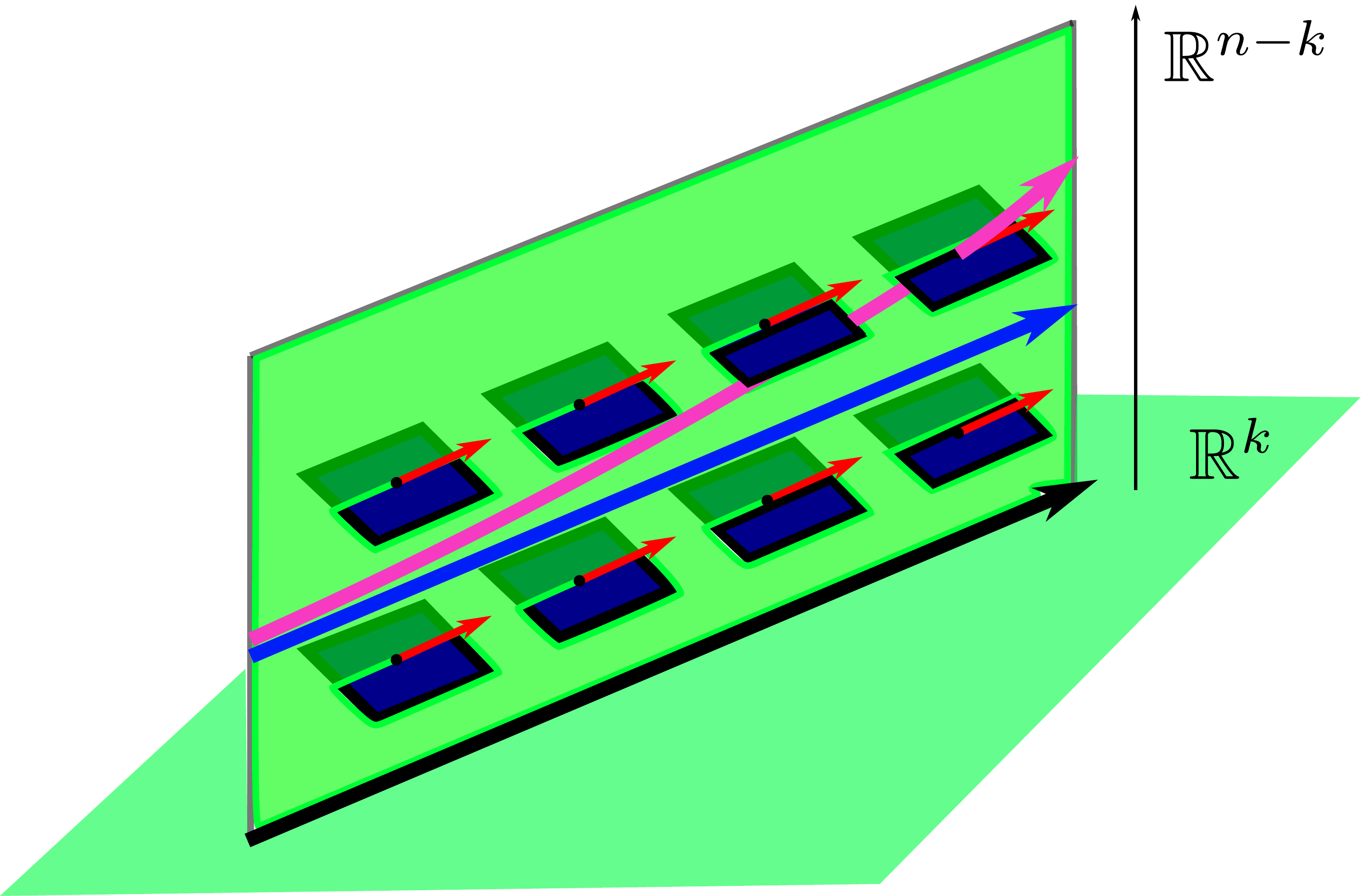}
	\centering
	\caption{We depict transversalisation of a horizontal curve. On a graphical model, it is easy to construct the desired transverse curve $\widetilde\gamma(-,1)$ (in magenta) by adding a small slope. This is not relative to the final endpoint.}\label{ConnectionFlowbox}
\end{figure}

\begin{figure}[h] 
	\includegraphics[scale=0.6]{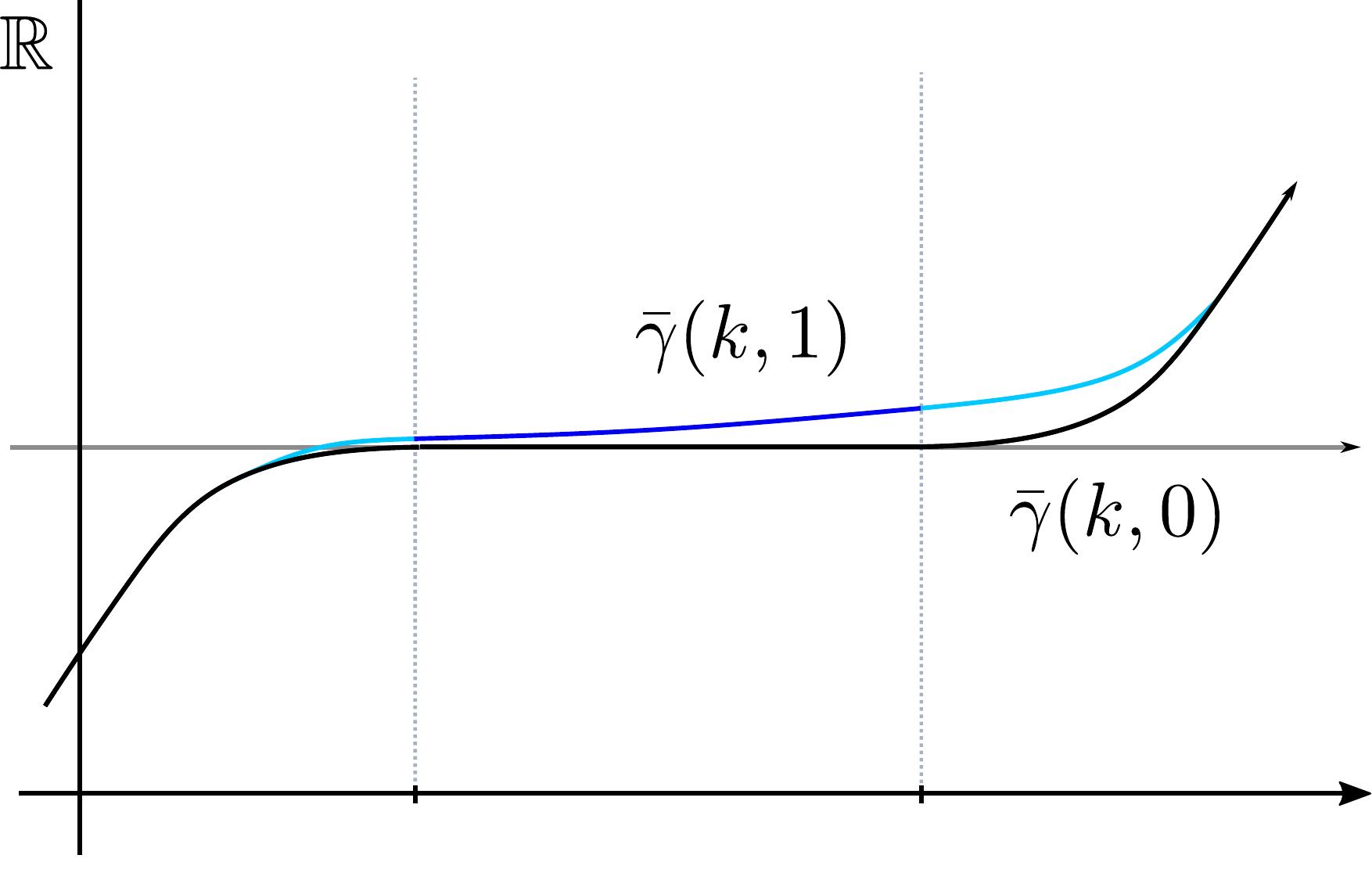}
	\centering
	\caption{Transversalisation for almost horizontal curves, relative to the boundary. The argument reduces to adding slope in an ODE model.}\label{HorizontalToTransverse}
\end{figure}

\begin{remark}
From this lemma it follows that there is a weak homotopy equivalence between $\Embt(M,\SD)$ and the subspace of $\EmbAT(M,\SD)$ consisting of curves that are somewhere (positively) transverse. This can be refined further to include those curves of $\EmbAT(M,\SD)$ that are regular horizontal. We leave this as an exercise for the reader.
\end{remark}

\subsubsection{Transversalisation of $\varepsilon$-transverse curves}

The following lemma achieves the transverse condition in a neighbourhood of $A$.
\begin{lemma} \label{lem:transversalisationSkeleton}
Suppose that $\SD$ is of corank-$1$ and cooriented. Given a family $\gamma: K \rightarrow \Embt^\varepsilon(I;M,\SD)$, there exists a family 
\[ \widetilde\gamma: K \times [0,1] \rightarrow \Embt^\varepsilon(I;M,\SD) \]
such that:
\begin{itemize}
\item[i. ] $\widetilde\gamma(k,0) = \gamma(k)$.
\item[ii. ] $\widetilde\gamma(k)(t) = \gamma(k)(t)$ if $(k,t)\in (K\times I) \setminus \Op{A}$
\item[iii. ]  $\widetilde\gamma(k,s)$ is $C^0$-close to $\gamma(k)$, for all $s$.
\item[iii'. ] the length of $\widetilde\gamma(k,s)$ is close to the length of $\gamma(k)$, for all $s$.
\item[iv. ] $\widetilde\gamma(k,1)$ is transverse close to $A$.
\end{itemize}
\end{lemma}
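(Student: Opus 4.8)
The statement is the exact transverse mirror of Lemma~\ref{lem:horizontalisationSkeleton}, so the plan is to run the same inductive scheme over the strata of $A$, replacing ``horizontalise'' by ``transversalise'' and appealing to the transverse analogues of the auxiliary lemmas. First I would set up the induction: order the strata of $A$ by dimension and assume inductively that a homotopy $\beta$, obtained from $\gamma$ by an $\varepsilon$-transverse deformation satisfying Properties (i)--(iii'), has already achieved transversality on a neighbourhood $U$ of all lower strata. Fix a stratum $B$; choose a closed submanifold $B' \subset B$ with $\{B',U\}$ covering $B$, and triangulate $B'$ via Lemma~\ref{lem:jiggling} so that each simplex $\Delta$ lands in an adapted chart / graphical model $(V,\phi)$. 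As in the horizontal case, simplices meeting $\partial B$ already sit inside $U$, so no work is needed there.

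For the inductive step over an $l$-simplex $\Delta$, I would use that $A$ is transverse to the $I$-direction to obtain a product neighbourhood $\Phi: \D^l \times \D^{k-l} \times (-\delta,\delta) \to K \times I$ adapted to the foliation by the last coordinate, and write $\eta = \phi \circ \beta \circ \Phi$ for the restricted family, now mapping into the graphical model $V$; by induction $\eta$ is transverse over $\Op(\partial\D^l) \times \D^{k-l} \times (-\delta,\delta)$. The task reduces to: a family of $\varepsilon$-transverse curves into $V$, transverse near the boundary of the model, to be made transverse over the whole disc relative to that boundary. In flowbox coordinates for the ODE model (Lemma~\ref{lem:ODEModel}), where $\SD$ is the horizontal line field and the coorientation is ``going up'', an $\varepsilon$-transverse curve is a graph whose slope is $> -\tan\varepsilon$; I would explicitly add a small positive slope, via a cut-off supported near $\Delta$, to push every slope strictly positive there, interpolating back to $\eta$ near the boundary of the model. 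Keeping the perturbation $C^0$-small and of small length (shrinking $\delta'$ as in the horizontal proof) yields Properties (iii), (iii'); Properties (i), (ii) hold because the deformation is localised near $A$; and the new family is transverse near $\Delta$ while remaining transverse over the lower simplices, completing the induction.

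The one genuinely new point compared to Lemma~\ref{lem:horizontalisationSkeleton} is that ``adding positive slope'' is a $C^1$-sized change in the vertical direction, not merely $C^0$-sized, so one must be a little careful that it does not destroy the $\varepsilon$-transversality already present elsewhere nor the embedding property; this is handled exactly as in the horizontal case by the freedom to choose the flowbox neighbourhood and cut-off as small as needed, so that the projections $\pi\circ\gamma|_{\Op(A)}$ are tiny near-straight embedded arcs and the slope adjustment is uniformly small. I would also record, as in the remark following Lemma~\ref{lem:horizontalisationSkeleton}, that the construction is relative to any region of $A$ where the curves are already transverse (since there the added slope can be taken to vanish), and that it is compatible with the $\varepsilon$-transverse interpolation Lemma~\ref{lem:transverseInterpolation} used to glue the local modifications across simplices. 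The main obstacle, such as it is, is purely bookkeeping: ensuring the slope-addition near a given simplex does not undo transversality achieved over adjacent lower-dimensional simplices, which is guaranteed by making the support of each cut-off shrink fast enough relative to the (finite, by Lemma~\ref{lem:jiggling}(iii)) combinatorics of the triangulation.
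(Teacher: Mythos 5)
Your proposal follows the paper's proof essentially verbatim: the same stratum-by-stratum, simplex-by-simplex inductive reduction to a graphical model, followed by adding positive slope in an ODE model, relative in parameter and domain, with the same $C^0$/length bookkeeping. The only detail the paper makes explicit that you gloss over is that an $\varepsilon$-transverse curve in the local model need not be graphical over the base --- it may be (nearly) vertical --- but in that case it is already positively transverse and no modification is needed, so your argument is unaffected.
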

\begin{proof}
As in the proof of Lemma \ref{lem:horizontalisationSkeleton}, we proceed inductively on the strata of $A$, each of which is in turn processed one simplex at a time. This reduces the proof to the analogous statement in which $(M,\SD)$ is a graphical model, $K$ is $\D^k$, and the curves of $\gamma$ have arbitrarily small length and image. Due to the $\varepsilon$-transverse condition, we have that the curves $\gamma(k)$ are then either (positively) vertical with respect to the base projection, in which case we do not need to do anything, or graphical over $\SD$. In the latter case we work in an ODE model and add positive slope. This is relative in the parameter and domain.
\end{proof}


\subsubsection{Transversalisation of formally transverse embeddings}

We also need a transversalisation statement, in the spirit of Lemma \ref{lem:transversalisationSkeleton}, that applies instead to formal transverse embeddings:
\begin{lemma} \label{lem:transversalisationFormalSkeleton}
Given a family $\gamma: K \rightarrow \Embt^f(I;M,\SD)$, there exists a family $\widetilde\gamma: K \times [0,1] \rightarrow \Embt^\varepsilon(I;M,\SD)$ such that:
\begin{itemize}
\item[i. ] $\widetilde\gamma(k,0) = \gamma(k)$.
\item[ii. ] $\widetilde\gamma(k)(t) = \gamma(k)(t)$ if $(k,t)\in (K\times I) \setminus \Op{A}$
\item[iii. ]  $\widetilde\gamma(k,s)$ is $C^0$-close to $\gamma(k)$, for all $s$.
\item[iii'. ] the length of $\widetilde\gamma(k,s)$ is close to the length of $\gamma(k)$, for all $s$.
\item[iv. ] $\widetilde\gamma(k,1)$ is transverse close to $A$.
\end{itemize}
\end{lemma}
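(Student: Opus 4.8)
The plan is to mimic the structure of the proof of Lemma~\ref{lem:transversalisationSkeleton}, reducing everything to a local problem in a graphical model, but with the extra preliminary task of converting the \emph{formal} data $(\gamma, (F_s)_s)$ into genuine curve geometry near $A$. First I would recall the punchline diagram for $\varepsilon$-transversality: since $\Embt^{f,\varepsilon}(M,\SD)$ is homotopy equivalent to $\Embt^f(M,\SD)$ (Proposition in Subsection~\ref{ssec:epsilonTransverseCurves}), and since the issue is local near the stratified set $A$ (all of whose strata are transverse to the $I$-factor), I can work one stratum at a time, and within a stratum one simplex at a time, via Thurston jiggling (Lemma~\ref{lem:jiggling}/Corollary~\ref{cor:jigglingBoundary}). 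This reduces the statement to the case where $M$ is a graphical model $(V,\SD)$, the parameter space is a disc $\D^k$, and the curves $\gamma(k)$ have arbitrarily small length and image — and where we already know, on a neighbourhood of the smaller strata and of $\partial A$, that the conclusion has been achieved (so we argue relative there).

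In the local model the key point is that a formal transverse embedding carries, along $A$, a monomorphism $F_1 \colon T\NS^1 \to \gamma^*TM$ whose projection to $\gamma^*(TM/\SD)$ is injective; i.e. $F_1$ points to a definite side of $\SD$ near $A$. I would first use the homotopy of monomorphisms $(F_s)_{s\in[0,1]}$ together with the fact that $A$ is (after jiggling) a disc to rotate the actual velocity $\gamma' = F_0$ to $F_1$ through monomorphisms that stay transverse to $\SD$ near $A$; because the graphical model is parallelisable and co-parallelisable, and because transversality to a corank-$1$ distribution is an open cone condition in each fibre that is connected (it is a half-space minus $\SD$), this rotation can be realized while keeping the underlying curve $C^0$-close and of nearly the same length, using the direction-adjustment mechanism of Lemma~\ref{lem:directionAdjust} adapted to the transverse cone (the role of $\varepsilon$-horizontality being replaced by $\varepsilon$-transversality). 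Concretely, I would rotate the base projection $\pi\circ\gamma$ near $A$ by a homotopy $\Phi_\theta \in \GL(\R^q)$ supplied by the homotopy lifting property so that its lift becomes genuinely $\varepsilon$-transverse along $A$, then invoke the transverse interpolation lemma (Lemma~\ref{lem:transverseInterpolation}) to keep the new family agreeing with $\gamma$ outside a small neighbourhood of $A$, and finally add a small positive slope in an ODE model (exactly as in Lemma~\ref{lem:transversalisationSkeleton}) to upgrade "transverse along $A$" to "transverse on $\Op(A)$". Properties (i)–(iii$'$) are inherited because every move is $C^0$-small, length-preserving up to $o(1)$, and supported in $\Op(A)$.

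The main obstacle I anticipate is not the rotation itself but bookkeeping the \emph{embedded}-ness of the curves throughout: as in the proofs of Lemmas~\ref{lem:directionAdjust} and \ref{lem:horizontalisationSkeleton}, one must choose the cut-off function $\chi$ (hence the support of the deformation near $A$) small enough that the perturbed base projections remain embedded — which is fine because on a tiny neighbourhood of $A$ the curves $\pi\circ\gamma$ look like short nearly-straight segments, so a $C^1$-small rotation cannot create self-intersections — but this must be tracked uniformly in the parameter $k\in\D^k$ and compatibly with the already-fixed behaviour over smaller simplices and $\partial A$. A secondary subtlety is that, unlike Lemma~\ref{lem:transversalisationSkeleton}, the starting data is only formal, so one cannot appeal directly to "the curve is vertical or graphical over $\SD$"; the dichotomy must instead be extracted from $F_1$ after the rotation step. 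Once the formal direction has been turned into an actual $\varepsilon$-transverse velocity along $A$, the remaining argument is verbatim that of Lemma~\ref{lem:transversalisationSkeleton}.
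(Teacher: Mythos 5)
Your proposal follows essentially the same route as the paper's proof: an inductive reduction over the strata of $A$ and over the simplices of a jiggled triangulation to a local, relative statement in an adapted chart, followed by using the formal homotopy $(F_s)$ to rotate the actual velocity vectors of $\gamma$ along $A$ into transverse position, in the style of Lemma~\ref{lem:directionAdjust}. The extra bookkeeping you supply (interpolation, adding slope in an ODE model, embeddedness of the perturbed projections) is consistent with, and fleshes out, the argument the paper sketches.
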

\begin{proof}
We work inductively over the strata of $A$ and inductively over the simplices of a triangulation of each stratum. This reduces the proof to a local and relative statement happening in an adapted chart. Then, the conclusion follows as in the proof of Lemma \ref{lem:directionAdjust}. Namely, the tangential homotopy given by $\gamma$ can be used to rotate the velocity vectors of $\gamma$ along $A$ to make them transverse to $\SD$.
\end{proof}

\section{Tangles}\label{sec:tangles}

In this section we introduce \emph{tangles}. These are particular local models for curves in the base $\R^q$ of a graphical model $(V,\SD)$. Upon lifting, they act as building blocks for horizontal curves. The reader should think of them as analogues of the stabilization in Contact Topology, seen in the Lagrangian projection.

\begin{remark}
We have chosen the name ``tangle'' because they are reminiscent of tangles in $3$-dimensional Knot Theory \cite{Con}. In the classical sense, a tangle $(\D^3, \T)$ consists of a ball $\D^3$ with a finite number of properly imbedded disjoint arcs $\T$. This allows for the factorisation of knots into elementary pieces \cite{Blei}. 
\end{remark}
Our tangles are similar: they are presented as boxes containing a homotopy of curves, with fixed endpoints. This allows us to attach them to any given family of curves in $\R^q$.

The construction of a tangle amounts to concatenating suitable flows and smoothing the resulting flowlines, taking care of the embedding condition of the lift. This is the natural approach: afterall, the bracket-generating condition explains how to produce motion in arbitrary directions by considering commutators of flows tangent to the distribution. We recommend that the reader takes a quick look at Appendix \ref{Appendix}, which recaps some elementary results in this direction. In Subsection \ref{ssec:flows} we introduce some further notation about bracket-expressions and concatenating flows.

\emph{Pretangles} are defined in Subsection \ref{ssec:pretangles}. These are simply curves in $\R^q$ given as flowlines of commutators of coordinate vector fields. These curves are just piecewise smooth. In order to address this, we introduce smoothing. This is done, for simple bracket-expressions, using \emph{s-pretangles} (Subsection \ref{sssec:spretangles}). We then introduce attaching models (Subsection \ref{ssec:attachingModels}) which will allow us to smooth out more complicated configurations of curves (Subsections \ref{ssec:pretangleModelsL2} and \ref{ssec:tanglesHigherLength}). These are shown to be embedded and we explain how to insert them into existing curves. In Subsection \ref{ssec:areaIsotopy} we explain how to manipulate these models to adjust the lifting of their endpoints. 

Tangles are finally introduced in Subsection \ref{ssec:tangles}.

\subsection{Flows} \label{ssec:flows}

This subsection introduces some of the notation about flows that will be used later in this section.

\subsubsection{Concatenation of flows}

Let $\phi_t$ be a flow, possibly time-dependent. We write 
\[ (\phi_{a\to b})_t := \phi_{t+a} \circ \phi^{-1}_a \]
for the flow in the interval $[a,b]$, shifted so that $\phi^{a\to b}_0$ is the identity.

Fix a second flow $\psi_t$ and real numbers $a< b$ and $c < d$. Then, we define the \textbf{concatenation} of $\phi_{a\to b}$ and $\psi_{c\to d}$ to be:
\[ \left(\phi_{a \to b} \quad\#\quad \psi_{c \to d} \right)_t := \begin{cases} 
     (\phi_{a\to b})_t & t\in\left[0,b-a\right], \\
		(\psi_{c\to d})_{t-(b-a)} \circ	(\phi_{a\to b})_{b-a} & t\in\left[b-a,(b-a)+(d-c)\right]. \\
   \end{cases}
\]
This is a time-dependent flow that is piecewise smooth in $t$, due to the switch at $t=b-a$.

In general, given flows $(\phi_i)_{i=1,\cdots,k}$ and real numbers $(a_i < b_i)_{i=1,\cdots,k}$ we can iterate the previous construction:
\[ \#_{i=1,\cdots,l}\quad (\phi_i)_{a_i \to b_i} :=
   \left( \#_{i=1,\cdots,l-1}\quad (\phi_i)_{a_i \to b_i} \right) \quad\#\quad (\phi_k)_{a_k \to b_k}. \]

\subsubsection{Generalised bracket-expressions}

We now generalise the formal bracket-expressions from Subsection \ref{sssec:LieFlag}. The aim is to consider iterates of formal bracket-expressions.
\begin{definition}
We say that the pair $(a,k)$, written as $a^{\#k}$, depending on the variable $a$ and the integer $k$, is a \textbf{generalised bracket expression} of length $1$. Similarly, we say that the expression $[a_1,a_2]^{\#k}$, depending on the variables $a_1$, $a_2$ and the integer $k$, is a generalise bracket expression of length $2$. Inductively, we define a generalised bracket expression of length $n$ to be an expression of the form 
\[ [A(a_1,\cdots,a_j),B(a_{j+1},a_n)], \qquad 0 < j < n \]
with $A$ and $B$ generalised bracket expressions of lengths $j$ and $n-j$, respectively.
\end{definition}

\subsection{Pretangles and s-pretangles} \label{ssec:pretangles}

We now fix a graphical model $(V,\SD)$. All the constructions in this section take place within it. We write $\pi: V \rightarrow \R^q$ for its projection to the base. The framing reads $\{X_1,\cdots, X_n\}$, with $\{X_1,\cdots, X_q\}$ a framing of $\SD$ lifting the coordinate framing $\{\partial_1,\cdots, \partial_q\}$ of $\R^q$. We write $\phi_i$ for the flow of $\partial_i$, here $i=1,\cdots,q$. The flow of $X_i$ is denoted by $\Phi_i$.

\subsubsection{Pretangles}

The following construction produces time-dependent flows that are iterates of a given commutator:
\begin{definition}
Let $A = [a,b]^{\# m}$ be a generalised bracket expression of length $2$. Let $\phi$ and $\psi$ be flows. We define
\[ A(s) := \left(\left(\phi_{0\to\frac{s}{\sqrt{m}}}\right) \# \left(\psi_{0\to\frac{s}{\sqrt{m}}}\right) \#
										 \left(\phi_{0\to\frac{s}{\sqrt{m}}}^{-1}\right) \# \left(\psi_{0\to\frac{s}{\sqrt{m}}}^{-1}\right)\right)^{\# m}.
\]
\end{definition}

We can introduce an analogous definition for bracket expressions of greater length, inductively:
\begin{definition}
Let $B = [a_1, A(a_2,\cdots,a_l)]^m$ a generalised bracket expression. Consider flows $(\phi_i)_{i=1,\cdots,l}$. Then we denote:
\[ B(s) :=
 \left(\left(\phi_{0\to\frac{s}{\sqrt{m}}}\right) \#  A(s/\sqrt{m}) \#
 \left(\psi_{0\to{\frac{s}{\sqrt{m}}}}^{-1}\right) \# A(s/\sqrt{m})^{-1} \right)^{\# m}.
\]
\end{definition}

The following is the main definition of this subsection:
\begin{definition}
Let $A$ be a generalised bracket-expression with $\phi_1,\cdots, \phi_\ell$ as inputs. An integral curve, depending on the parameter $s$, of the flow defined by the expression $A(s)$ is called a \textbf{pretangle}. We will denote such a curve by $\gamma^{A(s)}$.
\end{definition}

\subsubsection{S-pretangles} \label{sssec:spretangles}

We will introduce the notion of \textbf{S-pretangles}, where the ``s'' stands for ``smooth''.

We can describe a way of smoothing the corners where the previously defined pretangles failed to be smooth. We will define two different ways of smoothing a corner. Define first the following time-dependent vector field:
\[Y_t^{(i,j),\delta} :=  \dfrac{\delta-t}{\delta}\cdot X_j+\dfrac{t}{\delta}\cdot X_{j}, \phantom{aa} t\in\left[0, \delta\right].\]

Denote by $s^{i,j}_{0\to\delta}$ the flow associated to the vector field $Y_t^{(i,j),\delta}$. Note that the concatenation of flows $\phi^i_{0\to\eta-\delta/2} \: \# \: s^{i,j}_{0\to\delta} \: \# \: \phi^j_{\delta/2\to\tau}$ or, in short, $\phi^i_{0\to\eta-\delta/2}\:\#^{\delta}_{i,j} \: \phi^j_{\delta/2\to\tau}$,  can be made $C^\infty-$close to $ \phi^{i}_{0\to\eta}\#\phi^{j}_{0\to\tau}$ by taking $\delta$ small enough:

\[\phi^i_{0\to\eta-\delta/2}\:\#_{i,j}^{\delta} \: \phi^j_{\delta/2\to\tau}\xrightarrow[\delta \to 0]{||\cdot||_{C^\infty}}\phi^{i}\:\#\:\phi^{j}.\]

\begin{figure}[h] 
	\includegraphics[scale=0.4]{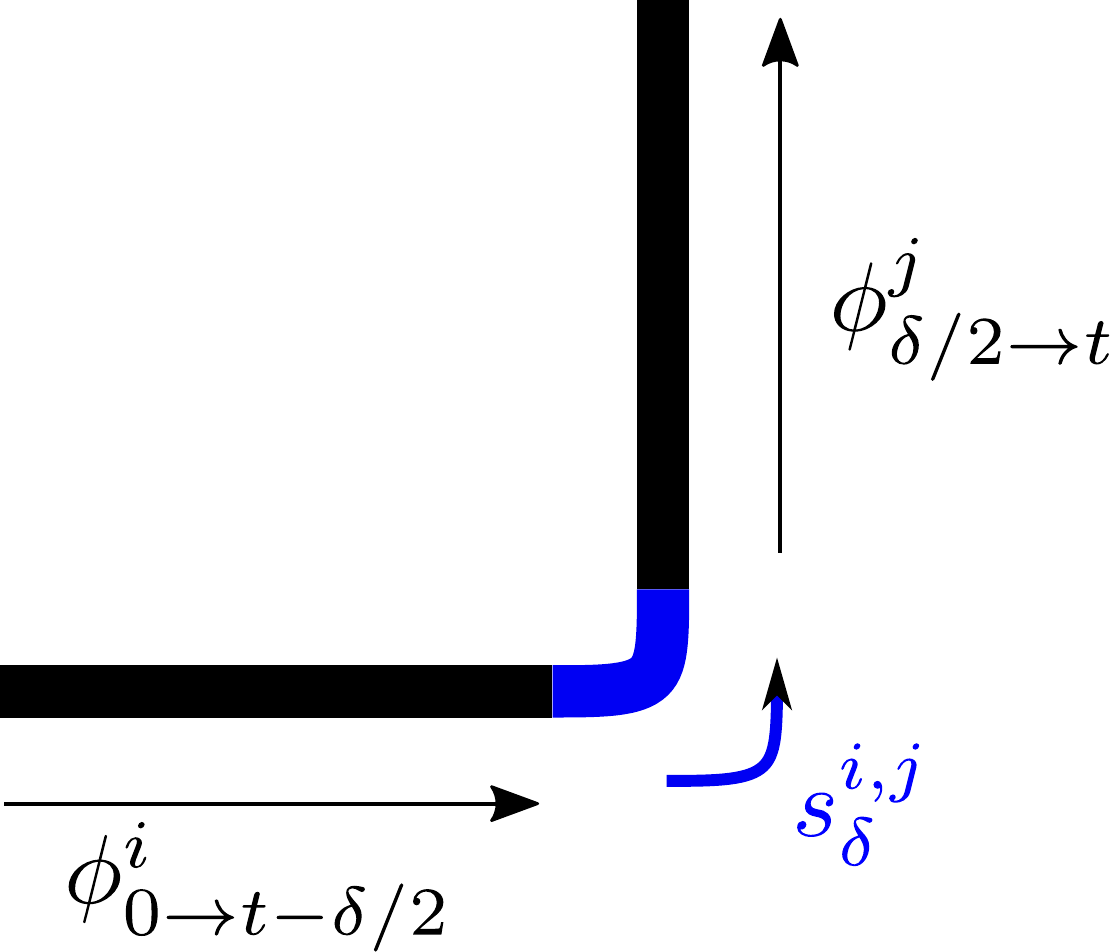}
	\centering
	\caption{Schematic description of $\phi^i_{0\to t-\delta/2} \: \#_{i,j}^\delta \: \phi^j_{\delta/2\to t}$. The flow $s^{i,j}_\delta$ provides a way of smoothing the previously defined concatenation of two given vector field flows. }\label{smoothing1}
\end{figure}

The flows $s^{i,j}_\delta$ play the role of smoothing the concatenation of two vector field flows when concatenated in between. Indeed, note that $\phi^i_{0\to\eta-\delta/2} \: \#^\delta_{i,j} \: \phi^j_{\delta/2\to\tau}$ is a $C^\infty-$flow.

Let us introduce now a different way of smoothing a corner. Denote by $\delta':=\delta/4-\delta/50$, $\delta'':=\delta/25$ and $\delta'''=\delta/50$. Consider the following flow (see Figure \ref{smoothing2}):

\[d^{j,i}_\delta:=\phi^j_{0\to\delta'} \: \#_{j,i}^{\delta''} \: \phi^i_{\delta'''\to\delta'} \: \#^{\delta''}_{i,-j}\: \phi^{-j}_{\delta'''\to\delta'} \: \#^{\delta''}_{-j,-i} \:\phi^{-i}_{\delta'''\to\delta/4}\]

\begin{figure}[h] 
	\includegraphics[scale=0.4]{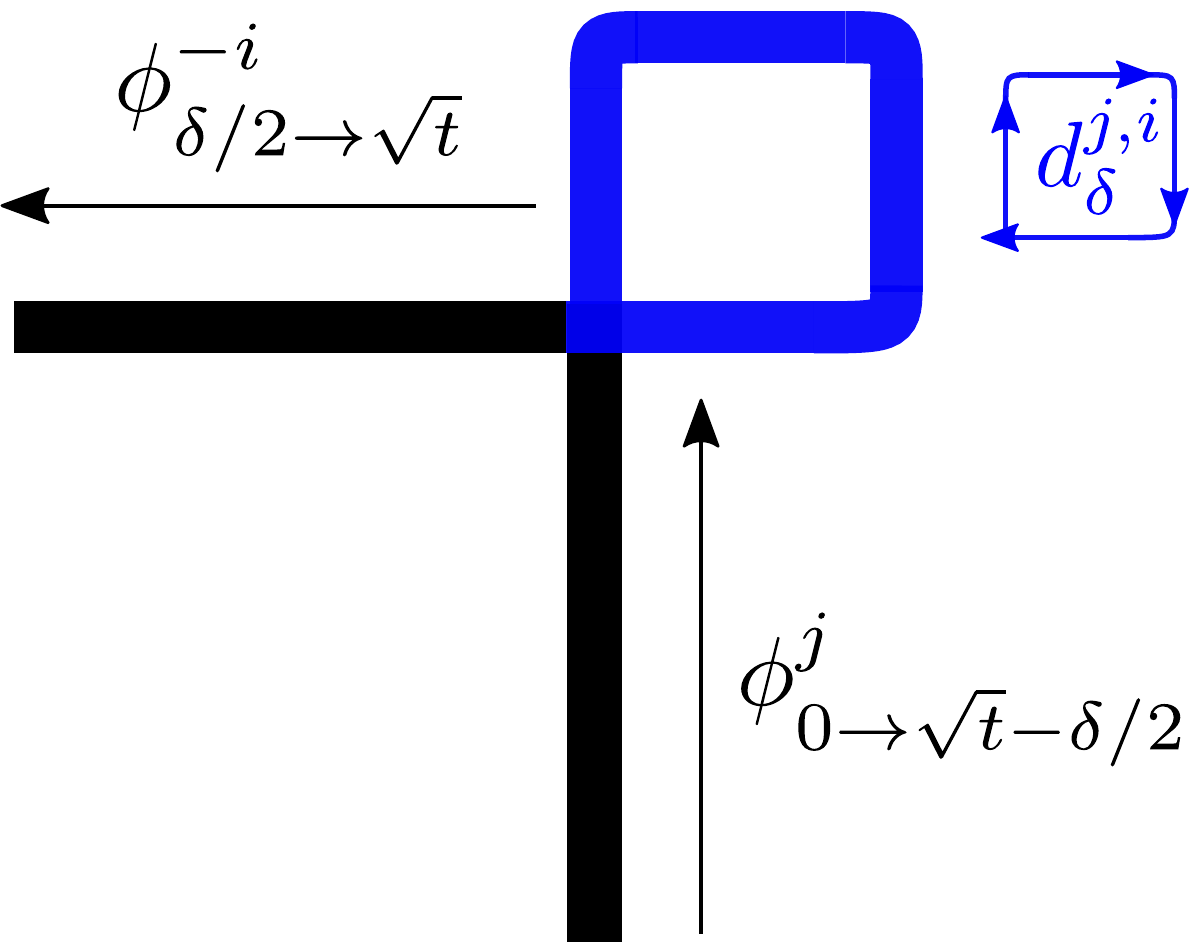}
	\centering
	\caption{Schematic description of $\phi^j_{0\to\sqrt{t}-\delta/2}\#d^{i,j}_\delta\#\phi^j_{\delta/2\to\sqrt{t}}$. The flow $d^{i,j}_\delta$ provides a essentially different way of smoothing the previously defined concatenation of two given vector field flows. }\label{smoothing2}
\end{figure}

We now define smooth pretangles $\gamma_\mathcal{SPT}^{t,\delta}$ for two given vector field flows $\phi^i_t, \phi^j_t$, as the curve in Figure \ref{fig:pretangleModels}, defined as a pretangle for $\left[\phi^i, \phi^j\right]$ whose corners have been smoothed by using the flows $\#^\delta_{\pm i, \pm j}$ and $d^{j, -i}$ (see the upright corner). A precise formula for the curve can be given. Indeed, $\gamma_\mathcal{SPT}^{t, \delta}$ is an integral curve of the flow:

\[\mathcal{SPT}^{t,\delta}_{\left[\phi^i, \phi^j\right]}:=\phi^i_{0\to t-\delta/2}\: \#^{\delta}_{i,j}\: \phi^j_{\delta/2\to t-\delta/2} \: \# \: d^{j, -i}_\delta \: \# \: \phi^{-i}_{\delta/2\to t-\delta/2}\: \#_{-i,-j}^{\delta}\: \phi^{-j}_{\delta/2\to t-3\delta/2}\: \#^\delta_{-j,i}\#  \phi^{i}_{\delta/2\to t-\delta}\#_{i,-j}^{\delta}\#_{-j,i}^{\delta}\]

We refer to $\delta$ as the \textbf{smoothing parameter of the s-pretangle} $\mathcal{SPT}^{t,\delta}_{\left[\phi^i, \phi^j\right]}$. (See Figure \ref{fig:pretangleModels}, where an integral curve of such a flow is depicted inside the grey box).



\subsection{Attaching models} \label{ssec:attachingModels}


As we saw earlier, pretangles can be interpreted as a local model that can be attached to a family of curves in the base of a graphical model in order to quantitatively control the endpoints of the lift (see Proposition \ref{EndpointBracketCurves}). Nonetheless, we face two fundamental problems when we try to do that: these curves are not smooth. Furthermore, they are not (topologically) embedded and it is not readily apparent whether their lifts are embedded.

We now introduce some alternate models by carefully modifying our prior constructions. These models will depend on certain small ``smoothing parameters'' and will converge to pretangles, in the $C^0$-norm, as we make these parameters tend to zero.

The general definition reads:
\begin{definition} \label{def:attachingModel}
Let $\gamma:I\to \R^q$ be a curve that is integral for a coordinate vector field $X_i$ in the adapted frame. We call an \textbf{attaching model} with axis $\partial_i$ to a choice of: 
\begin{itemize}
\item[i)] a size $\eta>0$,
\item[ii)] two attaching points $p_1=\gamma(t_1), p_2=\gamma(t_2)$ at distance $d_g(p_1, p_2) = \eta$,
\item[iii)] a hypercube $\SB(\eta) \subset \R^q$, called the box of the model, of side $2\eta$ so that $p_1$ and $p_2$ are in opposite faces of $\SB$.
\item[iv)] a curve $\beta$ with endpoints $p_1, p_2$  satisfying:
\begin{itemize}
\item[iv.a)] its image lies inside $\SB(\eta)$.
\item[iv.b)] The curve $\tilde\gamma: I\to \R^q$ defined as $\tilde\gamma|_{I\setminus[t_1, t_2]}=\gamma|_{I\setminus[t_1, t_2]}$, $\tilde\gamma|_{[t_1, t_2]}=\beta$ is continuous. If it has $C^r$ regularity, we say that the model is $C^r-$regular.
\end{itemize}
\end{itemize}
\end{definition}

\begin{figure}[h] 
	\includegraphics[scale=0.2]{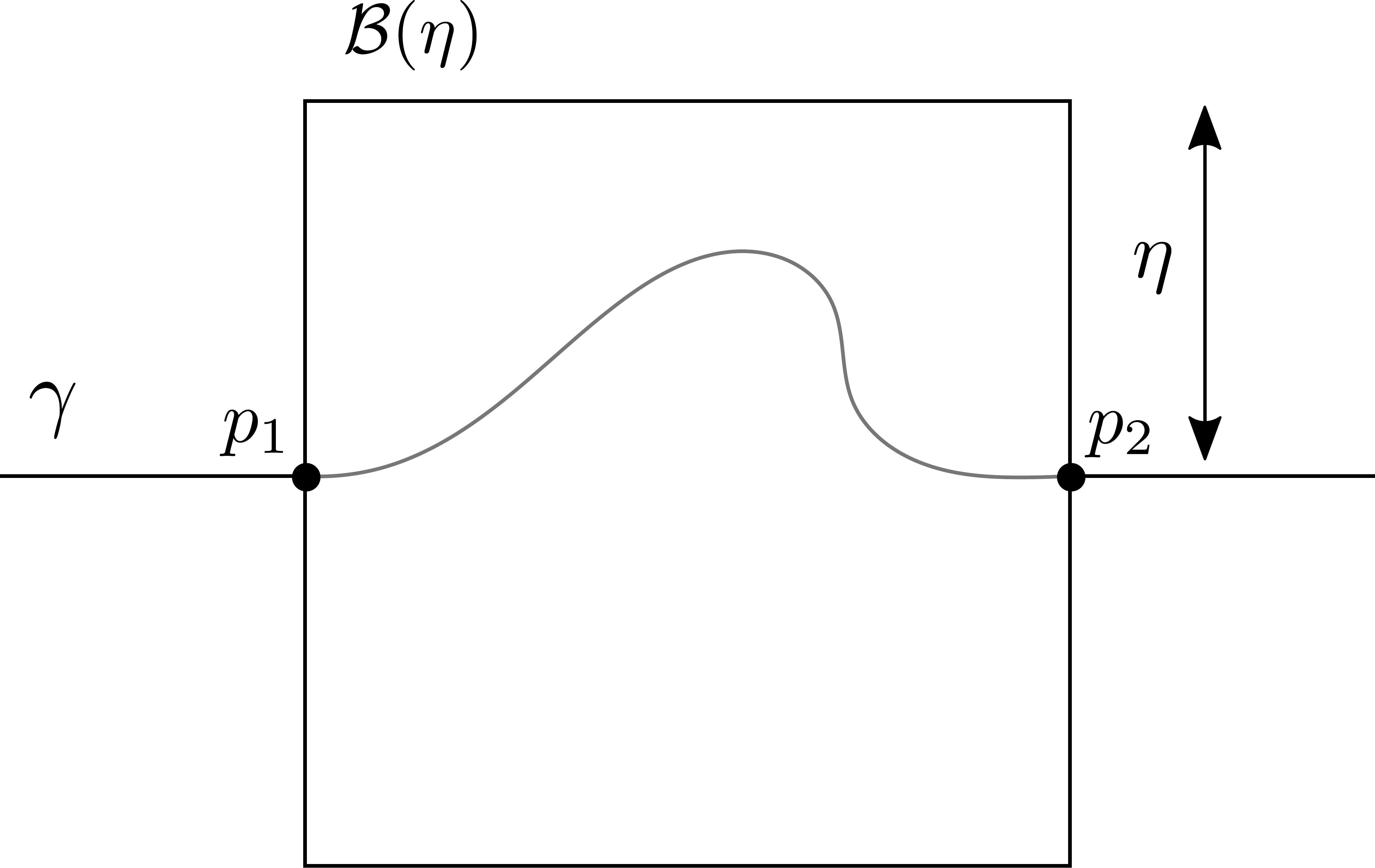}
	\centering
	\caption{Schematic depiction of an \textbf{attaching model} for certain choice of $\eta>0$, attaching points $p_1, p_2$, a hypercube $\mathcal{B}(\eta)$ and a curve $\beta$.}\label{BoxModel}
\end{figure}

\subsubsection{Pretangle models} \label{sssec:pretangleModels}

A concrete instance of Definition \ref{def:attachingModel} to be used in the next section reads:
\begin{definition}
Let $A$ be a generalised bracket expression of the form $A(\phi_i,\cdots,\phi_n)=[\phi_i, B(\phi_\ell,\cdots\phi_n)]^{\#k}$ with inputs flows $\phi_i,\cdots, \phi_n$, and where $B$ is a bracket expression of smaller length. A \textbf{pretangle model} associated to $A$ is an attaching model where  the curve $\beta:[t_1,t_2]\to \R^q$ inside the box satisfies:
\begin{itemize}
\item it is a pretangle for $t\in(t_1+\epsilon, t_2-\epsilon)$,
\item it coincides with the straight segment in the direction $\partial_i$ joining $p_1,p_2$ for $t\in(t_1, t_1+\epsilon)\cup(t_2-\epsilon, t_2)$.
\end{itemize} 
\end{definition}
We call length of the pretangle model to the length of the pretangle inside the box.

\begin{figure}[h] 
	\includegraphics[scale=0.7]{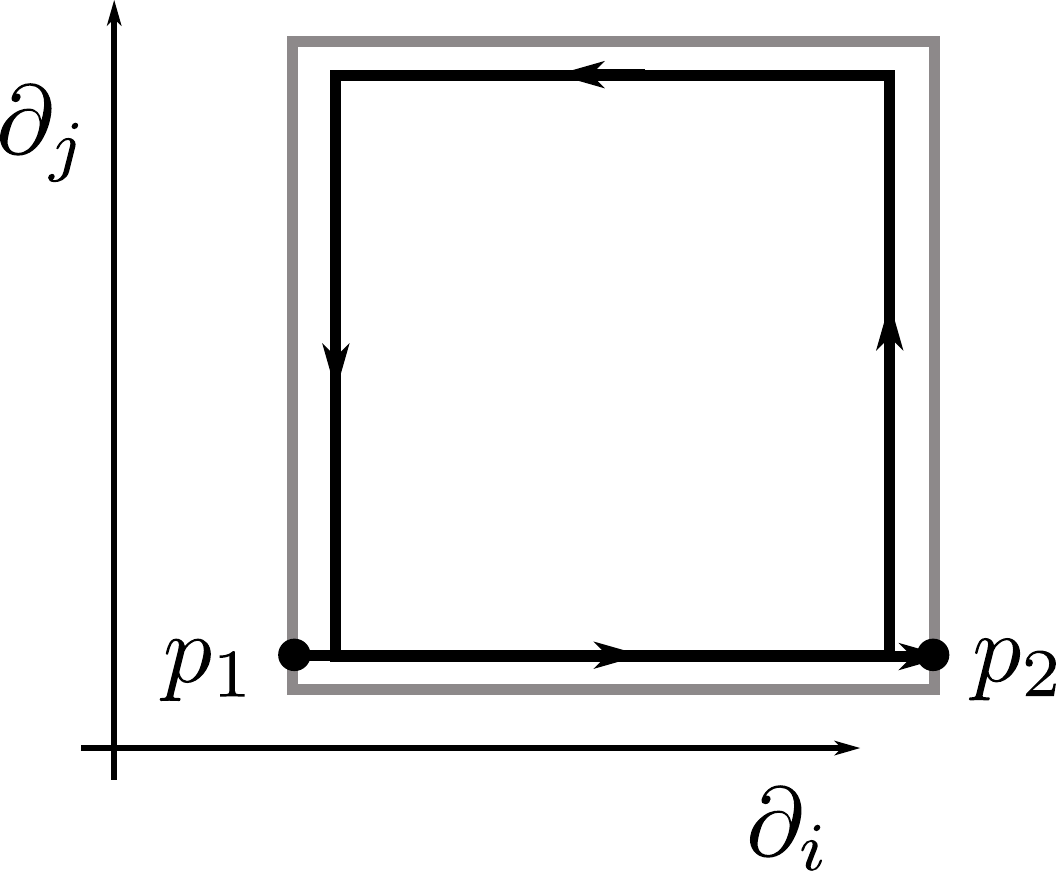}
	\centering
	\caption{Pretangle model.}\label{fig:pretangleModels}
\end{figure}

\subsection{Length-$2$ tangle models} \label{ssec:pretangleModelsL2}

We now introduce the building blocks of the main objects of interest in the section. We present first a specific type of attaching model that we call basic length $2$ tangle model. These are meant to be better behaved than pretangle models, whose regularity is only $C^0$.
\begin{definition}
Let $A$ be a generalised bracket expression of the form $A(\phi_i,\cdots,\phi_n)=[\phi_i, B(\phi_\ell,\cdots\phi_n)]^{\#k}$ with inputs flows $\phi_i,\cdots, \phi_n$, and where $B$ is a bracket expression of smaller length. A \textbf{length-$2$ base tangle model} associated to $A$  is the attaching model described by Figure \ref{Length2BasicModel}. 

The curve $\beta_\delta:[t_1,t_2]\to \R^q$ inside the box is immersed and satisfies:
\begin{itemize}
\item  it is a smooth pretangle $\mathcal{SPT}^{\mu,\delta}_{\left[\phi^i, \phi^j\right]}$ for $t\in(t_1+\epsilon, t_2-\epsilon)$,
\item it is $\delta-$close to the straight segment in the direction $\partial_i$ joining the points $p_1,p_2$ for $t\in(t_1, t_1+\epsilon)\cup(t_2-\epsilon, t_2)$,
\item the size of the box of the attaching model is $\mu+2\delta$.
\end{itemize}
We say that $\delta$ is the smoothing parameter of the model.
\end{definition}
Whenever it is clear from the context we will refer as the tangle to the curve $\beta$ in the tangle model. 

\begin{figure}[h] 
	\includegraphics[scale=0.7]{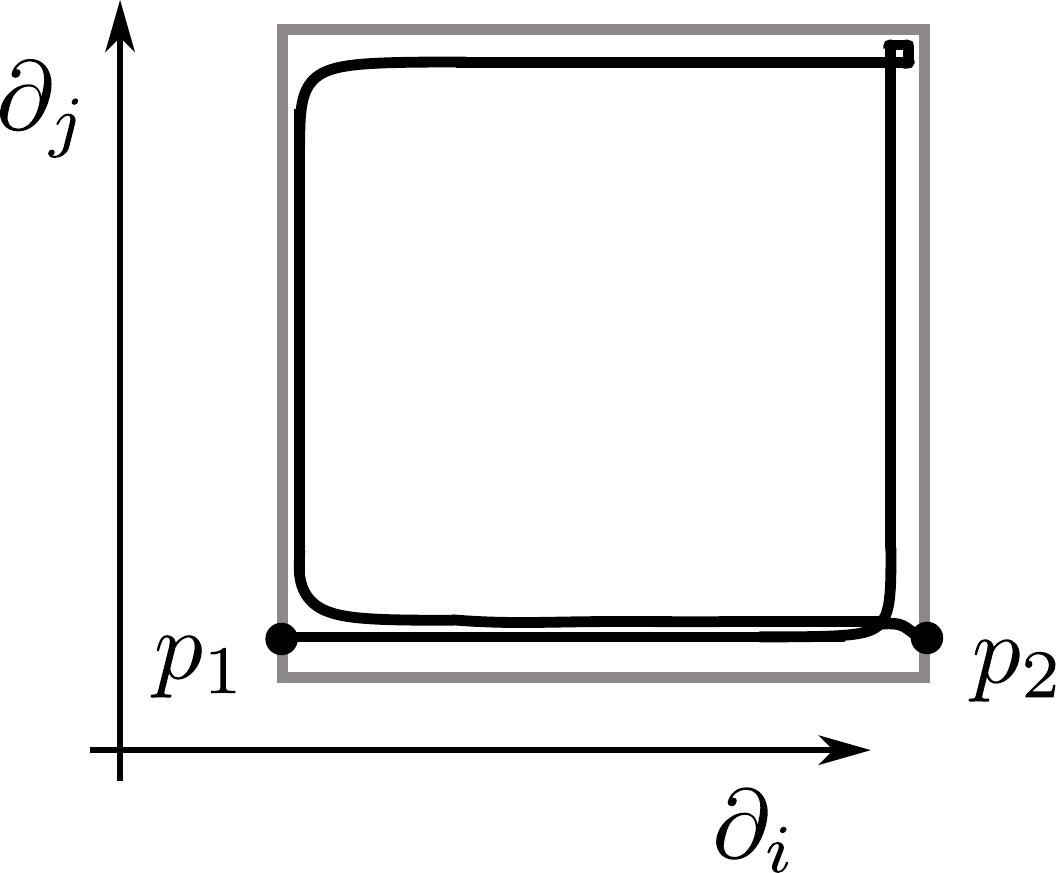}
	\centering
	\caption{Length $2$ base tangle model. The curve inside the box is a smooth pretangle $\mathcal{SPT}^{\mu,\delta}_{\left[\phi^i, \phi^j\right]}$ for $t\in(t_1+\epsilon, t_2-\epsilon)$ and coincides with the straight segment in the direction $\partial_i$ joining the points $p_1,p_2$ elsewhere.}\label{Length2BasicModel}
\end{figure}

\begin{remark}
Note that any length $2$ pretangle model can be $C^0-$approximated by a length 2 base tangle model by taking the smoothing parameter $\delta$ small enough.
\end{remark}

\subsubsection{Birth homotopy for length-$2$ base tangle models}

Our goal is to present a homotopy that introduced a length-$2$ base tangle. We first the following result:
\begin{proposition}\label{prop:AlzadoAreas}
Assume $[X_i, X_j=B(X_1,\cdots,X_{j})]=X_z$. Denote by $\alpha_z$ be the covector dual to $X_z$.

Then, any curve $\gamma:\NS^1\to\R^q$ enclosing area $A$ in the plane $\R^2=\langle \partial_i,\partial_j\rangle$ lifts to $\SD$ as a curve $\tilde{\gamma}:[0,1]\to \R^n$ satisfying
\[\int_{\gamma}\alpha_z=A\left(1+O(r)\right).\]
\end{proposition}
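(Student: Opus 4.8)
\emph{The plan.} This is an instance of the ``area equals holonomy'' principle, with the $O(r)$ error measuring how far the graphical model is from its nilpotentisation. Write $\tilde\gamma\colon[0,1]\to\R^n$ for the horizontal lift of $\gamma$, and read the left-hand side as the net variation $x_z(\tilde\gamma(1))-x_z(\tilde\gamma(0))$ of the $z$-th Euclidean coordinate along $\tilde\gamma$ (equivalently $\int_{\tilde\gamma}dx_z$); this is legitimate since $\alpha_z$ agrees with $dx_z$ at the origin by condition (b) of Definition \ref{def:graphicalModel}. Since $d\pi$ restricts to a fibrewise isomorphism $\SD\to T\R^q$ with $d\pi(X_k)=\partial_k$ and $\gamma$ takes values in the $2$-plane $\langle\partial_i,\partial_j\rangle$, the lift $\tilde\gamma$ is the integral curve through the prescribed initial point of the time-dependent horizontal field $V_t:=\dot\gamma_i X_i+\dot\gamma_j X_j$, where $\gamma_i=x_i\circ\gamma$, $\gamma_j=x_j\circ\gamma$ (cf. Subsection \ref{ssec:ODEModels}). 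Writing $X^z$ for the $z$-th component of a vector field, this gives
\[ x_z(\tilde\gamma(1))-x_z(\tilde\gamma(0)) \;=\; \int_0^1 \big(\dot\gamma_i\,X_i^z+\dot\gamma_j\,X_j^z\big)\big(\tilde\gamma(t)\big)\,dt. \]

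\emph{Descending to the base.} The fibre coordinates of $\tilde\gamma(t)$ stay within $O(r)$ of $0$ by the bound \eqref{eq:naiveBound}, while $\pi\circ\tilde\gamma=\gamma$. Since the frame fields are $C^1$ with derivatives controlled by the constant of Lemma \ref{lem:adaptedCharts}, one may replace $X_i^z(\tilde\gamma(t))$ by its value $f_i(\gamma(t))$ on the $2$-plane $\langle\partial_i,\partial_j\rangle$, and $X_j^z$ by $f_j(\gamma(t))$, at a cost of $O(r)$. The integral then becomes a genuine line integral over the loop $\gamma$ in the base:
\[ x_z(\tilde\gamma(1))-x_z(\tilde\gamma(0)) \;=\; \oint_\gamma \big(f_i\,dx_i+f_j\,dx_j\big) \;+\; O(r)\cdot A. \]

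\emph{Stokes.} Let $D$ be a disc in $\langle\partial_i,\partial_j\rangle$ with $\partial D=\gamma$, so the line integral equals $\int_D(\partial_i f_j-\partial_j f_i)\,dx_i\wedge dx_j$. On that plane $\partial_i f_j-\partial_j f_i$ is, up to $O(r)$ (again from $X_i\neq\partial_i$ away from the origin), the $z$-component of $[X_i,X_j]$. By hypothesis $[X_i,X_j]=X_z$, and $X_z(0)=\partial_z$ by condition (b), so this function equals $1$ at the origin, hence $1+O(r)$ throughout $D\subset B_r$. Thus the integral is $(1+O(r))\operatorname{area}(D)=A(1+O(r))$, which together with the previous display proves the claim.

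\emph{Main obstacle.} The delicate part is bundling every error into a single \emph{relative} $O(r)$: (i) one needs the fibre coordinates of the lift to be $O(r)$, which requires $\tilde\gamma$ to stay inside the model and is guaranteed by \eqref{eq:naiveBound} once $\len(\gamma)=O(r)$; (ii) the replacement of $X^z(\tilde\gamma)$ by $f(\gamma)$ is $O(r)$ \emph{relative to} $A$ only when the total variation of the coordinates of $\gamma$ is comparable to $\operatorname{area}(D)$, i.e. when $\gamma$ is positively oriented — which holds in all our applications; (iii) the curvature function $\partial_i f_j-\partial_j f_i$ differs from $1$ by $O(r)$, which is precisely where the bracket relation and condition (b) enter. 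An alternative route, closer in spirit to the rest of Section \ref{sec:tangles}, would be to $C^0$-approximate $\gamma$ by a pretangle (a concatenation of small coordinate rectangles), compute the endpoint of the lift directly from the commutator expansions of Appendix \ref{Appendix} and Proposition \ref{EndpointBracketCurves}, and pass to the limit; I would mention this as a remark.
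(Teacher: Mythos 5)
Your proof is correct at the level of rigour the paper itself operates at, and it rests on the same underlying principle — Stokes' theorem plus the fact that the relevant curvature pairing equals $1$ at the origin, with a Taylor-remainder estimate supplying the $O(r)$ — but the implementation is genuinely different. The paper never descends to the base: it closes up the lift $\tilde\gamma$ by the segment joining $\tilde\gamma(1)$ to $\tilde\gamma(0)$, picks a disc in $\R^n$ bounded by this closed curve and projecting onto $\gamma$, and computes $\int_{\tilde\gamma}\alpha_z$ as the integral of $d\alpha_z$ over that disc, identifying $d\alpha_z$ with $dx_i\wedge dx_j$ at the origin via Cartan's formula $d\alpha_z(X_i,X_j)=\pm\,\alpha_z([X_i,X_j])$ (the derivative terms vanish since $\alpha_z$ is part of the dual coframe). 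You instead realise the lift as an integral curve of $\dot\gamma_i X_i+\dot\gamma_j X_j$, write the $x_z$-displacement as a line integral of the $z$-components of the frame over the base loop, and apply Green's theorem in the plane $\langle\partial_i,\partial_j\rangle$. The paper's route lets the bracket hypothesis enter through one clean identity and avoids transporting frame values from the lift down to the plane; your route is more elementary (no spanning disc upstairs, no Cartan formula) and makes explicit the ODE picture used throughout Section \ref{sec:tangles}, at the price of the two replacement steps ($\alpha_z\leftrightarrow dx_z$ and $X^z(\tilde\gamma)\leftrightarrow f(\gamma)$), each a priori costing $O(r)\cdot\len(\gamma)$ rather than $O(r)\cdot A$. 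You flag this in your obstacle (ii); note that the paper's proof has the exact analogous looseness (its error is $O(r)\cdot\operatorname{area}$ of the chosen disc, silently treated as $O(r)\cdot A$), so you are on par with, and somewhat more candid than, the original. One small caveat: Lemma \ref{lem:adaptedCharts} only provides the $C^0$ bound $|X_j(x)-\partial_j|<C|x|$, so the uniform first-derivative bounds on the frame that your Green/Taylor step (and the paper's remainder estimate) require should be attributed to the same compactness argument rather than to that lemma; and your closing remark about approximating $\gamma$ by pretangles and invoking Proposition \ref{EndpointBracketCurves} should be used with care here, since this proposition is applied in Proposition \ref{prop:introtanglesbase} to the auxiliary loops created during the birth homotopy, not only to pretangles.
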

\begin{proof}
Denote by $\Gamma_{(\tilde{\gamma}(1), \tilde{\gamma}(0))}$ the oriented segment connecting the points $\tilde{\gamma}(1)$ and $\tilde{\gamma}(0)$. Denote by $\beta:=\tilde{\gamma}\#\Gamma_{(\tilde{\gamma}(1), \tilde{\gamma}(0))}$  the concatenation of the curves $\tilde{\gamma}$ and $\Gamma_{\left(\tilde{\gamma}(1), \tilde{\gamma}(0)\right)}$. Note that 
\[\int_{\beta}\alpha_z=-\int_{\Gamma_{\left(\tilde{\gamma}(1), \tilde{\gamma}(0)\right)}}\alpha_z\]
and, thus, this integral measures the difference of the $\partial_z$-coordinate values of the points $\tilde{\gamma}(1)$ and $\tilde{\gamma}(0)$. 

Consider a topological disk $\SD_{\tilde{\gamma}}$ bounded by $\beta$ and whose boundary gets projected to $\gamma$ in the projection onto the plane $\langle\partial_i, \partial_j\rangle$. By Stokes' theorem,
\[\int_{\beta}\alpha_z=\int_{\SD_{\tilde{\gamma}}}d\alpha_z.\]
By Cartan's formula we have that
\[d\alpha_z(X_i,X_j)=\alpha_z\left(\left[X_j,X_i\right]\right).\]
Thus, if we particularize this equation at the point $p\in M$, we get that
\[d\alpha_z(p)(X_i(p),X_j(p))=1,\]
and it vanishes when evaluated at any other combination of two elements of the framing associated to the coordinate chart. Thus, the $2-$form $d\alpha$ coincides with  $dx_i\wedge dx_j$ in the origin at the level of $0-$jets. As an application of Taylor's Remainder Theorem we get that
\[\int_{\SD_{\tilde{\gamma}}}d\alpha_z=\int_{\SD_{\tilde{\gamma}}}dx_i\wedge dx_j+O(r)=A+A\cdot O(r), \]
yielding the claim.
\end{proof}

Then the following statement holds:
\begin{proposition}[Birth homotopy for length-$2$ base tangle models] \label{prop:introtanglesbase}
Let $X_i, X_j$ be two elements in the adapted framing such that $[X_i(p), X_j(p)]=X_{\ell}(p)$. Let $\gamma:[-\delta, \delta]\to \R^q$ be a horizontal curve in a graphical model. 

There exists a homotopy of embedded horizontal curves $(\gamma^u)^{u\in[0,1]}$ such that: 
\begin{itemize}
\item[i)] $\gamma^0=\gamma$.
\item[ii)] $\pi\circ\gamma^{1}|_{[-\delta/2, \delta/2]}$ is endowed with a length-$2$ base tangle model associated to the generalised bracket expression $[\phi_i, \phi_j]$.
\item[iii)] $\pi\circ\gamma^u(t)=\pi\circ\gamma(t)$ for $t\in\Op(\lbrace -\delta, \delta\rbrace)$ and all $u\in[0,1]$.
\end{itemize}
\end{proposition}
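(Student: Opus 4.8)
The plan is to produce the homotopy explicitly as a concatenation of three elementary moves, each of which keeps the curve embedded and horizontal, and each of which is supported inside the tiny box $\SB$ and relative to its boundary. The starting curve $\gamma$ is (an integral segment of) the vector field $X_i$, so $\pi\circ\gamma$ is a straight segment in the $\partial_i$-direction; I first pick the two attaching points $p_1,p_2$ on it, a hypercube $\SB(\eta)$ around them with $\eta$ small compared with $\delta$ and with the model radius $r$, and the ``target'' curve $\beta_\delta$, namely the smooth pretangle $\mathcal{SPT}^{\mu,\delta}_{[\phi^i,\phi^j]}$ completed by the two straight $\partial_i$-segments near $p_1,p_2$, so that the resulting $\tilde\gamma$ is a valid length-$2$ base tangle model in the sense of the definition just given. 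The task is then to connect $\pi\circ\gamma|_{[-\delta/2,\delta/2]}$ to this $\tilde\gamma$ through embedded horizontal curves, relative to $\Op(\{-\delta,\delta\})$.

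First I would construct the homotopy at the level of the base $\R^q$. Inside the box the difference between $\pi\circ\gamma$ and $\beta_\delta$ is a loop bounding a small area $A$ in the plane $\langle\partial_i,\partial_j\rangle$; I would realise a homotopy $(\alpha^u)_{u\in[0,1]}$ from the straight segment to $\beta_\delta$ by letting this enclosed area grow monotonically from $0$ to $A$ (a standard ``finger move'' / Whitney-type isotopy, doable keeping all intermediate curves embedded because the relevant planar configurations are, by design, close to straight lines and the smoothings in $\mathcal{SPT}$ and $d^{j,-i}_\delta$ are made $C^\infty$-small by taking the smoothing parameter $\delta$ small). The key quantitative point is Proposition \ref{prop:AlzadoAreas}: the lift $\tilde\alpha^u$ of $\alpha^u$, with fixed initial point $\gamma(-\delta/2)$, has its $X_\ell$-coordinate (hence, since $X_i,X_j$ bracket to $X_\ell$ transversally and the chart is adapted, its full vertical displacement) controlled by $A^u(1+O(r))$; so by keeping $A$ (equivalently $\eta,\mu$) small we ensure $\tilde\alpha^u$ never leaves $B_r$ and stays $C^0$-close to $\gamma$. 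This gives a homotopy of horizontal curves $\gamma^u:=$ lift of $\alpha^u$, with $\gamma^0=\gamma$, $\pi\circ\gamma^1=\tilde\gamma$, and $\pi\circ\gamma^u$ fixed near $\{-\delta,\delta\}$, so (iii) and (i) hold and (ii) holds after the final step below.

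The remaining issue is that (ii) asks for $\pi\circ\gamma^1$ to coincide on $[-\delta/2,\delta/2]$ with a genuine length-$2$ base tangle model, and more importantly that the $\gamma^u$ be \emph{embedded}, not merely immersed. For embeddedness of each $\gamma^u$: since $\pi\circ\gamma^u=\alpha^u$ agrees with the original straight segment outside $\SB$, and inside $\SB$ the lift moves vertically by at most $A\cdot O(1)$ which is $\ll\eta$, the curve $\gamma^u$ is a small perturbation of $\gamma$ supported in a thin box; a self-intersection would have to occur between the perturbed portion and itself (ruled out because $\tilde\alpha^u$ is embedded in the base once its finitely many ``returns'' are separated in the vertical direction by the curvature-generated displacement — this is precisely why one uses the \emph{smoothed} pretangle $\mathcal{SPT}$ with the $d^{j,-i}_\delta$ corner rather than a raw pretangle) or between the perturbed portion and $\gamma$ itself, which is excluded by keeping $\eta$ small relative to the geometry of $\gamma$ on $[-\delta,-\delta/2]\cup[\delta/2,\delta]$ (a straight segment there). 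I expect this embeddedness bookkeeping — in particular checking that the intermediate curves during the area-growing finger move never acquire a self-tangency, and that the vertical displacement from Proposition \ref{prop:AlzadoAreas} genuinely separates the strands of the lifted smoothed pretangle — to be the main obstacle; everything else is a matter of choosing the constants $\eta,\mu,\delta,\epsilon$ small enough, in that order, using the $O(r)$ estimates from the graphical model (Section \ref{sec:graphicalModels}) and the interpolation lemmas of Section \ref{sec:microflexibility}.
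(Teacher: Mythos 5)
Your overall scheme (build the movie in the base, lift it using uniqueness of horizontal lifts, and control the vertical displacement via Proposition \ref{prop:AlzadoAreas} together with the $O(r)$ estimates of the graphical model) is the same as the paper's, and your treatment of (i), (iii) and the $C^0$/length bookkeeping is fine. The gap is precisely in the embeddedness step, which is the real content of the proposition. You claim the base homotopy can be carried out ``keeping all intermediate curves embedded''; this cannot work, because the curve you are required to end with --- the length-$2$ base tangle model, i.e.\ the smoothed commutator $\mathcal{SPT}^{\mu,\delta}_{[\phi^i,\phi^j]}$ inserted into a straight $\partial_i$-segment --- is only \emph{immersed} in the base: its projection necessarily self-intersects (the commutator loop must return across the $\partial_i$-axis before exiting at $p_2$), and along the paper's movie up to two pairs of intersection points appear. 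So embeddedness of the lifts $\gamma^u$ cannot be reduced to embeddedness downstairs; one must show that each base self-intersection lifts to two distinct points upstairs.

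Your sketch only handles the crossing whose intermediate loop encloses area in the $\langle\partial_i,\partial_j\rangle$-plane, where Proposition \ref{prop:AlzadoAreas} indeed separates the two strands in the $\partial_\ell$-direction. The second crossing is the delicate one: the sub-loop between its two visit times may enclose essentially zero area in that plane, so this argument gives nothing there. This is exactly where the paper uses the hypothesis $\dim(M)>3$, via a rank dichotomy: if $\rank(\SD)>2$ one uses a third base direction $\partial_z$ to remove that crossing already in the base; if $\rank(\SD)=2$, bracket-generation plus $\dim(M)>3$ provide a further frame element $X_z$ with $[X_i,X_\ell]=X_z$, and either the lift projects to the $\langle\partial_i,\partial_\ell\rangle$-plane as an open curve (done), or it encloses area $B^u$ there and a second application of Proposition \ref{prop:AlzadoAreas} separates the lifts in the new vertical direction. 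Your proposal never invokes $\dim(M)>3$ or the rank of $\SD$, which is a clear sign the essential step is missing: as written, the same argument would apply verbatim in the $3$-dimensional contact case, where the dimension hypothesis of the main theorem is known to be necessary. To repair the proof you must add this second-crossing analysis (or the $\rank>2$ base-level fix), following the case distinction above.
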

Property $iii)$ guarantees that this homotopy, when projected into the base, is relative to both endpoints. This, in particular, implies that the lifted homotopy $\gamma^u$ through horizontal curves is also relative to the starting point.

\begin{proof}
We construct the homotopy in the base; i.e. we will define $\pi\circ\gamma^u$ and, being the lifting onto the connection unique, the claim will follow. 

Since iterated models are constructed iteratively on the base length $2$ model, it suffices to show the result for that the latter.
 We first locally homotope $\pi\circ\gamma$ to an integral curve for $X_i$ in the base, any segment around $\gamma(t_0)$ is as described by the first frame in Figure \ref{tanglebase}. Consider the local isotopy of immersed cuves in the base described by the Figure \ref{tanglebase}. 

\begin{figure}[h] 
	\includegraphics[scale=0.1]{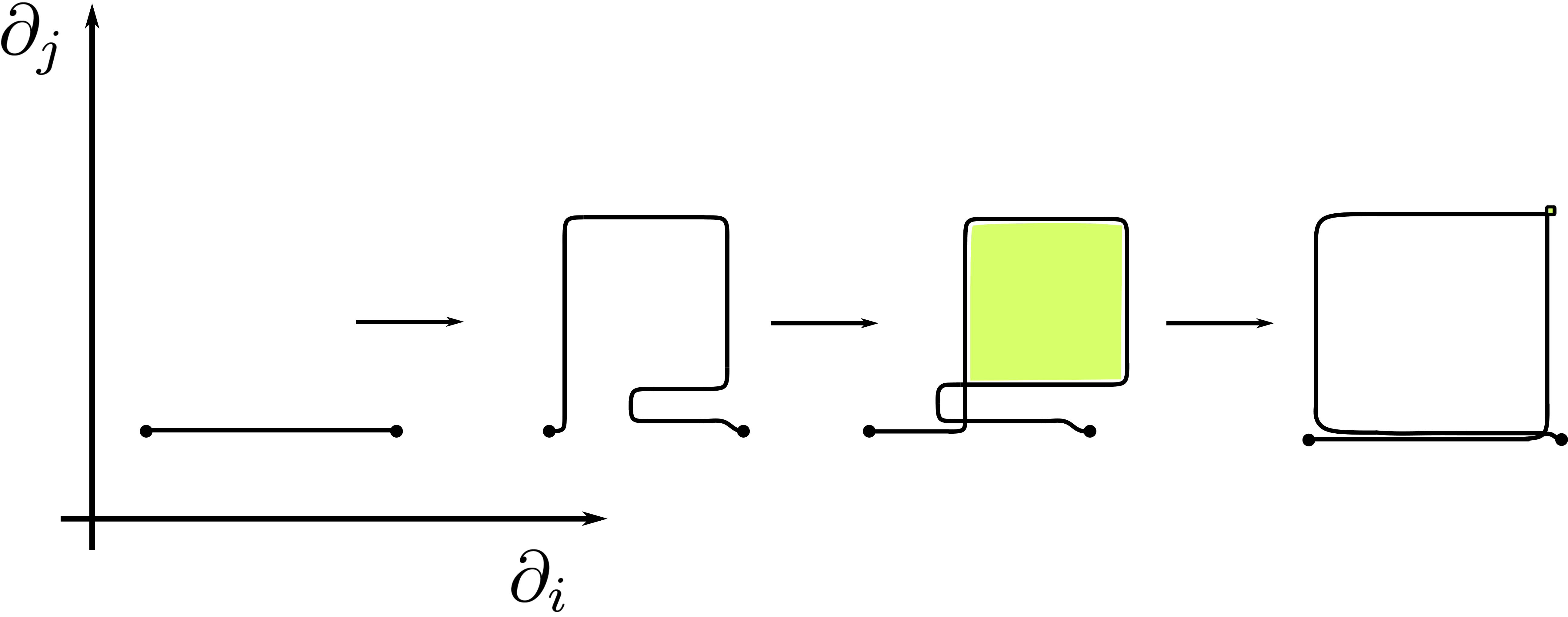}
	\centering
	\caption{Birth homotopy for a length $2$ base tangle model $\mathcal{T}^t_{\left[\phi^i, \phi^j\right]}$.}\label{tanglebase}
\end{figure}

The first three depicted frames in the movie correspond to the isotopy $(\pi\circ\gamma^u)^{u\in[0,\frac{1}{2}]}$, while  the fourth one completes it to $(\pi\circ\gamma^u)^{u\in[0,1]}$. 

Points $i), ii)$ and $iii)$ readily follow from the isotopy taking place in the projected plane $\langle \partial_i, \partial_j\rangle$. Therefore all we have to check is that embeddedness holds when we lift the curve to $\SD$. We will verify that any pair of intersection points taking place in the base (at most two pairs, depending on the value of $u\in[0,1]$) lift to different points upstairs.

Note that this fact can be achieved trivially if the rank of the distribution $\SD$ is greater than $2$, since we can use an additional coordinate $\partial_z$  in order to perform the homotopy while remaining embedded already in the base. This fact is depicted in Figure \ref{EmbeddedCrossings}, where it is shown how to avoid any crossing in the $3-$plane $\langle\partial_i,\partial_j, \partial_z\rangle$ during the isotopy. 

\begin{figure}[h] 
	\includegraphics[scale=0.25]{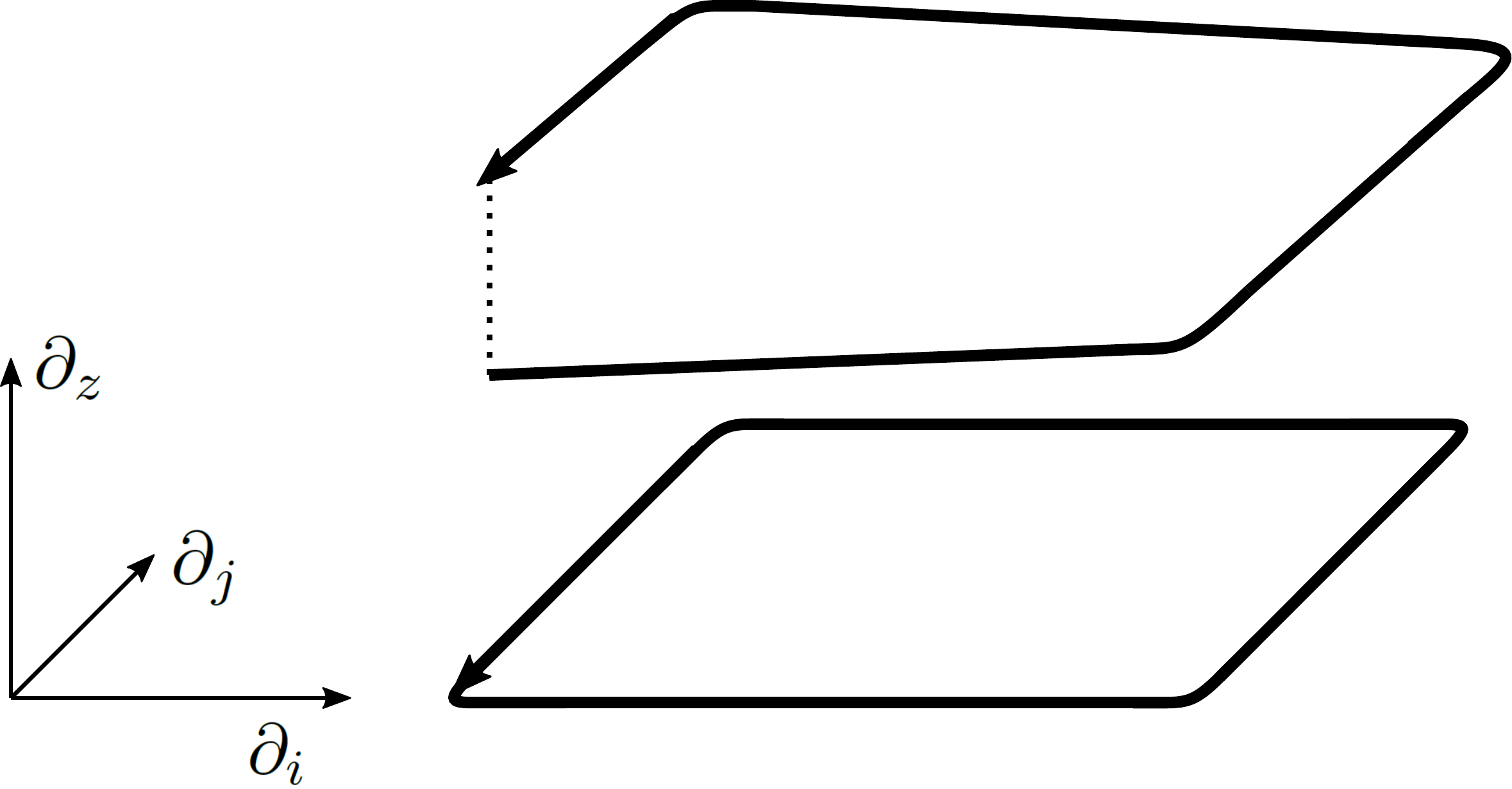}
	\centering
	\caption{Increase of the additional coordinate $\partial_z$ during the homotopy in order to achieve embeddedness in distributions $\SD$ of rank greater than $2$. }\label{EmbeddedCrossings}
\end{figure}

So, let us assume that the distribution is of rank $2$ and, thus, because of $dim(M)>3$ and the bracket-generating condition, we can assume that either $[X_i(p), X_\ell(p)]=X_z(p)$ or $[X_i(p), X_\ell(p)]=X_z(p)$, where $X_z$ is some other element in the adapted frame. Without loss of generality, we assume $[X_i(p), X_\ell(p)]=X_z(p)$.

Let us denote by $\left(\pi\circ\gamma^u(t_1), \pi\circ\gamma^u(t_2)\right)$ the $1-$parametric family of pairs of points corresponding to the upper-right autointersection in the homotopy in Figure \ref{tanglebase}. By Proposition \ref{prop:AlzadoAreas}
the difference in the values of the $\partial_{\ell}$-coordinate between the liftings of the points $\pi\circ\gamma^u(t_1)$ and $\pi\circ\gamma^u(t_2)$ is $A^u\left(1+O(r)\right)$, where $A^u$ is the area enclosed by the curve $\pi\circ\gamma^u|_{[t_1,t_2]}$ in the plane $\langle\partial_i,\partial_j\rangle$. Therefore for a sufficiently small choice of $r>0$,  $A^u\left(1+O(r)\right)$ is a positive number.

Denote by $\left(\pi\circ\gamma^u(t'_1), \pi\circ\gamma^u(t'_2)\right)$ the $1-$parametric family of pair of points corresponding to the other autointersection in Figure \ref{tanglebase}. If the lifting $\gamma|_{[t'_1,t'_2]}$ of the curve $\pi\circ\gamma|_{[t'_1,t'_2]}$ projects onto the plane $\langle \partial_i,\partial_\ell\rangle$ as an opened curve then we are done, since this means that the $\partial_z$-coordinates of the points $\gamma^u(t'_1)$ and $\gamma^u(t'_2)$ are different. 

Otherwise, we get a closed loop that  encloses area $B^u$ in the plane $\langle\partial_i,\partial_{\ell}\rangle$ and that implies that, again by Proposition \ref{prop:AlzadoAreas}, $\pi\circ\gamma(t'_1)$ and $\pi\circ\gamma(t'_2)$ differ an ammount of $B^u\left(1+O(r)\right)$ in the $\partial_\ell$-coordinate. We conclude then that the $\partial_\ell$-coordinates of the liftings of both points are different by the same argument as above.

Properties $i), ii)$ and  $iii)$ are satisfied by construction. 
\end{proof}

\begin{figure}[h] 
	\includegraphics[scale=0.10]{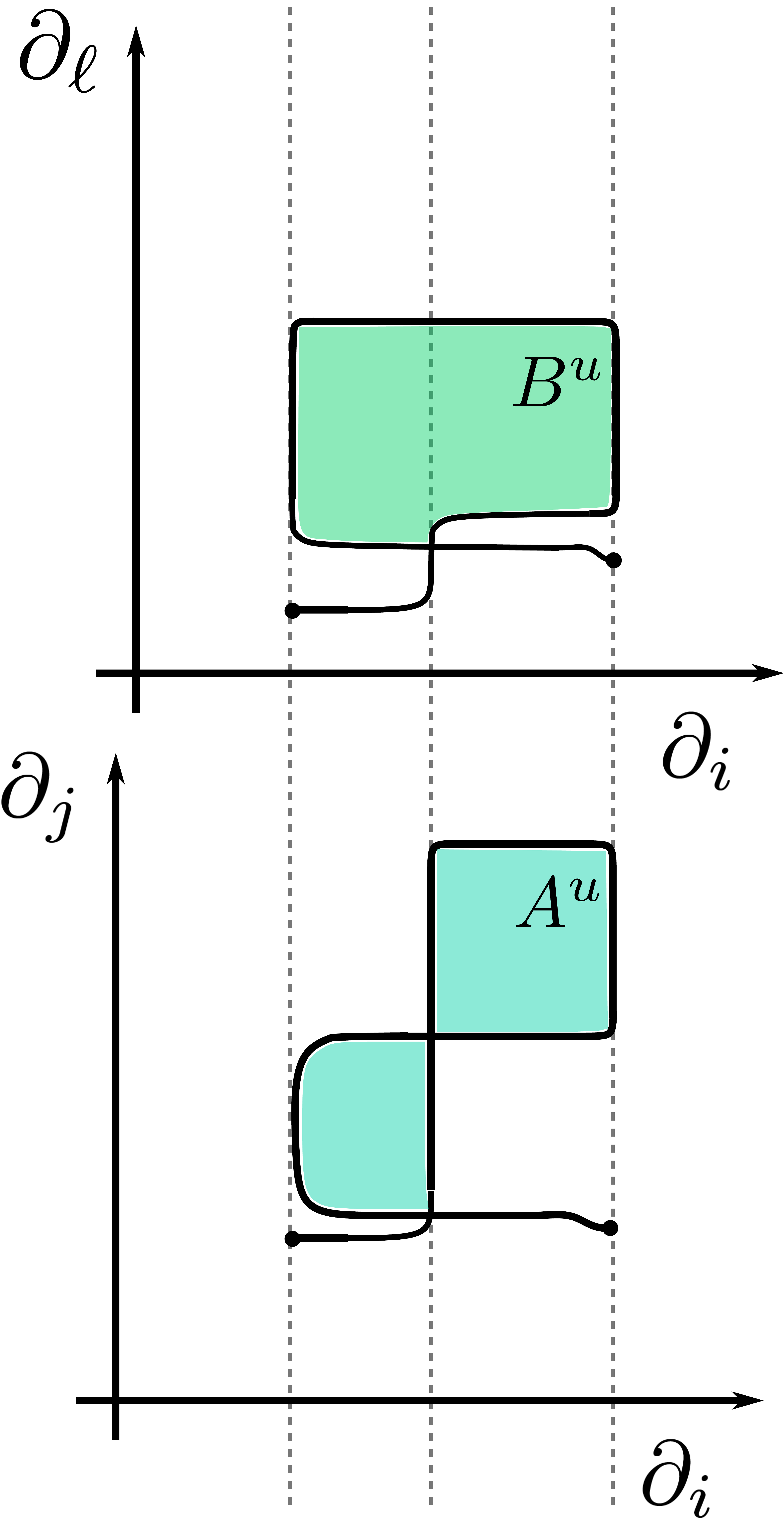}
	\centering
	\caption{When we look at the projection of the curve into the plane $\langle\partial_i,\partial_{\ell}\rangle$ we get a closed loop that encloses area $B^u$. }\label{fig:EmbeddedCrossings2}
\end{figure}

\begin{remark}
Note that we can inductively choose two attaching points $q_1, q_2$ inside a length $2$ base tangle model and a box whose boundary intersects the curve only at $q_1, q_2$ as in Figure \ref{Length2IteratedModel}. This way, we can insert to the given model another length $2$ base tangle model (see Figure \ref{Length2IteratedModel}) which is $2\delta-$close, in the $C^0-$norm, to the given one. We call \textbf{k times nested length $2$ tangle curves} with smoothing parameter $\delta$ to the curve obtained after repeating this process $k$ times.
\end{remark}
\begin{figure}[h] 
	\includegraphics[scale=0.7]{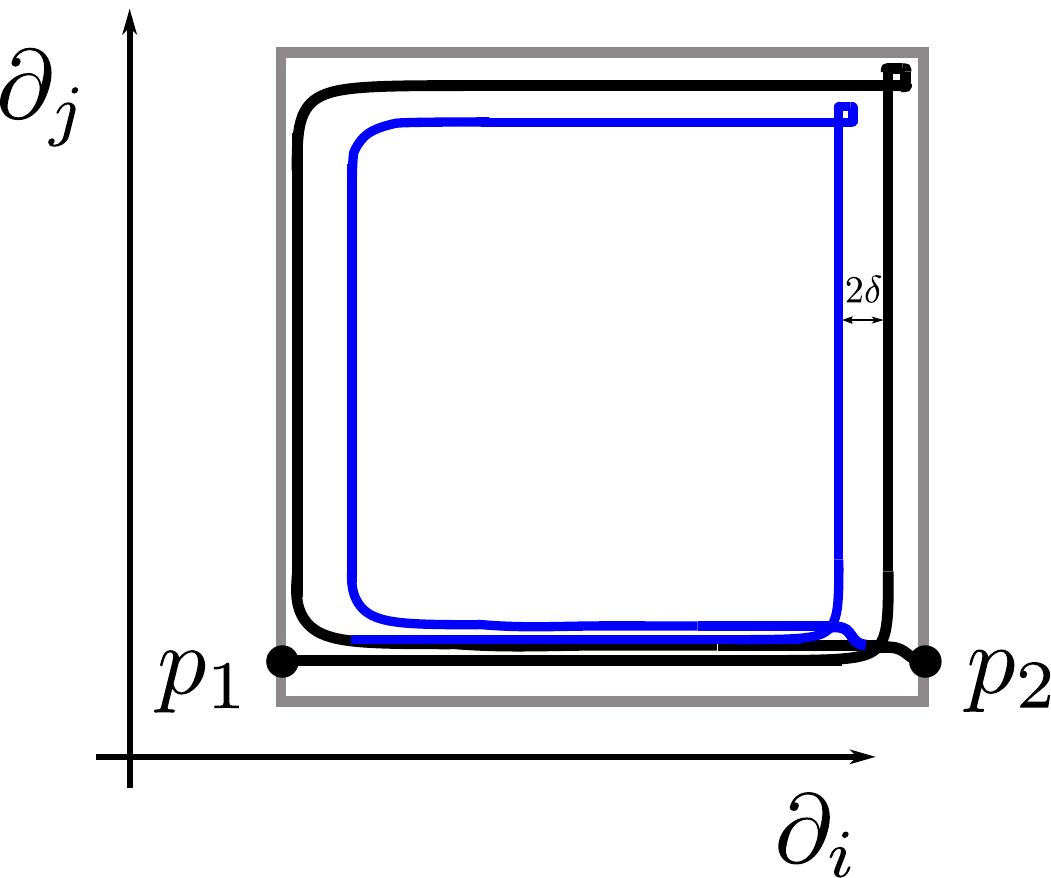}
	\centering
	\caption{Length $2$ base tangle model on the left with two marked attaching points $q_1, q_2$ and a choice of box for inserting another length $2$ base tangle model inside. On the right, a $2$ times nested length $2$ tangle curve.}\label{Length2IteratedModel}
\end{figure}

\subsubsection{Iterated length-$2$ tangle models}

We introduce now a variation on the previously defined model:
\begin{definition}
A \textbf{$k-$times iterated length-$2$ tangle model} associated to the generalised bracket expression $[\phi_i, \phi_j]^{\#k}$ with inputs $\phi_i, \phi_j$ is an attaching model described by Figure \ref{Length2IteratedModel}. Note that the curve inside the box satisfies:
\begin{itemize}
\item it coincides with the straight segment in the direction $p_1,p_2$ for $t\in(t_1, t_1+\epsilon)\cup(t_2-\epsilon, t_2)$,
\item the curve $\beta$ inside the box is a k times nested length $2$ tangle curve with smoothing parameter $\delta$.
\item the size of the box of the attaching model is $\mu+2\delta$,
\end{itemize}
\end{definition}
Note that a length-$2$ base tangle model is just a $1-$time iterated length $2$ tangle model.

\begin{remark}\label{convergenceC0}
A key remark is the following one: note that as $\tau, \delta\to 0$, any $k$-times iterated length $2$ tangle model converges to a pretangle model in the $C^0$-norm. 
\end{remark}

Their birth homotopy is explained in the following proposition:
\begin{proposition}
Let $X_i, X_j$ be two elements in the adapted framing such that $[X_i(p), X_j(p)]=X_{\ell}(p)$. Let $\gamma:[-\delta, \delta]\to \R^q$ be a horizontal curve in a graphical model. 
 There exists a homotopy of embedded horizontal curves $(\gamma^u)^{u\in[0,1]}$ such that: 
\begin{itemize}
\item[i)] $\gamma^0=\gamma$.
\item[ii)] $\pi\circ\gamma^{1}|_{[-\delta/2, \delta/2]}$ is endowed with a $k-$times iterated length-$2$ tangle model  associated to the generalised bracket expression $[\phi_i, \phi_j]^{\#k}$.
\item[iii)] $\pi\circ\gamma^u(t)=\pi\circ\gamma(t)$ for $t\in\Op(\lbrace -\delta, \delta\rbrace)$ and all $u\in[0,1]$.

\end{itemize}
\end{proposition}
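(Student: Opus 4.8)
The plan is to induct on $k$, building the birth homotopy as a concatenation of $k$ stages, each of which is an application of Proposition~\ref{prop:introtanglesbase} on a successively smaller sub-arc. For $k=1$ there is nothing new: a $1$-time iterated length-$2$ tangle model is by definition a length-$2$ base tangle model, so the statement is exactly Proposition~\ref{prop:introtanglesbase}.

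For the inductive step, assume we have already produced a homotopy of embedded horizontal curves, relative to $\Op(\{-\delta,\delta\})$ in the base, after which $\pi\circ\gamma|_{[-\delta/2,\delta/2]}$ carries a $(k-1)$-times iterated length-$2$ tangle model. By construction (see Figure~\ref{Length2IteratedModel}) this model contains a short sub-arc $\beta$, parametrised by some interval $[t_1,t_2]$, on which the curve is an integral curve of $X_i$, together with a box $\SB$ whose boundary meets the curve only at $q_1=\beta(t_1)$ and $q_2=\beta(t_2)$ and whose interior meets the curve only along $\beta$. Applying Proposition~\ref{prop:introtanglesbase} to $\beta$ — with smoothing and length parameters small enough that the resulting length-$2$ base tangle model (box of size $\mu+2\delta$) fits inside $\SB$ — yields a homotopy of embedded horizontal curves, supported inside $\SB$ and relative to $\Op(\{t_1,t_2\})$ in the base, after which $\beta$ carries a fresh length-$2$ base tangle model. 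Concatenating this with the previous homotopy produces the nested model of the statement. Property (iii) holds because every stage is relative to $\Op(\{-\delta,\delta\})$, property (i) is the normalisation at $u=0$, and property (ii) holds by construction.

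The actual content, exactly as in Proposition~\ref{prop:introtanglesbase}, is that the lift stays embedded. Since the homotopy of the inductive step is supported inside the small box $\SB$, relative to its boundary, the only base crossings it can create during any intermediate time $u$ are the boundedly many self-intersections internal to the new tangle, together with possible crossings of the new tangle against the finitely many strands of the $(k-1)$-times model that run $C^0$-close to $\beta$; note that the nesting construction does collapse all strands towards a single integral curve of $X_i$, so this second kind genuinely occurs and cannot be handled by a $C^0$ argument alone. For each $u$ and each such crossing pair $(\pi\circ\gamma^u(s_1),\pi\circ\gamma^u(s_2))$, one closes up the intermediate sub-arc and applies Proposition~\ref{prop:AlzadoAreas} in the plane $\langle\partial_i,\partial_j\rangle$ — or, when the projection to $\langle\partial_i,\partial_\ell\rangle$ is itself a loop, in that plane after invoking a further bracket relation exactly as in the rank-$2$ case of Proposition~\ref{prop:introtanglesbase}: the enclosed area $A^u$ is small but strictly positive, so the two lifts differ in the $\partial_\ell$- (resp.\ $\partial_z$-) coordinate by the amount $A^u(1+O(r))$, which is nonzero once $r$ is chosen small. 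When $\rank(\SD)>2$ one may instead resolve the crossings already in the base using an auxiliary coordinate, as in Figure~\ref{EmbeddedCrossings}. Finally, the inductive homotopy is $C^0$-small and of controlled length (the new model is $2\delta$-close to $\beta$), so by stability of lifts under such base perturbations (Lemma~\ref{lem:horizontalisationStability}) it does not spoil the embeddedness achieved at the earlier levels. This closes the induction. Alternatively, one may write down directly a single base movie generalising Figure~\ref{tanglebase} and apply the area argument to each of its finitely many self-intersections; the bookkeeping is identical.

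The main obstacle is precisely this embeddedness bookkeeping across levels: because the nested strands accumulate onto one integral curve of $X_i$ in the base, distinctness of the lifts hinges on the areas enclosed by the successive loops remaining positive, which forces one to choose the box sizes and smoothing parameters at the various levels compatibly and sufficiently small — both to keep those areas positive and to keep every lift inside the graphical model. Once these choices are organised, the remainder is a mechanical iteration of Proposition~\ref{prop:introtanglesbase}.
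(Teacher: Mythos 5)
Your proposal is correct and follows the paper's own route: the paper proves this precisely by inductively applying Proposition \ref{prop:introtanglesbase}, starting from the outermost curve, exactly as you do. The only real divergence is that your extra bookkeeping about crossings between the new tangle and the strands of the $(k-1)$-times model is superfluous: since the inner box is chosen so that its boundary meets the curve only at the two attaching points $q_1,q_2$, the older strands never enter that box, so a homotopy supported inside it (relative to its boundary) can only create the self-intersections already handled by Proposition \ref{prop:introtanglesbase}.
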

\begin{proof}
The claim follows by inductively applying Proposition \ref{prop:introtanglesbase} starting from the outermost curve.
\end{proof}

\subsection{Tangle models of higher length} \label{ssec:tanglesHigherLength}

Consider a bracket expression of the form $A(\phi_1,\cdots,\phi_m)=[\phi_i, B(\phi_r,\cdots, \phi_\ell)]$ with flows $\phi_1,\cdots, \phi_m$ as inputs. A \textbf{length-$n$ base tangle model} is an attaching model associated to $A$. It is described inductively on its length, which is the length of $A$. The inductive step is described in Figure \ref{lengthNmodel}. 

\begin{figure}[h] 
	\includegraphics[scale=0.17]{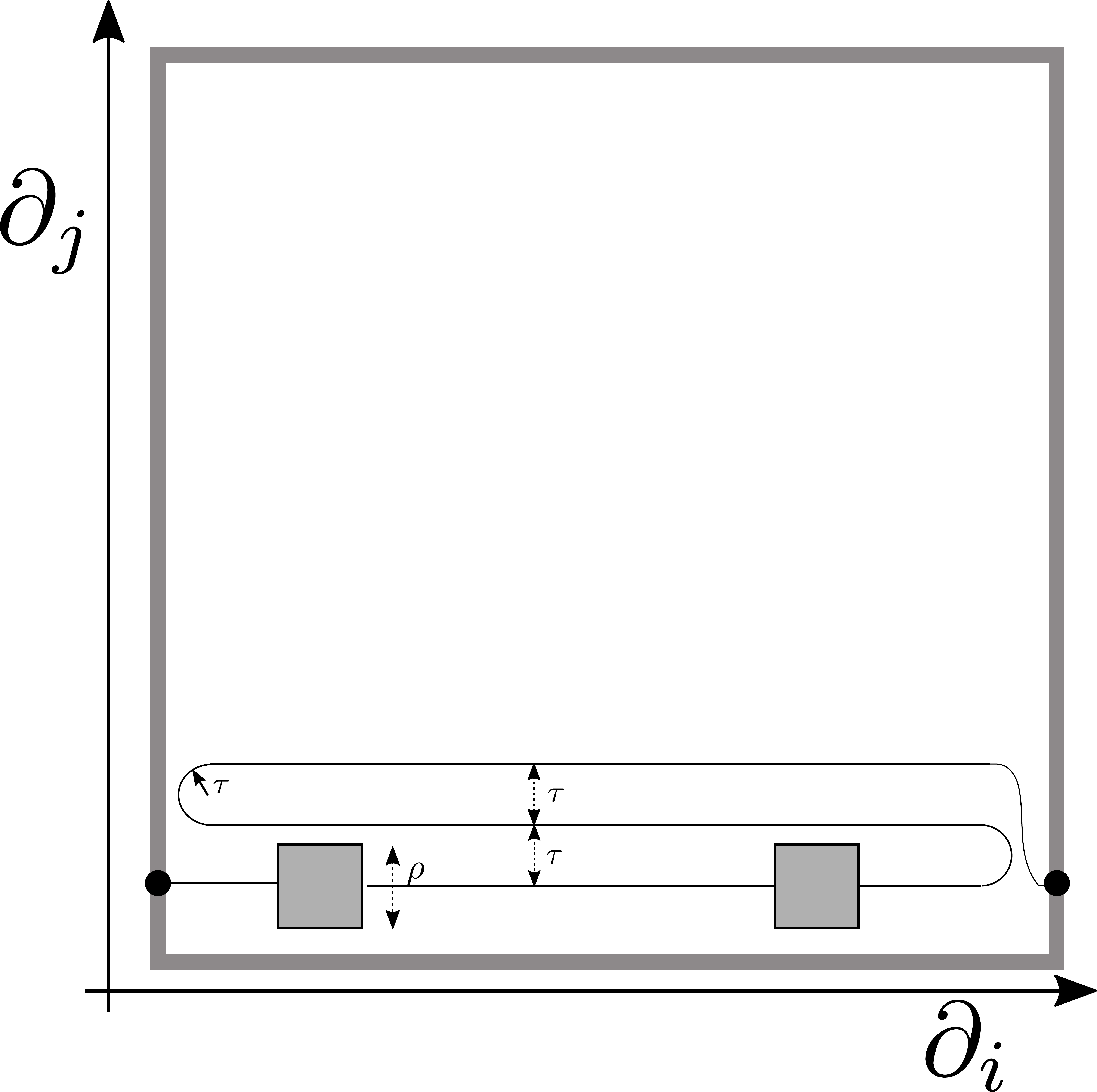}
	\centering
	\caption{Length $N>2$ base tangle model. The grey boxes represent tangle models of size $\rho>0$ of one unit smaller length. The real numbers $\tau, \rho$ are called smoothing parameters associated to the inductive step and are all greater than all the smoothing parameters defined in previous steps. The direction $\partial_j$ is associated to the coordinate flow $\phi_j$, which is the first entry appearing in the generalised bracket expression $A$, different from $\phi_i$. }\label{lengthNmodel}
\end{figure}

The grey boxes in Figure  \ref{lengthNmodel} represent tangle models of size $\eta>0$ associated to the expression $B(\phi_r,\cdots, \phi_\ell)$. The direction $\partial_j$ is associated to the coordinate flow $\phi_j$, which is the first entry appearing in the generalised bracket expression $A$, different from $\phi_i$. All the model, except for the pieces inside the grey boxes, is described in the plane $\partial_i,\partial_j$. The real numbers $\tau, \rho$ are called smoothing parameters associated to the inductive step and are all greater than all the smoothing parameters defined in previous steps.

For length-$N$ tangle models, we can also choose two attaching points $q_1, q_2$ and a box ($2\tau-$close in the $C^0-$norm to the outermost box) in such a model and iterate the construction, in the same fashion as in length $2$, thus constructing another model over the given one.

\begin{figure}[h] 
	\includegraphics[scale=0.15]{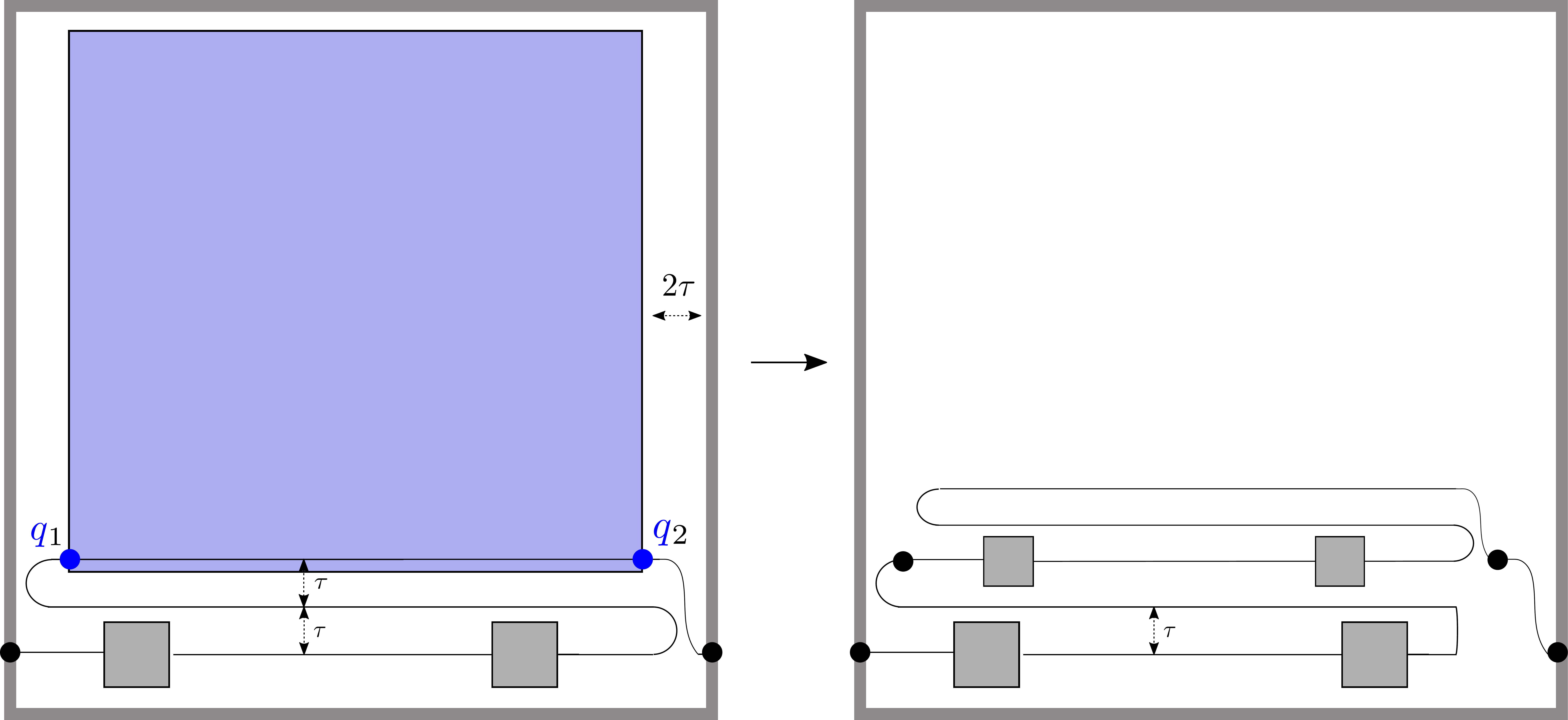}
	\centering
	\caption{Length $N>2$ base tangle model on the left with two marked attaching points $q_1, q_2$ and a choice of box for inserting another length $N$ base tangle model inside. On the right, a $2$ times iterated length $N$ tangle curve.}\label{lengthNIteratedModel}
\end{figure}

\begin{remark}\label{convergenceC0general}
Note that as all the smoothing parameters of any $k$-times iterated length $n$ tangle model tend to zero, the model converges to a pretangle model in the $C^0$-norm. Indeed, it is clear that the result is true for length $2$ models (See Remark \ref{convergenceC0}). On the other hand, assuming that the grey boxes in Figure \ref{lengthNIteratedModel} contained pretangle models instead of tangle models, observe that as $\tau$ and $\rho$ tend to $0$, the whole construction would converge to a pretangle model. Combining both facts the claim follows. We call \textbf{ the pretangle model associated to the tangle model} to such a pretangle model.
\end{remark}

\subsubsection{Birth homotopy for higher length tangle models}

The birth homotopy is given by the following result:
\begin{proposition} \label{prop:insertTangle}
Consider a bracket expression $A(\phi_1,\cdots,\phi_m)$ with inputs the flows $\phi_1,\cdots, \phi_m$. Consider
\[ \pi \circ \gamma:[-\delta, \delta]\to \R^q \]
a family of curves given by a horizontal lift $\gamma$.

Then, there exists a homotopy of embedded horizontal curves $(\gamma^u)_{u\in[0,1]}$ such that: 
\begin{itemize}
\item[i)] $\gamma^0 = \gamma$.
\item[ii)] $\pi\circ\gamma^{1}|_{[-\delta/2, \delta/2]}$ is endowed with a $k-$times iterated length-$n$ tangle model associated to the generalised bracket expression $A$.
\item[iii)] $\pi\circ\gamma^u(t) = \pi\circ\gamma(t)$ for $t\in\Op(\lbrace -\delta, \delta\rbrace)$ and all $u\in[0,1]$.
\end{itemize}
\end{proposition}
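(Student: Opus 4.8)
The plan is to follow the same inductive structure that organises the definition of length-$n$ base tangle models, thereby reducing the general statement to the length-$2$ case already settled by Proposition \ref{prop:introtanglesbase} (in its $k$-times iterated form). As in the proof of that proposition, I will construct the homotopy downstairs, i.e. I define the family of projected curves $(\pi\circ\gamma^u)_{u\in[0,1]}$; since $\SD$ is a connection over $\R^q$ and lifting is unique once the starting point $\gamma(0)$ is fixed, this determines the homotopy $(\gamma^u)_{u\in[0,1]}$ of horizontal curves, and relativeness at the endpoints (Property iii)) for the base homotopy automatically gives relativeness at the starting point upstairs. Property i) and Property ii) will hold by construction, so the entire content of the proof is embeddedness of the lifts $\gamma^u$.

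For the inductive step, write $A(\phi_1,\cdots,\phi_m)=[\phi_i,B(\phi_r,\cdots,\phi_\ell)]$. The length-$n$ model is assembled, as in Figure \ref{lengthNmodel}, from finitely many grey boxes each carrying a $k$-times iterated length-$(n-1)$ tangle model associated to $B$, together with a configuration of arcs in the plane $\langle\partial_i,\partial_j\rangle$. By the inductive hypothesis, each grey box can be born off a horizontal segment through a homotopy of embedded horizontal curves, relative to the endpoints of that segment; concatenating these (disjointly supported) homotopies and then performing the planar isotopy that arranges the boxes into the commutator pattern — exactly the movie of Figure \ref{lengthNmodel}, parametrised over $u$ — produces the desired $(\pi\circ\gamma^u)$. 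The planar part is supported in the $\langle\partial_i,\partial_j\rangle$-plane and is relative to $\Op(\{-\delta,\delta\})$, so i), ii), iii) are immediate.

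The main obstacle, as in the length-$2$ case, is controlling self-intersections of the lift during the homotopy. The planar movie has finitely many transverse self-crossings at each time $u$; I must check that each such pair of base points $(\pi\circ\gamma^u(t_1),\pi\circ\gamma^u(t_2))$ lifts to distinct points of $B_r$. If $\rank(\SD)\geq 3$ one disposes of an auxiliary coordinate $\partial_z$ and can keep the curves embedded already in the base, as in Figure \ref{EmbeddedCrossings}; so assume $\rank(\SD)=2$. Then, using $\dim(M)>3$ and the bracket-generating condition, some further bracket $[X_i,X_\ell]$ (or a comparable one) equals a new frame element $X_z$, and Proposition \ref{prop:AlzadoAreas} applies: whenever the sub-arc $\pi\circ\gamma^u|_{[t_1,t_2]}$ together with the connecting segment bounds a region of nonzero area in the relevant coordinate plane, the corresponding lifted points differ in the $\partial_z$-coordinate by that area times $(1+O(r))$, which for $r$ small is nonzero; and when no closed loop is formed, the projection to the relevant plane is already an open arc, so the endpoints differ in the $\partial_z$-coordinate directly. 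The only delicate point is bookkeeping: in a length-$n$ model there are self-crossings both inside the nested grey boxes (handled by the inductive hypothesis, since those sub-models are already embedded upstairs and stay so under $C^0$-small perturbation by Remark \ref{convergenceC0general} and the transversality of the lifted intersection) and in the outer planar pattern (handled by the area/Proposition \ref{prop:AlzadoAreas} argument above, choosing all smoothing parameters $\tau,\rho$ and the radius $r$ small enough that the relevant enclosed areas dominate the $O(r)$-errors and the perturbations coming from smoothing corners). Collecting these finitely many smallness conditions completes the induction.
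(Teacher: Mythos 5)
Your proposal is correct and follows essentially the same route as the paper: the homotopy is built in the base and lifted uniquely, the iterated length-$n$ model is reduced inductively (via the planar pattern of Figure \ref{lengthNmodel} and the grey boxes) to the length-$2$ case of Proposition \ref{prop:introtanglesbase}. In fact you spell out the embeddedness check for the lifts (auxiliary coordinate when $\rank(\SD)\geq 3$, the area argument of Proposition \ref{prop:AlzadoAreas} when $\rank(\SD)=2$) more explicitly than the paper, which leaves that verification implicit in the inductive step.
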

\begin{proof}
It is easy to construct the homotopy in the base by defining $\pi\circ\gamma^u$. The length $n$ case iterated model (Figure \ref{lengthNIteratedModel}) reduces to the non-iterated model (Figure \ref{lengthNmodel}) since the birth homotopy for the former can be constructed inductively by using the birth homotopy of the latter.

Note that the birth homotopy for the non-iterated length $n$ model can be constructed inductively. Indeed, assume we already now how to introduce length $n-1$-models of sufficiently small size at any given point of a curve and proceed as follows. We first  homotope the given curve in the box to the curve in Figure \ref{lengthNmodel}) (but omitting the grey boxes). Now we perform the birth homotopies for the $n-1$ tangle models in the grey boxes and we are done.

The base case corresponds to the case of length $2$-tangle models, which we already explained how to do (See Proposition \ref{prop:introtanglesbase}).
\end{proof}

\subsection{Area isotopy} \label{ssec:areaIsotopy}


Associated to a $2-$length tangle realizing the bracket $[X_i, X_j]$, we have a notion of \textbf{increasing} or \textbf{decreasing} its ``\textbf{area}'' just by geometrically increasing or decreasing the area enclosed by the tangle in the $\langle\partial_i, \partial_j\rangle$ plane. In a sense, the increasing/decreasing of such area parametrizes  (\textbf{controls}) the increase of the $\partial_z$ coordinate of the lifted curve. We will extend this notion of area controlling for higher length tangle expressions.

Assume $\partial_z(p) = A(X_1,\cdots,X_n)(p)$ in a graphical model based at $p\in M$ with $\len(A)=\lambda$. The following statement allows us to estimate how a pretangle controls the endpoint: 
\begin{proposition} \label{EndpointBracketCurves}
A pretangle $\gamma^{A(\mu)}$ into the graphical model associated to the generalised bracket-expression $A(\phi^1_s,\cdots,\phi^{n}_s)$ lifts to the distribution as a curve $\tilde{\gamma}:[0,1]\to \R^n$ where the difference between the endpoints $\gamma(1)$ and $\gamma(0)$ is $\mu^\lambda\left(1+O(r)\right)\cdot\partial_z$. 
\end{proposition}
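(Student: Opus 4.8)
The plan is to argue by induction on the length $\lambda = \len(A)$, reducing the statement to the commutator calculus collected in Appendix \ref{Appendix}. The starting observation is that the pretangle $\gamma^{A(\mu)}$ is, by construction, an integral curve of a flow obtained by concatenating flows of the coordinate fields $\partial_i$ of the base; since $d\pi(X_i) = \partial_i$, the horizontal lift of a flowline of $\partial_i$ is a flowline of $X_i$, so $\tilde\gamma$ is the integral curve of the \emph{same} concatenation with each $\phi_i$ replaced by the flow $\Phi_i$ of $X_i$. Thus the proposition becomes purely a statement about flows: the composition encoded by $A(\mu)$ displaces the point $p$ by $\mu^\lambda X_z(p)\,(1 + O(r))$, where $X_z := A(X_1,\dots,X_n)$, so that $\partial_z = X_z(p)$.

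For the base case $\lambda = 2$, where $A = [a,b]^{\#m}$ has axes $\partial_i,\partial_j$, I would project $\tilde\gamma$ to the plane $\langle\partial_i,\partial_j\rangle$ and close it up by the straight segment between its endpoints; the resulting loop bounds signed area $m\cdot(\mu/\sqrt m)^2 = \mu^2$ (the $m$ commutator blocks trace, up to reparametrisation, the same square of side $\mu/\sqrt m$). Proposition \ref{prop:AlzadoAreas} then yields a $\partial_z$-displacement of $\mu^2(1+O(r))$, and one checks that the complementary components of the displacement are of lower order. (Length $1$ is immediate: a single flow of $X_i$ displaces by $\mu\,\partial_i$, up to lower-order terms.)

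For the inductive step, write $A = [a_1, B(a_2,\dots,a_l)]^{\#m}$ with $\len(B) = \lambda - 1$, let $j$ be the axis of $a_1$, and put $X_w := B(X_1,\dots,X_n)$, so $X_z = [X_j, X_w]$. The induction must be run in a slightly strengthened form: not merely that a $B$-pretangle of size $\sigma$ displaces its endpoint by $\sigma^{\lambda-1}(1+O(r))\,\partial_w$, but that the associated piecewise-smooth flow $B(\sigma)$ is $C^0$-close, on the relevant ball, to the time-$\sigma^{\lambda-1}$ flow of the vector field $X_w$, with an error of relative size $O(r)$. Granting this, each of the $m$ blocks making up $A(\mu)$ is a conjugation-commutator of the time-$(\mu/\sqrt m)$ flow of $X_j$ with the time-$(\mu/\sqrt m)^{\lambda-1}$ flow of $X_w$; the commutator formula of Appendix \ref{Appendix} identifies it, modulo an $O(r)$-relative error and modulo remainders of higher order in the step length, with the time-$(\mu/\sqrt m)^{\lambda}$ flow of $[X_j, X_w] = X_z$. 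Concatenating the $m$ blocks and using that the step lengths add up to $\mu^\lambda$, one obtains $\partial_z$-displacement $\mu^\lambda(1+O(r))$ and, along the way, re-establishes the strengthened hypothesis at length $\lambda$, closing the induction. The higher-order-in-step-length remainders are harmless because the whole pretangle is confined to a ball of radius $O(r)$, so they are absorbed into the multiplicative $(1+O(r))$ — exactly as the discrepancy between $d\alpha_z$ and $dx_i\wedge dx_j$ was in the proof of Proposition \ref{prop:AlzadoAreas}.

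The main obstacle is the propagation and uniformity of errors through this nested construction. One must check: that feeding the \emph{approximate} flow furnished by the inductive hypothesis into the commutator formula does not degrade the relative error beyond $O(r)$; that the higher-order remainders generated at every level stay subordinate to the leading term $\mu^\lambda\partial_z$; and that the combinatorial factors produced by the $m$-fold iteration and the $\sqrt m$-rescaling in the definition of $A(s)$ normalise to give exactly $\mu^\lambda$ (for length $2$ this is the identity $m\cdot(\mu/\sqrt m)^2 = \mu^2$ used above, and the higher-length cases reduce to it). All implicit constants must be seen to depend only on bounds for $\SD$ and its derivatives over the graphical model, which are uniform by Lemma \ref{lem:adaptedCharts}. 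The conceptually cleanest packaging is to rescale by $\mu$: in the blown-up picture $\SD$ converges to its nilpotentisation, where the iterated-commutator identities hold exactly and the pretangle realises the displacement $\mu^\lambda\partial_z$ on the nose, so that the non-nilpotent corrections account for precisely the $(1+O(r))$ factor. A minor further point is that pretangle flows are only piecewise smooth, so the commutator estimates must be applied to concatenations rather than single flows — which is the reason Appendix \ref{Appendix} records them in that generality.
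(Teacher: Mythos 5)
Your proposal is correct and follows essentially the same route as the paper: the paper's written proof simply cites Proposition \ref{prop:bracketk} from the Appendix --- which is precisely your ``strengthened inductive hypothesis'', established there by the same induction on bracket length via Lemmas \ref{lem:bracket1}--\ref{lem:erroresSaltan} --- and then replaces $A(X_1,\cdots,X_n)$ by $\partial_z + O(r)$ using Taylor's theorem on the graphical model, absorbing the $o(t^\lambda)$ commutator error into the multiplicative $(1+O(r))$ factor, exactly as you do. The only cosmetic difference is your length-$2$ base case via the Stokes/area argument of Proposition \ref{prop:AlzadoAreas}, where the paper's chain of lemmas instead uses the iterated-commutator estimate of Proposition \ref{prop:bracketsiterados}.
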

\begin{proof}
By Proposition \ref{prop:bracketk} (Subsection \ref{Appendix}) the following equality holds
\[ A\left(\varphi_t^{X_1},\cdots,\varphi^{X_\lambda}_t\right) = \varepsilon_{t^\lambda}\circ\phi^{A(X_1,\cdots,X_\lambda)}_{t^\lambda}.\]
On one hand we have that $\partial_z(p) = A(X_1,\cdots,X_n)(p)$ and, thus, by Taylor's Remainder Theorem we have that for nearby points $q\in\Op(p)$ the following equality holds 
\[ A(X_1,\cdots,X_n)(q)=\partial_z+O(r)\]
Combining both inequalities:
\[ A\left(\varphi_t^{X_1},\cdots,\varphi^{X_\lambda}_t\right)(q) = \varepsilon_{t^\lambda}\circ\phi^{\partial_z+O(r)}_{t^\lambda}=t^\lambda\cdot (\partial_z+O(r)), \]
where the error associated to $\varepsilon_{t^\lambda}$ has been subsummed by $O(r)$. Now taking $t=\mu$ implies the claim.
\end{proof}

The lifting of a curve into a connection does not depend on its reparametrization. Therefore,
\begin{definition}
By Proposition \ref{EndpointBracketCurves}, we have associated to a pretangle $A(\mu)$ a real number $\mu^\lambda$ which is independent of its reparametrization and that we call its \textbf{total area}.
\end{definition}
Let $\gamma:I\to\R^q$ be a curve in the base of the graphical model equipped with a pretangle model associated to $A$ with attaching points $\gamma(t_1)=q_1$ and $\gamma(t_2)=q_2$

\begin{corollary}
The curve $\gamma$ equipped with the pretangle model lifts to the distribution as a curve $\tilde{\gamma}:[0,1]\to \R^n$ where the difference between the endpoints $\tilde\gamma(1)$ and $\tilde\gamma(0)$ is $\mu^\lambda\left(1+O(r)\right)\cdot\partial_z$. 
\end{corollary}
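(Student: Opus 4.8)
The statement is an immediate consequence of Proposition \ref{EndpointBracketCurves} together with the fact that a pretangle model agrees with the pretangle $\gamma^{A(\mu)}$ away from two short caps, so the plan is simply to lift the pretangle-model curve piece by piece and collect the contributions to the endpoint displacement.

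The curve $\gamma$ equipped with the pretangle model coincides with $\gamma$ off $[t_1,t_2]$, and on $[t_1,t_2]$ it is the concatenation of a straight segment in the direction $\partial_i$ over $(t_1,t_1+\epsilon)$, the pretangle $\gamma^{A(\mu)}$ over $(t_1+\epsilon,t_2-\epsilon)$, and a second straight $\partial_i$-segment over $(t_2-\epsilon,t_2)$. Since $\SD$ is a connection over the base (Definition \ref{def:graphicalModel}), this curve lifts uniquely once the initial point $\tilde\gamma(0)$ is fixed (Subsection \ref{ssec:ODEModels}), and I would compute the lifted endpoint difference as the sum of the displacements produced by the three pieces lying over $[t_1,t_2]$; the part of $\gamma$ outside $[t_1,t_2]$ is unchanged by the insertion of the model and therefore plays no role in the comparison.

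For the caps I would argue as follows: a straight segment in the direction $\partial_i$ lifts to an integral curve of $X_i$, and because $d\pi(X_i)=\partial_i$ and $X_i(0)=\partial_i$ while $|X_i(x)-\partial_i|<C|x|=O(r)$ throughout the graphical model (conditions (a)--(b) of Definition \ref{def:graphicalModel}, see also Lemma \ref{lem:adaptedCharts}), the lift of such a segment differs from a genuine translation by $\epsilon\partial_i$ in $\R^n$ by an amount $O(r)\epsilon$; in particular the caps contribute only $O(r)$ in the $\partial_z$-direction. For the pretangle in the middle, Proposition \ref{EndpointBracketCurves} says that its lifted endpoints differ by $\mu^\lambda(1+O(r))\cdot\partial_z$, where one uses that the base projection of a pretangle is a closed loop (the coordinate vector fields $\partial_1,\dots,\partial_q$ commute, so each commutator block closes up exactly and there is no leading-order horizontal displacement). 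Concatenating the three pieces, the net displacement of the lift over $[t_1,t_2]$ is a horizontal translation (the one carrying the lift of $q_1$ to the lift of $q_2$, as for the plain segment it replaces) together with the vertical term $\mu^\lambda(1+O(r))\cdot\partial_z$, which is the asserted difference between $\tilde\gamma(1)$ and $\tilde\gamma(0)$.

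The one point that will require care is the bookkeeping of the $O(r)$ error: one must choose the cap length $\epsilon$ (and the smoothing parameters entering the model) small enough relative to $\mu$ that both the cap contributions and the corner-smoothing artifacts are genuinely of size $O(r)\mu^\lambda$ or smaller, hence absorbable into the error term. This is exactly the kind of parameter adjustment already performed in the proof of Proposition \ref{EndpointBracketCurves}, and I expect it to pose no real difficulty.
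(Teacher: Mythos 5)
Your proposal is correct and follows essentially the paper's (implicit) argument: the paper states this corollary without proof, as an immediate consequence of Proposition \ref{EndpointBracketCurves} together with uniqueness/reparametrisation-invariance of horizontal lifts in a graphical model, which is exactly the cap--pretangle--cap decomposition you spell out. Your closing remark about absorbing the cap and smoothing errors into the $O(r)$ term operates at the same level of informality as the paper's own statement (which likewise suppresses the horizontal offset between the two attaching points), so no further work is needed.
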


\begin{definition}
We define the \textbf{total area of a pretangle model} as the total area of the pretangle in the model.
\end{definition}

\subsubsection{Area isotopy for pretangle models}

We now describe a way of increasing/decreasing the total area of a given pretangle model by appropriately manipulating it.

Assume that the generalised bracket expression $A$ is of the form $A(\phi_i,\cdots,\phi_n)=[\phi_i, B(\phi_\ell,\cdots\phi_n)]^{\#k}$ with inputs flows $\phi_i,\cdots, \phi_n$, where $B$ is a bracket expression of smaller length. Let $r>0$ be the total number of times that $\phi_i$ appears as an entry in the expression  $A(\phi_i,\cdots,\phi_n)$.  Consider a pretangle model $\mathcal{PM}_A$ associated to $A$ with box a hypercube $\mathcal{B}\subset\R^q$ and total area $\mu^\lambda$.

Take coordinates in $\R^q$ in such a way that the hypercube $\mathcal{B}$ has its center at the origin. Take a bump function $\psi:\R^q\to[0,1]$ in $\R^k$ with support $\Op(M^{1/r}\cdot\mathcal{B})$, where $M^{1/r}\cdot\mathcal{B}$ denotes the hypercube with side $M^{1/r}$ times the one of $\mathcal{B}$. $M$ denotes the size of the maximal box onto which we can extend $\mathcal{B}$.
\begin{definition} \label{def:areaIsotopy}
We define the Area isotopy $(\Psi^u)_{u\in[0,M]}$  of the pretangle model $\mathcal{PM}_A$ as:
\begin{center}
		$\begin{array}{rccl}
		(\Psi^u)_{u\in[0,M]}: & \R^k & \longrightarrow & \R^k \\
		& x=(x_1,\cdots, x_i,\cdots, x_k) & \longmapsto & (x_1,\cdots, \psi_{\mathcal{B}}(x)\cdot (u)^{1/r}x_i,\cdots, x_k)
		\end{array}$
	\end{center}
\end{definition}
Recall that $\partial_z(p) = A(X_1,\cdots,X_n)(p)=$ in the graphical model based at $p\in M$.

The upcoming proposition explains how the Area isotopy $\Psi^u\circ\gamma$ applied to the curve $\gamma$ equipped with the pretangle model $\mathcal{PM}_A$ behaves with respect to endpoints.

A combination of Proposition \ref{EndpointBracketCurves} together with a rutinary inductive argument based on Proposition \ref{prop:AlzadoAreas} imply the following result:
\begin{proposition}[Endpoint of lifted pretangle models under the Area isotopy] \label{prop:endpointsLiftPretangles} 
$\Psi^u\circ\gamma$ lifts to the distribution as a curve $\tilde{\gamma}^u:[0,1]\to \R^n$ where the difference between the lifts of the attaching points $\tilde\gamma(t_1)$ and $\tilde\gamma(t_2)$ is $(u\cdot\mu^\lambda)\left(1+O(r)\right)\cdot\partial_z$. 
\end{proposition}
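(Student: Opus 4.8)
The plan is to reduce the statement to a computation of the \emph{total area} of the deformed pretangle model and then to carry that computation out by induction on $\len(A)$, exactly along the lines suggested by Propositions \ref{EndpointBracketCurves} and \ref{prop:AlzadoAreas}.

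\textbf{Reduction.} By the Corollary to Proposition \ref{EndpointBracketCurves}, if $\gamma$ carries a pretangle model associated to $A$ with total area $T$, then its horizontal lift satisfies $\tilde\gamma(t_2)-\tilde\gamma(t_1)=T\,(1+O(r))\cdot\partial_z$, with $\partial_z(p)=A(X_1,\cdots,X_n)(p)$ and $\lambda=\len(A)$ so that $T=\mu^{\lambda}$ before deforming. Thus it suffices to show that $\Psi^u\circ\gamma$ is again the base projection of a pretangle model associated to the \emph{same} $A$, but with total area $(u\cdot\mu^{\lambda})(1+O(r))$. Since the lift into the connection is reparametrisation-invariant (as already used in Subsection \ref{ssec:areaIsotopy}), we may ignore the effect of $\Psi^u$ on the parametrisation.

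\textbf{Geometric effect of $\Psi^u$.} Arrange the collar in which the bump function $\psi_{\SB}$ interpolates between $1$ and $0$ to lie in an arbitrarily thin neighbourhood of $\partial(M^{1/r}\cdot\SB)$; this is possible because $M$ is the size of the maximal box onto which $\SB$ extends, so there is room, and we may also place the two attaching points and every flow-switch of the pretangle inside $\{\psi_{\SB}\equiv 1\}$. There, $\Psi^{u}$ is the linear dilation $D_c\colon x_i\mapsto c\,x_i$ with $c=u^{1/r}$, fixing the remaining coordinates. Because a pretangle is by construction (Subsection \ref{ssec:pretangles}) a concatenation of coordinate flows $\phi_l$ together with the smoothing flows $s^{\cdot,\cdot}_\delta,d^{\cdot,\cdot}_\delta$, and only the $\phi_i$-segments run in the $\partial_i$-direction, $D_c$ multiplies the duration of each $\phi_i$-flow by $c$ and leaves all other flows unchanged up to a $C^0$-small error (produced by the smoothing pieces and by the dilated interpolation region), which only feeds the $O(r)$-factor. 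Hence $\Psi^{u}\circ\gamma$ is a pretangle model associated to $A$ with flow-time parameters modified precisely in the $\phi_i$-entries, each multiplied by $c$.

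\textbf{The area count.} I claim, by induction on $\len(A)$, that dilating the $\partial_i$-direction by $c$ multiplies the total area by $c^{\,r}$. For $\len(A)=2$, i.e.\ $A=[\phi_i,\phi_j]^{\#k}$, Proposition \ref{prop:AlzadoAreas} identifies the total area (up to $1+O(r)$) with the area enclosed in $\langle\partial_i,\partial_j\rangle$; dilating one side of that region by $c$ multiplies the area by $c$, consistently with $r=1$. For the inductive step with $A=[\phi_i,B(\phi_\ell,\cdots,\phi_n)]^{\#k}$, Proposition \ref{EndpointBracketCurves} expresses the total area as the product over the $\lambda$ flow-time slots, one of which is the outer $\phi_i$-slot (scaling by $c$) while the remaining $r-1$ occurrences of $\phi_i$ are buried in the sub-tangles realising $B$, whose total area scales by $c^{\,r-1}$ by the inductive hypothesis; multiplying gives $c^{\,r}$. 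Taking $c=u^{1/r}$ yields total area $u\cdot\mu^{\lambda}$ up to the usual $O(r)$-error, and the reduction step above then gives $\tilde\gamma^{u}(t_2)-\tilde\gamma^{u}(t_1)=(u\cdot\mu^{\lambda})(1+O(r))\cdot\partial_z$, as claimed.

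\textbf{Main obstacle.} The genuinely delicate point is the bookkeeping: one must verify that $D_c$ is applied to every $\phi_i$-segment, including those nested inside sub-tangles where $\phi_i$ reappears, that it meets no $\phi_j$- or smoothing-segment except through $C^0$-small errors, and that the $O(r)$-errors stay uniform in $u\in[0,M]$. The last point holds because all the dilated curves remain in a fixed compact part of the graphical model, so Taylor's Remainder Theorem applies with uniform constants, exactly as in Propositions \ref{prop:AlzadoAreas} and \ref{EndpointBracketCurves}.
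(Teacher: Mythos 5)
Your proposal is correct and follows essentially the same route as the paper: the paper's own proof is a one-line invocation of Proposition \ref{EndpointBracketCurves} together with a ``routine inductive argument'' based on Proposition \ref{prop:AlzadoAreas}, which is exactly the reduction-plus-induction you spell out (viewing $\Psi^u$ as scaling the $\phi_i$-flow times by $u^{1/r}$ and using multilinearity of the commutator estimates in those times). The only cosmetic slip is that pretangle models have no smoothing pieces (those belong to s-pretangles/tangle models, where the extra error appears as $O(\delta)$ in Proposition \ref{endpointsLift}), so your $C^0$-error remark there is superfluous rather than wrong.
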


\subsubsection{Area isotopy for tangle models}

We have explained so far how the Area isotopy controls the displacement of the lifted endpoints of pretangle models. Nonetheless, we can extend the discussion to tangle models.

Recall that as all the smoothing parameters of any tangle model tend to zero, the model converges to a pretangle model in the $C^0$-norm (see Remark \ref{convergenceC0general}). We call the \textbf{total smoothing parameter} of a given tangle model to the real number $\delta:=\max_i \lbrace \delta_i\rbrace$, where $\lbrace \delta_i\rbrace_{i}$ is the set of all smoothing parameters of a given tangle model. Then, we have that in the limit case where $\delta\to 0$, tangle models converge to pretangle models in the $C^0$-norm.

\begin{definition}
We define the \textbf{total area of a tangle model} as the total area of its associated pretangle model. 
\end{definition}

We can analogously define the Area isotopy for a tangle model:
\begin{definition}
We define the \textbf{Area isotopy for a tangle model} as the Area isotopy of its associated pretangle model.
\end{definition}

\begin{figure}[h] 
	\includegraphics[scale=0.34]{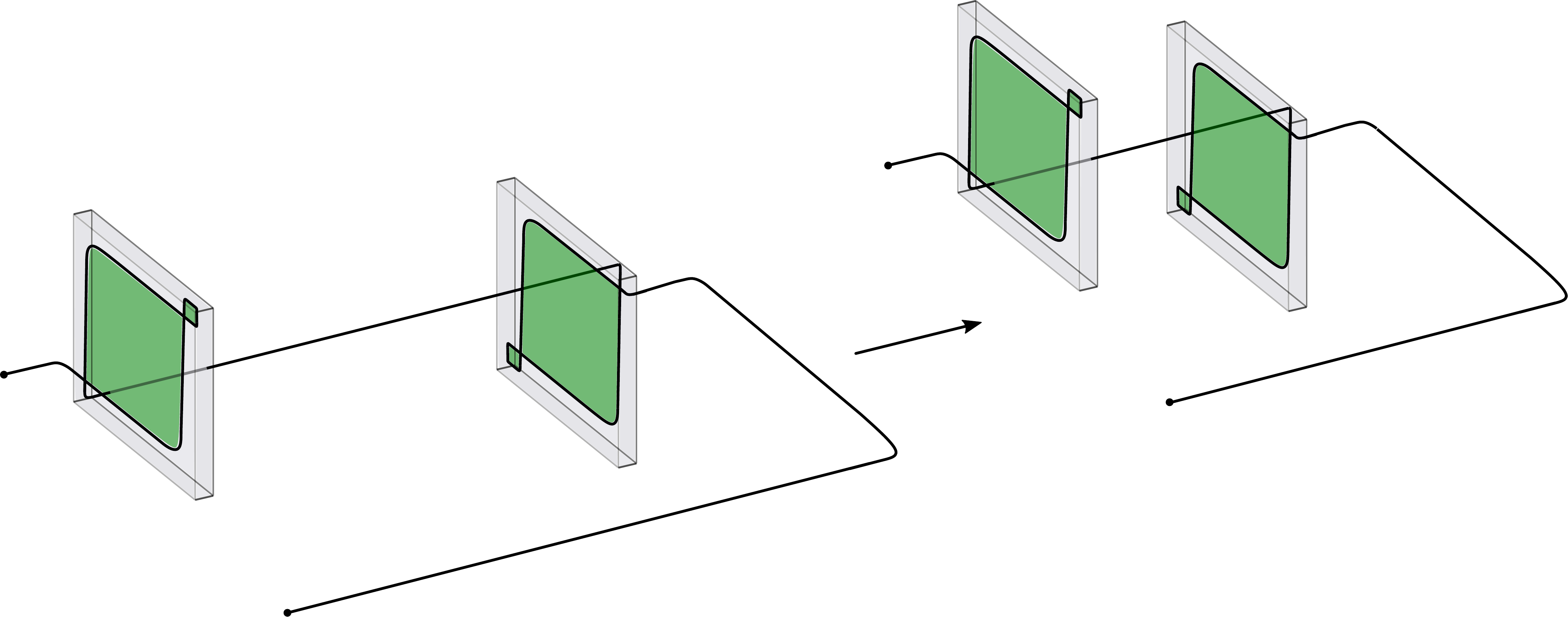}
	\centering
	\caption{Schematic description of a length $3$ tangle model. The transition from the first frame to the second one represents the isotopy $\Psi^u$ acting on the model. The long segments $X_{i}$ get shrunk while the length $2$ subtangle models do not get contracted nor shrunk.} 
\end{figure}

As a consequence of the whole discussion until this point we deduce the following key result:
\begin{proposition}[Endpoint of lifted tangle models under the Area isotopy] \label{endpointsLift} 
Let $\gamma:I\to\R^q$ be a curve equipped with a tangle model associated to $A$ with attaching points $\gamma(t_1)=q_1$ and $\gamma(t_2)=q_2$. Then, $\Psi^u\circ\gamma$ lifts to the distribution as a curve $\tilde{\gamma}^u:[0,1]\to \R^n$ where the difference between the lifts of the attaching points $\tilde\gamma(t_1)$ and $\tilde\gamma(t_2)$ is $(u\cdot \mu^\lambda)\left(1+O(r)\right)\left(1+O(\delta)\right)\cdot\partial_z$. 
\end{proposition}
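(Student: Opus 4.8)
The plan is to deduce this statement from the already-established pretangle version, Proposition \ref{prop:endpointsLiftPretangles}, together with the $C^0$-convergence of tangle models to their associated pretangle models (Remark \ref{convergenceC0general}) and the continuous dependence of horizontal lifts on their base curves.

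First I would unwind the definitions. The Area isotopy $(\Psi^u)_{u\in[0,M]}$ of a tangle model is, by fiat, the Area isotopy of its associated pretangle model, and each $\Psi^u$ is a diffeomorphism of $\R^q$ whose Lipschitz constant is bounded uniformly in $u\in[0,M]$. Write $\gamma_{\mathcal{PM}}$ for the curve $\gamma$ equipped instead with the associated pretangle model (which has the same attaching points $\gamma(t_1), \gamma(t_2)$). Then Remark \ref{convergenceC0general} gives $|\Psi^u\circ\gamma - \Psi^u\circ\gamma_{\mathcal{PM}}|_{C^0} = O(\delta)$ uniformly in $u$, where $\delta$ is the total smoothing parameter; moreover the defect between the lengths of these two curves is also $O(\delta)$, since the smoothing modifies the underlying pretangle only on small neighbourhoods of its corners.

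Second I would invoke the fact that the horizontal lift of a base curve into the connection $\SD$ over $V$ depends continuously — in fact in a quantitatively Lipschitz manner with respect to the $C^0$-norm, on the class of curves of uniformly bounded length contained in a fixed compact subset of the model — on the base curve. This is the standard Gronwall estimate for the ODE $\dot x(t) = \sum_{i} \dot c_i(t)\,X_i(x(t))$ governing the lift (Subsection \ref{ssec:ODEModels}); all the relevant curves remain inside the model for every parameter value $u\in[0,M]$ by the size estimates of Subsection \ref{sssec:ODEsize}, provided $\mu$ and $r$ are chosen small enough. Equivalently, one may argue as in Proposition \ref{prop:AlzadoAreas}: the $\partial_z$-component of the displacement between the lifts of two points of the curve is an integral of the $2$-form $d\alpha_z$ over a disk bounded by the lift, hence a $C^0$-continuous functional of the curve. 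In either formulation, the $\partial_z$-displacement between the lifts of the attaching points of $\Psi^u\circ\gamma$ differs from the same quantity computed for $\Psi^u\circ\gamma_{\mathcal{PM}}$ by a factor $1+O(\delta)$, uniformly in $u$.

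Finally, combining this with Proposition \ref{prop:endpointsLiftPretangles}, which states that for the pretangle model the displacement of the lifted attaching points is $(u\cdot\mu^\lambda)(1+O(r))\cdot\partial_z$, yields a displacement of $(u\cdot\mu^\lambda)(1+O(r))(1+O(\delta))\cdot\partial_z$ for the tangle model, which is the claim. The only delicate point is to phrase the continuous dependence so that the smoothing error enters multiplicatively, as $1+O(\delta)$, rather than additively: this is where one uses that the leading-order term $u\mu^\lambda$ is common to both curves and that the $C^0$-distance and the length defect between them are themselves $O(\delta)$, so that the Gronwall-type bound produces a relative error of the same order.
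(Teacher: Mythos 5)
Your proposal is correct and follows essentially the same route as the paper, which states Proposition \ref{endpointsLift} as an immediate consequence of the preceding discussion: the area isotopy and total area of a tangle model are by definition those of its associated pretangle model, the tangle model converges to that pretangle model in $C^0$ as the total smoothing parameter $\delta\to 0$ (Remark \ref{convergenceC0general}), and Proposition \ref{prop:endpointsLiftPretangles} gives the displacement for pretangles, so the lifted endpoint of the tangle differs only by a factor $1+O(\delta)$. Your Gronwall/Stokes justification of the continuous dependence of lifts on the base curve simply spells out what the paper leaves implicit.
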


\begin{remark}
Note that, as the radius $r$ of the graphical model gets close to zero, the quantity $(1+O(r))$ becomes close to $1$. The same phenomenon holds when the total smoothing parameter $\delta$ of the tangle model gets close to zero, $(1+O(\delta))$ becomes close to $1$. Therefore, in the limit, adjusting the endpoints of lifted tangle models is practically equivalent to adjusting the endpoints of the corresponding lifted pretangle models. 
\end{remark}

\subsection{Tangles} \label{ssec:tangles}

Let $X_j$ be an element in the framing of $TV$, with $A$ a bracket-expression generating it. We assume that $A$ is of the form $[\phi_i, B(-)]^{\#k}$. Consider the following data:
\begin{itemize}
\item a size $R>0$,
\item a curve $\gamma: [0,1] \to \R^q$ parallel to $\partial_i$
\item two attaching points $\gamma(t_1)=q_1$ and $\gamma(t_2)=q_2$
\end{itemize}
Then there exists a tangle model $\mathcal{TM}$, associated to the bracket-expression $A$, and endowed with:
\begin{itemize}
\item Total area $\mu^\lambda$ (determined by $R$).
\item Smoothing parameter $\delta>0$.
\item A birth homotopy for $\mathcal{TM}$, parametrised by $\theta \in [0,1]$ and given by Proposition \ref{prop:insertTangle}.
\item An area isotopy (Definition \ref{def:areaIsotopy}) parametrised as $d \mapsto \Psi^{(d/\mu^{\lambda})}$.
\item An upper bound $h := M\mu^\lambda$ for the area isotopy. 
\end{itemize}

We now put together all the ingredients introduced in this section:
\begin{definition} \label{def:tangle}
An \textbf{$X_j$-tangle} is a family of curves
\[ \ST: (0,h] \times [0,1]  \quad\longrightarrow\quad \Imm([0,1];\R^q)\]
given by the previously introuced tangle model $\mathcal{TM}$. It is parametrised by
\begin{itemize}
\item an \textbf{estimated-displacement} $d \in (0,h]$ that governs the area isotopy,
\item the \textbf{birth-parameter} $\theta \in [0,1]$.
\end{itemize}
The number $h$ is called the \textbf{maximal-displacement} of the tangle.
\end{definition}

\subsubsection{Error in the displacement}

The following statement bounds the difference between the estimated-displacement and the actual displacement of the endpoint upon lifting a tangle:
\begin{lemma} \label{lem:errorTangles}
Lift the tangle $\ST$ using Lemma \ref{lem:horizontalisationGraphical}. Then the following estimate holds:
\[ \lift(\ST(d,1))(1) = \lift(\ST(0,1))(1) + (0,\cdots,0,d,0,\cdots,0) + O(r + \delta)d, \]
where $r$ is the radius of the graphical model and $\delta$ is the smoothing parameter of the tangle.
\end{lemma}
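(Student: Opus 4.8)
The plan is to obtain the estimate almost directly from Proposition \ref{endpointsLift}, once one unpacks what $\ST(d,1)$ is. By Definition \ref{def:tangle}, $\ST(d,1)$ is the curve produced from the fixed $\partial_i$-parallel ambient curve $\gamma\colon[0,1]\to\R^q$ by running the birth homotopy of Proposition \ref{prop:insertTangle} to its end ($\theta=1$), installing the tangle model $\mathcal{TM}$ associated to the bracket expression $A$ that generates $X_j$, and then applying the area isotopy $\Psi^{u}$ with parameter $u=d/\mu^{\lambda}$. Both operations are supported strictly inside the attaching interval $[t_1,t_2]$ and fix a $\partial_i$-parallel collar around each of its endpoints, so for every $d$ the base curve $\ST(d,1)$ coincides with the same fixed $\partial_i$-parallel segment on $[0,t_1]\cup[t_2,1]$; in particular the lifts $\lift(\ST(d,1))$ all have the same initial value, coincide on $[0,t_1]$, and on $[t_2,1]$ are all obtained by transporting along the lift of one and the same $\partial_i$-parallel segment, i.e. by the same time-$\ell$ flow $\Phi^{X_i}_{\ell}$ of $X_i$ for a fixed small $\ell$.

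First I would apply Proposition \ref{endpointsLift} to $\ST(d,1)$ with $u=d/\mu^\lambda$, obtaining that the difference between the lifts of the two attaching points is $(u\mu^\lambda)(1+O(r))(1+O(\delta))\cdot\partial_z$, which equals $d\,\partial_j+O(r+\delta)d$: here $\partial_z$ denotes the direction realised by $A(X_1,\dots,X_n)=X_j$, and by condition (b) in the definition of graphical model $X_j=\partial_j+O(r)$ on the $r$-ball, so both the $O(r)d$ error in the $\partial_j$-coefficient and the $O(r)d$ discrepancy between $X_j$ and $\partial_j$ are absorbed into $O(r+\delta)d$. Since the collars near the attaching points are $d$-independent and the lift at the left attaching point is independent of $d$ (hence agrees with $\lift(\ST(0,1))$ there), this gives
\[ \lift(\ST(d,1))(t_2) \;=\; \lift(\ST(0,1))(t_2) + (0,\cdots,0,d,0,\cdots,0) + O(r+\delta)d. \]
Finally I would propagate this from $t_2$ to $1$ through the $\partial_i$-parallel collar: both $\lift(\ST(d,1))$ and $\lift(\ST(0,1))$ are transported there by the same flow $\Phi^{X_i}_{\ell}$, and by the mean value theorem their difference after the collar equals $D\Phi^{X_i}_{\ell}|_{\xi}$ applied to their difference before it, for some $\xi$ on the segment joining them. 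Since $X_i-\partial_i=O(r)$ with $C^1$-norm bounded uniformly over all graphical models (Lemma \ref{lem:adaptedCharts}), one has $D\Phi^{X_i}_{\ell}=\Id+O(r)$ over the relevant (short) scale, so the difference is preserved up to a multiplicative $(1+O(r))$, yielding exactly the asserted formula.

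The conceptual content is all in Proposition \ref{endpointsLift}; the only work left is bookkeeping, and that is also where I expect the one genuine nuisance. One must check that every implicit constant behind the $O(r)$ and $O(\delta)$ is uniform — over the graphical model, which is what Lemma \ref{lem:adaptedCharts} provides, and over the parameters $d\in(0,h]$ and $\delta$ — and that the components of the displacement transverse to $\partial_j$ are genuinely of size $O(r)d$ rather than merely $o(d)$. The safest route is to keep all estimates linear in $d$ from the outset, as above, and to lean on Remark \ref{convergenceC0general} (tangle models converge in $C^0$ to their pretangle models as $\delta\to0$) together with Propositions \ref{EndpointBracketCurves} and \ref{prop:AlzadoAreas}, whose proof already isolates the $O(r)$ Taylor error of $d\alpha_z-dx_i\wedge dx_j$ at the origin; these make the $r$- and $\delta$-dependence visibly linear in $d$ with uniformly controlled coefficients.
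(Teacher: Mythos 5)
Your argument is correct and follows the paper's route exactly: the paper proves this lemma by declaring it immediate from Proposition \ref{endpointsLift}, which is precisely the proposition you invoke with $u=d/\mu^{\lambda}$. The extra bookkeeping you supply (converting $\partial_z$ to the coordinate direction via condition (b) of the graphical model, transporting the discrepancy through the $\partial_i$-parallel collar with $D\Phi^{X_i}_{\ell}=\Id+O(r)$, and the uniformity of constants from Lemma \ref{lem:adaptedCharts}) is exactly the detail the paper leaves implicit, so there is no gap.
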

The proof is immediate from Proposition \ref{endpointsLift}.

\section{Controllers} \label{sec:controllers}

In the previous Section \ref{sec:tangles} we introduced the notion of tangle. The purpose of a tangle is to displace the endpoint of a horizontal curve in a given direction. In this section we introduce the notion of \emph{controller}. This is a sequence of tangles, located one after the other, in order to be able to control the endpoint of a curve fully.

In Subsection \ref{ssec:controllability} we talk about general (finite-dimensional) families of horizontal curves. The goal is to discuss their endpoint map and make quantitative statements about their controllability. We then particularise to controllers (Subsection \ref{ssec:controllers}), which are specific families of horizontal curves built out of tangles. The process of adding a controller to a horizontal or $\varepsilon$-horizontal curve is explained in Subsection \ref{ssec:insertionControllers}.

\subsection{Controllability} \label{ssec:controllability}

We introduced the notion of regularity in Subsection \ref{ssec:regularity}. This meant that the endpoint map of the horizontal curve under consideration was an epimorphism, which should be understood as a form of infinitesimal controllability (every infinitesimal displacement of the endpoint can be followed by a variation of the curve). In this Subsection we pass from infinitesimal to local.

\subsubsection{Controlling families}

For our purposes we need to work on a parametric setting. Fix $(M,\SD)$, a manifold endowed with a bracket-generating distribution, and a compact fibre bundle $E \rightarrow K$. We write $E_k$ for the fibre over $k \in K$.

Given a family of horizontal curves
\[ \gamma: E \longrightarrow \Maps([0,1];M,\SD) \]
we have evaluation maps $\ev_0, \ev_1:  E \quad\longrightarrow\quad M$ defined by the expression $\ev_a(e) := \gamma(e)(a)$. We require that $\ev_0$ is constant along the fibres of $E$. 

\begin{definition} \label{def:controlling}
The family $\gamma$ is \textbf{controlling} (in a manner fibered over $K$) if $\ev_1|_{E_k}$ is a submersion for all $k \in K$.
\end{definition}
Given a section $f: K \to E$ we can produce a family $\gamma \circ f: K \to \Maps([0,1];M,\SD)$. We say that $\gamma \circ f$ is \textbf{controllable} and that $\gamma$ is a \textbf{controlling extension}. That is, we are interested in $\gamma \circ f$ and we think of the controlling family $\gamma$ as a device that allows us to control its endpoints. 

\subsubsection{Controllability}

It is immediate from the parametric nature of the implicit function theorem that this implies local controllability:
\begin{lemma} \label{lem:implicitFunctionTheorem}
Given a controlling family $\gamma$ and a section $f: K \to E$, there are constants $C, \eta_0>0$ such that:
\begin{itemize}
\item for any $0 < \eta < \eta_0$,
\item and any smooth choice of endpoint $q_k \in \D_\eta(\gamma \circ f(k)(1))$,
\end{itemize}
there exists a section $g: K \to E$ such that:
\begin{itemize}
\item $g$ and $f$ are homotopic by a homotopy of $C^0$-size at most $C\eta$.
\item $\gamma \circ g(k)(1) = q_k$.
\end{itemize}
Furthermore, if $q_k = \gamma \circ f(k)(1)$, it can be assumed that $g(k) = f(k)$.
\end{lemma}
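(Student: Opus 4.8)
The plan is to reduce the statement to a finite-dimensional implicit/inverse function theorem, carried out parametrically over the compact base $K$. First I would set up the relevant map: since $\gamma$ is a controlling family, the evaluation $\ev_1 \colon E \to M$ restricts on each fibre $E_k$ to a submersion, and by hypothesis $\ev_0$ is constant along fibres. Thinking of $E \to K$ as a compact fibre bundle, the section $f \colon K \to E$ picks out the curve $\gamma \circ f(k)$ with endpoint $\ev_1(f(k))$. The key local statement is: near each $f(k) \in E_k$, because $\ev_1|_{E_k}$ is a submersion, there is a local smooth right inverse, i.e. a choice of ``correction'' that moves the point $f(k)$ within $E_k$ so that its image under $\ev_1$ equals a prescribed nearby target $q_k$. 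The inverse function theorem gives this with uniform constants on a compact neighbourhood of the image of $f$.

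Next I would globalise over $K$. Cover $f(K)$ by finitely many trivialising charts of $E \to K$ in which $\ev_1$ becomes (after composing with a chart of $M$) a genuine submersion of Euclidean spaces depending continuously — indeed smoothly — on the parameter $k$. On each such chart the parametric inverse function theorem (see e.g.\ the standard statement, which is what the text refers to as ``the parametric nature of the implicit function theorem'') produces, for targets $q_k$ within a uniform radius $\eta_0$ of $\ev_1(f(k))$, a smooth solution $g(k) \in E_k$ with $\ev_1(g(k)) = q_k$ and with $d_{C^0}(g(k), f(k)) \le C \eta$ where $\eta = \sup_k d(q_k, \ev_1(f(k)))$. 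Here the constant $C$ comes from a uniform bound on the norm of the local right inverses, which exists by compactness of $\overline{f(K)}$ (shrinking $K$-charts and using a partition of unity to patch the locally defined corrections into a single global section $g$; the uniqueness clause of the inverse function theorem guarantees the local solutions agree on overlaps, or one patches by the usual convexity argument in the fibre charts). Since $E_k$ sits inside $\Maps([0,1];M,\SD)$, moving $f(k)$ to $g(k)$ within the fibre is realised by an actual homotopy of horizontal curves of $C^0$-size controlled by $C\eta$; taking $\eta_0$ smaller if necessary keeps everything inside the charts.

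Finally, the homotopy statement: interpolating linearly (in the fibre charts) between $f(k)$ and $g(k)$ gives the homotopy of sections of $C^0$-size at most $C\eta$, and by construction this homotopy moves only where $q_k \neq \ev_1(f(k))$, so if $q_k = \gamma \circ f(k)(1)$ then the inverse function theorem returns $g(k) = f(k)$ (by uniqueness of the local solution), giving the last sentence. The main obstacle I anticipate is purely bookkeeping: ensuring the constants $C, \eta_0$ can be chosen \emph{uniformly} in $k \in K$ and independently of the chart patching — this is where compactness of $K$ and of the fibres, together with continuity of the submersion $\ev_1$ and of its local right inverses in the parameter, must be invoked carefully. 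No genuinely hard analysis is involved beyond the standard parametric inverse function theorem; the content of the lemma is just organising that statement in the fibre-bundle language used later in the paper.
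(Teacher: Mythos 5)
Your proposal is correct and follows essentially the same route as the paper, which simply observes that the lemma is immediate from the parametric nature of the implicit function theorem applied to the fibrewise submersions $\ev_1|_{E_k}$, with compactness of $K$ supplying the uniform constants $C,\eta_0$. Your write-up merely fills in the standard details (local right inverses, patching, the relative clause via a canonical choice of solution) that the paper leaves implicit.
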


The following variation will be useful for us:
\begin{lemma} \label{lem:inverseFunctionTheorem}
Let $\gamma$ be a controlling family such that its evaluation map $\ev_1|_{E_k}$ is an equi-dimensional embedding for all $k \in K$. Let $q: K \rightarrow M$ with $q \in \ev_1(E_k)$. Then, there is a unique section $f: K \rightarrow E$ such that $\gamma(f(k))(1) = q(k)$.
\end{lemma}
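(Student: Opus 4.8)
The plan is to deduce Lemma \ref{lem:inverseFunctionTheorem} from Lemma \ref{lem:implicitFunctionTheorem}, replacing its local character by a global one thanks to the fact that each $\ev_1|_{E_k}$ is now assumed to be an equi-dimensional embedding (in particular a local diffeomorphism onto an open subset of $M$). First I would observe that, since $\ev_1|_{E_k}: E_k \to M$ is an equi-dimensional embedding, it is an injective immersion that is a homeomorphism onto its image $\ev_1(E_k) \subset M$, which is therefore an open submanifold. Hence for each $k$ there is a well-defined inverse $(\ev_1|_{E_k})^{-1}: \ev_1(E_k) \to E_k$, and the desired section is forced pointwise: $f(k)$ must equal $(\ev_1|_{E_k})^{-1}(q(k))$. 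This establishes uniqueness immediately, so the only content is smoothness of $k \mapsto f(k)$.

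For smoothness I would argue locally in $k$. Fix $k_0 \in K$ and set $e_0 := (\ev_1|_{E_{k_0}})^{-1}(q(k_0))$. Choosing a local trivialisation of $E \to K$ near $k_0$, the evaluation map assembles into a smooth map $\ev_1: \D^{\dim K} \times E_{k_0} \supset \Op(k_0,e_0) \to M$ whose restriction to each slice $\{k\} \times E_{k_0}$ is an immersion of the same dimension as $M$; hence the differential of $\ev_1$ in the $E_{k_0}$-directions is an isomorphism at $(k_0,e_0)$. The parametric implicit/inverse function theorem (exactly the mechanism behind Lemma \ref{lem:implicitFunctionTheorem}, or simply the standard finite-dimensional statement applied to the map $(k,e) \mapsto (k,\ev_1(k,e))$) then produces a smooth local section $g$ of $E$ near $k_0$ with $\ev_1(k,g(k)) = q(k)$; by the uniqueness already established, $g$ agrees with $f$ on a neighbourhood of $k_0$. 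Since smoothness is a local property and $k_0$ was arbitrary, $f$ is smooth, i.e.\ a genuine section $K \to E$. (If $E \to K$ is a fibre bundle with infinite-dimensional fibre modelled on a space of curves, one runs the same argument using the Frechet inverse function theorem that is implicit in the formulation of Lemma \ref{lem:implicitFunctionTheorem}; the equi-dimensional embedding hypothesis guarantees the relevant linearisation is a continuous isomorphism, not merely a surjection.)

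The one point that requires a little care — and which I expect to be the main (if modest) obstacle — is checking that the pointwise-defined $f$ is globally well-defined and continuous as a section, i.e.\ that the local sections produced near each $k_0$ patch together. This is where uniqueness does the work: any two local smooth sections solving $\ev_1(k,f(k)) = q(k)$ must coincide on the overlap of their domains because $\ev_1|_{E_k}$ is injective for each fixed $k$. Therefore the germs glue into a single smooth section $f: K \to E$, and $\gamma(f(k))(1) = \ev_1(f(k)) = q(k)$ by construction, which is the assertion.
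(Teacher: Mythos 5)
Your argument is correct and is essentially the paper's: the paper simply asserts that the lemma "follows from the inverse function theorem," and your write-up spells out exactly that mechanism (pointwise uniqueness from injectivity of $\ev_1|_{E_k}$, local smoothness of $k \mapsto f(k)$ from the parametric inverse function theorem applied to $(k,e)\mapsto(k,\ev_1(k,e))$, and gluing by uniqueness). Since $E \to K$ is a compact (finite-dimensional) fibre bundle, the standard finite-dimensional statement suffices and your Frechet caveat is not needed.
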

It follows from the inverse function theorem.

\subsubsection{Existence of controlling families}

The following statement follows from standard control theoretical arguments:
\begin{lemma}\label{lem:controlling}
A family of regular horizontal curves $\gamma: K \longrightarrow \Maps([0,1];M,\SD)$ admits a controlling family.
\end{lemma}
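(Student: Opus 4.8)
The goal is, given a family of \emph{regular} horizontal curves $\gamma : K \to \Maps([0,1];M,\SD)$, to produce a compact fibre bundle $E \to K$, a section $f$ with $\gamma \circ f$ the given family, and a controlling extension. The plan is to build the extension by concatenating the original curves with \emph{controllers} (Subsection \ref{ssec:controllers}), i.e.\ ordered strings of tangles, and to verify that the resulting endpoint map is fibrewise submersive because the tangles provide displacements spanning all of $T_{\gamma(k)(1)}M$.

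First I would fix $k \in K$. Since $\gamma(k)$ is a regular horizontal curve, the endpoint map $\epoint$ is submersive at $\gamma(k)$, which means that there exist finitely many infinitesimal variations of $\gamma(k)$ whose images under $d\epoint$ span $T_{\gamma(k)(1)}M$. By the standard control-theoretic description of variations (they are ``needle''/bracket variations localised in small time intervals along the curve; see Subsection \ref{ssec:regularity} and Appendix \ref{Appendix}), each such variation can be realised geometrically by inserting, near a suitable interior point of $\gamma(k)$, a tangle associated to an appropriate bracket-expression in the adapted frame, using Proposition \ref{prop:insertTangle}: the birth homotopy produces an honest family of horizontal curves, and the area isotopy (Proposition \ref{endpointsLift}, Lemma \ref{lem:errorTangles}) makes the endpoint displacement go through a full neighbourhood of the origin in the chosen direction, up to an $O(r+\delta)$ error. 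Choosing the graphical models fine enough (Lemma \ref{lem:adaptedCharts}) and the smoothing parameters small enough makes the error negligible, so the $n$ tangents directions remain linearly independent; assembling $n$ such tangles in disjoint subintervals along $\gamma(k)$ yields a local family whose $\ev_1$-differential is an isomorphism onto $T_{\gamma(k)(1)}M$. This is the construction of a controller, and the bundle $E_k$ is the parameter space $(0,h]^{n}\times[0,1]^{n}$ of estimated-displacements and birth-parameters of its tangles; the section $f(k)$ is the point where all birth-parameters are $0$ and all displacements are ``as small as allowed'', which by construction gives back $\gamma(k)$ (the tangles not yet born).

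To go from a fixed $k$ to a family over $K$: the choices above (which bracket-expressions, where along the curve, which adapted charts) cannot in general be made continuously in $k$ on all of $K$ at once, but they can be made locally; so I would cover $K$ by finitely many opens $U_\alpha$ on each of which a controller is defined, take a subordinate partition of unity, and \emph{concatenate} the controllers of all the $U_\alpha$ one after another along the curves (turning down the parametrisation to zero between blocks via the stopping trick if needed to make room), using the cutoff functions to switch each block off outside its $U_\alpha$. Over any $k$ at least one block is ``active'' and already makes $\ev_1|_{E_k}$ submersive; the inactive blocks contribute trivial (zero-displacement, unborn) tangles and do not spoil submersivity. The total parameter space is the product of the blocks' parameter spaces, a compact manifold (after restricting displacements to a closed subinterval $[d_0,h]$), giving the desired compact fibre bundle $E \to K$. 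The main obstacle is precisely this globalisation-with-boundary bookkeeping — ensuring the concatenated tangles stay inside their graphical models, that the embedding condition is preserved throughout (which is where $\dim M \geq 4$ and the embedded birth homotopies of Section \ref{sec:tangles} are used), and that $\ev_0$ stays constant along fibres — rather than the fixed-$k$ statement, which is essentially the classical fact that regular curves admit controlling extensions.
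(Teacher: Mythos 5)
There is a genuine gap: your construction does not produce a \emph{controlling extension} of $\gamma$ in the sense of Definition \ref{def:controlling}. The lemma requires a bundle $E\to K$, a family of horizontal curves on $E$ whose evaluation $\ev_1|_{E_k}$ is a submersion, \emph{and} a section $f$ with $\gamma\circ f$ equal to the given family; submersivity is needed in particular at $f(k)$, since Lemma \ref{lem:implicitFunctionTheorem} is an implicit-function-theorem argument based at the section. In your model the section is the point where all birth-parameters vanish (``tangles not yet born''), but there the estimated-displacement coordinates act trivially on the curve, so $d(\ev_1|_{E_k})$ has rank far below $\dim M$ and $\ev_1|_{E_k}$ is not a submersion at $f(k)$ (nor on the locus $\theta<1$). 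If instead you place the section at fully born tangles, then $\gamma\circ f\neq\gamma$: by design a controller ``at rest'' already differs from the underlying curve (the interpolation $\chi_S$ has size-at-rest $S>0$ and the estimated-displacement of a tangle ranges over $(0,h]$, never $0$), which is precisely why, in Subsection \ref{ssec:Step2Proof}, inserting controllers near $\partial K$ destroys horizontality/endpoint data there and has to be repaired by a different mechanism. A symptom of the problem is that your argument never uses the regularity hypothesis: but fibrewise submersivity of $\ev_1$ at the section forces the endpoint map to be submersive at $\gamma(k)$, so the lemma is essentially equivalent to regularity and cannot follow from a tangle/controller insertion that works for arbitrary horizontal curves. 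The paper indeed stresses that at this stage ``the usage of regularity \ldots cannot be avoided'' because of rigidity.

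The paper's proof is of a different nature and does not involve tangles or controllers at all: regularity supplies finitely many infinitesimal variations of $\gamma(k)$ (sections of the bundle of controls) whose images under $d\epoint$ span $T_{\gamma(k)(1)}M$; these are integrated to a finite-dimensional family through $\gamma(k)$ (the non-parametric case is Hsu's Proposition 4 and Corollary 5), and the parametric case is handled as in \cite[Section 8]{PS} by choosing the variations locally in $k$, localising them in disjoint subintervals of $[0,1]$ with bump functions so that they do not interfere, and patching over a finite cover of $K$. If you want to salvage your plan, the part worth keeping is the globalisation-by-cutoffs scheme; but the fibre directions of $E$ must come from integrating the variations guaranteed by regularity at the original curve, not from inserting controllers whose effect on the endpoint only switches on after the birth homotopy.
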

In the non-parametric case, this was proven in \cite[Proposition 4 and Corollary 5]{Hsu}. The proof amounts to choosing infinitesimal variations (given by the regularity condition) and integrating these to a controlling family. The parametric case was then explained in \cite[Section 8]{PS} and requires us to patch these variations parametrically in $k \in K$. The idea is that the infinitesimal variations can be ``localised'' in $[0,1]$ by an appropriate use of bump functions. This can be exploited to show that variations do not interfere with each other when patching.

\subsection{Defining controllers} \label{ssec:controllers}

A controller will depend on the following input data: a graphical model $(V,\SD)$, a maximal-displacement $h>0$, a radius $R>0$, a size-at-rest $S$, and a smoothing-parameter $\delta$. Recall that $\pi: V \rightarrow \R^q$ is the projection to the base in the model and we have a framing $\{X_1,\cdots,X_n\}$ of $TV$ such that $\{X_1,\cdots,X_q\}$ is a framing of $\SD$ obtained as a lift of the coordinate framing of $\R^q$.

\subsubsection{Setup}

We consider the cube $C = [-R,R]^q \subset \R^q$ and we write $\gamma: [0,1] \rightarrow C$ for the curve $\gamma(t) = (2Rt-R,0,\cdots,0)$ parametrising its first coordinate axis. We call it the \emph{axis} of the controller to be built. For each vertical direction $i=q+1,\cdots,n$ in the model, we define a pair of points 
\[ x_{i,+} = \gamma\left(\dfrac{i-q}{n}\right), \qquad x_{i,-} = \gamma\left(\dfrac{i-q}{n} = \dfrac{1}{2n}\right), \]
that are meant to serve as insertion points for tangles. We fix a box $C_{i,\pm}$ centered at $x_{i,\pm}$ and of side $1/4n$. In this manner all the boxes are disjoint.

For each $i = q+1,\cdots,n$, we insert (Proposition \ref{prop:insertTangle}) a $(\pm X_i)$-tangle at $x_{i,\pm}$; we denote it by $\ST^{i,\pm}$. The radius of these tangles should be smaller than the side $1/4n$, so that they are contained in the boxes $C_{i,\pm}$. We require that the maximal-displacement of the tangles is $2h$. We write $\ST^{j,\pm}_{d,\theta}$ whenever we need to include their estimated-displacement $d$ and birth parameter $\theta$ in the discussion. We use $\delta$ as their smoothing-parameter.

\subsubsection{The definition}

The estimated-displacement of $\ST^{i,+}$ pushes positively along $X_i$ by an amount in the range $(0,2h]$. Similarly, the estimated-displacement of $\ST^{i,-}$ pushes along $-X_i$. We want to combine these two displacements in order to produce motion in the $X_i$-direction in the range $[-h,h]$. To do so, we recall the size-at-rest $S$ parameter that we fixed earlier, and we consider a bump function $\chi_S: \R \to [0,1]$ satisfying 
\[ \chi_S{(-\infty,-S]} = S, \qquad \chi_S|_{[0,\infty)}(a) = a+S, \qquad -S < \chi_S'|_{(0,1)} < S. \]

\begin{definition} \label{def:controller}
A \textbf{controller} is a family of curves 
\[ \SC: [-h,h]^{n-q} \times [0,1] \quad\longrightarrow\quad \Imm([0,1]; C \subset \R^q) \]
parametrised by:
\begin{itemize}
\item a \textbf{estimated-displacement} $d = (d_{q+1},\cdots,d_n) \in [-h,h]^{n-q}$,
\item and a \textbf{birth parameter} $\theta \in [0,1]$.
\end{itemize}
Each curve $\SC(d,\theta)$ is obtained from $\gamma$ by inserting 
\[ \ST^{i,+}_{\chi_S(d_i),\theta} 			\quad\text{ at }\quad x_{i,+}, \qquad 
   \ST^{i,+}_{\chi_S(-d_i),\theta} \quad\text{ at }\quad x_{i,-}. \]
In particular, $\SC(d,\theta)$ agrees with $\gamma$ close to the boundary of $C$. The controller $\SC$ depends on the following parameters:
\begin{itemize}
\item the radius $R>0$ of the box $C$ that contains it,
\item the maximal-displacement $h>0$ bounding the estimated-displacement,
\item the size-at-rest $S$ that defines the interpolating function $\chi_S$,
\item the smoothing-parameter $\delta>0$ of its tangles.
\end{itemize}
\end{definition}

\subsection{Insertion of controllers} \label{ssec:insertionControllers}

Let $(V,\SD)$ be a graphical model and $K$ a compact manifold that serves as parameter space.
\begin{proposition} \label{prop:insertionControllers}
Let the following data be given:
\begin{itemize}
\item a family $\gamma: K \rightarrow \Emb([0,1];D_r \subset V,\SD)$,
\item a maximal-displacement $0 < h < 2r$,
\item a $k$-disc $A \subset K$,
\item a time $t_0 \in (0,1)$,
\item a constant $\eta > 0$ and a sufficiently small $\tau > 0$,
\item a sufficiently small radius $R>0$, size-at-rest $S>0$, and smoothing-parameter $\delta>0$.
\end{itemize}
Then, assuming that $r$ is sufficiently small, there are:
\begin{itemize}
\item a function $\theta: K \rightarrow [0,1]$ that is identically one on $\nu_\tau(A)$ and zero in the complement of $\nu_{2\tau}(A)$,
\item a family $\widetilde\gamma: K \times [-h,h]^{n-q} \times [0,1] \rightarrow \Emb([0,1];V,\SD)$,
\end{itemize}
such that the following statements hold:
\begin{itemize}
\item $\widetilde\gamma(k,d,0) = \gamma(k)$.
\item $\widetilde\gamma(k,d,s)(t) = \gamma(k)(t)$ if $t \in \Op(0)$.
\item $\pi \circ \widetilde\gamma(k,d,s)(t) = \pi \circ \gamma(k)(t)$ outside of $\nu_{2\tau}(A) \times \nu_{2\tau}(t_0)$.
\item The length of $\widetilde\gamma(k,d,s)$ in the region $\nu_{2\tau}(t_0) \setminus \nu_{\tau}(t_0)$ is bounded above by $\eta$.
\item For all $k \in \nu_\tau(A)$, all $t \in \nu_\tau(t_0)$, and all $s \in [1/2,1]$, it holds that
\[ \pi \circ \widetilde\gamma(k,d,s)(t) = \SC(d, (2s-1)\theta(k))\left(\dfrac{t-t_0-\tau}{2\tau}\right) + \pi \circ \gamma(k)(t_0). \]
Here the left-hand-side is a reparametrised and translated copy of the controller $\SC$ with radius $R$, size-at-rest $S$, maximal-displacement $h$, and smoothing-parameter $\delta$. 
\end{itemize}
\end{proposition}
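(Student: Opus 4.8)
The plan is to build the entire family $\widetilde\gamma$ by first prescribing only its projection $\pi\circ\widetilde\gamma$ to the base $\R^q$ and then lifting to $\SD$. Since $\SD$ is a connection over the base and every $\gamma(k)$ is horizontal, the requirement that $\widetilde\gamma(k,d,s)$ agree with $\gamma(k)$ on $\Op(0)$ pins down the lift uniquely (Subsection~\ref{ssec:ODEModels}), and that lift is automatically horizontal, so horizontality of the output is free. The only global point is that no lift should escape $V$: this follows from the vertical-displacement estimate of Subsection~\ref{sssec:ODEsize} (the vertical excursion of a horizontal lift is at most $O(r)$ times the length of its base curve), together with the bound $h<2r$ and the freedom to shrink $r,R,S,\delta,\tau$, since all the base curves that appear have length bounded by a fixed constant.

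I would run the construction in two stages. Fix first a bump function $\theta\colon K\to[0,1]$ that is $1$ on $\nu_\tau(A)$ and supported in $\nu_{2\tau}(A)$, as the statement demands. In \emph{Stage~1} ($s\in[0,\tfrac12]$) I apply the argument of the Direction-Adjustment Lemma~\ref{lem:directionAdjust}, locally around the stratum $\nu_{3\tau/2}(A)\times\{t_0\}$ (which is transverse to the $I$-factor), obtaining a $C^0$-small homotopy supported in $\nu_{2\tau}(A)\times\nu_{2\tau}(t_0)$ and independent of $d$, whose target is the horizontal curve that near $t_0$ is the straight segment of length $2R$ in the direction $\partial_1$ through $\gamma(k)(t_0)$; afterwards $\pi\circ\widetilde\gamma(k,d,\tfrac12)$ coincides, for $k\in\nu_\tau(A)$ and $t\in\nu_\tau(t_0)$, with a translated copy of the axis $(2Rt-R,0,\dots,0)$ of the controller to be inserted. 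In \emph{Stage~2} ($s\in[\tfrac12,1]$) I realise, on this axis, the birth homotopy of the controller $\SC$ of radius $R$, size-at-rest $S$, maximal-displacement $h$ and smoothing-parameter $\delta$ at birth parameter $(2s-1)\theta(k)$; by Definition~\ref{def:controller} this inserts the tangles $\ST^{i,\pm}$ at the points $x_{i,\pm}$ with estimated-displacements $\chi_S(\pm d_i)$, using the tangle birth homotopy of Proposition~\ref{prop:insertTangle} at the disjoint boxes. For $k\in\nu_\tau(A)$ this is exactly the formula in the last bullet; for $k$ in the shell $\nu_{2\tau}(A)\setminus\nu_\tau(A)$ one interpolates, using $\theta$, down to the constant homotopy outside $\nu_{2\tau}(A)$. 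At $s=\tfrac12$ the birth parameter is $0$ and $\SC(d,0)$ is the axis, so the two stages match; and near $t=0$ nothing changes since $t_0-2\tau>0$ for $\tau$ small.

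The bookkeeping properties — $C^0$-closeness of $\widetilde\gamma(k,d,s)$ to $\gamma(k)$, the length bound $\eta$ on $\nu_{2\tau}(t_0)\setminus\nu_\tau(t_0)$, constancy of the projection outside $\nu_{2\tau}(A)\times\nu_{2\tau}(t_0)$ and near $t=0$ — all reduce to taking $R,\tau,\delta$ small, because the controller lives in a cube of side $2R$ and the straightening and interpolating homotopies are $C^0$-small and supported in the prescribed neighbourhoods. The one genuinely non-formal point, and the main obstacle, is \textbf{embeddedness of $\widetilde\gamma(k,d,s)$ throughout the homotopy}. For Stage~1 this is a compactness argument: each $\gamma(k)$ is an embedding, so $\inf\{\,|\gamma(k)(t)-\gamma(k)(t_0)| : |t-t_0|\ge\tau\,\}$ is positive and, by compactness of $K$, bounded below uniformly in $k$, whence a sufficiently $C^0$-small perturbation localised near $t_0$ cannot introduce self-intersections (the part near $t_0$ is being turned into a short embedded segment). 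For Stage~2 this is exactly what the tangle birth homotopies guarantee (Propositions~\ref{prop:introtanglesbase} and~\ref{prop:insertTangle}): the inserted tangle curves do self-intersect in the base, but the area estimate of Proposition~\ref{prop:AlzadoAreas} forces base-coincident points to lift to points with distinct vertical coordinates, and it is precisely here that the hypothesis $\dim(M)\ge4$ enters, supplying an auxiliary bracket direction along which the crossings are separated. Finally, embeddedness of the lift persists as $d$ ranges over $[-h,h]^{n-q}$, since the area isotopy $\Psi^u$ only rescales the tangles, rescaling the separating areas accordingly (Proposition~\ref{endpointsLift}), so no crossing is ever lost.
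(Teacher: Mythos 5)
Your proposal is correct and follows essentially the same route as the paper's proof: a first stage applying Lemma \ref{lem:directionAdjust} near $A\times\{t_0\}$ to align the base projection with the controller's axis, a second stage defining the projection by the birth homotopy of $\SC$ cut off by $\theta$, with the actual curves obtained by horizontal lifting (Lemmas \ref{lem:horizontalisationGraphical} and \ref{lem:horizontalisationStability}) and embeddedness ensured by the tangle constructions together with the smallness of $r$ and $\delta$ (Lemma \ref{lem:errorTangles}). Your slightly more explicit treatment of the interpolation over the shell $\nu_{2\tau}(A)\setminus\nu_\tau(A)$ and of Stage-1 embeddedness via the uniform separation argument is consistent with, and if anything fills in, the paper's brief remarks.
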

The family $\widetilde\gamma$ is said to be obtained from $\gamma$ by inserting the controller $\SC$ along $A$. The last item asserts that $\SC$ has been inserted. The other items provide quantitative control for the insertion.

\begin{proof}
We first homotope the family $\gamma$ in the vicinity of $A \times \{t_0\}$ in order to align its projection with the axis of the controller $\SC$. This follows from an application of Lemma \ref{lem:directionAdjust}. We denote the resulting homotopy of $\varepsilon$-horizontal curves by $\beta(k,s)|_{s \in [0,1/2]}$. When we apply Lemma \ref{lem:directionAdjust}, we use a constant $\tau>0$ such that: $\beta(k,s)$ agrees with $\gamma(k,s)$ outside of $\nu_{2\tau}(A) \times \nu_{2\tau}(t_0)$. Furthermore, over $\nu_{\tau}(A) \times \nu_{\tau}(t_0)$, the projection $\pi \circ \beta(k,1/2)$ agrees with a translation of the axis of $\SC$. The desired bound $\eta$ tells us how small $\tau$ must be chosen. In particular, it should be small enough so that $\len(\beta(k,s))$ is bounded above by $\len(\gamma(k)) + \eta/2$.

We then apply Lemma \ref{lem:horizontalisationGraphical} to $\beta$, relative to $t=0$. This yields a family of horizontal curves $\widetilde\gamma(k,d,s)|_{s \in [0,1/2]}$ such that $\pi \circ \widetilde\gamma(k,d,s) = \pi \circ \beta(k,s)$. Thanks to the bound $\eta$ and the fact that $\gamma$ is horizontal, we can invoke Lemma \ref{lem:horizontalisationStability} to assert that $\widetilde\gamma(k,d,s)$ is indeed defined for all $t \in [0,1]$. Due to the uniqueness of lifts, $\widetilde\gamma(k,d,0) = \gamma(k)$. We denote $\gamma'(k) = \widetilde\gamma(k,d,1/2)$.

By construction, over $\nu_{\tau}(A) \times \nu_{\tau}(t_0)$, $\pi \circ \gamma'$ agrees with a translation of the axis of $\SC$. Our choice of $\tau$ determines the length of the curves $\pi \circ \gamma'$ in the region $\nu_{\tau}(A) \times \nu_{\tau}(t_0)$. This is the available length for placing the axis of the controller. As such, it provides an upper bound for our choice of $R$. We then define 
\[ \pi \circ \widetilde\gamma|_{k \in \nu_\tau(A),\quad t \in \nu_\tau(t_0),\quad s \in [1/2,1]} \]
using the formula appearing in the last item of the statement. For other values of $k$ and $t$, we set $\pi \circ \widetilde\gamma(k,d,s) = \pi \circ \gamma'(k)$.

The family $\widetilde\gamma(k,d,s)|_{s \in [1/2,1]}$ itself is given from its projection by lifting horizontally. This is done with Lemmas \ref{lem:horizontalisationGraphical} and \ref{lem:horizontalisationStability}. Here is where the smallness of $r$ enters. It must be sufficiently small to control the error in the estimated-displacement of $\SC$ (which amounts to the error in the estimated-displacement of its tangles; Lemma \ref{lem:errorTangles}). How small $r$ must be depends only on the graphical model $(V,\SD)$. It follows that $r$ being small enough implies that a lift exists for all times and, due to the properties of tangles, it yields embedded curves.
\end{proof}

\subsubsection{Controllability}

We now address how the insertion of a controller allows us to control the endpoint of the corresponding horizontal curves.
\begin{lemma} \label{lem:errorControllers}
Consider the setup and conclusions of Proposition \ref{prop:insertionControllers}. Consider the endpoint map
\[ \epoint: K \times [-h,h]^{n-q} \quad\longrightarrow\quad \R^{n-q} \]
defined by $\epoint(k,d) := \pi_\vert \circ \widetilde\gamma(k,d,1)(1)$. Then the following estimate holds:
\[ \epoint(k,d) = \gamma(k) + (0,d)(1+O(r)+O(\delta)) \]
for all $k \in A$.
\end{lemma}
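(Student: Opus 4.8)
The statement is essentially a bookkeeping consequence of the estimates already established for individual tangles, applied to the controller $\SC$ and then transported to $(M,\SD)$ via the graphical model. The plan is to compute $\epoint(k,d)$ by following the chain of identifications in Proposition \ref{prop:insertionControllers}: over $\nu_\tau(A)$ and $s=1$, the projection $\pi\circ\widetilde\gamma(k,d,1)$ is a translated, reparametrised copy of $\SC(d,\theta(k))=\SC(d,1)$ inserted into $\pi\circ\gamma'(k)$, where $\gamma'(k)=\widetilde\gamma(k,d,1/2)$ differs from $\gamma(k)$ only by a homotopy supported near $A\times\{t_0\}$ that preserves $\pi$-images outside a $2\tau$-neighbourhood and leaves $\gamma$ untouched near $t=0$. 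Since lifts of horizontal curves into the connection are determined by their projection and initial value (Subsection \ref{ssec:ODEModels}), and the curves agree near $t=0$ with $\gamma(k)$, the endpoint $\widetilde\gamma(k,d,1)(1)$ is computed by lifting $\pi\circ\gamma(k)$ up to $t_0-\tau$, then lifting the inserted copy of $\SC(d,1)$, then lifting the remainder of $\pi\circ\gamma(k)$ — the last two pieces being $\pi$-unchanged from $\gamma'(k)$, whose own endpoint agrees with that of $\gamma(k)$ up to an error we must track.

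The core step is the displacement contributed by the inserted controller. By Definition \ref{def:controller}, $\SC(d,1)$ is built by inserting the tangles $\ST^{i,+}_{\chi_S(d_i),1}$ and $\ST^{i,-}_{\chi_S(-d_i),1}$ at the disjoint points $x_{i,\pm}$. The tangles are supported in pairwise disjoint boxes $C_{i,\pm}$, so upon lifting they act independently and their endpoint displacements add (this is the standard observation that disjointly supported variations do not interfere, used already in Lemma \ref{lem:controlling}). For each $i$, Lemma \ref{lem:errorTangles} gives that $\ST^{i,+}$ displaces the lift by $\chi_S(d_i)$ along $X_i$ up to an error $O(r+\delta)\chi_S(d_i)$, and $\ST^{i,-}$ displaces by $\chi_S(-d_i)$ along $-X_i$ with a comparable error. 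Since $\chi_S(d_i)-\chi_S(-d_i)=d_i$ for $|d_i|\le 1$ by the defining property of $\chi_S$, the net displacement along the $X_i$-direction is $d_i(1+O(r)+O(\delta))$, the error being bounded by $O(r+\delta)(\chi_S(d_i)+\chi_S(-d_i)) = O(r+\delta)(|d_i|+O(S))$, which is $O(r+\delta)$ once $S$ is small (and may be absorbed by taking $S$ small as allowed in Proposition \ref{prop:insertionControllers}). Summing over $i=q+1,\dots,n$ gives a vertical displacement $d(1+O(r)+O(\delta))$; the horizontal ($\R^q$-base) component returns to where it started because $\SC(d,1)$ has fixed endpoints equal to those of a sub-arc of the axis, and because $\pi\circ\gamma'(k)$ coincides with $\pi\circ\gamma(k)$ outside the insertion region.

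Finally I would account for the discrepancy between $\gamma'(k)$ and $\gamma(k)$: the homotopy $\beta(k,s)|_{s\in[0,1/2]}$ from Lemma \ref{lem:directionAdjust} is $C^0$-small, supported near $A\times\{t_0\}$, leaves the curve unchanged near $t=0$, and its lift $\widetilde\gamma(k,d,s)|_{s\in[0,1/2]}$ has the same $\pi$-image as $\beta$; by Lemma \ref{lem:horizontalisationStability} the resulting horizontal curve $\gamma'(k)$ is $C^0$-close to $\gamma(k)$, and since the two projections agree outside a $2\tau$-neighbourhood of $t_0$ while both lifts share the same initial point, the endpoints $\gamma'(k)(1)$ and $\gamma(k)(1)$ differ by an amount controlled by the area swept in the base during the alignment homotopy, which is $O(\eta)$ and hence (choosing $\tau$, i.e. $\eta$, small as permitted) absorbed into the stated error terms. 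Putting the three contributions together yields $\epoint(k,d)=\pi_\vert\circ\widetilde\gamma(k,d,1)(1)=\gamma(k)+(0,d)(1+O(r)+O(\delta))$ for $k\in A$, as claimed. The only mild subtlety — the ``main obstacle'' — is confirming that the independence-of-tangles claim survives the lifting: the curvature terms coupling different coordinate directions are $O(r)$ (Proposition \ref{prop:AlzadoAreas} and the $O(r)$ control in the graphical model), so cross-terms between the $n-q$ tangle pairs contribute only to the $O(r)$ error and do not spoil the leading-order diagonal behaviour.
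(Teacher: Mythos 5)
Your proposal is correct and follows essentially the same route as the paper, which simply observes that the estimate is immediate from Lemma \ref{lem:errorTangles}: the controller is a union of disjointly supported tangles, their lifted displacements combine (with the $\chi_S$-cancellation giving net displacement $d_i$ in each vertical direction), and all cross-terms and insertion effects are absorbed into the $O(r)+O(\delta)$ error. Your additional bookkeeping of the alignment homotopy and the size-at-rest contribution is a harmless elaboration of what the paper leaves implicit.
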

\begin{proof}
This is immediate from the analogous statement about tangles, namely Lemma \ref{lem:errorTangles}.
\end{proof}
As in Lemma \ref{lem:adaptedCharts} we can be more precise and say that there is a constant $C$, depending only on the graphical model, such that the error is bounded above by $C.(r+\delta)$. It follows that imposing $\delta, r << 1/C$ implies that $\epoint(k,-)$ is an equidimensional embedding whose image contains a ball of radius $h/2$, centered at $\gamma(k)$.

\subsubsection{Insertion in the $\varepsilon$-horizontal case}

For our purposes, we will need the following variation of Proposition \ref{prop:insertionControllers}:
\begin{lemma} \label{lem:insertionControllers}
Let $r$, $h$, $A$, $t_0$, $\eta$, $R$, $S$, and $\delta$ be as in Proposition \ref{prop:insertionControllers}. Given $\gamma: K \rightarrow \Emb^\varepsilon([0,1];D_r,\SD)$, there is a family $\widetilde\gamma: K \times [-h,h]^{n-q} \times [0,1] \rightarrow \Emb([0,1];V)$ such that:
\begin{itemize}
\item $\widetilde\gamma(k,d,0) = \gamma(k)$.
\item $\widetilde\gamma(k,d,s) = \gamma(k)$ outside of $\nu_{2\tau}(A) \times \nu_{2\tau}(t_0)$.
\item The length of $\widetilde\gamma(k,d,s)$ in the region $\nu_{2\tau}(t_0) \setminus \nu_{\tau}(t_0)$ is bounded above by $\eta$.
\item For all $k \in \nu_\tau(A)$, all $t \in \nu_\tau(t_0)$, and all $s \in [1/2,1]$, it holds that
\[ \pi \circ \widetilde\gamma(k,d,s)(t) = \SC(d, (2s-1)\theta(k))\left(\dfrac{t-t_0-\tau}{2\tau}\right) + \pi \circ \gamma(k)(t_0). \] 
\end{itemize}
\end{lemma}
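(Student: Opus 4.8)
The argument is a variant of the proof of Proposition~\ref{prop:insertionControllers}. There the modified curve is produced as a horizontal lift, so that a change of the projection near $t_0$ is propagated forward by the lifting ODE; this is precisely why that statement is only relative to $\{t=0\}$. Since here $\gamma$ is merely $\varepsilon$-horizontal, we can instead carry out the whole modification \emph{within} $\nu_{2\tau}(A)\times\nu_{2\tau}(t_0)$, relative to its boundary, using the $\varepsilon$-horizontal slack to absorb the displacement that a horizontal lift would carry out of that region, so that $\widetilde\gamma=\gamma$ outside. No ambient reduction is needed: $\gamma$ already maps into the graphical model $D_r\subset V$, so we only localise in $K\times I$, with $A\times\{t_0\}$ as the stratum along which we work.

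For $s\in[0,1/2]$ we align the projection with the axis of the controller exactly as in the first step of the proof of Proposition~\ref{prop:insertionControllers}: apply Lemma~\ref{lem:directionAdjust} (using $\rank(\SD)\geq 2$, automatic since tangles involve at least two coordinate flows) to the disc $A\times\{t_0\}$, with target the horizontal lift through $\gamma(k)(t_0)$ of the segment parallel to $\partial_1$. This gives a homotopy of $\varepsilon$-horizontal embeddings, relative to the exterior of $\nu_{2\tau}(A)\times\nu_{2\tau}(t_0)$, $C^0$-close to $\gamma$ and of comparable length; smallness of $\tau$ (hence of $R$) is what makes the length at most $\eta$ over $\nu_{2\tau}(t_0)\setminus\nu_\tau(t_0)$. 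Call the time-$1/2$ slice $\gamma'$; it equals $\gamma$ outside the region, is horizontal near $A\times\{t_0\}$, and over $\nu_\tau(A)\times\nu_\tau(t_0)$ its projection is $\SC(d,0)$ (the axis) translated by $\pi\circ\gamma(k)(t_0)$.

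For $s\in[1/2,1]$ we prescribe the projection $\pi\circ\widetilde\gamma(k,d,s)$ by the formula in the statement over $\nu_\tau(A)\times\nu_\tau(t_0)$, by $\pi\circ\gamma'(k)$ outside $\nu_{2\tau}(A)\times\nu_{2\tau}(t_0)$, and by a cut-off interpolation over the collar in between; since $\SC(d,0)$ is the axis and $\theta$ vanishes near $\partial\nu_{2\tau}(A)$, this is continuous, reduces to $\pi\circ\gamma'$ at $s=1/2$, and by Remark~\ref{convergenceC0general} stays $C^0$-close to $\pi\circ\gamma'$ and of comparable length once $\delta,R,\tau$ are small. We then choose the vertical component: outside the region it is that of $\gamma'=\gamma$; over the inserted portion we take the horizontal lift of $\pi\circ\widetilde\gamma$ (legitimate, $\gamma'$ being horizontal there) \emph{corrected} by a vertical term, distributed along the controller, that cancels the net vertical displacement of that lift. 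By Lemma~\ref{lem:errorControllers} this displacement is of size $O(h)$ with $h<2r$, so the correction has slope $O(r/R)$; hence it does not affect embeddedness and keeps $\widetilde\gamma$ $\varepsilon$-horizontal once $r$ is small relative to $R$. Embeddedness of the controller portion is inherited from the tangle models as in Proposition~\ref{prop:insertionControllers} (for $r$ small), and all choices are made continuously in $(k,d)$ and relative to the locus where the data already agrees. Concatenating the two homotopies and reparametrising $s$ yields $\widetilde\gamma$.

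The essential new point — and the only place this differs from Proposition~\ref{prop:insertionControllers} — is the vertical cancellation: one gives up the horizontal lift precisely in order to confine the modification to $\nu_{2\tau}(A)\times\nu_{2\tau}(t_0)$, absorbing the endpoint displacement of the lifted controller into a correction term of controllably small slope. The estimates close under the hypotheses' usual hierarchy ($\tau,R,\delta$ small, then $r$ small relative to them), so no additional smallness is introduced.
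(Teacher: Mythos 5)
Your overall mechanism is the one the paper uses: align the projection with the axis (Lemma \ref{lem:directionAdjust}), prescribe the controller in the base, produce the curve by horizontal lifting (Lemmas \ref{lem:horizontalisationGraphical} and \ref{lem:horizontalisationStability}), and then modify the vertical component so that the curve coincides with $\gamma$ itself outside $\nu_{2\tau}(A)\times\nu_{2\tau}(t_0)$; the paper phrases this more compactly as ``horizontalise $\gamma$, apply Proposition \ref{prop:insertionControllers}, then adjust the vertical component''. The genuine problem is your quantitative claim that the correcting term has slope $O(r/R)$, hence is small ``once $r$ is small relative to $R$'', and that this yields both $\varepsilon$-horizontality and embeddedness. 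There is no such regime: the axis of the controller must be placed along the projection of the curve inside the short time window $\nu_\tau(t_0)$, so $R$ is bounded above by the length of that arc (and by the size of the model), which forces $R\lesssim r$ --- in the intended application $R$ is in fact much smaller than $r$ --- while the vertical displacement to be cancelled can be of magnitude $h$, with $h$ comparable to $r$ (this is exactly how the lemma is used in Section \ref{sec:hPrincipleHorizontal}, where displacements of size up to $\sim r_1$ are needed). So the slope $h/R$ of the correction is in general large, not small. This is precisely why the statement only asks for a family in $\Emb([0,1];V)$ and why the paper's proof explicitly concedes that $\varepsilon$-horizontality may be lost, retaining only graphicality over $\R^q$.

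The false smallness claim is harmless for the (unrequired) $\varepsilon$-horizontality, but it is the only justification you give for embeddedness, so as written there is a gap: a steep vertical correction could a priori cancel the small vertical separations that make the lifted tangles embedded. The repair does not need smallness: distribute the correction so that its cumulative value is constant on each tangle box, i.e.\ support its derivative on the axis segments and on the collar $\nu_{2\tau}(t_0)\setminus\nu_\tau(t_0)$, where the base projection of the curve is an embedded arc away from the base self-intersections. Then at every self-intersection of the projection both branches are shifted vertically by the same amount, so the separations coming from the tangle construction (quantified via Lemmas \ref{lem:errorTangles} and \ref{lem:errorControllers}) persist, while on the support of the correction injectivity of the base projection makes arbitrary slopes harmless. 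With that substitution, and dropping the $\varepsilon$-horizontality claim, your argument closes and matches the paper's.
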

\begin{proof}
We use Lemmas \ref{lem:horizontalisationGraphical} and \ref{lem:horizontalisationStability} to horizontalise $\gamma$, yielding some family $\gamma'$. We apply to it Proposition \ref{prop:insertionControllers}, yielding a family $\widetilde\gamma'$ that is also horizontal. We then adjust its vertical component to yield the claimed $\widetilde\gamma$. Due to the displacement introduced by the controller, it may be the case that $\widetilde\gamma$ is not $\varepsilon$-horizontal. However, it is still graphical over $\R^q$.
\end{proof}

\section{h-Principles for horizontal curves} \label{sec:hPrincipleHorizontal}

In this section we prove our main Theorem \ref{thm:embeddings}, the classification of regular horizontal embeddings. This uses all the tools that we have presented in previous sections. The corresponding statement for immersions, Theorem \ref{thm:immersions}, will follow from simplified versions of the same arguments. In Subsection \ref{ssec:hPrincipleHorizontalImmersions} we explain how this is done.

\subsection{Relative version of Theorem \ref{thm:embeddings}} \label{ssec:relativeHorizontalEmbeddings}

We explained in Section \ref{sec:epsilon} that our $h$-principle arguments are relative in nature. This allows us to state an analogue of Theorem \ref{thm:embeddings} that deals with embedded regular horizontal paths and is relative in parameter and domain.
\begin{proposition} \label{prop:relativeHorizontalEmbeddings}
Let $K$ be a compact manifold. Let $I = [0,1]$. Let $(M,\SD)$ be a manifold of dimension $\dim(M)>3$, endowed with a bracket--generating distribution. Suppose we are given a map $\gamma: K \to \Emb^\varepsilon(I;M,\SD)$ satisfying:
\begin{itemize}
\item $\gamma(k) \in \Emb^\regu(I;M,\SD)$ for $k \in \Op(\partial K)$.
\item $\gamma(k)(t)$ is horizontal if $t \in \Op(\partial I)$.
\end{itemize}

Then, there exists a homotopy $\widetilde\gamma: K \times [0,1] \to \Emb^\varepsilon(I;M,\SD)$ satisfying:
\begin{itemize}
\item $\widetilde\gamma(k,0) = \gamma(k)$.
\item $\widetilde\gamma(k,1)$ takes values in $\Emb^\regu(I;M,\SD)$.
\item this homotopy is relative to $k \in \Op(\partial K)$ and to $t \in \Op(\partial I)$.
\item $\widetilde\gamma(k,s)$ is $C^0$-close to $\gamma(k)$ for all $s \in [0,1]$.
\end{itemize}
\end{proposition}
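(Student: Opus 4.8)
The strategy is to reduce the relative $h$-principle to the problem of introducing, along a codimension-$0$ region where the curves fail to be regular, a \emph{controller} (Section~\ref{sec:controllers}), which by Lemma~\ref{lem:errorControllers} makes the endpoint map an equidimensional embedding onto a ball and thus certifies regularity. More precisely, I would argue as follows. First, triangulate $K$ using Thurston's jiggling (Lemma~\ref{lem:jiggling}, Corollary~\ref{cor:jigglingBoundary}) so that each simplex is small enough that the family $\gamma$ restricted to a neighbourhood of it maps into a single adapted chart (Lemma~\ref{lem:adaptedCharts}), with $r$ as small as required by Lemma~\ref{lem:errorControllers}. Proceed by induction on the skeleton, the base of the induction being a neighbourhood of $\partial K$ where $\gamma$ is already regular, and where nothing needs to be done (relativity). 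The inductive step deals with a single $k$-disc $A = \Delta$, relative to $\partial\Delta$ where the family is already regular.

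For the inductive step over a simplex $\Delta$ mapping into a graphical model $(V,\SD)$, pick an interior time $t_0 \in \Op(1/2)$ away from $\Op(\partial I)$ (so horizontality near $\partial I$ is untouched). Use Lemma~\ref{lem:insertionControllers} to insert a controller $\SC$ along $\Delta$ at $t_0$, with $R,S,\delta$ small and maximal-displacement $h < 2r$: this produces $\widetilde\gamma(k,d,s)$, a family parametrised additionally by $d \in [-h,h]^{n-q}$, agreeing with $\gamma(k)$ at $d = 0$ and for $k$ outside $\nu_{2\tau}(\Delta)$, equal to $\gamma(k)$ near $\partial I$, and with the controller inserted for $k \in \nu_\tau(\Delta)$, $s \in [1/2,1]$. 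By Lemma~\ref{lem:errorControllers}, for $k \in \Delta$ the map $d \mapsto \epoint(k,d)$ is an equidimensional embedding onto a ball of radius $h/2$ around $\gamma(k)(1)$. The homotopy we want is $s \mapsto \widetilde\gamma(k,0,s)$: at $s = 0$ it is $\gamma$, and at $s=1$ the curve $\widetilde\gamma(k,0,1)$ carries a controller (with zero estimated-displacement but nonzero birth-parameter for $k \in \nu_\tau(\Delta)$), which makes it regular there because the endpoint map of the controlling family $(d,k) \mapsto \widetilde\gamma(k,d,1)$ is submersive. Thus $\widetilde\gamma(k,0,1) \in \Emb^\regu$ over $\nu_\tau(\Delta)$; $C^0$-closeness follows from the corresponding bound in Proposition~\ref{prop:insertionControllers}/Lemma~\ref{lem:insertionControllers} (choose $\tau$ small so the length bound $\eta$ is tiny), and relativity to $\partial\Delta$ and to $\Op(\partial I)$ holds because the insertion is supported in $\nu_{2\tau}(\Delta) \times \nu_{2\tau}(t_0)$.

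The subtlety, and the main obstacle, is making the inductive patching consistent: once a controller has been inserted over the lower skeleton (so the curves are regular there), inserting a further controller over the next simplex $\Delta$ must not destroy regularity already achieved, and the regions of insertion associated to adjacent simplices must be kept disjoint in the $I$-direction (using different insertion times $t_0$ for different simplices, or disjoint sub-intervals, which is possible since the number of simultaneously incident simplices is bounded by Lemma~\ref{lem:jiggling}(iii)). One also needs that the curves remain $\varepsilon$-horizontal throughout, or rather merely graphical over $\R^q$, which is all Lemma~\ref{lem:insertionControllers} guarantees; since the target statement allows the homotopy to stay in $\Emb^\varepsilon(I;M,\SD)$, a final appeal to the interpolation/horizontalisation lemmas (Lemma~\ref{lem:varepsilonStability}, Lemma~\ref{lem:varepsilonInterpolation}) is needed to bring the graphical-but-not-$\varepsilon$-horizontal pieces back inside the $\varepsilon$-cone, relative to the already-good regions. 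Finally, regularity being an open and $C^0$-local-in-the-curve property (a curve carrying a controller anywhere is regular), the fact that we only achieve it simplex-by-simplex assembles to regularity over all of $K$ once the induction is complete.
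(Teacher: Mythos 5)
There is a genuine gap, and it is central: you never make the curves horizontal. The target space $\Emb^\regu(I;M,\SD)$ consists of \emph{regular horizontal} embeddings, so $\widetilde\gamma(k,1)$ must be tangent to $\SD$ at every time, not merely $\varepsilon$-horizontal with a controller sitting on it. Your homotopy ends at curves produced by Lemma \ref{lem:insertionControllers}, which are only graphical over the base (you even propose a final interpolation back into the $\varepsilon$-cone), so the endpoint of your homotopy does not lie in $\Emb^\regu$ at all; for such curves ``regular'' is not even the relevant notion. In the paper's proof the controllers are not a device for certifying regularity of the given curves: they are inserted so that, after the family is \emph{horizontalised} by projecting to the base of each graphical model and lifting (a process that inevitably moves the endpoints of the lifts), the estimated-displacement of the controllers can be adjusted, chart by chart via the inverse function theorem and cut-offs, to push the endpoints back to their required positions so the horizontal pieces glue and the construction is relative in parameter and domain. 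This is why the paper covers $K\times I$ by a fine grid of cubes $U_i\times I_j$ and inserts one controller in \emph{every} such cube: lifting can only be controlled over short time intervals, and the endpoint error must be corrected in each $I_j$ separately. A single controller per simplex of $K$ at one time $t_0$ cannot absorb the errors accumulated along the whole curve, and in your scheme there is nothing for it to absorb because the horizontalisation step is absent.

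Two further points. First, your skeleton-induction on a triangulation runs close to the trap discussed in Remark \ref{rem:whyNotTriangulate}: to proceed cell by cell one would need the curves produced along lower-dimensional strata to be regular (so that controllers/controllability are available there), and there is no guarantee of that; the paper deliberately replaces the triangulation by a cubical cover with bounded multiplicity and distinct insertion times precisely to avoid this. Second, you do not use the hypothesis that $\gamma(k)$ is regular for $k\in\Op(\partial K)$, whereas in the paper this hypothesis is essential (not technical): controllers inserted near the boundary spoil horizontality in the region $\nu_\tau(\partial K)$ where the homotopy must be relative, and the only way to repair it there is to use the local controllability of the given regular boundary family (Lemma \ref{lem:controlling} and Lemma \ref{lem:implicitFunctionTheorem}); because of rigidity this cannot be bypassed. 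As it stands your argument proves at most that the family can be homotoped, through $\varepsilon$-horizontal embeddings, to one admitting controlling extensions, which is substantially weaker than the statement.
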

Do note that the analogous statement where we consider formal horizontal embeddings instead of $\varepsilon$-horizontal ones follows from this one, thanks to the results in Subsection \ref{ssec:epsilonHorizontalCurves}.

Now, as usual, the absolute statement follows from the relative one:
\begin{proof}[Proof of Theorem \ref{thm:embeddings} from Proposition \ref{prop:relativeHorizontalEmbeddings}]
The statement follows, according to Subsection \ref{ssec:epsilonHorizontalCurves}, from the vanishing of the relative homotopy groups of the pair 
\[ (\Emb^\varepsilon(M,\SD),\, \Emb^\regu(M,\SD)). \]
Consider a family $\gamma: \D^a \to \Emb^\varepsilon(M,\SD)$ that takes values in $\Emb^\regu(M,\SD)$ along $\NS^{a-1}$. This represents a class in the $a$th relative homotopy group. We must deform this family to lie entirely in $\Emb^\regu(M,\SD)$.

We may assume, by suitable reparametrisation in the parameter, that in a collar of $\NS^{a-1}$, the family $\gamma$ is radially constant. This provides for us an open along the boundary of $\D^a$ in which all curves are regular horizontal. We then consider the product space $\D^a \times \NS^1$ and we make the curves $\gamma$ horizontal in a neighbourhood of the slice $\D^a \times \{1\}$, using Lemma \ref{lem:horizontalisationSkeleton}. This is a $C^0$-small process.

Regarding $\D^a \times \NS^1$ as $\D^a \times I$ yields a family of $\varepsilon$-horizontal paths, horizontal at the endpoints, with all curves regular close to $\partial\D^a$. We apply Proposition \ref{prop:relativeHorizontalEmbeddings} to it, relatively to $\Op(\NS^{a-1})$ in the parameter, and relatively to $\Op(\partial I)$ in the domain. The claim follows.
\end{proof}
The remainder of this section is mostly dedicated to the proof of Proposition \ref{prop:relativeHorizontalEmbeddings}.

\begin{figure}[h] 
	\includegraphics[scale=0.8]{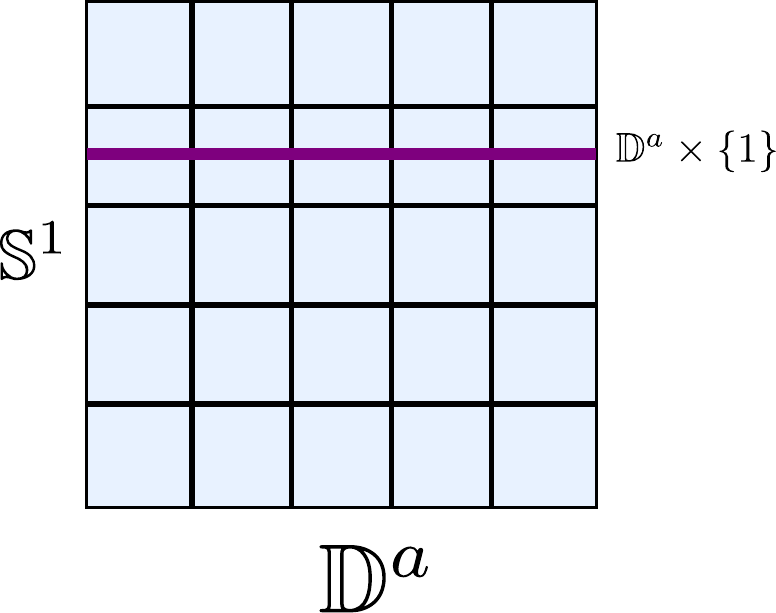}
	\centering
	\caption{Choosing the time slice $\D^a \times \{1\}$ identifies the product $\D^a \times\NS^1$ with $\D^a \times [0,1]$, allowing us to apply Proposition \ref{prop:relativeHorizontalEmbeddings}.}\label{fig:timeSlice}
\end{figure}

\subsection{Setup for the proof of Proposition \ref{prop:relativeHorizontalEmbeddings}} \label{ssec:Step1Proof}

Given the family of curves $\gamma$, we will produce a series of homotopies in order to obtain $\widetilde\gamma(-,1)$. These elementary homotopies are relative to the boundary and through $\varepsilon$-horizontal curves. Their concatenation will be the homotopy $\widetilde\gamma$.

We denote $\dim(M) = n$, $\rank(\SD) = q$, and $\dim(K) = K$. We write $\pi_K$ and $\pi_I$ for the projections of $K \times I$ to its factors. We fix a metric on $K$, endow $I$ with the euclidean metric, and endow the product $K \times I$ with the product metric. We will write $d(-,-)$ to denote distance between subsets; the metric used should be clear from context.

\subsubsection{A fine cover of $K \times I$}

Our first goal is to subdivide $K \times I$ in a manner that is nicely adapted to $(M,\SD)$ and $\gamma$. The aim with this is to reduce our subsequent arguments to constructions happening in very small balls in which errors are controlled. No homotopy of $\gamma$ is produced in this subsection, we are just doing some preliminary work.

Introduce a size parameter $N \in \N$, to be fixed as we go along in the proof. We divide $I$ into intervals $I_j$ of length $1/N$. Furthermore, we fix a finite cover of $K$ by charts parametrised by the unit cube. Such cubes can themselves be divided into cubes of side $3/N$, spaced along the coordinate axes as $1/N$. We write 
\[ \{ \phi_i: [0,3/N]^k \,\longrightarrow\, U_i \subset K\} \]
for the resulting collection of cubical charts.

\subsubsection{Bounding the number of intersections between charts}

By construction, there is a constant $C_1$ such that any intersection $U_{i_1} \cap \cdots \cap U_{i_{C_1}}$, involving distinct charts, is empty. This follows from the properties of cubical subdivision; a detailed argument can be found in \cite[p. 25]{MAP}.

\subsubsection{Covering the image by graphical models}

Given the origin $k \in K$ of the chart $U_i$ and the initial time $t_j := j/N \in I_j$, we fix an adapted chart $(V_{i,j},\Psi_{i,j})$ centered at $\gamma(k)(t_j)$. These adapted charts are given by Lemma \ref{lem:adaptedCharts}, meaning that they all have the same radius $r_0 > 0$ and the difference between their framing and the coordinate axes is controlled by some constant $C_2>0$.

\subsubsection{Discussion about parameters}

The constants $C_1$, $C_2$, and $r_0$ are given to us and depend on $(M,\SD)$ and the family $\gamma$. For convenience, we introduce new parameters $0 < r_1 < r_0$ and $0 < l$, to be fixed later in the proof. We impose $1/N << r_1, l$ in order to ensure that:
\begin{equation} \label{eq:parameterR1}
\gamma(k)(t) \in \Psi_{i,j}(\D_{r_1}) \text{ for all } (k,t) \in U_i \times I_j
\end{equation}
\begin{equation} \label{eq:parameterL}
\Psi_{i,j}^{-1} \circ \gamma(k)|_{I_j}  \text{ has euclidean length bounded above by $l$}
\end{equation}
I.e. the curves in each cube $U_i \times I_j$ are very short and are located very close to the origin of the corresponding graphical model.

\subsection{Introducing controllers} \label{ssec:Step2Proof}

Our next goal is to add controllers to $\gamma$ over each cube $U_i \times I_j$. We continue using the notation introduced in the previous subsection. We write $\nu_r(A)$ for the $r$-neighbourhood of a subset $A$.

\subsubsection{Boundary neighbourhoods}

We fix a small enough constant $\tau > 0$ so that all curves $\gamma(k)$ with $k \in \nu_\tau(\partial K)$ are horizontal and regular. We then consider a constant $\tau/2 < \tau' < \tau$ and a subset $\SU$ of the cover $\{U_j \times I_i\}$ so that:
\begin{itemize}
\item the elements of $\SU$, together with $\nu_{\tau'}(\partial (K \times I))$, cover $K \times I$,
\item all elements in $\SU$ are disjoint from $\nu_{\tau/2}(\partial (K \times I))$.
\end{itemize}

\begin{figure}[h] 
	\includegraphics[scale=0.8]{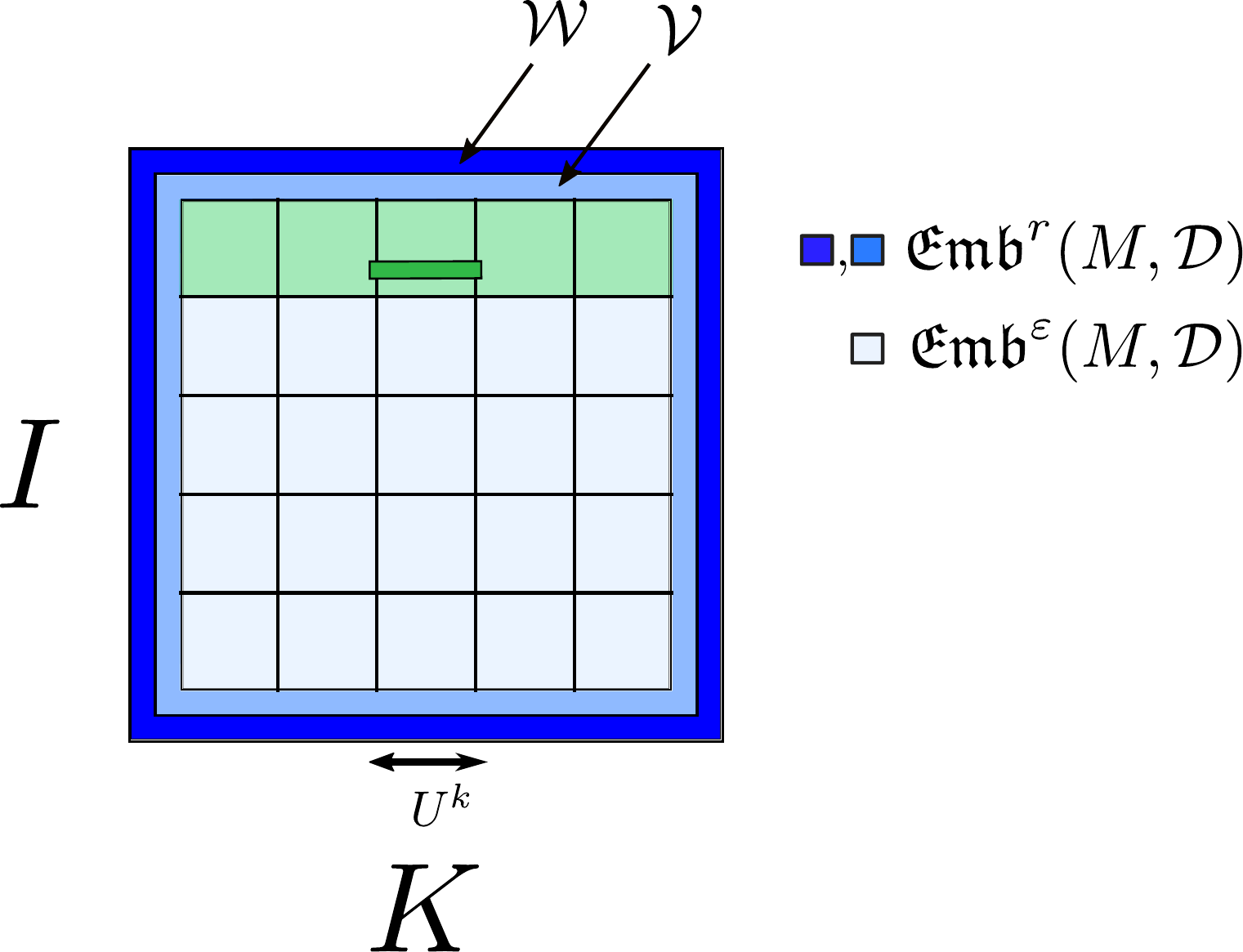}
	\centering
	\caption{The cover $\SU$, together with the boundary neighbourhoods of radii $\tau$ and $\tau/2$. A controller is shown in one of the $U_i \times I_j$.}\label{cuadricula}
\end{figure}

\subsubsection{Introducing controllers}

Given $U_i \times I_j \in \SU$, we choose a time $t_{i,j}$ in the interior of $I_j$. We require that these are all distinct. We then introduce a controller $\SC_{i,j}$ (Lemma \ref{lem:insertionControllers}) along $U_i \times \{t_{i,j}\}$. We write:
\begin{itemize}
\item $S$ for the size-at-rest of all the controllers.
\item $\eta>0$ for the size of the neighbourhood of $U_i \times \{t_{i,j}\}$ in which the controllers are contained. If $\eta$ is sufficiently small, the controllers do not interact with one another. 
\end{itemize}
Later on in the proof we will use the estimated-displacement of the $\SC_{i,j}$, one pair $(i,j)$ at a time. For now we write $\gamma'$ for the $K$-family of $\varepsilon$-horizontal curves that has the estimated-displacement of each controller at $0$. Do note that $S$ must be small enough to guarantee $\varepsilon$-horizontality. Furthermore, $\gamma'$ is homotopic to $\gamma$ through $\varepsilon$-horizontal curves, relative to the complement of all $\nu_\tau(U_i \times \{t_{i,j}\})$.

There is now a subtlety that we need to take care of: some of the $\SC_{i,j}$ enter the region $\nu_\tau(\partial (K \times I))$, destroying the horizontality condition there. This cannot be addressed using the controllers themselves, since the collection $\{U_i\}$ does not cover $\nu_\tau(\partial K)$ completely. We address it instead using regularity. Note that this is not a technical point: due to the phenomenon of rigidity (Subsection \ref{ssec:regularity}), the usage of regularity at this stage cannot be avoided.

\subsubsection{Horizontalisation} \label{sssec:auxiliaryHorizontalisation}

The family $\gamma'$ is horizontal over $\Op(\partial K)$, but not necessarily over the whole band $\nu_\tau(\partial K)$, due to our insertion of controllers. To address this, we reintroduce horizontality, at the cost of losing control of the endpoints. Namely, given $\mu > 0$, any sufficiently small size-at-rest $S$ will guarantee that there is a family of horizontal curves $(\alpha(k))_{k \in \nu_\tau(\partial K)}$ that satisfies:
\begin{itemize}
\item[i. ] $\left| \alpha - \gamma|_{\nu_\tau(\partial K)} \right|_{C^0} < \mu$.
\item[i'. ] $\len(\alpha(k)) < \len(\gamma(k)) + \mu$.
\item[ii. ] $\alpha$ is homotopic to $\gamma|_{\nu_\tau(\partial K)}$ through horizontal curves.
\item[iii. ] This homotopy is relative in the parameter to $\nu_{\tau/2}(\partial K)$.
\item[iii'. ] The homotopy is relative to $\{t=0\}$ in the domain.
\end{itemize}
The family $\alpha$ is constructed inductively, one chart $U_i \times I_j$ at a time, increasingly in $j$, and arbitrarily in $i$. The inductive step consists of using each adapted chart $(V_{i,j},\Psi_{i,j})$ to see $\gamma'|_{U_i \times I_j}$ as a family of curves in the graphical model $V_{i,j}$. We can then apply the lifting Lemma \ref{lem:horizontalisationGraphical} to the projection $\pi \circ \gamma'|_{U_i \times I_j}$. The lifting process can be completed over $[0,1]$ because a small $S$ means that $\pi \circ \gamma'|_{U_i \times I_j}$ is close to $\pi \circ \gamma|_{U_i \times I_j}$. This also justifies Conditions (i) and (i'). The families are homotopic two one another by the birth of the controller, proving Condition (ii). Lastly, projecting and lifting leaves horizontal curves invariant, proving Conditions (iii) and (iii').

We now choose $\mu$ small enough so that the bounds provided by Conditions (i) and (i') allows us to invoke the interpolation Lemma \ref{lem:varepsilonInterpolation}. This allows us to interpolate through $\varepsilon$-horizontal curves between $\alpha$ and $\gamma'$ in the region $\{\tau' < d(k,\partial K) < \tau \}$. The resulting family of curves $\alpha'$:
\begin{itemize}
\item agrees with $\gamma'$ in the complement of $\nu_\tau(\partial K)$,
\item agrees with $\gamma$ in $\nu_{\tau/2}(\partial K)$,
\item is horizontal in $\nu_{\tau'}(\partial K)$,
\item contains controllers along $U_i \times \{t_{i,j}\}$.
\end{itemize}
The issue is that $\alpha(k)(1)$ may be different from $\gamma'(k)(1) = \gamma(k)(1)$ in the region $\{\tau/2 < d(k,\partial K) < \tau' \}$. This is a feature of the lifting process. Nonetheless, according to Lemma \ref{lem:horizontalisationGraphical}, the endpoints are $\mu$-close.

\subsubsection{Controllability}

By hypothesis, the curves $\gamma|_{\nu_\tau(\partial K)}$ are horizontal and regular. It follows that $\gamma|_{\nu_\tau(\partial K)}$ is a controllable family\footnote{Do note that $\nu_\tau(\partial K)$ is not compact but, since $\tau$ is arbitrary, we can take a slightly smaller compact neighbourhood of $\partial K$ and carry the argument there.}, according to Lemma \ref{lem:controlling}. We deduce that there are constants $c,\delta > 0$ such that any $\delta$-displacement of their endpoints can be followed by a homotopy of the curves themselves, through horizontal curves, that is $c\delta$-small.

We claim that the family $\alpha'|_{\nu_{\tau'}(\partial K)}$ is also controllable, with constants $2c$ and $\delta/2$, as long as $S$ is sufficiently small. Indeed, consider the variations $F$ of $\gamma$ that yield controllability. Then, the homotopy lifting property, applied to $F$ and the homotopy of horizontal curves connecting $\gamma|_{\nu_{\tau'}(\partial K)}$ with $\alpha'|_{\nu_{\tau'}(\partial K)}$, yields corresponding variations for small values of the homotopy parameter. They exist for the whole homotopy if $\mu$ is assumed to be sufficiently small.

Then, assuming that $S$ is sufficiently small, we have that $\mu < \delta/2$ and we can use the controllability of $\alpha'|_{\nu_{\tau'}(\partial K)}$ to yield a family $\gamma'': K \to \Emb^\varepsilon(I;M,\SD)$ that:
\begin{itemize}
\item agrees with $\gamma$ in $\Op(\partial (K \times I))$.
\item is horizontal and regular if $k \in \nu_{\tau'}(\partial K)$.
\item has a family of controllers, still denoted by $\{\SC_{i,j}\}$, along $U_i \times \{t_{i,j}\}$.
\end{itemize}
The situation is depicted in Figure \ref{fig:controllersIntroduced}.

\begin{figure}[h] 
	\includegraphics[scale=1.2]{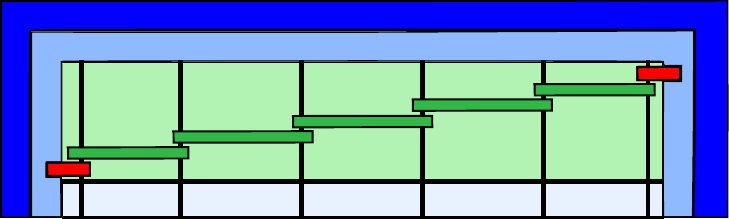}
	\centering
	\caption{For each element in the cover $\SU$, a controller has been introduced. These are shown as green thin rectangles. Close to the boundary, some controllers (in red) enter its $\tau$-neighbourhood. Horizontality is reestablished using the variations given by local controllability.}\label{fig:controllersIntroduced}
\end{figure}

\subsubsection{Discussion about parameters}

In this subsection we added controllers $\{\SC_{i,j}\}$ to the family $\gamma$. Each curve $\gamma(k)$ crosses at most $N.C_1$ controllers; here $C_1$ is the upper bound for the intersections between elements in $\{U_i\}$. Inserting the controllers produces a deformation $\alpha$ of $\gamma|_{\nu_{\tau'}(\partial K)}$ through horizontal curves. This deformation displaces the endpoints an amount $\mu$, which we can estimate. For each controller inserted, the endpoints move a magnitude $O(S)$, the size-at-rest. This implies that $\mu$ is bounded above by $O(S).N.C_1$.

Furthermore, in Subsection \ref{ssec:Step1Proof} we showed that $\gamma$ satisfies the size estimates given in Equations \ref{eq:parameterR1} and \ref{eq:parameterL}, involving $r_1$ and $l$. We want $\alpha$ and thus $\gamma'$ to satisfy these as well. To this end, the $C^0$-distance between $\gamma$, $\alpha$, and $\gamma'$ must be much smaller than $r_1$ and $l$.

These considerations force the choice $S << 1/N << l, r_1$.

\subsection{Using the controllers} \label{ssec:Step3Proof}

In this subsection we complete the proof of Proposition \ref{prop:relativeHorizontalEmbeddings}. The idea is to use projection and lifting to replace $\gamma''$ by a family of horizontal curves $\beta$, whose endpoints are incorrect. The endpoints will then be adjusted thanks to the presence of controllers.

\subsubsection{Horizontalisation}

Much like in the proof of Theorem \ref{thm:embeddings}, we first apply Lemma \ref{lem:horizontalisationSkeleton} to $\gamma''$ at each time $t_j = j/N \in I$. This can be done in a $C^0$-small way, through $\varepsilon$-horizontal curves, by making the newly created horizontal region sufficiently small. This is relative to $\nu_{\tau'}(\partial K)$ in the parameter. The resulting family is denoted by $\gamma_0$. The proof now focuses on a concrete interval $I_j$; the argument is identical for all of them.

\subsubsection{Horizontalisation again}

We have a family $\gamma_0$ with values in $\Emb^\varepsilon(I_j;M,\SD)$ that along the boundary of $K \times I_j$ is horizontal. Suppose $1/N$ is sufficiently small. We can argue as in Subsection \ref{sssec:auxiliaryHorizontalisation} to construct a $K$-family $\beta$ with values in $\Emb(I_j;M,\SD)$ such that:
\begin{itemize}
\item $\beta(k) = \gamma_0(k)$ if $d(k,\partial K) \leq \tau'$.
\item $\beta(k)(t) = \gamma_0(k)(t)$ if $t \in \Op(\{t_j\})$.
\item Write $\pi$ for the projection to the base given by each graphical model $V_{i,j}$. Then, the $C^\infty$-closeness of $\pi \circ \beta$ and $\pi \circ \gamma_0$ is controlled by $S$.
\item In particular, the two families are homotopic through $\varepsilon$-horizontal curves.
\end{itemize}
In particular, the controllers of the family $\gamma_0$ define controllers for $\beta$. We still denote them by $\{\SC_{i,j}\}_i$. The goal now is to use these to produce a homotopy of horizontal curves between $\beta$ and the claimed $\widetilde\gamma|_{K \times \{1\}}$.

\subsubsection{Adjusting the endpoint over one chart}

The difference $e = |\gamma_0(k)(t_{j+1}) - \beta(k)(t_{j+1})|$ is certainly bounded above by $r_1$. However, as explained in Subsection \ref{sssec:ODEsize}, it can also be bounded above by $C_2.r_1.l$. It follows that we should impose $l << 1/(C_2.r_1)$ to make $e$ much smaller than $r_1$. By making this choice, Equations \ref{eq:parameterR1} and \ref{eq:parameterL} hold for $\beta$.

We now perform induction on $i$ to correct this difference. We start with the base case $i=1$, so we work over the chart $U_1$. Adjusting the estimated-displacement of the controller $\SC_{1,j}$ yields a homotopy of horizontal curves 
\[ \widetilde\beta: U_1 \times A \subset \R^{n-q} \quad\longrightarrow\quad \Emb(I_j;M,\SD) \]
with $\widetilde\beta(k,0) = \beta(k)$. The variable $a \in A$ measures the endpoint displacement introduced by the controller vertically. We package this as an endpoint map
\[ \widetilde\beta(-,-)(t_{j+1}): U_1 \times A \longrightarrow \R^{n-q} \]
which satisfies the following error estimate:
\[ \widetilde\beta(k,a)(t_{j+1}) = \beta(k)(t_{j+1}) + a.(1+C_3.(r_1+|a|+\delta)). \]
According to Lemma \ref{lem:adaptedCharts}, the constant $C_3$ appearing as the coefficient of the error term $(r_1+|a|)$ is independent of the adapted chart and thus independent of the controller. This forces us to choose $\delta, r_1 << 1/C_3$. The quantity $|a|$ will be of magnitude $e$ and thus smaller than $r_1$.

This choice tells us that $\widetilde\beta(k,-)(t_{j+1})|_{\D_{2r_1}}$ is an embedding whose image contains $\D_{r_1}$, for all $k \in V_1$. In particular, it contains the desired endpoint $\gamma_0(k)(t_{j+1})$. The inverse function theorem (Lemma \ref{lem:inverseFunctionTheorem}) defines for us a unique function $a: V_1 \rightarrow \D_{2r_1}$ so that $\widetilde\beta(k,a(k))(t_{j+1}) = \gamma_0(k)(t_{j+1})$.

We now cut-off the function $a$, in order to make the construction relative to the boundary of $U_1 \times I_j$. Fix a constant $\rho > 0$ and write $W_i \subset U_i$ for a domain covering $U_i$ up to a $\rho$-neighbourhood of its boundary. We require that the family $\{W_i\}$ is an open cover of $K \setminus \nu_{\tau'}(\partial K)$. This imposes $\rho << 1/N,\tau$. This allows us to introduce a cut-off function $\chi_1: K \rightarrow [0,1]$ that is one in $W_1$ and zero along $\partial U_1$. We set $\beta_1(k) := \widetilde\beta(k,\chi(k).a(k))$. This is a family of horizontal curves such that:
\begin{itemize}
\item $\beta_1(k)$ is horizontal.
\item $\beta_1$ is homotopic to $\beta$ as maps into $\Emb(I_j;M,\SD)$.
\item The base projections of $\beta_1(k)$ and $\gamma_0(k)$ agree for all $k \in \Op(\partial U_1)$.
\item $\beta_1(k)$ and $\gamma_0(k)$ agree over $t \in \Op(\partial I_j)$, for every $k \in W_1$.
\end{itemize}
The third property allows us to homotope $\beta_1$ to a family $\gamma_1: U_1 \to \Emb^\varepsilon(I_j;M,\SD)$ that is horizontal over $W_1$ and agrees with $\gamma_0$ in $\Op(\partial U_1)$. We can then use $\gamma_0$ to extend $\gamma_1$ to a family $K \to \Emb^\varepsilon(I_j;M,\SD)$. The two are homotopic, relative to endpoints and to the complement of $U_1$, thanks to the second property and the fact that $\beta$ was the horizontal lift of $\gamma_0$.

\subsubsection{The inductive argument}

The $i_0$th inductive step follows the exact same argument. It produces a family $\gamma_{i_0}: K \to \Emb^\varepsilon(I_j;M,\SD)$ that is horizontal over the union $\cup_{i \leq i_0} W_i$. The observation to be made is the following. Suppose $U_{i_0}$ intersects non-trivially some previous $U_i$. Then, in the overlap $U_{i_0} \cap W_i$, we have that the family $\gamma_{i_0-1}$ is already horizontal over $W_i$. It follows that the associated horizontal family $\beta$ agrees with $\gamma_{i_0-1}$ over $W_i$, due to the uniqueness of horizontal lifts. In particular, when we use the controller $\SC_{i_0,j}$ to produce a fully controllable family over $W_{i_0}$, we see that no adjustments must be made over $W_i$, since the endpoint is already correct there. This is immediate from the uniqueness provided by the inverse function theorem.

Since the $W_i$ cover $K \setminus \nu_\tau(\partial K)$, the inductive argument produces the required homotopy $\widetilde\gamma$. The proof of Proposition \ref{prop:relativeHorizontalEmbeddings} is complete.

\subsubsection{Final discussion about constants} \label{sssec:finalConstants}

Two quantities had to be controlled in this section. The first was the error suffered by the controllers. This was proportional (Lemma \ref{lem:errorControllers}) to the radius of the graphical models (and could therefore be controlled by $r_1$) and by the smoothing parameter $\delta$. The other quantity was the $C^0$-distance $e$ between $\beta$ and $\gamma_{i-1}$ (particularly at their endpoints). This was controlled by setting $l << r_1$.

We note that the embedding condition enters the discussion only in the choice of $r_1$. Namely: once the error of the controllers has been bounded, it follows that the horizontal curves produced by the controller are indeed embedded. See Lemma \ref{lem:errorControllers}. 

The summary is that we require the chain of inequalities $\delta << S << 1/N << l << r_1$.  \hfill$\Box$

\begin{figure}[h] 
	\includegraphics[scale=0.8]{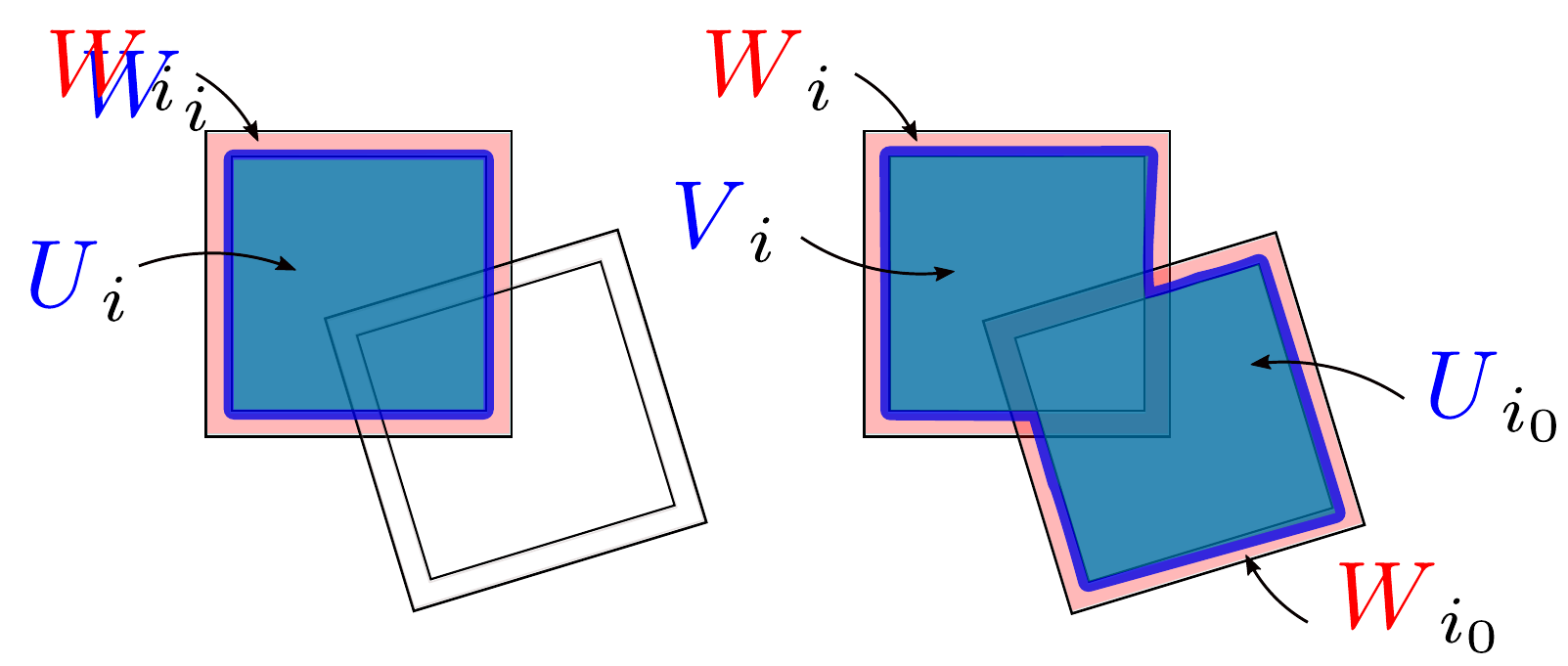}
	\centering
	\caption{The inductive process. In the $i$th step of the induction we introduce horizontality over the region $W_i$, in dark blue. Appropriate cut-off functions have been introduced in $V_i \setminus W_i$ (light red) to make this homotopy relative to the boundary of the model. At a later stage, we consider some $U_{i_0}$ overlapping with $W_i$. In the overlap, the step $i_0$ homotopy is constant, thanks to horizontality.}\label{InduccionCuadricula}
\end{figure}

\begin{remark} \label{rem:whyNotTriangulate}
The reader experienced in $h$-principles may wonder why we do not use a triangulation of $K \times I$, in general position with respect to $\pi_K$, to argue. Indeed, this would have the added advantage of localising our arguments to balls that do not interact with one another. This was not the case in the proof we presented.

The issue with the triangulation approach is that we would have to make a first homotopy that makes our curves horizontal along the codimension-1 skeleton. This is certainly possible, but we have no guarantee that the produced curves are themselves regular. This is absolutely necessary, since we need to be able to introduce controllers at the bottom of each top-cell.

In fact, this can be made to work. The local integrability of micro-regular curves (i.e. curves that are in particular regular over any interval) was proven in \cite{PS,Bho}, which would allow us to produce regular curves along the skeleton. However, it seemed preferable to us to keep the proof self-contained and not invoke additional results. \hfill$\Box$
\end{remark}

\subsection{Other $h$-principles for horizontal curves} \label{ssec:hPrincipleHorizontalImmersions}

We now discuss how the proof of Theorem \ref{thm:embeddings} adapts to prove Theorems \ref{thm:TransverseImmersions} and \ref{thm:Ge}.

\begin{proof}[Proof of Theorem \ref{thm:TransverseImmersions}]
The absolute statement can be reduced to proving the relative $h$-principle over the interval (i.e. the analogue of Proposition \ref{prop:relativeHorizontalEmbeddings}). The proof of the relative statement is identical to the one we presented for embeddings. The reader can check that the proof goes through line by line. Instead, it is more interesting to point out how the proof simplifies for immersions. 

First note that the construction of tangles (Section \ref{sec:tangles}) and controllers (Section \ref{sec:controllers}) is less involved if we do not need to take care of self-intersections. In particular, we do not need to develop all the explicit models shown in Figures \ref{EmbeddedCrossings} and \ref{fig:EmbeddedCrossings2}. Similarly, in Subsection \ref{sssec:finalConstants}, we do not need to control errors in order to ensure the controller produces embedded curves.

This crucial difference explains why the statement for immersions goes through in $\dim(M) = 3$. Achieving embeddedness parametrically was not possible in dimension $3$, but one can certainly produce tangles and controllers that are immersed.
\end{proof}

Similarly:
\begin{proof}[Proof of Theorem \ref{thm:Ge}]
The statement reduces once again to the $h$-principle for horizontal paths, relative both in parameter and domain. We now indicate the differences with respect to the proof of Proposition \ref{prop:relativeHorizontalEmbeddings}.

First: since there is no first order formal data, we do not need to set up a convex integration argument to achieve $\varepsilon$-horizontality. Second: we use the covering arguments as they appear in Subsection \ref{ssec:Step1Proof} but we run into issues in Subsection \ref{ssec:Step2Proof}, when we try to introduce controllers. In Proposition \ref{prop:insertionControllers} we explained how to introduce them when our curves are embedded/immersed, which may not be the case here. To address this, we use the  ``stopping trick''; see Figure \ref{StoppingTrick}. We explain it next.

Given a smooth horizontal arc $\gamma: [a,b] \rightarrow (M,\SD)$, we can precompose it with a non-decreasing map $\phi: [a,b] \rightarrow [a,b]$ that is the identity at the endpoints and is constant in $\Op(\{(a+b)/2\})$. Since $\phi$ is homotopic to the identity rel boundary, it defines a homotopy between $\gamma$ and a horizontal curve $\gamma \circ \psi$ whose parametrisation is stationary at the middle point. The curve $\gamma \circ \psi$ is thus regular. Even more: suppose $\nu: [0,1] \rightarrow (M,\SD)$ is some other horizontal curve with $\nu(0) = \gamma \circ \psi((a+b)/2)$. Then $\gamma \circ \psi$ is homotopic to a smooth horizontal curve $\gamma'$ whose image is the concatenation
\[ (\gamma \circ \psi|_{[(a+b)/2,b]}) \bullet \bar\nu \bullet \nu \bullet (\gamma \circ \psi|_{[a,(a+b)/2]}). \]
Since $\nu$ is arbitrary, it may be chosen to be a curve contained in a graphical model around $\gamma \circ \psi((a+b)/2)$ and projecting to the base as whatever direction we require.

The conclusion is that any family $\gamma: K \longrightarrow \SL(M)$, horizontal at the boundary, can also be assumed to be regular along $\partial K$, up to a homotopy through horizontal curves. Furthermore, it can be assumed to be immersed whenever controllers need to be introduced. This concludes the proof.
\end{proof}

\begin{figure}[h] 
	\includegraphics[scale=0.125]{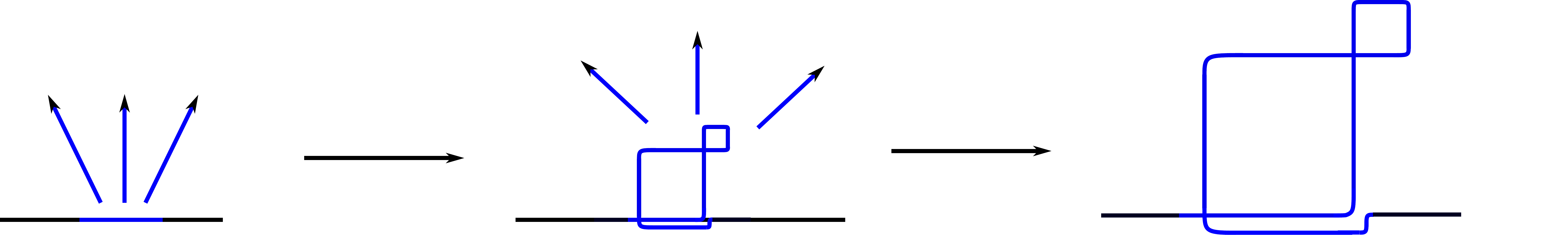}
	\centering
	\caption{The stopping trick. The space of horizontal loops is extremely flexible. Any family can be homotoped to a regular family by introducing stationary points in the parametrisation. This allows us to introduce controllers.}\label{StoppingTrick}
\end{figure}

\section{h-Principle for transverse embeddings} \label{sec:hPrincipioTransverse}

In this section we prove Theorem \ref{thm:TransverseEmbeddings}, the classification of transverse embeddings.

\subsection{The relative $h$-principle} \label{ssec:relativeTransverseEmbeddings}

The $h$-principle for transverse paths, relative in parameter and domain, reads:
\begin{proposition} \label{prop:relativeTransverseEmbeddings}
Let $K$ be a compact manifold. Let $I = [0,1]$. Let $(M,\SD)$ be a manifold of dimension $\dim(M)>3$, endowed with a bracket--generating distribution of corank $1$. Suppose that we are given a map $\gamma: K \to \Embt^f(I;M,\SD)$ satisfying:
\begin{itemize}
\item $\gamma(k) \in \Embt(I;M,\SD)$ for $k \in \Op(\partial K)$.
\item $\gamma(k)(t)$ is positively transverse if $t \in \Op(\partial I)$.
\end{itemize}

Then, there exists a homotopy $\widetilde\gamma: K \times [0,1] \to \Embt^f(I;M,\SD)$ satisfying:
\begin{itemize}
\item $\widetilde\gamma(k,0) = \gamma(k)$.
\item $\widetilde\gamma(k,1)$ takes values in $\Embt(I;M,\SD)$.
\item this homotopy is relative to $k \in \Op(\partial K)$ and to $t \in \Op(\partial I)$.
\item $\widetilde\gamma(k,s)$ is $C^0$-close to $\gamma(k)$ for all $s \in [0,1]$.
\end{itemize}
\end{proposition}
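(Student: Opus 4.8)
The plan is to reduce Proposition~\ref{prop:relativeTransverseEmbeddings} to the already-established horizontal $h$-principle (Proposition~\ref{prop:relativeHorizontalEmbeddings}) by exploiting the interplay between transversality and horizontality recorded in Section~\ref{sec:microflexibility}, notably Corollary~\ref{cor:horizontalisationSkeleton} and Lemmas~\ref{lem:transversalisationAT}, \ref{lem:transversalisationSkeleton}, \ref{lem:transversalisationFormalSkeleton}. First I would pass, via the commutative diagram at the end of Subsection~\ref{ssec:epsilonTransverseCurves}, from formal transverse embeddings to $\varepsilon$-transverse embeddings: the given map $\gamma\colon K\to\Embt^f(I;M,\SD)$ may be replaced by a map into $\Embt^\varepsilon(I;M,\SD)$, using Proposition~\ref{prop:hPrincipleEpsilonTransverse} and the relative/domain/$C^0$ refinements therein, while preserving the boundary conditions (on $\Op(\partial K)$ the curves are already honestly transverse, and near $\Op(\partial I)$ they are positively transverse). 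So it suffices to produce, from a family of $\varepsilon$-transverse embeddings that is transverse on $\Op(\partial K)$ and positively transverse near $\Op(\partial I)$, a homotopy, rel those regions and $C^0$-small, to a family of genuinely transverse embeddings.

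Next I would set up the same "fine cover of $K\times I$" machinery as in Subsections~\ref{ssec:Step1Proof}--\ref{ssec:Step3Proof}, choosing a size parameter $N$, cubical charts $U_i$ of $K$, intervals $I_j$ of $I$, and adapted charts $(V_{i,j},\Psi_{i,j})$ via Lemma~\ref{lem:adaptedCharts}, so that each $\gamma(k)|_{I_j}$ lives in a tiny ball of a graphical model. The key structural observation is that an $\varepsilon$-transverse curve in a graphical model, read in an ODE model (Lemma~\ref{lem:ODEModel}), is a graph with slope bounded below by $-\tan\varepsilon$; within such a model the geometry is one-sided, so inserting a tangle/controller (Sections~\ref{sec:tangles}--\ref{sec:controllers}) to adjust an endpoint can be done while keeping the slope of the curve uniformly above $-\varepsilon$, provided the controller's size-at-rest $S$ and smoothing parameter $\delta$ are small. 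Concretely, I would run the insertion-of-controllers argument of Subsection~\ref{ssec:Step2Proof} verbatim, using Lemma~\ref{lem:insertionControllers}, with the single modification that all local homotopies are now carried out in ODE models where they manifestly preserve the $\varepsilon$-transverse condition; on $\nu_\tau(\partial K)$, where $\gamma$ is already transverse, no controllers are needed since transversality is an open condition — there is no rigidity/singularity to fight (as stressed in the introduction), so I do not even need the controllability input of Lemma~\ref{lem:controlling}. The endpoint-correction induction of Subsection~\ref{ssec:Step3Proof} then goes through: horizontalise on neighbourhoods of the $t_j$ using Corollary~\ref{cor:horizontalisationSkeleton} (which keeps curves almost transverse), lift, correct endpoints chart-by-chart using the error estimates of Lemma~\ref{lem:errorControllers}, with the chain of inequalities $\delta\ll S\ll 1/N\ll l\ll r_1$ as before, so that the produced curves are embedded and $\varepsilon$-transverse.

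Finally I would upgrade $\varepsilon$-transverse to honestly transverse. After the controller step I have a family $\widetilde\gamma\colon K\to\Embt^\varepsilon(I;M,\SD)$ that agrees with $\gamma$ on $\Op(\partial(K\times I))$ and whose endpoints over all of $K$ now match the prescribed ones; but along the interiors of the cells the slope may still dip slightly below zero. I apply Lemma~\ref{lem:transversalisationSkeleton} to a stratified set $A\subset K\times I$ consisting of (a neighbourhood of) the relevant cell boundaries to make the family transverse near $A$, then iterate over strata; where the family is already transverse — in particular on $\Op(\partial(K\times I))$ — the lemma is relative and does nothing. The residual pieces, away from $A$, are $\varepsilon$-transverse graphs in ODE models, and Lemma~\ref{lem:transversalisationAT} (applied in the almost-transverse closure $\EmbAT$, after first nudging slopes to be $\geq 0$) pushes them to strictly positive slope rel boundary; the homotopies are $C^1$-small hence preserve embeddedness. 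Concatenating all of these produces the required $\widetilde\gamma$.

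The main obstacle, as in the horizontal case, is bookkeeping the embeddedness of the lifts while inserting controllers: one must verify that the auto-intersections created in the base during a tangle's birth homotopy lift to distinct points upstairs, which uses $\dim(M)\geq 4$ and the bracket-generating condition exactly as in Proposition~\ref{prop:introtanglesbase}. The transverse setting does not add a genuinely new difficulty here — the same area estimates (Propositions~\ref{prop:AlzadoAreas}, \ref{endpointsLift}) apply — but one must be careful that the vertical displacements introduced by the controller do not destroy the lower slope bound, which is why $S$ and $\delta$ must be taken small relative to $\varepsilon$; tracking that this smallness is compatible with the smallness already demanded by the endpoint-error control is the delicate point. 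Everything else is a transcription of Section~\ref{sec:hPrincipleHorizontal} with "horizontal" replaced by "$\varepsilon$-transverse" and with the controllability/rigidity discussion of Subsection~\ref{ssec:Step2Proof} simply deleted.
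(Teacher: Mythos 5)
Your overall ambition — recycle the horizontal machinery of Sections \ref{sec:tangles}--\ref{sec:controllers} and finish with the transversalisation lemmas — is the right instinct, but the proposal has a genuine gap exactly at the crux of the transverse problem: the final upgrade to honest transversality \emph{relative to the endpoints}. After your controller induction you only claim the family is embedded and $\varepsilon$-transverse (or horizontal over the $W_i$) with correct endpoints, and you then propose to ``nudge slopes to be $\geq 0$'' and invoke Lemma \ref{lem:transversalisationAT} rel boundary. Neither step is available. In an ODE model the net vertical displacement of a curve is fixed by its endpoints; an $\varepsilon$-transverse curve may have strictly negative net displacement, while any almost transverse curve has nonnegative net displacement, so flattening negative-slope regions rel both endpoints is in general impossible without a compensating mechanism (tangles/areas) — and Lemma \ref{lem:transversalisationAT} is explicitly \emph{not} relative unless the family is already transverse near the boundary: the paper warns right after its proof that the endpoint is necessarily displaced upwards. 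The paper's Subsection \ref{ssec:Step3ProofTrans} is built precisely around this obstruction: it leaves untouched the terminal stretch $[1-\tau,1]$, where the family is genuinely transverse and gains a definite height $h$; it horizontalises only along the slice $\D^k_{1-\tau/2}\times\{\tau/2\}$ (Corollary \ref{cor:horizontalisationSkeleton}) so as to insert a \emph{single} controller there (Lemma \ref{lem:insertionControllers}), whose estimated displacement absorbs the height created when the middle of the curve is flattened to almost transverse, making the resulting error $e$ at $t=1-\tau$ satisfy $e\ll h$; it then reconnects through the transverse stretch and only afterwards applies Lemma \ref{lem:transversalisationAT}, which is now relative. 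Your plan contains no analogue of this endpoint bookkeeping, and the chart-by-chart endpoint correction of Subsection \ref{ssec:Step3Proof} does not substitute for it, since what it produces is not transverse.

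Two further points where ``transcription line by line'' fails. First, $\varepsilon$-transverse curves, unlike $\varepsilon$-horizontal ones, need not be graphical over the base of a graphical model: the locus $\Sigma=\{d\pi(\gamma(k)'(t))=0\}$ of vertical velocities obstructs all the projection/lifting and flattening manipulations you want to run ``verbatim''. The paper devotes an extra jiggling step (the second half of Subsection \ref{ssec:Step2ProofTrans}) to excising a neighbourhood of $\Sigma$, using that there the curves are automatically positively transverse; your proposal never mentions this. Second, your very first move — passing globally from $\Embt^f$ to $\Embt^\varepsilon$ via Proposition \ref{prop:hPrincipleEpsilonTransverse} — presupposes a coorientation of $\SD$, which the statement does not assume; the paper's first triangulation step together with Lemma \ref{lem:transversalisationFormalSkeleton} exists precisely to reduce to adapted charts where the formal data induces a local coorientation before $\varepsilon$-transversality can even be spoken of. The coorientation issue is patchable, but the endpoint/relativity gap and the verticality locus require the structural changes the paper makes; as written, your argument would not close.
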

Observe that we do not assume that $\SD$ is cooriented. We will be able to pass to the cooriented case (and thus use $\varepsilon$-transverse embeddings) during the proof.

\begin{proof}[Proof of Theorem \ref{thm:TransverseEmbeddings} from Proposition \ref{prop:relativeTransverseEmbeddings}]
We must prove the vanishing of the relative homotopy groups of the pair 
\[ (\Embt^f(M,\SD),\, \Embt(M,\SD)). \]
Given a family $\gamma$ representing a class in the $a$th relative homotopy group, we must deform it to lie entirely in $\Embt(M,\SD)$. Close to $\partial\D^a$ the family is transverse because transversality is an open condition. We then fix a slice $\D^a \times \{1\} \subset \D^a \times \NS^1$ and apply the transversalisation Lemma \ref{lem:transversalisationSkeleton} to $\gamma$ there. This reduces the argument to a family of paths and thus to Proposition \ref{prop:relativeTransverseEmbeddings}.
\end{proof}
We henceforth focus on the proof of Proposition \ref{prop:relativeTransverseEmbeddings}

\subsection{The first triangulation step} \label{ssec:Step1ProofTrans}

We are given a family of formally transverse curves $\gamma$. Our first goal is to pass to the $\varepsilon$-transverse setting. To do so, we will produce a very fine subdivision of $K \times I$ so that we can work over balls in $(M,\SD)$. We can then use the local coorientability of $\SD$ to introduce $\varepsilon$-transversality.

\subsubsection{Triangulating}

We subdivide $K \times I$ using a triangulation $\ST$. This should be compared to the proof of Proposition \ref{prop:relativeHorizontalEmbeddings}, which used a different scheme to localise the arguments to little balls. The reason was explained in Remark \ref{rem:whyNotTriangulate}: Triangulating $K \times I$ would have led to issues due to the phenomenon of rigidity for horizontal curves. However, there is no rigidity for transverse curves because they are defined by an open condition.

In order to produce a triangulation, we proceed as follows. We pick a small constant $\tau > 0$ so that $\gamma$ is transverse over $\nu_\tau(\partial(K \times I))$. We choose a closed domain $A \subset K \times I$ such that $\{A,\nu_\tau(\partial(K \times I))\}$ covers $K \times I$. We ask that $\partial A$ is smooth. We then apply the jiggling Corollary \ref{cor:jigglingBoundary} to $(A, \ker(d\pi_K))$. We do not need to introduce further subdivisions, we simply choose $\ST$ fine enough so that each top-cell $\Delta \in \ST$ is mapped by $\gamma$ to an adapted chart of $(M,\SD)$.

According to Lemma \ref{lem:jiggling}, the following holds: a simplex $\Delta$ is either transverse to the vertical or is contained in $\partial A$. In the latter case, $\gamma$ is already transverse in $\Op(\Delta)$. It follows that we can apply Lemma \ref{lem:transversalisationFormalSkeleton} to $\gamma$, along the codimension-$1$ skeleton, in a manner relative to $\partial A$. This yields a homotopy of formal transverse embeddings, relative to $\partial A$, between $\gamma$ and a family $\gamma_1$. The family $\gamma_1$ is transverse on a neighbourhood of the codimension-$1$ skeleton.

\subsubsection{$\varepsilon$-transversality}

Let $\Delta \in \ST$ be a top-cell. Since $\gamma_1$ maps it to to an adapted chart $(V,\Phi)$, we have that $\gamma_1|_\Delta$ takes values in a manifold endowed with a coorientable distribution. Furthermore, since $\gamma_1|_\Delta$ is formally transverse, it defines a preferred coorientation for $\SD|_{\Phi(V)}$.

According to Lemma \ref{lem:jiggling}, the top-cells of $\ST$ are flowboxes, meaning that there is a fibre-preserving embedding 
\[ \Psi: \D^k \times [0,1] \longrightarrow \Delta \subset A \subset K \times I \]
whose image covers most of $\Delta$. In particular, the boundary of this embedding may be assumed to be contained in the region where $\gamma$ is transverse.

The conclusion is that $\Phi^{-1} \circ \gamma_1 \circ \Psi$ is a family of formally transverse curves that:
\begin{itemize}
\item takes values in a cooriented graphical model $(V,\SD_V)$,
\item is transverse over $\Op(\partial(\D^k \times [0,1]))$.
\end{itemize}
This implies that we can apply the $h$-principle for $\varepsilon$-transverse curves (Subsection \ref{ssec:epsilonTransverseCurves}) to $\Phi^{-1} \circ \gamma_1 \circ \Psi$, yielding a family $\gamma_2: \D^k  \longrightarrow \Embt^\varepsilon([0,1];V,\SD_V)$ that is (positively) transverse over $\Op(\partial(\D^k \times [0,1]))$. 

We have one such family per top-cell $\Delta$. Furthermore, their domains are disjoint. This implies that it is sufficient for us to work with each $\gamma_2$ individually, homotoping them through $\varepsilon$-transverse curves (and relative to $\partial(\D^k \times [0,1])$) to a family of transverse curves.

\subsection{The second triangulation step} \label{ssec:Step2ProofTrans}

Let us explain how the remainder of the proof goes, morally. Our goal is to apply the case of horizontal embeddings. The reasoning is that, whenever the curves $\gamma_2(k)$ are graphical over $\SD_V$, we can flatten them to make them almost horizontal, which will then allow us to manipulate them through the use of controllers. In order to project towards $\SD_V$, we use the projection $\pi: \R^n \rightarrow \R^{n-1}$ to the base of the graphical model.

There are two issues with this idea. The first is that the family $\gamma_2$ may have uncontrolled length, since it was produced by the $h$-principle for $\varepsilon$-transverse curves. To address this, we will triangulate again to pass to small balls. The other issue will be explained afterwards.

\subsubsection{Triangulating again...}

We proceed as above, applying the jiggling Lemma \ref{lem:jiggling} to $\gamma_2$ and $(\D^k \times [0,1], \ker(d\pi_{[0,1]}))$. We obtain a sequence of triangulations $\ST_b$ inducing a triangulation of the boundary (do note that it has corners, but this is not an issue). We fix $b$ as in the horizontal case (Subsection \ref{ssec:Step1Proof}): each simplex $\Delta$ should have diameter bounded above by $1/N << l, r_1$. In this manner, $\gamma_2(\Delta)$ is contained in an adapted chart $(U,\Psi)$ of radius $r_1$ and each curve in $\gamma_2|_{\Delta}$ has length at most $l$ from the perspective of the corresponding graphical model $U$. We still refer to these conditions as Equations \ref{eq:parameterR1} and \ref{eq:parameterL}.

We now apply Lemma \ref{lem:transversalisationSkeleton} to $\gamma_2$, along the codimension-1 skeleton of $\ST_b$. This shows that $\gamma_2$ is homotopic, relative to the boundary, to a family $\gamma_3$ that is transverse along the codimension-1 skeleton. This is a $C^0$-small process that does not increase the length of the curves much. It follows that the conditions given by Equations \ref{eq:parameterR1} and \ref{eq:parameterL} apply to $\gamma_3$ as well.

As we did earlier, we can embed a copy of $\D^k \times [0,1]$ into each top-simplex, in a fibered manner, in such a way that its boundary lies in the region where $\gamma_3$ is transverse. These domains do not interact with one another, so we argue on each of them separately. 

\subsubsection{... and again}

We now encounter the second issue. We cannot flatten $\gamma_3$ to make it almost horizontal along the locus
\[ \Sigma := \lbrace (k,t) \in D^k \times [0,1] \,\mid\, d\pi(\gamma_3(k)'(t)) = 0 \rbrace. \]
That is, the locus where the velocity vector becomes vertical. Nonetheless, using Thom transversality \cite[p. 17]{EM}, we can assume that $\Sigma$ is a closed submanifold of $\D^k \times [0,1]$.

\begin{figure}[h] 
	\includegraphics[scale=1]{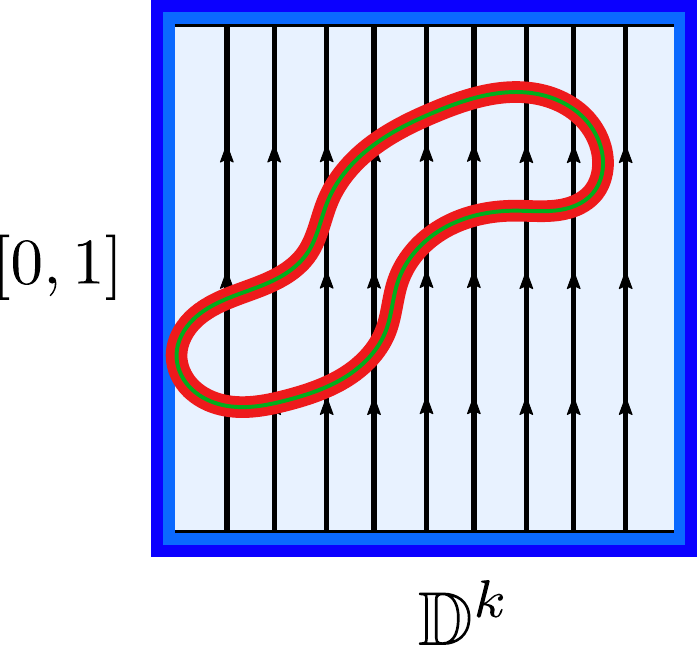}
	\centering
	\caption{Schematic representation of $\D^k \times [0,1]$. The blue band represents a neighbourhood of the boundary. The thin green curve is the vertical locus $\Sigma$. Its neighbourhood $W$ is shown in red. The region $B$ is a slight thickening of the complement.}\label{fig:locusB}
\end{figure}

The family $\gamma_3$ is vertical along $\Sigma$ and $\varepsilon$-transverse in general. It follows that $\gamma_3$ is positively transverse on a neighbourhood $W \supset \Sigma$. We can therefore find a closed subdomain $B \subset \D^k \times [0,1]$, disjoint from $W$, whose smooth boundary lies in the region where $\gamma_3$ is transverse. See Figure \ref{fig:locusB}.

We can then proceed as above, applying Lemma \ref{lem:jiggling} to $\gamma_3$ and $(B, \ker(d\pi_{[0,1]}))$. We do not need the resulting triangulation $\ST'$ to be thin, since we already achieved quantitative control in the previous subdivision. Applying Lemma \ref{lem:transversalisationSkeleton} shows that $\gamma_3$ is homotopic, relative to $\partial B$, to a family $\gamma_4$ that is transverse along the codimension-1 skeleton. This is a $C^0$-small process.

We henceforth argue on each top simplex separately, relative to the boundary. The punchline is that we have a family
\[ \gamma_4: \D^k  \longrightarrow \Embt^\varepsilon([0,1]; ,\SD_U), \]
that is transverse along the boundary. Here $(U,\SD_U)$ is the graphical model of radius $r_1$ that we fixed earlier (and that we used to discuss verticality). Each curve of $\gamma_4$ has length bounded above by $l$. Since we avoided $\Sigma$, we can assume that each curve $\gamma_4(k)$ is graphical over $\SD_U$.

\subsection{End of the argument} \label{ssec:Step3ProofTrans}

Choose $\tau>0$ small enough so that $\gamma_4$ is transverse in a $\tau$-neighbourhood of the boundary of $\D^k \times [0,1]$. We can apply Corollary \ref{cor:horizontalisationSkeleton} to $\gamma_4$ along the slice $D = \D^k_{1-\tau/2} \times \{\tau/2\}$, in order to yield a family that is almost transverse in $\D^k \times [0,\tau]$ and horizontal in $\Op(D)$. This allows us to introduce a controller $\SC$ along $D$; see Lemma \ref{lem:insertionControllers}. The resulting family is called $\gamma_5$. It consists of embedded curves as long as $r_1$ was sufficiently small (Lemma \ref{lem:errorControllers}).

We now adjust the estimated-displacement of $\SC$ in order to obtain an almost transverse family $\gamma_6$ such that
\[ e = \left| \gamma_6(k)(1-\tau) - \gamma_5(k)(1-\tau) \right| \]
is small. We require that
\[ e << h = \left| \gamma_5(k)(1) - \gamma_5(k)(1-\tau) \right|. \]
If that is the case, we can invoke the fact that $\gamma_5(k)$ is transverse in the interval $[1-\tau,1]$, to extend $\gamma_6(k)$ to a curve that is transverse in $[1-\tau,1]$ and agrees at the end with $\gamma_5(k)$. This preserves embeddedness.

The proof concludes invoking Lemma \ref{lem:transversalisationAT}, which adds a $C^1$-small perturbation to $\gamma_6$ to yield a family that is transverse. \hfill$\Box$

\begin{proof}[Proof of Theorem \ref{thm:TransverseImmersions}]
The proof follows from the same arguments. As we already observed for horizontal embeddings, handling the controller becomes easier in the immersion case, since self-intersections do not need to be avoided. Because of this reason, the statement also holds in dimension $3$.
\end{proof}

\section{Appendix: Technical lemmas on commutators}\label{Appendix}

Given a vector field $Z$ on $M$ we write $\phi^Z_t$ for its flow at time $t$. Given a pair of vector fields $X$ and $Y$ on $M$, we want to compare, in a quantitative manner, the flow of their Lie bracket $\phi^{[X,Y]}_t$ with the commutator of their flows $\phi_t^X$ and $\phi_t^Y$. The contents of this appendix will be an important technical ingredient in the proof of our main theorems.

Given $1$--parameter families (not necessarily subgroups) $(\varphi_t)_{t \in \R}$ and $(\psi_t)_{t \in \R}$ in the diffeomorphism group $\Diff(M)$, we can define in a given local chart the map $[\psi(t),\varphi(s)]= \varphi_s \circ \psi_t \circ \varphi_s^{-1} \circ \psi_t^{-1}(x)$.
Note that if we take $s=t$ then this map is the commutator of the families taken for each time $t$, and we denote it by $[\psi_t, \varphi_t]:=[\psi(t),\varphi(s)]$.

\begin{lemma}\label{lem:bracket1}
Write $X = \frac{\partial}{\partial t}\Big|_{t=0}\phi_t$ and $Y = \frac{\partial}{\partial t}\Big|_{t=0}\psi_t$ and assume $\phi_0=\psi_0=Id$. Then the following statements hold:
\begin{itemize}
\item[$i)$] There exists a $2-$parametric family of diffeomorphisms $\varepsilon_{ts}=o(ts)$ such that \[\psi_t\circ\phi_s(x)=\varepsilon_{ts}\circ\varphi^{X+Y}_{ts}(x)\]

\item[$ii)$] $\frac{\partial^k}{\partial t^k}\Big|_{t,s=0}[\psi(t),\phi(s)](x) = 0$ for any $k\in\mathbb{N}$,
\item[$iii)$] $\frac{\partial^k}{\partial s^k}\Big|_{t,s=0}[\psi(t),\phi(s)](x) = 0$ for any $k\in\mathbb{N}$,
 \item[$iv)$] $\frac{1}{2}\frac{\partial^2}{\partial t \partial s}\Big|_{t=0}[\psi(t),\phi(s)](x) = [X,Y]$.
\item[$v)$]  There exists a $2-$parametric family of diffeomorphisms $\varepsilon(ts)=o(ts)$ such that \[[\phi_t,\psi_s] = \varepsilon_{ts}\circ\varphi^{[X,Y]}_{ts}.\]
\end{itemize}

\end{lemma}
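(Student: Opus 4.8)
The plan is to prove the five statements of Lemma \ref{lem:bracket1} together, using Taylor expansion of flows in a local chart. First I would fix a local chart near the point $x$ and write the Taylor expansions of $\phi_s$ and $\psi_t$ in the parameters. Since $\phi_0 = \psi_0 = \Id$ and $X = \partial_s|_{s=0}\phi_s$, $Y = \partial_t|_{t=0}\psi_t$, one has $\phi_s(x) = x + sX(x) + \tfrac{s^2}{2}(DX\cdot X)(x) + o(s^2)$ and similarly for $\psi_t$, where $DX$ denotes the Jacobian of the coefficient vector field. (Strictly, the second-order term of a non-subgroup family $\phi_s$ is $\tfrac12\partial_s^2|_0\phi_s$, which need not equal $\tfrac12 DX\cdot X$; but the key point is that it is a well-defined symmetric term, and for the commutator computation only the \emph{mixed} second derivatives of the composition matter, which are governed by $X$ and $Y$ alone.)

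For part $(i)$, I would compute $\psi_t \circ \phi_s(x)$ to first order: it equals $x + sX(x) + tY(x) + o(s,t)$, and restricting to the diagonal-type scaling or simply noting that $\varphi^{X+Y}_{ts}(x) = x + ts(X+Y)(x) + o(ts)$, one reads off $\varepsilon_{ts} := (\psi_t\circ\phi_s)\circ(\varphi^{X+Y}_{ts})^{-1}$ and checks it is $o(ts)$ by comparing Taylor jets. For parts $(ii)$ and $(iii)$: when $s = 0$, $\phi_0 = \Id$, so $[\psi(t),\phi(0)] = \Id$ identically in $t$, hence all $t$-derivatives vanish; symmetrically when $t=0$. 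For part $(iv)$, this is the classical computation: expanding $\varphi_s\circ\psi_t\circ\varphi_s^{-1}\circ\psi_t^{-1}(x)$ to second order and extracting the coefficient of $ts$ gives exactly the Lie bracket $[X,Y]$ (in the convention $[X,Y] = DY\cdot X - DX\cdot Y$), while the pure $t^2$ and $s^2$ coefficients vanish by $(ii)$–$(iii)$. Finally, part $(v)$ follows by combining $(ii)$, $(iii)$, $(iv)$: the $2$-jet of $[\phi_t,\psi_s]$ along the diagonal $s=t$ agrees with the $2$-jet of $\varphi^{[X,Y]}_{ts}$ (both have vanishing first-order and pure-square second-order terms and mixed term $[X,Y]$ up to the factor $2$ absorbed correctly), so the quotient $\varepsilon_{ts} := [\phi_t,\psi_s]\circ(\varphi^{[X,Y]}_{ts})^{-1}$ is $o(ts)$.

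The main obstacle, and the point requiring care, is bookkeeping the remainder terms: one must be precise that "$o(ts)$" for a family of diffeomorphisms means the difference of the coordinate expressions together with all relevant derivatives vanishes faster than $ts$, and that composing such families preserves this property. I would handle this by working at the level of $2$-jets at the origin of the $(s,t)$-plane throughout, invoking that composition of smooth maps induces a (polynomial) operation on jets, so that all the identities above are identities of $2$-jets, and the claimed factorizations follow by defining $\varepsilon$ as the appropriate residual. The verification that $\varepsilon_{ts}$ is genuinely $o(ts)$ (and not merely $O(ts)$) in $(i)$ and $(v)$ uses precisely that the first-order jet and the pure quadratic jets have already been matched. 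No deep input is needed beyond Taylor's theorem with remainder and the chain rule; the content is entirely in organizing the expansion cleanly.
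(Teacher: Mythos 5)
Your proposal is correct and takes essentially the same route as the paper's own (very terse) proof, which likewise handles $(i)$ and $(v)$ by Taylor's Remainder Theorem applied to the compositions, dismisses $(ii)$--$(iii)$ as immediate because one factor is the identity, and takes $(iv)$ as the classical flow-commutator characterization of the Lie bracket. Your jet-level bookkeeping simply spells out what the paper leaves implicit, including the diagonal reading of the $ts$-subscript needed to make sense of the $o(ts)$ error in $(i)$.
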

\begin{proof}
Part $i)$ follows from Taylor's Remainder Theorem applied to the composition map $\psi_t\circ\phi_s(x)$, $ii)$ and $iii)$ are obvious, $iv)$ follows from the definition of Lie Bracket and $v)$ follows from an application of Taylor's Remainder Theorem together with $ii), iii)$ and $iv)$.
\end{proof}

\begin{remark}\label{rem:flow1}
A trivial but rather useful observation that we will eventually make use of is the following one. If $\phi_t$, $\psi_t$, are two flows such that for some $1-$parametric family of diffeomorphisms $\varepsilon_t$
\[\phi_t=\varepsilon_t\circ\psi_t\]
then $\varepsilon_t$ is also a flow since it is the composition of two flows $\varepsilon_t=\psi^{-1}_t\circ\phi_t$. Moreover, if $\varepsilon=o(t)$, then there exists another flow $\tilde{\varepsilon}=o(t)$ such that
\[\psi_t=\tilde{\varepsilon}_t\circ\phi_t.\]
For this last statement just note that
\[\phi_t=\varepsilon_t\circ\psi_t\Longrightarrow \phi_t\circ\psi_t^{-1}=o(t)\Longrightarrow \psi_t\circ\phi_t^{-1}=o(t)\] 
and thus there exists some flow $\tilde{\varepsilon}_t=o(t)$ such that $\psi_t=\tilde{\varepsilon}_t\circ\phi_t$.
\end{remark}

The following Lemma formalizes how errors inside a commutator of $1-$parametric families of diffeomorphisms can be taken out the bracket expressions when comparing to the flow of the respective brackets. 
\begin{lemma}\label{lem:erroresbracket}

Consider a flow $\varepsilon(t)=o(t)$ and write $X = \frac{\partial}{\partial t}\Big|_{t=0}\phi_t$, $Y = \frac{\partial}{\partial t}\Big|_{t=0}\psi_t$ and assume $\phi_0=\psi_0=Id$. Then there exists a $2-$parametric family of diffeomorphisms $\tilde{\varepsilon}(ts)=o(ts)$ such that

\[
[\varphi_s^X, \varepsilon_t\circ\varphi_t^Y]=\tilde{\varepsilon}_{ts}\circ\varphi^{[A,B]}_{ts}.
\]
\end{lemma}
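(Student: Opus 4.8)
The plan is to reduce the statement directly to Lemma~\ref{lem:bracket1}(v), absorbing the error $\varepsilon_t$ into the second family. The key point is that Lemma~\ref{lem:bracket1} is not genuinely a statement about flows: its proof only uses the values and first derivatives at the origin of the two families involved (via Taylor's Remainder Theorem and the jet computations $(ii)$, $(iii)$, $(iv)$), and never the group law. Hence it applies verbatim to the genuine flow $\varphi^X_s$ and to the one-parameter family $\psi_t := \varepsilon_t\circ\varphi^Y_t$, provided the latter is smooth, passes through the identity at $t=0$, and has velocity $Y$ there.

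First I would record the elementary consequences of the hypothesis that $\varepsilon_t = o(t)$ is a flow: namely $\varepsilon_0 = \Id$ (so that $D\varepsilon_0 = \Id$) and $\frac{\partial}{\partial t}\Big|_{t=0}\varepsilon_t = 0$. Together with $\varphi^Y_0 = \Id$ this gives $\psi_0 = \Id$, while the chain rule yields
\[ \frac{d}{dt}\Big|_{t=0}\psi_t(x)\;=\;\frac{d}{dt}\Big|_{t=0}\varepsilon_t(x)\;+\;D\varepsilon_0(x)\!\left(\frac{d}{dt}\Big|_{t=0}\varphi^Y_t(x)\right)\;=\;0+Y(x). \]
Hence $\frac{\partial}{\partial t}\Big|_{t=0}\psi_t = Y$, so that the pair $(\varphi^X_\bullet,\psi_\bullet)$ satisfies exactly the hypotheses of Lemma~\ref{lem:bracket1}.

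Applying part (v) of that lemma to this pair then produces a $2$-parametric family of diffeomorphisms $\tilde\varepsilon_{ts} = o(ts)$ with
\[ [\varphi^X_s,\,\varepsilon_t\circ\varphi^Y_t]\;=\;[\varphi^X_s,\psi_t]\;=\;\tilde\varepsilon_{ts}\circ\varphi^{[X,Y]}_{ts}, \]
which is the desired identity (here the bracket in the exponent of the model flow is $[X,Y]$). By Remark~\ref{rem:flow1} the family $\tilde\varepsilon_{ts}$ is moreover itself a flow, being a composition of flows. The only step deserving any attention is the observation that Lemma~\ref{lem:bracket1}(v) does not require $\psi_t$ to be a one-parameter subgroup; once this is granted, everything reduces to the one-line $1$-jet computation above, so no real obstacle arises.
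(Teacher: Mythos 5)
Your proposal is correct and follows essentially the same route as the paper, which also deduces the statement directly from Lemma \ref{lem:bracket1} (the paper cites points (i) and (v), using (i) to absorb the error $\varepsilon_t$ into the second family, whereas you replace that step by the explicit chain-rule computation showing $\varepsilon_t\circ\varphi^Y_t$ is a $1$-parameter family through $\Id$ with velocity $Y$, which is legitimate since Lemma \ref{lem:bracket1} is stated for families that are not necessarily subgroups). The only cosmetic points are that the exponent $[A,B]$ in the statement is indeed the typo $[X,Y]$, as you note, and that your closing appeal to Remark \ref{rem:flow1} about $\tilde\varepsilon_{ts}$ being a flow is only meaningful (and only needed) on the diagonal $s=t$, exactly as in the paper's remark following the lemma.
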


\begin{proof}
The result follows from a direct application of points $i)$ and $v)$ from Lemma \ref{lem:bracket1}.
\end{proof}

\begin{remark}
In particular, if we take $s=t$ in this previous lemma, we get that $\tilde{\varepsilon}_{t^2}$ is an actual flow, since it is the composition of two flows as in Remark \ref{rem:flow1}, \[\tilde{\varepsilon}_{t^2}=\left(\varphi^{[A,B]}\right)^{-1}_{t^2}\circ[\varphi_t^X, \varepsilon_t\circ\varphi_t^Y]\]
The same remark is true for the family $\varepsilon_{ts}$ in $v)$ from Lemma \ref{lem:bracket1}.
\end{remark}

\begin{lemma}\label{lem:erroresSaltan}
Consider $X = \frac{\partial}{\partial t}\Big|_{t=0}\phi_t$, $Y = \frac{\partial}{\partial t}\Big|_{t=0}\psi_t$ and assume $\phi_0=\psi_0=Id$. Then, if  $\phi_t=\varepsilon_t\circ\psi_t$ for certain $\varepsilon_t=o(t)$, then there exists some other $\tilde{\varepsilon}_t=o(t)$ such that 
\[\phi_t=\psi_t\circ\tilde{\varepsilon}_t\] 
\end{lemma}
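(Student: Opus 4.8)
The plan is to produce $\tilde{\varepsilon}_t$ explicitly by conjugation, exactly in the pattern of Remark \ref{rem:flow1}. First I would set $\tilde{\varepsilon}_t := \psi_t^{-1}\circ\phi_t$. With this definition the desired identity $\phi_t = \psi_t\circ\tilde{\varepsilon}_t$ holds tautologically, so the entire content of the lemma reduces to checking that $\tilde{\varepsilon}_t = o(t)$. Substituting the hypothesis $\phi_t = \varepsilon_t\circ\psi_t$, this $\tilde{\varepsilon}_t$ is nothing but the conjugate
\[ \tilde{\varepsilon}_t = \psi_t^{-1}\circ\varepsilon_t\circ\psi_t \]
of the given small family $\varepsilon_t$ by the flow $\psi_t$.

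Second, I would verify the $o(t)$-estimate in a local chart. Recall that $\varepsilon_t = o(t)$ encodes $\varepsilon_0 = \id$ together with the vanishing of the first-order term $\frac{\partial}{\partial t}\big|_{t=0}\varepsilon_t = 0$. Since $\psi_0 = \id$, one gets $\tilde{\varepsilon}_0 = \id$ at once. For the first derivative, write $Y := \frac{\partial}{\partial t}\big|_{t=0}\psi_t$, so that $\frac{\partial}{\partial t}\big|_{t=0}\psi_t^{-1} = -Y$, and differentiate the composition $\tilde{\varepsilon}_t(x) = \psi_t^{-1}\big(\varepsilon_t(\psi_t(x))\big)$ at $t=0$. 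Because every inner map equals the identity at $t=0$, the chain rule yields three additive contributions, $-Y$, $0$ and $+Y$ (coming from the outer $\psi_t^{-1}$, the middle $\varepsilon_t$, and the inner $\psi_t$, respectively), which cancel:
\[ \frac{\partial}{\partial t}\Big|_{t=0}\tilde{\varepsilon}_t(x) = -Y(x) + 0 + Y(x) = 0. \]
Therefore $\tilde{\varepsilon}_t = \id + o(t)$, i.e. $\tilde{\varepsilon}_t = o(t)$, which completes the argument. (An equivalent bookkeeping: the relation $\phi_t = \varepsilon_t\circ\psi_t$ with $\varepsilon_t = o(t)$ forces $\phi_t$ and $\psi_t$ to have the same infinitesimal generator, after which the cancellation above is automatic.)

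There is essentially no obstacle here; the only point requiring any care is tracking the chain rule for the $t$-dependent composition at $t=0$ and observing that the first-order contributions of $\psi_t$ and of $\psi_t^{-1}$ cancel. This is the exact analogue, for one-parameter families of diffeomorphisms that are not assumed to be subgroups, of the ``move the error to the other side'' manipulation already used in Remark \ref{rem:flow1} and in the proof of Lemma \ref{lem:erroresbracket}.
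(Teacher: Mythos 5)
Your proof is correct, but it follows a different route from the paper's. The paper deduces the statement from Lemma \ref{lem:erroresbracket}: since the family $\varepsilon_t$ has vanishing generator, the commutator $\varepsilon_t^{-1}\circ\psi_t^{-1}\circ\varepsilon_t\circ\psi_t$ is $o(t^2)$-close to the identity, which after composing with $\phi_t^{-1}=\psi_t^{-1}\circ\varepsilon_t^{-1}$ and inverting yields $\phi_t=\psi_t\circ\varepsilon_t\circ h_{t^2}^{-1}$ with $h_{t^2}=o(t^2)$, so $\tilde\varepsilon_t:=\varepsilon_t\circ h_{t^2}^{-1}=o(t)$. You instead take the tautological candidate $\tilde\varepsilon_t=\psi_t^{-1}\circ\phi_t=\psi_t^{-1}\circ\varepsilon_t\circ\psi_t$ and verify the $o(t)$ estimate directly, by the chain rule at $t=0$ (the $-Y$ and $+Y$ contributions of the outer and inner factors cancel, the middle one vanishes by hypothesis). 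These are two presentations of the same object --- the paper's factorisation $\varepsilon_t\circ h_{t^2}^{-1}$ is exactly the conjugate you write down, with the commutator machinery doing the bookkeeping --- but your argument is more elementary and self-contained: it uses neither Lemma \ref{lem:bracket1} nor Lemma \ref{lem:erroresbracket}, only that $\psi_0=\varepsilon_0=\id$ and differentiability at $t=0$, and in particular it nowhere needs the families to be genuine flows. What the paper's route buys is uniformity with its other estimates: all error manipulations are funneled through the same commutator lemmas, and the stronger output $\tilde\varepsilon_t=\varepsilon_t\circ(\text{an }o(t^2)\text{ term})$ is visible in that formulation. One small refinement to yours: if one wants $o(t)$ in a merely $C^0$ (rather than $C^1$-in-$t$) sense, the conjugate $\psi_t^{-1}\circ\varepsilon_t\circ\psi_t$ is still $o(t)$ directly, since $\psi_t^{-1}$ is uniformly Lipschitz for small $t$ and $\bigl|\psi_t^{-1}(\varepsilon_t(\psi_t(x)))-x\bigr|\leq C\,\bigl|\varepsilon_t(\psi_t(x))-\psi_t(x)\bigr|=o(t)$; this sidesteps any regularity-in-$t$ discussion and gives the same conclusion.
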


\begin{proof}
By Lemma \ref{lem:erroresbracket} there exists some $h_t=o(t)$ such that $h_{t^2}\circ\varepsilon_t^{-1}\circ\psi^{-1}_t\circ\varepsilon_t\circ\psi_t=Id$. So, we have that
$h_{t^2}\circ\varepsilon_t^{-1}\circ\psi^{-1}_t=\phi^{-1}_t$. The result follows from taking inverse flows at both side of the equation.
\end{proof}

\begin{lemma}
Write $X = \frac{\partial}{\partial t}\Big|_{t=0}\phi_t$ and $Y = \frac{\partial}{\partial t}\Big|_{t=0}\psi_t$ and assume $\phi_0=\psi_0=Id$. Then we have the following estimation:
\[
\phi_t^{-1}\circ\psi_s\circ\phi_t=\varepsilon_{ts}\circ\varphi^{X+t[X,Y]}_s.
\]
\end{lemma}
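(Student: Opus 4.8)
The plan is to recognise the left-hand side as the flow in $s$ of a pullback vector field and to Taylor-expand that field in $t$. I will take $\phi_t=\varphi^X_t$ and $\psi_s=\varphi^Y_s$ to be the genuine autonomous flows of $X$ and $Y$; for general one-parameter families with the prescribed $1$-jets one first reduces to this case, absorbing the discrepancies into $o(\cdot)$ errors by the manipulations of Remark \ref{rem:flow1}. Note first, as a sanity check, that the generator appearing on the right must be $Y+t[X,Y]$ rather than $X+t[X,Y]$: the left-hand side equals $\mathrm{Id}$ at $s=0$ and its $s$-derivative there, at $t=0$, equals $Y$. Now, by the naturality of flows under diffeomorphisms --- conjugating the flow of a vector field $Y$ by a diffeomorphism $g$ gives the flow of the pulled-back field $g^{*}Y$ --- we have
\[ \phi_t^{-1}\circ\psi_s\circ\phi_t \;=\; \varphi^{(\phi_t)^{*}Y}_{s}. \]
Taylor's remainder theorem (as used in Lemma \ref{lem:bracket1}), together with the identity $\tfrac{d}{dt}\big|_{t=0}(\varphi^X_t)^{*}Y=\mathcal{L}_XY=[X,Y]$, then gives
\[ (\phi_t)^{*}Y \;=\; Y + t\,[X,Y] + O(t^{2}), \]
where the $O(t^{2})$ term and its first derivatives are controlled, uniformly on the compact region of interest, by $X$, $Y$ and finitely many of their derivatives.

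It remains to compare the flows $\varphi^{(\phi_t)^{*}Y}_{s}$ and $\varphi^{Y+t[X,Y]}_{s}$. Since their generators differ by a field of size $O(t^{2})$, the standard continuous-dependence (Grönwall) estimate shows that $\varphi^{Y+t[X,Y]}_{-s}\circ\varphi^{(\phi_t)^{*}Y}_{s}$ is the flow of a time-dependent field of size $O(t^{2})$, hence is of the form $\mathrm{Id}+O(st^{2})$; as $st^{2}=o(st)$ this is an $o(ts)$ family of diffeomorphisms. Transporting this error to the other side exactly as in Remark \ref{rem:flow1} yields $\varphi^{(\phi_t)^{*}Y}_{s}=\varepsilon_{ts}\circ\varphi^{Y+t[X,Y]}_{s}$ with $\varepsilon_{ts}=o(ts)$, which is the assertion.

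The step I expect to be the main obstacle is this last comparison: one must check carefully that the $O(t^{2})$ gap between the two generators propagates to an $o(ts)$ --- and not merely $o(t)$ --- gap between the time-$s$ maps, uniformly on compacta, and that the resulting correction is realised by an honest two-parameter family of diffeomorphisms in the sense of Remark \ref{rem:flow1}; this is of the same nature as the remainder estimates already used in Lemma \ref{lem:bracket1} and Lemma \ref{lem:erroresbracket}. An alternative, more in the bookkeeping style of the rest of the appendix, is to write $\phi_t^{-1}\circ\psi_s\circ\phi_t=\big(\phi_t^{-1}\circ\psi_s\circ\phi_t\circ\psi_s^{-1}\big)\circ\psi_s$, recognise the parenthesised factor as the commutator $[\psi(s),\phi^{-1}(t)]$ and apply Lemma \ref{lem:bracket1}$(v)$ to rewrite it as $\varepsilon_{ts}\circ\varphi^{[X,Y]}_{ts}$ (using $[Y,-X]=[X,Y]$), and then merge $\varphi^{[X,Y]}_{ts}\circ\varphi^{Y}_{s}$ into $\varphi^{Y+t[X,Y]}_{s}$ up to an $o(ts)$ error by a composition-of-flows estimate of the type of Lemma \ref{lem:bracket1}.
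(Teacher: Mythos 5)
Your main argument is correct and takes a genuinely different route from the paper's. The paper factors the conjugation as $\psi_s$ composed with a commutator of the two families, identifies that commutator with $\varphi^{[X,Y]}_{ts}=\varphi^{t[X,Y]}_{s}$ up to an $o(ts)$ error via Lemma \ref{lem:bracket1}$(v)$, and then merges $\psi_s\circ\varphi^{t[X,Y]}_{s}$ into $\varphi^{Y+t[X,Y]}_{s}$ using Lemma \ref{lem:bracket1}$(i)$, shuffling the errors from one side to the other as in Remark \ref{rem:flow1} and Lemma \ref{lem:erroresSaltan}; this is precisely the alternative you sketch in your closing sentence. Your primary route instead rests on the naturality identity $\phi_t^{-1}\circ\varphi^{Y}_{s}\circ\phi_t=\varphi^{(\phi_t)^{*}Y}_{s}$, the expansion $(\varphi^{X}_{t})^{*}Y=Y+t[X,Y]+O(t^{2})$, and a Gr\"onwall comparison of the two flows. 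This avoids the commutator bookkeeping entirely and gives the sharper quantitative error $O(st^{2})$, which is indeed $o(ts)$; it is arguably cleaner than the paper's argument. You are also right that the $X+t[X,Y]$ in the statement is a typo for $Y+t[X,Y]$: the paper's own proof ends with $\varphi_s^{Y+t[X,Y]}$.

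One caveat concerns your reduction to genuine flows. The $o(t)$-discrepancy of $\phi_t$ is harmless, since it enters the conjugation only through a commutator with $\psi_s$ and therefore contributes $o(ts)$, in the spirit of Lemma \ref{lem:bracket1}. But the discrepancy $\psi_s\circ(\varphi^{Y}_{s})^{-1}$ of a general family $\psi_s$ is only $o(s)$ and is independent of $t$, so it cannot be absorbed into an $\varepsilon_{ts}=o(ts)$ in the strict two-variable sense (take $\phi_t=\Id$ and $\psi_s=\varphi^{Y}_{s}+O(s^{2})$ to see this); with that strict reading the clean $o(ts)$ statement really only holds in the case you prove, namely when $\psi$ is an honest flow, or with the error interpreted as $o(s)$ uniformly in $t$. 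The paper's proof glosses over the same point when it invokes Lemma \ref{lem:bracket1}$(i)$ at the last step, so this is not a gap relative to the paper's own standard of rigour, but it is worth flagging if you want the sharp form of the estimate.
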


\begin{proof}
First, note that
\begin{align*}
\phi_t^{-1}\circ\psi_s\circ\phi_t=\psi_s\circ\left[\phi_t,\psi_s\right]=\psi_s\circ\tilde{\varepsilon}_{ts}\circ\varphi^{[X,Y]}_{ts}=\psi_s\circ\tilde{\varepsilon}_{ts}\circ\varphi^{t[X,Y]}_{s},
\end{align*}
where the second equality follows by Lemma \ref{lem:bracket1}. Nevertheless, this implies the existence of a $2-$parametric family of diffeomorphisms $\epsilon_{ts}=o(ts)$ such that $\psi_s\circ\varphi^{t[X,Y]}_{s}\circ\epsilon_{ts}$. But, from point $i)$ in Lemma \ref{lem:bracket1} applied to the composition $\psi_s\circ\varphi^{t[X,Y]}_{s}$, there exist some other $\tilde{\epsilon}_{ts}=o(ts), \varepsilon_{ts}=o(ts)$ such that 
\[\psi_s\circ\varphi^{t[X,Y]}_{s}\circ\epsilon_{ts}=\varphi_s^{Y+t[X,Y]}\circ\tilde{\epsilon}_{ts}=\varepsilon_{ts}\circ\varphi_s^{Y+t[X,Y]}\]
thus yielding the claim.

\end{proof}

\begin{definition}
Given two $1-$parametric families of diffeomorphisms $\varphi_s$, $\psi_t$, we define their $k-$th iterated commutator $\left[  \psi_t, \varphi_s \right]^{\#k}$ as follows
\[
\left[  \psi_t, \varphi_s \right]^{\#k}:=\left( \left(\varphi_{\frac{s}{\sqrt{k}}}^{-1}\right)\circ\left(\psi^{-1}_{\frac{t}{\sqrt{k}}}\right)\circ\left(\varphi_{\frac{s}{\sqrt{k}}}\right)\circ\left(\psi_{\frac{t}{\sqrt{k}}}\right)\right)^k.
\]
\end{definition}

The reason for parametrizing the flows by $\frac{s}{\sqrt{k}}$ in the definition of iterated commutator is justified by the following proposition.

\begin{proposition}\label{prop:bracketsiterados}
Write $X = \frac{\partial}{\partial t}\Big|_{t=0}\phi_t$ and $Y = \frac{\partial}{\partial t}\Big|_{t=0}\psi_t$ and assume $\phi_0=\psi_0=Id$. Then there exists $\varepsilon_{t}=o(t)$ such that
\[
\left[  \psi_t, \varphi_s \right]^{\#k}=\varepsilon_{ts}\circ\varphi^{[X,Y]}_{ts}
\]

\end{proposition}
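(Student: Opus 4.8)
The plan is to induct on $k$. For $k=1$ the statement $\left[\psi_t,\varphi_s\right]^{\#1} = \varphi_s^{-1}\circ\psi_t^{-1}\circ\varphi_s\circ\psi_t = [\psi(t),\varphi(s)]$ is exactly part $v)$ of Lemma \ref{lem:bracket1} (after relabelling which family plays the role of $\phi$ and which of $\psi$). For the inductive step, write
\[ \left[\psi_t,\varphi_s\right]^{\#k} = C_k^{\#(k-1)}\circ C_k, \qquad C_k := \varphi_{s/\sqrt{k}}^{-1}\circ\psi^{-1}_{t/\sqrt{k}}\circ\varphi_{s/\sqrt{k}}\circ\psi_{t/\sqrt{k}}, \]
where $C_k^{\#(k-1)}$ denotes the $(k-1)$-fold composition of $C_k$ with itself. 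The point of the $1/\sqrt{k}$ rescaling is precisely that the ``area'' carried by a single commutator block scales like the product of the two time-parameters, i.e. $(s/\sqrt{k})(t/\sqrt{k}) = ts/k$; composing $k$ such blocks then multiplies this by $k$, recovering $ts$ to leading order.

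The key step is to apply Lemma \ref{lem:erroresbracket} (and part $v)$ of Lemma \ref{lem:bracket1}) to the single block $C_k$. Since $\varphi_{s/\sqrt{k}}$ and $\psi_{t/\sqrt{k}}$ are $1$-parameter families through the identity with velocities $X$ and $Y$, reparametrised by $1/\sqrt{k}$, we obtain a $2$-parameter family of diffeomorphisms $\varepsilon = o(ts)$ with
\[ C_k = \varepsilon_{ts/k}\circ\varphi^{[Y,X]}_{ts/k} \]
(the sign of the bracket depends on the order convention; I will fix it to match the statement). By Remark \ref{rem:flow1}, $\varepsilon_{ts/k}$ is itself a flow, so $C_k$ is a flow, and hence $C_k^{\#(k-1)}$ is the time-$(k-1)$ reparametrisation of that flow: $C_k^{\#(k-1)} = \left(\varepsilon_{(\cdot)}\circ\varphi^{[Y,X]}_{(\cdot)}\right)$ evaluated so that the total bracket-time accumulated is $(k-1)\cdot ts/k$. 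Composing with the remaining block $C_k$ adds one more $ts/k$ of bracket-time, for a total of $k\cdot ts/k = ts$. The errors accumulated at each of the $k$ compositions are each $o(ts/k)$; one must argue they combine into a single $o(ts)$ family — this follows by repeatedly absorbing lower-order terms using Lemma \ref{lem:erroresbracket} and Lemma \ref{lem:erroresSaltan}, exactly as in the proofs of the preceding lemmas, since there are only $k$ of them and $k$ is fixed.

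The main obstacle is the bookkeeping of the error terms: one needs that composing $k$ flows of the form $\varepsilon^{(i)}_{ts/k}\circ\varphi^{[X,Y]}_{ts/k}$ yields $\widetilde\varepsilon_{ts}\circ\varphi^{[X,Y]}_{ts}$ with $\widetilde\varepsilon = o(ts)$, i.e. that the $o(ts/k)$ corrections do not conspire to produce something of order $ts$. The clean way to see this is to note that, by Lemma \ref{lem:erroresbracket} together with Remark \ref{rem:flow1}, each partial composition $C_k^{\#j}$ can be written as $\varepsilon^{[j]}_{ts}\circ\varphi^{[X,Y]}_{jts/k}$ with $\varepsilon^{[j]} = o(ts)$, and the inductive step from $j$ to $j+1$ is a single application of the ``error jumps out of the composition'' mechanism (Lemma \ref{lem:erroresSaltan} plus part $i)$ of Lemma \ref{lem:bracket1}), whose output error is again $o(ts)$ since it is a finite composition of $o(ts)$ families with flows whose own deviation from a group is controlled. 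Setting $j = k$ gives $C_k^{\#k} = \left[\psi_t,\varphi_s\right]^{\#k} = \varepsilon_{ts}\circ\varphi^{[X,Y]}_{ts}$, which is the claim.
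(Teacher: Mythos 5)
Your proposal is correct and follows essentially the same route as the paper: both decompose $\left[\psi_t,\varphi_s\right]^{\#k}$ into $k$ rescaled commutator blocks, write each block as $\varepsilon^i_{ts/k}\circ\varphi^{[X,Y]}_{ts/k}$ via Lemma \ref{lem:bracket1}(v), and then use Lemma \ref{lem:erroresSaltan} to push the $o(ts/k)$ errors past the bracket flows, so that the flows collect into $\varphi^{[X,Y]}_{ts}$ and the finitely many errors into a single $o(ts)$ family. The only difference is cosmetic: you organise the error bookkeeping as an induction on partial compositions, whereas the paper performs the same absorption on all $k$ blocks at once.
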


\begin{proof}
By the definition of the $k-$th iterated commutator of flows and by Lemma \ref{lem:bracket1}, there exist flows $\varepsilon^1_t=o(t),\cdots,\varepsilon^k_t=o(t)$ such that
\[
\left[  \psi_t, \varphi_t \right]^{\#k}=\left( \varepsilon^1_{ts/k}\circ\varphi_{ts/k}^{[X,Y]}\right)\circ\cdots\circ\left( \varepsilon^k_{ts/k}\circ\varphi_{ts/k}^{[X,Y]}\right)\]
But by Lemma \ref{lem:erroresSaltan} there exist flows $\tilde{\varepsilon}^1_t=o(t),\cdots,\tilde{\varepsilon}^k_t=o(t)$ such that
\[\left( \varepsilon^1_{ts/k}\circ\varphi_{ts/k}^{[X,Y]}\right)\circ\cdots\circ\left( \varepsilon^k_{ts/k}\circ\varphi_{ts/k}^{[X,Y]}\right)=\tilde{\varepsilon}^1_{ts/k}\circ\cdots\circ\tilde{\varepsilon}^k_{ts/k}\circ\left(\varphi_{ts/k}^{[X,Y]}\right)^k
\]
and so the claim follows.
\end{proof}

With this battery of technical results at our disposal, we can compare how taking a given bracket expression behaves with respect to taking flows \cite[Theorem 1]{Mau}:
\begin{proposition}\label{prop:bracketk}
Let $X_1,X_2,\cdots,X_{\lambda}$ be (possibly repeated) vector fields on a manifold $M$. Then, for any bracket expression $A(-,\cdots,-)$ of length $\lambda$ there exists a flow $\varepsilon_t=o(t)$ such that
\[ A\left(\varphi_t^{X_1},\cdots,\varphi^{X_\lambda}_t\right) = \varepsilon_{t^\lambda}\circ\phi^{A(X_1,\cdots,X_\lambda)}_{t^\lambda}. \]
\end{proposition}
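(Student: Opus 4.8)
The plan is to prove Proposition \ref{prop:bracketk} by induction on the length $\lambda$ of the bracket expression $A$. The base case $\lambda = 1$ is immediate: a length-$1$ bracket expression is just $A(X_1) = X_1$, so $A(\varphi^{X_1}_t) = \varphi^{X_1}_t$, and we may take $\varepsilon_t = \mathrm{Id}$ (which is trivially $o(t)$). More honestly, the first genuinely substantial case is $\lambda = 2$, $A = [-,-]$, which is exactly Proposition \ref{prop:bracketsiterados} (with $k=1$), i.e. part $v)$ of Lemma \ref{lem:bracket1}: there is a flow $\varepsilon_t = o(t)$ with $[\varphi^{X_1}_t,\varphi^{X_2}_t] = \varepsilon_{t^2}\circ \phi^{[X_1,X_2]}_{t^2}$.

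For the inductive step, write $A(a_1,\dots,a_\lambda) = [B(a_1,\dots,a_j), C(a_{j+1},\dots,a_\lambda)]$ with $B$ of length $j$ and $C$ of length $\lambda - j$, where $0 < j < \lambda$. By the inductive hypothesis applied to $B$ and to $C$, there are flows $\varepsilon'_t = o(t)$ and $\varepsilon''_t = o(t)$ with
\[ B(\varphi^{X_1}_t,\dots,\varphi^{X_j}_t) = \varepsilon'_{t^j}\circ \phi^{Y}_{t^j}, \qquad C(\varphi^{X_{j+1}}_t,\dots,\varphi^{X_\lambda}_t) = \varepsilon''_{t^{\lambda-j}}\circ \phi^{Z}_{t^{\lambda-j}}, \]
where $Y := B(X_1,\dots,X_j)$ and $Z := C(X_{j+1},\dots,X_\lambda)$, so that $A(X_1,\dots,X_\lambda) = [Y,Z]$. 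Substituting, $A(\varphi^{X_1}_t,\dots,\varphi^{X_\lambda}_t)$ equals the commutator
\[ \left[\,\varepsilon'_{t^j}\circ\phi^{Y}_{t^j}\,,\ \varepsilon''_{t^{\lambda-j}}\circ\phi^{Z}_{t^{\lambda-j}}\,\right]. \]
The idea is now to peel the error flows $\varepsilon', \varepsilon''$ out of the commutator using Lemma \ref{lem:erroresbracket} (and Remark \ref{rem:flow1}/Lemma \ref{lem:erroresSaltan} to move errors between left and right composition as needed), reducing the expression to $\big[\phi^{Y}_{t^j},\phi^{Z}_{t^{\lambda-j}}\big]$ up to a new $o(\cdot)$ error flow. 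Then part $v)$ of Lemma \ref{lem:bracket1}, applied with the two one-parameter families $s\mapsto \phi^Y_{s^{1/1}}$ reparametrised appropriately — more precisely, with $\phi_s = \phi^Y_{s}$ having infinitesimal generator $Y$ evaluated at the relevant rescaled time, and similarly for $Z$ — produces a flow $\delta_\bullet = o(\bullet)$ with
\[ \big[\phi^{Y}_{t^j},\phi^{Z}_{t^{\lambda-j}}\big] = \delta_{t^{\lambda}}\circ \phi^{[Y,Z]}_{t^{\lambda}}. \]
Here one uses that the bilinear ``bracket of flows'' term in Lemma \ref{lem:bracket1}$iv$–$v$ scales as the product $t^j \cdot t^{\lambda-j} = t^\lambda$, which is precisely why the exponent on the error and on the main flow comes out to $\lambda$; this is the flow-level analogue of the fact that a length-$\lambda$ bracket is $\lambda$-homogeneous. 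Composing all the accumulated $o(\cdot)$ error flows into a single $\varepsilon_t = o(t)$ (legitimate because a composition of flows is a flow, and $o(t^a)\circ o(t^b)$ stays $o(t^{\min(a,b)})$, with the final rescaling absorbed) gives the claimed identity.

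I expect the main obstacle to be bookkeeping of the error terms: carefully tracking that each application of Lemma \ref{lem:erroresbracket}, Lemma \ref{lem:erroresSaltan} and Remark \ref{rem:flow1} produces an error that is genuinely $o(t^{\lambda})$ (not merely $o(t)$) after the rescalings, and verifying that one can freely commute these errors past the main flows $\phi^{[Y,Z]}_{t^\lambda}$ to collect them on one side. A clean way to organise this is to prove, alongside the main statement, the slightly stronger bookkeeping claim that all errors appearing are flows of the form $o(t^{\lambda})$ and that the identity is stable under pre- or post-composition of the inputs $\varphi^{X_i}_t$ by error flows $o(t)$ — this is exactly the content of Lemma \ref{lem:erroresbracket} at length $2$, and it propagates through the induction without extra difficulty. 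The reference \cite[Theorem 1]{Mau} can be cited as the source of this result; the proof above simply reassembles it from the lemmas already established in the appendix. \hfill$\Box$
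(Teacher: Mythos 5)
Your proposal is correct and follows essentially the same route as the paper: induction on the length, splitting $A=[B,C]$, invoking the inductive hypothesis on both factors, extracting the error flows via Lemma \ref{lem:erroresbracket} (with Remark \ref{rem:flow1} and Lemma \ref{lem:erroresSaltan}), and concluding with the length-two case, where the two time scales $t^{j}$ and $t^{\lambda-j}$ multiply to give the exponent $\lambda$. The only cosmetic difference is that the paper phrases the inductive step for generalised bracket expressions $[B,C]^{\#m}$ and therefore cites Proposition \ref{prop:bracketsiterados} where you cite Lemma \ref{lem:bracket1}$\,v)$, which is the same statement for $m=1$.
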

\begin{proof}
We proceed by induction on the length of the formal bracket--expression:
\begin{itemize}
\item For $k=2$ the result holds by Proposition \ref{prop:bracketsiterados}.
\item 

The Induction Hypothesis (IH) says that the statement holds for all expressions of length $k' < k$. By definition, if $A(-,\cdots,-)$ is an expression of length $k$, there exists $i < k$ and an integer $m$ such that $A(X_1,\cdots,X_k) = [B(X_1,\cdots,X_i), C(X_{i+1},\cdots,X_k)]^{\#m}$, with $B()$ of length $i$ and $C()$ of length $k-i$. Computing we see that there are flows $f_t=o(t)$ and $g_t=o(t)$ such that:
\begin{align}\label{align1}
A(\phi^{X_1}_t,\cdots,\phi^{X_k}_t) =& \left[B(\phi^{X_1}_t,\cdots,\phi^{X_i}_t), C(\phi^{X_{i+1}}_t,\cdots,\phi^{X_k}_t)\right]^{\#m} \\
\overset{\text{IH}}{=} & \left[f_{t^i}\circ\phi^{B(X_1,\cdots,X_i)}_{t^i}, g_{t^{k-i}}\circ\phi^{C(X_{i+1},\cdots,X_k)}_{t^{k-i}}\right]^{\#m}\nonumber
\end{align}
By an application of Proposition \ref{prop:bracketsiterados} first, and by Lemma \ref{lem:erroresbracket}, there exists a flow $\varepsilon_t=o(t)$ such that

\begin{align}\label{align2}
 \left[f_t\circ\phi^{B(X_1,\cdots,X_i)}_{t^i}, g_t\circ\phi^{C(X_{i+1},\cdots,X_k)}_{t^{k-i}}\right]^{\#m}=\varepsilon_{t^{k}}\circ\left[\phi^{B(X_1,\cdots,X_i)}_{t^i}, \phi^{C(X_{i+1},\cdots,X_k)}_{t^{k-i}}\right] 
\end{align}

But, since $\left[\phi^{B(X_1,\cdots,X_i)}_{t^i}, \phi^{C(X_{i+1},\cdots,X_k)}_{t^{k-i}}\right]=\phi^{A(X_1,\cdots,X_k)}_{t^k}$ the result follows from combining (\ref{align1}) and (\ref{align2}).
\end{itemize}
\end{proof}

\end{document}